\documentclass[11pt, leqno]{amsbook}
\usepackage{graphicx}
\usepackage{amsfonts,delarray,amssymb,amsmath,amsthm,a4,a4wide, appendix}
\usepackage{latexsym}
\usepackage{epsfig}
\usepackage{color}

\usepackage[margin=1in]{geometry} 
\vfuzz2pt 
\hfuzz2pt 
\newtheorem{thm}{Theorem}[section]
\newtheorem{cor}[thm]{Corollary}
\newtheorem{lem}[thm]{Lemma}
\newtheorem{conj}[thm]{Conjecture}
\newtheorem{exam}[thm]{Example}
\newtheorem{quest}[thm]{Question}
\newtheorem{prop}[thm]{Proposition}
\theoremstyle{definition}
\newtheorem{defn}[thm]{Definition}
\newtheorem{rem}[thm]{Remark}
\newtheorem{prob}[thm]{Problem}
\numberwithin{equation}{section}
\newcommand{\norm}[1]{\left\Vert#1\right\Vert}
\newcommand{\abs}[1]{\left\vert#1\right\vert}

\newcommand{\R}{\mathbb R}

\newcommand{\e}{\varepsilon}
\newcommand{\eps}{\varepsilon}
\newcommand{\ov}{\overline}
\newcommand{\p}{\partial}

\newcommand{\comment}[1]{}
\def\h{\hspace*{.24in}}

\newenvironment{myindentpar}[1]%
{\begin{list}{}%
         {\setlength{\leftmargin}{#1}}%
         \item[]%
}
{\end{list}}
\setcounter{tocdepth}{2}
\makeatletter
\def\l@subsection{\@tocline{2}{0pt}{2.5pc}{5pc}{}}
\makeatother

\begin{document}

\title[The second boundary value problem and linearized Monge-Amp\`ere equation ]{ \Huge The second boundary value problem of the prescribed affine mean curvature equation and
 related linearized Monge-Amp\`ere equation}
\author{Nam Q. Le}
\address{Department of Mathematics, Indiana University,
Bloomington, 831 E 3rd St, IN 47405, USA.}
\email{nqle@indiana.edu}

\begin{abstract}

These lecture notes are concerned with the solvability of 
the second boundary value problem of the prescribed affine mean curvature equation and 
 related regularity theory of the Monge-Amp\`ere and  linearized Monge-Amp\`ere equations.

 The prescribed affine mean curvature equation is a fully nonlinear, fourth order, geometric partial differential equation of the following form
 $$\sum_{i, j=1}^n U^{ij}\frac{\partial^2}{\partial {x_i}\partial{x_j}}\left[(\det D^2 u)^{-\frac{n+1}{n+2}}\right]=f$$
 where $(U^{ij})$ is the cofactor matrix of the Hessian matrix $D^2 u$ of a locally uniformly convex function $u$. 
 Its variant is related to the problem of finding K\"ahler metrics of constant scalar curvature in complex geometry. 
 
We first introduce the background of the prescribed affine mean curvature equation which can be viewed as a coupled system of Monge-Amp\`ere and  linearized Monge-Amp\`ere equations. 
Then we state key open problems and present the solution of the  second boundary value problem that prescribes the boundary values of the solution $u$ and its Hessian determinant 
$\det D^2 u$. Its proof uses important tools from the boundary regularity theory
 of the Monge-Amp\`ere and linearized Monge-Amp\`ere equations. Next, we present the regularity theory of the linearized Monge-Amp\`ere equations
initiated by Caffarelli and Guti\'errez. 
We discuss the background and provide the proof of Caffarelli-Guti\'errez's
fundamental interior Harnack inequality and Holder estimates for solutions to the linearized Monge-Amp\`ere equation. The corresponding
global H\"older estimates are also proved. 
These proofs use geometric properties, including the engulfing property, of sublevel sets of solutions to the Monge-Amp\`ere equation. 
Finally, we present the proofs of these geometric properties of solutions to the Monge-Amp\`ere equation, culminating in the proof of Caffarelli's celebrated $C^{1,\alpha}$ regularity theorem.
\end{abstract}

\maketitle
\vglue 2cm
\noindent
\thanks
{\it Key words:} Prescribed affine mean curvature equation, affine Bernstein problem, second boundary value problem, Monge-Amp\`ere equation, linearized Monge-Amp\`ere equation, 
Caffarelli-Guti\'errez Harnack inequality, boundary localization theorem.
\vglue 0.4cm
\noindent
{\it AMS Mathematics Subject Classification (2010):} 35J40, 35B45, 35B65, 35J60, 35J70, 35J96, 53A15.
\tableofcontents

\section*[Notation]{Notation}
We collect here several standard notations used in the lecture notes.\\
$\bullet$ Partial differentiations:
$$\displaystyle\p_i =\frac{\p}{\p x_i}, \p_{ij} =\frac{\p^2}{\p x_i\p x_j}.$$
$\bullet$ Convex function: A function $u:\Omega\subset \R^n\rightarrow\R$ is convex if for all $0\leq t\leq 1$, and any $x, y\in\Omega$ such that $tx+ (1-t)y\in\Omega$ we have $$u(
tx + (1-t)y)\leq tu(x) + (1-t) u(y).$$
$\bullet$ Subgradient: For a convex function $u$, we use $\nabla u(x)$ to denote a subgradient of the graph of $u$ at $(x, u(x))$, that is, for all $y$
in the domain of $u$, we have $$u(y)\geq u(x) + \nabla u(x) \cdot (y-x).$$ 
$\bullet$ Gradient vector:
$$D u = (\frac{\partial u}{\partial x_1}, \cdots, \frac{\partial u}{\partial x_n} ) = (u_1, \cdots, u_n).$$
$\bullet$ Hessian matrix:
$$D^2 u =(u_{ij})= \left(\frac{\p^2 u}{\p x_i \p x_j}\right)_{1\leq i, j\leq n}.$$
$\bullet$ $I_n$ is the identity $n\times n$ matrix.\\
$\bullet$ $B_r(a)$ denotes the ball in $\R^n$ with center $a$ and radius $r$.\\ 
$\bullet$ Euclidean norm: If $x=(x_1,\cdots, x_n)\in \R^n$ then $$|x|= \left(\sum_{i=1}^n x_i^2\right)^{\frac{1}{2}}.$$
$\bullet$ Dot product: the dot product of $x=(x_1,\cdots, x_n), y= (y_1,\cdots, y_n) \in\R^n$ is $$\displaystyle x\cdot y= \sum_{i=1}^n x_i y_i.$$
$\bullet$ $\omega_n$ is the volume of the unit ball in $\R^n$.\\
$\bullet$ $|\Omega|$ denotes the Lebesgue measure of a measurable set $\Omega\subset\R^n$.\\
$\bullet$ Repeated indices are summed. For example
$$a^{ij} u_{ik}=\sum_{i} a^{ij} u_{ik}.$$\\
$\bullet$ Compact inclusion: If $A\subset B\subset\R^n$ and $\overline{A}\subset B$, then we write $A\subset\subset B$.\\
$\bullet$ Lebesgue space: $L^{p}(\Omega)$ is the Banach space consisting of all measurable functions $f$ on $\Omega$ that are $p$-integrable. The norm of $f$ is defined by
$$\|f\|_{L^{p}(\Omega)}=\left( \int_\Omega |f(x)|^p dx\right)^{\frac{1}{p}}.$$
$\bullet$ Sobolev space: 
\begin{multline*}W^{k, p}(\Omega)=
\Bigl\{u\in L^p(\Omega), D^{\alpha} u\in L^p(\Omega)~\text{for all multi-indices } \alpha= (\alpha_1,\cdots,\alpha_n)~\\ \text{with length~}|\alpha|=\alpha_1+\cdots+\alpha_n\leq k
\Bigr\}.
\end{multline*}
$\bullet$ H\"older space: $C^{\alpha}(\overline{\Omega})$ ($0<\alpha\leq 1$) consists of continuous functions $u$ that are uniformly H\"older continuous with exponent $\alpha$
in $\bar \Omega$. The $C^{\alpha}(\overline{\Omega})$ norm of $u$ is
$$\displaystyle \|u\|_{C^{\alpha}(\overline \Omega)}:= \sup_{x\in\overline\Omega}|u(x)| + \sup_{x\neq y\in\overline\Omega}\frac{|u(x)-u(y)|}{|x-y|^{\alpha}}.$$
$\bullet$ Higher order H\"older space: $C^{k,\alpha}(\overline{\Omega})$ consists of $C^{k} (\overline{\Omega})$ functions whose $k$-th order partial derivatives are 
uniformly H\"older continuous with exponent $\alpha$
in $\bar \Omega$.\\
$\bullet$ $\overline E:$ the closure of a set $E$.\\  
$\bullet$ $\p E:$ the boundary of a set $E$.\\   
$\bullet$ $\text{diam} (E):$ the diameter of a bounded set $E$.\\ 
$\bullet$ $\text{dist} (\cdot, E):$ the distance function from a closed set $E$.\\ 
$\bullet$ $A\geq B$ for symmetric $n\times n$ matrices $A$ and $B$: if the eigenvalues of $A-B$ are nonnegative.\\
$\bullet$ $\text{trace} (M):$  the trace of a matrix  $M$.\\
$\bullet$ $\|M\|:$ the Hilbert-Schmidt norm of a symmetric $n\times n$ matrice $M$: $\|M\|^2=\text{trace}(M^T M)$.\\
\newpage
\section*[Introduction]{Introduction}
These lecture notes, consisting of three parts, are concerned with 
the second boundary value problem of the prescribed affine mean curvature equation and 
 related regularity theory of the linearized Monge-Amp\`ere equation. 
 
 In Part 1, we discuss the affine maximal surface equation and its associated boundary value problems. 
 The affine maximal surface equation is a fully nonlinear, fourth order, geometric partial differential equation of the following form
 $$\sum_{i, j=1}^n U^{ij}\frac{\partial^2}{\partial {x_i}\partial{x_j}}\left[(\det D^2 u)^{-\frac{n+1}{n+2}}\right]=0$$
 where the unknown is a locally uniformly convex function $u$ and $(U^{ij})_{1\leq i, j\leq n}$ is the cofactor matrix of the Hessian matrix $D^2 u$. It arises naturally in affine 
 differential geometry while its analogues appear in the problem of finding K\"ahler metrics of constant scalar curvature in complex geometry which witnesses intensive interest in 
 recent years. The left hand side of the above equation is a constant multiple of the affine mean curvature of the graph of $u$.
 We will consider a more general equation, called
the prescribed affine mean curvature equation which prescribes the affine mean curvature of the graph of 
a locally uniformly convex function $u$ defined on a bounded, smooth, strictly convex domain. 
 As far as boundary value problems are concerned, so far
 only the second boundary value problem has been more or less well understood
 in all dimensions. We present the proof of the solvability of this second boundary value problem in our main result of Part 1, Theorem \ref{mainthm}. 
 Its proof uses tools from the boundary regularity theory
 of the Monge-Amp\`ere and linearized Monge-Amp\`ere equations. 
 
 Part 2 of the notes will be devoted to the regularity theory of the linearized Monge-Amp\`ere equations
initiated by Caffarelli and Guti\'errez. These equations are of the form
$$\sum_{i, j=1}^n U^{ij}\frac{\partial^2}{\partial {x_i}\partial{x_j}} v=g$$
where $(U^{ij})_{1\leq i, j\leq n}$ is the cofactor matrix of the Hessian matrix $D^2 u$ of a locally uniformly convex function $u$ solving the Monge-Amp\`ere equation
$$\lambda\leq \det D^2 u\leq\Lambda$$
where $\lambda$ and $\Lambda$ are two positive constants. 
Caffarelli and Guti\'errez's theory has already played a crucial role in 
Trudinger and Wang's resolution of Chern's conjecture in affine geometry concerning affine maximal hypersurfaces in $\R^3$ and in Donaldson's interior estimates for Abreu's equation 
in his resolution, using the continuity method, of the constant scalar curvature problem for toric surfaces. It was also used by Caffarelli and Silvestre in one of their
pioneering papers on nonlocal equations to prove several regularity results for problems involving the fractional powers of 
the Laplacian or other integro-differential equations.
We will present the background and proof of Caffarelli-Guti\'errez's
fundamental Harnack inequality for nonnegative solution to the homogeneous equation linearized Monge-Amp\`ere equation, Theorem \ref{CGthm}. 
We will also give the proof of  another main result of Part 2, Theorem \ref{global-h}, which  is concerned with global H\"older estimates for the inhomogeneous linearized Monge-Amp\`ere equation, extending Caffarelli-Guti\'errez's interior estimates
to the boundary. Together with Trudinger and Wang's global $C^{2,\alpha}$ estimates for the Monge-Amp\`ere equation, Theorem \ref{global-h} plays a crucial role
in  the proof of Theorem \ref{mainthm}.

In Part 3, we present from scratch  basic and well-known facts regarding geometric properties of solutions to the Monge-Amp\`ere equations used in Part 2.  A very important concept in this part is the notion of sections
of a convex function. They are sublevel sets of a convex function after subtracting a supporting hyperplane. Their role in the regularity 
theory of linearized Monge-Amp\`ere equation is the same as that of balls in the regularity theory of linear, uniformly elliptic equations. Sections of solutions to the Monge-Amp\`ere equation can have degenerate geometry
but they share many crucial properties as Euclidean balls such as precise volume estimate and engulfing property. We restrict ourselves to developing tools in the Monge-Amp\`ere
equation to prove these remarkable properties of sections. Central topics in this Part 3 include Caffarelli's celebrated $C^{1,\alpha}$ regularity of strictly convex solutions
and the engulfing properties of sections.

The notes
are self-contained, except for Caffarelli's striking interior $C^{2,\alpha}$ estimates for the Monge-Amp\`ere equation,
Trudinger and Wang's important global $C^{2,\alpha}$ estimates for the Monge-Amp\`ere equation 
when the Monge-Amp\`ere measure is only assumed to be globally $C^{\alpha}$ and Savin's deep Localization theorem at the boundary for the Monge-Amp\`ere equation 
with bounded right hand side.

These lecture notes expand and update materials covered in seminars at Columbia, Kansas State, Rutgers, and the following mini-courses and lecture series:
\begin{myindentpar}{1cm}
1. ``The linearized Monge-Amp\`ere equation and its geometric
applications'' given at the Workshop on Geometric Analysis and Nonlinear PDEs at  Rutgers University, from May 1-5,  2013.\\
2. ``From a classical theorem of J\"{o}rgens, Calabi, and Pogorelov to the affine Bernstein problem'' given at the
Vietnam Institute for Advanced Study in Mathematics, Hanoi, Vietnam from July 01-August 31, 2013.\\
3. ``The linearized Monge-Amp\`ere equation and its geometric
applications'' given at the Institute of Mathematics, Vietnam Academy of Science and Technology, Hanoi from October to November 2013.\\
4. ``The Monge-Amp\`ere equation and its geometric
applications'' given in a Special topics course at Indiana University, Bloomington during the Spring semseter in 2016.
\end{myindentpar}
I would like to thank Ovidiu Savin, Diego Maldonado, Natasa Sesum, Longzhi Lin, Ng\^o Bao Ch\^au, L\^e Tu\^an Hoa, and Nguy\^en Minh Tr\'i and their institutions for the kind invitations and hospitality.  
My special thanks go to Ovidiu Savin for his enlightening insights and many interesting discussions on the linearized Monge-Amp\`ere equations, and their boundary regularity 
in particular. Oversights or inaccuracies, if any, in these notes, are mine.
\vglue 0.5cm

{\bf Acknowledgements.} The author would like to thank the anonymous referees for their careful reading of the manuscript and for their insightful comments that improve the exposition.
The research of the author was supported in part by the National Science Foundation under grant DMS-1500400.

\part{The affine Bernstein and boundary value problems}
\section{The affine Bernstein and boundary value problems}
In this section, we introduce the affine Bernstein and boundary value problems for affine maximal graphs. The solutions or partial solutions to these problems bring us to the realm
of the regularity theory of the linearized Monge-Amp\`ere equation and the second boundary value problem for affine maximal graphs.
\subsection{Minimal graph}
We first start by briefly recalling the Bernstein and boundary value problems for minimal surfaces. They serve as historical perspectives for the results, problems
and conjectures concerning affine maximal surfaces. 

Suppose $u$ is a real-valued function defined on a bounded domain $\Omega\subset\R^n$. Then the
area of the graph of $u$ on $\Omega$ is given by the formula:
$$A(u,\Omega)=\int_{\Omega} \sqrt{1+\abs{D u}^2}.$$
This is a convex functional.
The graph is called minimal if $u$  is a critical point of $A$ under local perturbations, that is, for all smooth functions $\varphi\in C^{\infty}_0(\Omega)$, we have
$$\displaystyle\frac{d}{dt}\mid_{t=0} A(u+ t\varphi, \Omega)=0.$$
This is equivalent to
$$\int_{\Omega}\frac{Du\cdot D\varphi}{\sqrt{1+ |Du|^2}}=0,~\text{for all} ~ \varphi\in C^{\infty}_0(\Omega),$$
or
$u$ satisfies the Euler-Lagrange equation of the area functional $A(\cdot,\Omega)$:
\begin{equation}
\label{MSE}
\sum_{i=1}^{n}\frac{\partial}{\partial_{x_i}} \left(\frac{u_i}{\sqrt{1+\abs{D u}^2}}\right)=0.
\end{equation}
From the Partial Differential Equations (PDE) viewpoint, two of the main problems concerning (\ref{MSE}) are the Bernstein and Dirichlet problems. The first problem stemmed 
from a geometric result of Bernstein \cite{Berns} around 1915-1917 which says that
an entire, two dimensional, minimal graph must be a hyperplane. Hence, solutions in $\R^2$ of (\ref{MSE}) are affine functions.

\begin{myindentpar}{1cm}
{\bf 1.} The Bernstein problem asks if global solutions (defined on the whole space $\R^n$) of (\ref{MSE}) are affine functions, that is, if $u(x)= a + b\cdot x$ where $a\in\R$
and $b\in\R^n$.\\
{\bf 2.} The Dirichlet problem seeks a minimal graph with given values on the boundary of a domain $\Omega$, that is, to solve the following boundary value problem for minimal surfaces:
\begin{equation}
  \left\{ 
  \begin{alignedat}{2}\sum_{i=1}^{n} \frac{\p}{\p x_i} \left(\frac{u_i}{\sqrt{1+ |Du|^2}}\right)~& =0 ~&&\text{in} ~\Omega, \\\
u ~&=\varphi~&&\text{on}~\p \Omega.
\end{alignedat}
\right.
\label{MSBVP}
\end{equation}
\end{myindentpar}
The search for a complete answer to the Bernstein problem has an interesting history. As mentioned above,
Bernstein gave an affirmative answer for $n=2$ in 1915-1917. 
New proof of Bernstein's theorem was given by Fleming \cite{Fl} in 1962. The combined effort of 
 De Giorgi \cite{DG2} (for $n=3$ in 1965), Almgren \cite{Alm} (for $n=4$ in 1966), and  Simons \cite{Si} (for $n\leq 7$ in 1968) settled the Bernstein problem in the affirmative for all $n\leq 7$.
 The Bernstein problem has a negative answer for all $n\geq 8$ by a counterexample of Bombieri, De Giorgi, and Giusti \cite{BDGG} in 1969.
 
The Dirichlet problem was completely solved by Jenkins-Serrin \cite{JS} in 1968. They proved the following surprising 
and beautiful result: 
\begin{thm} (\cite{JS})
\label{JSthm} Let $\Omega$ be a bounded $C^{2,\alpha}$ domain in $\R^n$ where $0<\alpha<1$.
A smooth solution for (\ref{MSBVP}) exists for arbitrary $\varphi\in C^{2,\alpha}(\overline{\Omega})$ if and only if $\p\Omega$ has everywhere non-negative mean curvature.
\end{thm}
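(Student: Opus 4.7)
The plan is to prove the theorem by the continuity method, reducing the existence question to a chain of \emph{a priori} estimates in $C^{2,\alpha}(\overline{\Omega})$, with the geometric hypothesis on $\p\Omega$ entering decisively at exactly one step (the boundary gradient bound). The necessity of the mean curvature condition is proved separately by a barrier/comparison argument.

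\textbf{Necessity.} Suppose $\p\Omega$ has mean curvature $H(x_0)<0$ at some $x_0\in\p\Omega$. I would produce boundary data $\varphi\in C^{2,\alpha}(\overline{\Omega})$ for which no smooth solution of (\ref{MSBVP}) can exist. The strategy is to compare with a large vertical translate: the minimal surface operator (\ref{MSE}) is, up to a factor, the mean curvature of the graph of $u$. If a smooth minimal graph $\Sigma = \mathrm{graph}(u)$ meets the cylinder $\p\Omega\times\R$ transversally along a curve passing through $(x_0,\varphi(x_0))$ and $\varphi$ is chosen so that $u$ attains a sharp boundary maximum there, one can compute the mean curvature of $\Sigma$ at that point in terms of $H(x_0)$ and the boundary gradient. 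Using a carefully tuned family of boundary data, one forces an inequality incompatible with $H_\Sigma=0$, contradicting existence.

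\textbf{Sufficiency via continuity method.} Define, for $t\in[0,1]$, the problem $Q(u_t)=0$ in $\Omega$, $u_t=t\varphi$ on $\p\Omega$, where $Q$ denotes the minimal surface operator in (\ref{MSE}). Let $S=\{t\in[0,1]:\text{problem has a } C^{2,\alpha}(\overline{\Omega}) \text{ solution}\}$. Then $0\in S$ with $u_0\equiv 0$. Openness of $S$ follows from the implicit function theorem applied to $Q:C^{2,\alpha}_\varphi\to C^{\alpha}$: the linearization of $Q$ at any smooth solution is a linear uniformly elliptic operator (with ellipticity constants depending on $\|Du_t\|_{L^\infty}$), and invertibility between Schauder spaces is standard. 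Closedness reduces, by Arzelà-Ascoli, to a uniform $C^{2,\alpha}(\overline{\Omega})$ bound for solutions, which I would obtain in the usual chain: (i) $C^0$ estimate from the maximum principle, $\|u_t\|_{L^\infty}\leq\|\varphi\|_{L^\infty}$; (ii) \emph{boundary} gradient bound using mean convexity of $\p\Omega$ (see below); (iii) \emph{interior} gradient bound via the classical Bombieri--De Giorgi--Miranda estimate for the minimal surface equation, combined with (i)--(ii) to convert the interior bound into a global one; (iv) Hölder estimate for $Du$ via De Giorgi--Nash--Moser applied to the differentiated equation, which is linear uniformly elliptic once $|Du|$ is bounded; (v) $C^{2,\alpha}$ bound via Schauder theory on the resulting linear equation.

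\textbf{The main obstacle: the boundary gradient bound.} This is the step where the sign of the mean curvature of $\p\Omega$ is essential and, effectively, the heart of the theorem. The plan is to construct upper and lower barriers of the form $w^{\pm}(x)=\varphi(x)\pm K\,\psi(d(x))$, where $d(x)=\mathrm{dist}(x,\p\Omega)$, $\psi\in C^2([0,d_0])$ satisfies $\psi(0)=0$, $\psi'>0$, $\psi''<0$, and $K$ is large. A direct computation gives
\begin{equation*}
Q(w^{\pm}) = \pm K\psi''(d)\cdot\frac{1}{(1+|Dw^{\pm}|^2)^{3/2}} \;\pm\; K\psi'(d)\,H_{\{d=\mathrm{const}\}} \;+\; \text{(terms controlled by }\varphi, D^2\varphi\text{)},
\end{equation*}
where $H_{\{d=\mathrm{const}\}}$ is the mean curvature of the level set of $d$, which near $\p\Omega$ is close to the mean curvature of $\p\Omega$. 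The hypothesis $H\geq 0$ ensures that the $\psi'$-term has the correct sign, so by choosing $\psi$ with sufficiently large $|\psi''|$ one obtains $Q(w^{+})\leq 0\leq Q(w^{-})$ in a boundary strip $\{d<d_0\}$, with $w^{+}\geq u\geq w^{-}$ on its boundary. The comparison principle for the quasilinear operator $Q$ then forces $w^{-}\leq u\leq w^{+}$ in the strip, and since $w^{+}(x)=w^{-}(x)=\varphi(x)$ on $\p\Omega$, this bounds the normal derivative of $u$ at $\p\Omega$ in terms of $\|\varphi\|_{C^2}$, $d_0$, and $\psi'(0)$. Combined with the tangential gradient (which equals that of $\varphi$), this gives the desired boundary gradient estimate. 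Were $H$ negative at some point, this barrier construction would fail, consistent with the necessity half.
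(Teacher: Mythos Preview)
The paper does not prove this theorem; it is stated with a citation to Jenkins--Serrin \cite{JS} purely as historical background for the affine boundary value problems that are the paper's actual subject. There is therefore no ``paper's own proof'' to compare against.

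Your outline is the standard route to Jenkins--Serrin and is essentially correct in structure. A few remarks on the rough edges. Your necessity argument is the least precise part: the usual proof does not proceed by computing the mean curvature of $\Sigma$ at a boundary point, but rather by showing that if $H(x_0)<0$ then for boundary data with a sufficiently large value near $x_0$ (relative to its values elsewhere) any solution would violate a one-sided gradient bound forced by comparison with a barrier adapted to the negative mean curvature---hence no $C^1$ solution can exist. In your barrier computation for sufficiency, the displayed expression for $Q(w^{\pm})$ is schematic rather than accurate; the honest calculation tracks the full quasilinear structure, with $\Delta d = -(n-1)H + O(d)$ supplying the geometric term, and one must check that the $\psi''$ term dominates the error from $D^2\varphi$ after choosing $\psi$ (e.g.\ $\psi(d)=\log(1+kd)$) appropriately. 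Finally, the Bombieri--De Giorgi--Miranda interior gradient estimate degenerates like $\mathrm{dist}(x,\p\Omega)^{-1}$, so it does not by itself give a global bound; you correctly note that it must be combined with the boundary estimate, but the mechanism is simply that the boundary barrier handles a fixed collar $\{d<d_0\}$ and the interior estimate handles the rest.
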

\subsection{Affine maximal graph}
In affine geometry, the affine area of the graph of a smooth, convex $u$ defined on $\Omega$ is
\begin{equation}\mathcal{A}(u,\Omega) =\int_{\Omega}[\det D^2 u(x)]^{\frac{1}{n+ 2}}dx.
 \label{calAfunc}
\end{equation}
We digress for a moment to briefly comment on the geometric meaning of the affine area and its affine invariance; see Calabi \cite{Calabi} for more information.

On the graph $\mathcal{M}=\{(x, u(x)| x\in\Omega)\}$ of $u$, we define the affine metric $g=\left(g_{ij}\right)$, where
$$g_{ij}= \frac{u_{ij}}{(\det D^2 u)^{\frac{1}{n+2}}}.$$
Due to the identity
$$[\det D^2 u]^{\frac{1}{n+2}}= [\det (g_{ij})]^{1/2},$$
the integrand $[\det D^2 u]^{\frac{1}{n+2}} dx$ of the affine area functional $\mathcal{A}$ is the volume form $\sqrt{\det (g_{ij})} dx$ of $\mathcal{M}$ induced by the metric $g$.

The metric $g$ and the area $\mathcal{A}$ are invariant under
unimodular affine transformations on $\R^{n+1}$, that is, linear transformations in $\R^{n+1}$ preserving Euclidean volume and orientation.
For reader's convenience, we verify in Lemma \ref{A_inv} the above mentioned invariant property of $\mathcal{A}$.

The affine area functional $\mathcal{A}$ is concave (see Lemma \ref{concavelem}), i.e., 
$$\mathcal{A}(t u + (1-t)v,\Omega)\geq t \mathcal{A}(u,\Omega) + (1-t)\mathcal{A}(v,\Omega), 0\leq t\leq 1.$$
Critical points of $\mathcal{A}$ are maximizers under local perturbations. Locally uniformly convex maximizers
satisfy the Euler-Lagrange equation
\begin{equation}\sum_{i, j=1}^{n}\frac{\partial^2}{\partial x_i \partial x_j}(U^{ij} w)=0, w = [\det D^2 u]^{-\frac{n+1}{n +2}}
\label{AMSE}
\end{equation}
where $U= (U^{ij})$ denotes the matrix of cofactors of the Hessian matrix
$D^2 u := (\frac{\partial^2 u}{\partial x_i\partial x_j}) = (u_{ij}).$ When $u$ is locally uniformly convex, 
$$U = (\det D^2 u) (D^2 u)^{-1}.$$
See Lemma \ref{ELlem} for a brief derivation of the Euler-Lagrange equation (\ref{AMSE}). 

The fourth order equation (\ref{AMSE}) can also be viewed as a system of two second order partial differential equations. 
Regarded as a linear elliptic equation for $w$, it is a linearized Monge-Amp\`ere equation, since the coefficient matrix
$U$ comes from linearization of the Monge-Amp\`ere operator: $$U=\frac{\p\det D^2 u}{\p u_{ij}}.$$
The other equation in the system is the Monge-Amp\`ere equation for $u$:
$$\det D^2 u = w ^{-\frac{n+2}{n+1}}.$$ 
Since the matrix $U$ is divergence free (see Lemma \ref{divfreeU}), that is
$\displaystyle \sum_{j=1}^{n}\partial_{j}U^{ij}=0$ for all $i$,
we can rewrite (\ref{AMSE}) in the following form 
\begin{equation}H_{\mathcal{A}}[u]:= -\frac{1}{n+1}\sum_{i, j=1}^{n}U^{ij}w_{ij}=0.
 \label{HAu}
\end{equation}
The quantity  $H_{\mathcal{A}}[u]$ represents the affine mean curvature of the graph of $u$ \cite{NS, SiAff}. As a result, (\ref{AMSE}) is called the {\it affine maximal surface equation}
\cite{TW00}. The graph of the function $u$ satisfying (\ref{AMSE}) is then called the affine maximal graph. 

In 1977, Chern \cite{Chern} made the following conjecture: 
\begin{conj}[Chern's conjecture]
An affine maximal graph of a smooth, locally uniformly convex function on the 2-dimensional Euclidean space, $\R^2$,
must be an elliptic paraboloid. 
\end{conj}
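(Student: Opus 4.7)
The plan is to reduce Chern's conjecture to the classical J\"orgens--Calabi--Pogorelov theorem, which asserts that every smooth, locally uniformly convex entire solution of $\det D^2 u = \text{const}$ on $\R^n$ is a quadratic polynomial, whose graph is an elliptic paraboloid. Accordingly, it suffices to show that
$$w := [\det D^2 u]^{-(n+1)/(n+2)} = [\det D^2 u]^{-3/4} \qquad (n=2)$$
is constant on $\R^2$.

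The starting observation is that, by the divergence-free property of the cofactor matrix $U=(U^{ij})$ (Lemma \ref{divfreeU}), the affine maximal surface equation (\ref{AMSE}) is equivalent to the linearized Monge-Amp\`ere equation $L_u w := U^{ij}w_{ij} = 0$, so that $w$ is a positive solution of a homogeneous linearized Monge-Amp\`ere equation on all of $\R^2$, with $u$ itself coupled to $w$ by $\det D^2 u = w^{-4/3}$. What is needed is a Liouville-type theorem for $L_u$ in this coupled setting.

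My plan then has three steps. \emph{Step 1 (Global strict convexity).} After subtracting an affine function and applying a unimodular linear change of coordinates, normalize $u(0)=0$ and $\nabla u(0)=0$. One must show $u(x)>0$ for all $x\neq 0$, i.e.\ the graph contains no line segment. If not, the contact set $\{u=0\}$ contains a ray, and a blow-down argument using the affine invariance of (\ref{AMSE}) (Lemma \ref{A_inv}) produces a lower-dimensional entire affine maximal structure that is incompatible with the 2-dimensional equation. \emph{Step 2 (Uniform upper bound on $w$).} By a Bernstein-type maximum-principle argument applied to an auxiliary quantity such as $\log w$ combined with a convex function of $|\nabla u|$, and exploiting the specific algebraic structure of (\ref{AMSE}) when $n=2$, I expect to derive $\sup_{\R^2} w \leq C$. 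This is the step in which the dimensional restriction of Chern's conjecture enters essentially. \emph{Step 3 (Oscillation decay and Liouville).} By Step 1, each section $S_h(0):=\{u<h\}$ is a bounded convex set; by John's lemma it can be normalized by a unimodular affine map so that, after the natural rescaling, the normalized solution $\tilde u$ enjoys estimates independent of $h$. Applying Caffarelli-Guti\'errez (Theorem \ref{CGthm}) to $\tilde w$ on normalized $S_{h/2}(0)\subset\subset S_h(0)$ yields a uniform $\delta\in(0,1)$ with
$$\mathop{\mathrm{osc}}_{S_{h/2}(0)} w \,\leq\, (1-\delta)\,\mathop{\mathrm{osc}}_{S_h(0)} w.$$
Iterating this from a fixed section $S_{h_0}$ outward to $S_{2^k h_0}$ as $k\to\infty$, and using the $L^\infty$ bound of Step 2 to control the oscillation on large sections, one concludes $\mathop{\mathrm{osc}}_{S_{h_0}} w = 0$, so $w$ is constant. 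Then J\"orgens--Calabi--Pogorelov finishes.

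The hard part will be Steps 1 and 2. Without global strict convexity the Caffarelli-Guti\'errez theory on sections is not available, and without the uniform bound on $w$ the Liouville iteration in Step 3 fails. Both steps seem to require genuinely 2-dimensional input---presumably through the conformal geometry of the affine metric $g_{ij}=u_{ij}/[\det D^2 u]^{1/(n+2)}$ when $n=2$, or through the Legendre transform, both of which enjoy special features in that dimension that are not expected to survive in higher $n$.
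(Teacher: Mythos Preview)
The paper does not prove this statement; it records it as Chern's \emph{conjecture} and notes that Trudinger and Wang settled it in \cite{TW00}, with the Caffarelli--Guti\'errez Harnack inequality (Theorem~\ref{CGthm}) as the key tool. There is thus no proof in these notes to compare against directly, only the indication that the resolution passes through Theorem~\ref{CGthm}.

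Your three-step architecture is the right one and matches the broad shape of the Trudinger--Wang argument, but there is a concrete gap between Steps~2 and~3. Theorem~\ref{CGthm} requires the pinching $\lambda \leq \det D^2 u \leq \Lambda$, i.e.\ \emph{two-sided} bounds on $w=(\det D^2 u)^{-3/4}$; your Step~2 proposes only an upper bound on $w$, which yields only a lower bound on $\det D^2 u$. Without the other side, the volume estimate for sections (Theorem~\ref{vol-sec1}) is unavailable, and the assertion in Step~3 that the normalized $\tilde u$ ``enjoys estimates independent of $h$'' is circular---both the normalization and the uniformity of the Harnack constant $\delta$ presuppose the pinching you have not established. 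In the actual proof the hard analytic core is precisely a two-sided interior estimate for $w$ on normalized sections, coupled with a quantitative strict-convexity (modulus-of-convexity) bound; both rely on affine-invariant integral estimates that are genuinely two-dimensional, not on a single pointwise Bernstein-type computation. Only once those are secured does the Caffarelli--Guti\'errez Liouville iteration you sketch in Step~3 go through. So your Step~2 must be upgraded to a two-sided bound, and you should expect it to be substantially more delicate than a maximum-principle argument on $\log w$ alone.
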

This conjecture is known as the Bernstein problem for affine maximal hypersurfaces in $\R^3$.  We can also consider a more general version
of Chern's conjecture on $\R^n$.
The Bernstein problem for affine maximal hypersurfaces was also proposed by Calabi \cite{Calabi}.
\subsection{The affine Bernstein problem}
In PDE language, the Bernstein problem for affine maximal hypersurfaces, which we call {\it the affine Bernstein problem} for short, is equivalent to the following question.
\begin{quest} Suppose $u$ is a smooth, locally uniformly convex solution in $\R^n$ $(n\geq 1)$ of
$$\sum_{i, j=1}^{n}U^{ij} w_{ij}=0, ~w=[\det D^2 u]^{-\frac{n+1}{n +2}},~\text{and}~U= (U^{ij})= (\det D^2 u)(D^2u)^{-1}.$$
Is $u$ an elliptic quadratic polynomial?
\end{quest}
Here we call a quadratic polynomial
$$P(x)= c + b\cdot x + \sum_{i, j=1}^n\frac{1}{2} a_{ij} x_i x_j~(c\in\R,~b\in\R^n,~ a_{ij}= a_{ji}\in\R)$$
{\it elliptic} if its Hessian matrix $(a_{ij})_{1\leq i, j\leq n}$ is positive definite.

It is easy to see that, in $1D$, the affine Bernstein problem has an affirmative answer. Indeed, when $n=1$, $U=1$ in (\ref{AMSE}) and hence 
the equation (\ref{AMSE}) implies that $w$ is an affine function, that is, $w(x)= a+ bx$ where $a,b\in\R$. Using the positivity of $w$, we 
conclude that $w$ is a positive constant. Thus $u$ must be a quadratic polynomial with positive opening.

In 2000, Trudinger and Wang \cite{TW00} proved that the affine Bernstein problem has an affirmative answer in dimensions $n=2$ 
and thus settling Chern's conjecture. They also showed that a corresponding result holds in higher dimensions provided that a uniform, {\it strict convexity} condition on the solution
$u$ holds. However, they produced a (non-smooth) counterexample for $n\geq 10.$ The convex function $u(x)= \sqrt{|x'|^9 + x_{10}^2}$, where
$x'= (x_1, \cdots, x_9)$, satisfies (\ref{AMSE}) in $\R^{10}$ and is not differentiable at the origin. 

For reader's convenience, we provide in Appendix B a heuristic explanation
of this non-smooth example using simple symmetry and scaling arguments. This explanation is reminiscent of Pogorelov's singular solution
of the form $u(x', x_n)=|x'|^{2-2/n} f(x_n)$ to the Monge-Amp\`ere equation $\det D^2 u=1$; see \cite{Pogo}.

Trudinger and Wang \cite{TW3} made the following conjecture.
\begin{conj} If $n\leq 9$ then any smooth, locally uniformly convex solution $u$ in $\R^n$ of
$$\sum_{i, j=1}^{n}U^{ij} w_{ij}=0, ~w=[\det D^2 u]^{-\frac{n+1}{n +2}},~\text{and}~U= (U^{ij})= (\det D^2 u)(D^2u)^{-1}$$
is an elliptic quadratic polynomial.
If $n\geq 10$, then there is a smooth, locally uniformly convex solution $u$ in $\R^n$ of the above equation which is not an elliptic quadratic polynomial.
\end{conj}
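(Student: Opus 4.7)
The plan is to attack this conjecture by splitting it into two halves that require very different techniques, in parallel with the classical Bernstein problem for minimal graphs where the threshold is $n\leq 7$ versus $n\geq 8$: the positive part (rigidity for $n\leq 9$) via a Simons-type argument adapted to affine geometry, and the negative part (counterexample for $n\geq 10$) via a Bombieri--De Giorgi--Giusti-style construction based on a smoothing of the singular solution of Trudinger--Wang.

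\textbf{Positive part.} I would first reduce matters to proving uniform strict convexity of an entire smooth solution, since the work of Trudinger--Wang (combined with the $W^{4,p}$ theory for linearized Monge--Amp\`ere and Caffarelli's interior $C^{2,\alpha}$ estimates) already shows that strict convexity plus $H_{\mathcal{A}}[u]\equiv 0$ forces $u$ to be an elliptic quadratic polynomial. The main analytic tool would be an affine Simons-type inequality for the Pick invariant $J$, the squared norm of the Fubini--Pick cubic form with respect to the affine metric $g_{ij}=u_{ij}/(\det D^2 u)^{1/(n+2)}$, of schematic form
\begin{equation*}
\Delta_g J \;\geq\; c_n\, J^2 \;-\; (\text{lower order, controlled by }H_{\mathcal{A}}[u]=0),
\end{equation*}
combined with the stability inequality coming from concavity of $\mathcal{A}$ and an affine Sobolev inequality on the graph. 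The dimensional constants should balance precisely up through $n=9$, yielding $J\equiv 0$ and hence $u$ quadratic.

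\textbf{Negative part.} I would start from the Trudinger--Wang singular example $u_0(x',x_{10})=\sqrt{|x'|^9+x_{10}^2}$ in $\R^{10}$ (and its obvious analogs in higher dimensions) and try to smooth out the isolated singularity at the origin while preserving its asymptotic behavior at infinity. One concrete approach is to solve the second boundary value problem on exhausting balls $B_R$ with boundary data matching $u_0|_{\partial B_R}$ and suitably prescribed Hessian determinant, then pass $R\to\infty$ using interior a priori estimates to produce an entire smooth locally uniformly convex solution whose growth at infinity matches $u_0$'s, and which is therefore not an elliptic quadratic polynomial. An alternative is a Lyapunov--Schmidt / fixed-point construction directly near $u_0$: for $n\geq 10$ the Hardy-type inequality for the linearization of (\ref{AMSE}) around $u_0$ switches sign, which should permit an entire smooth perturbation to exist.

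\textbf{Main obstacle.} The hard part is the positive direction in the intermediate range $3\leq n\leq 9$. Writing down the correct affine Simons identity is already a delicate fourth-order computation in affine differential geometry, and the relevant linearized operator $U^{ij}\partial_{ij}$ can degenerate arbitrarily, so standard elliptic machinery is unavailable and one must work entirely within the Caffarelli--Guti\'errez framework developed in Part 2. Moreover, obtaining an affine Sobolev inequality with sharp enough constants to pin down the threshold as exactly $n=9$ --- rather than some smaller or larger number --- appears to be the decisive technical obstruction, and it is quite plausible that genuinely new ideas beyond the direct Simons--Bombieri analogy will be required; this is why the conjecture, although widely believed, remains open.
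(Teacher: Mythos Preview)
This statement is labeled a \emph{conjecture} in the paper (attributed to Trudinger and Wang) and the paper gives no proof of it; there is nothing to compare your attempt against. The paper merely records the conjecture, notes that the $n=2$ case is Chern's conjecture (settled by Trudinger--Wang), mentions the existence of a \emph{non-smooth} counterexample for $n\geq 10$, and in Appendix~B gives a heuristic scaling explanation of how that singular example $u_0(x',x_{10})=\sqrt{|x'|^9+x_{10}^2}$ arises. That is all.

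Your write-up is therefore not a proof but a research outline, and you correctly say so in your final paragraph. A few comments on the outline itself. For the negative part, note that the conjecture asks for a \emph{smooth, locally uniformly convex} entire solution that is not a quadratic; the known Trudinger--Wang example is only Lipschitz at the origin, so the genuine content is the smoothing step. Your two proposed mechanisms (exhaustion via second boundary value problems, or a perturbation/fixed-point argument around $u_0$) are reasonable starting points, but neither is known to work, and the exhaustion approach faces the serious difficulty that interior estimates for affine maximal hypersurfaces depend on strict convexity, which is exactly what may fail in the limit. For the positive part, the Simons-type strategy you sketch is natural by analogy with minimal surfaces, but no affine Simons inequality of the required strength is available in the literature for $3\leq n\leq 9$; this is the heart of the difficulty and your outline does not advance beyond naming it.

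In short: the paper does not prove this statement, and your proposal is an honest sketch of possible approaches to an open problem rather than a proof.
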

The affine maximal surface equation is fourth-order; while the minimal surface equation is second-order. In terms of orders and the 
dimensions of potential counterexamples, the numerologies in the affine Bernstein problem are greater than those in the Bernstein problem by exactly 2, as can be see from $4 = 2 + 2$ and 
$10 = 8 + 2.$
\vglue 0.2cm
The key tool in Trudinger and Wang's resolution of Chern's conjecture is the theory of interior Harnack and 
H\"older estimates for linearized Monge-Amp\`ere equation initiated by Caffarelli 
and Guti\'errez \cite{CG}. We will discuss this theory in Section \ref{LMA_sec}. To get a flavor of this theory, we recall that the classical regularity theory of linear, 
uniformly elliptic equations of second order (in non-divergence form)
studies the equations of the form
$$\sum _{i, j=1}^{n}a^{ij}(x) \frac{\p^2 v}{\p x_i\p x_j}(x)=0$$
where the eigenvalues of the coefficient matrix $A(x)= (a^{ij}(x))$ are bounded between two positive constants $\lambda$, and $\Lambda.$ The 
linearized Monge-Amp\`ere theory studies the same
equation but with the bounds on the determinant of $A$, or equivalently the product of its eigenvalues, as the only {\it quantitative} assumption.
The theory of Caffarelli-Guti\'errez provides interior H\"older estimates similar to those of the classical theory provided that $A$ is matrix of cofactors of some convex function. 
\subsection{Connection with the constant scalar curvature problem}
We can consider a more general version of the affine area functional
$$\mathcal{A}_{\theta}(u,\Omega) = \int_{\Omega} \frac{[\det D^2 u]^{\theta}-1}{\theta}.$$
In the limit of $\theta\rightarrow 0$, using $\frac{t^{\theta}-1}{\theta}\rightarrow \text{log}~t$ for $t>0$, we obtain
the following functional 
$$\mathcal{A}_{0}(u,\Omega)=\int_{\Omega} \text{log} \det D^2 u.$$
This functional is the principal  part of the Mabuchi functional in complex geometry \cite{D1, Mabu}
$$M(u) = \int_{\Omega} -\text{log} \det D^2 u + \int_{\partial \Omega} u d\sigma - \int_{\Omega} f u dx.$$
Here $\sigma$ is some positive Radon measure supported on the boundary $\p\Omega$.
In the special case of
$$f ~\text{is a positive constant A},~\Omega~\text{is a polytope}~ P, $$
$$\sigma~
\text{is a measure on}~\p P~\text
{which is a multiple of the standard
Lebesgue measure on each face},$$
the existence of critical points of $M$ with certain boundary conditions implies the existence of a constant scalar curvature metric. Let us be a bit more precise here.\\
Critical points of $M$ satisfy the Abreu's equation \cite{Ab}
$$-\sum_{i, j=1}^{n} U^{ij} \partial_{ij}\left((\det D^2 u)^{-1}\right)=f.$$
Suppose that the polytope $P$ is defined by the linear inequalities $l_k(x)>c_k$ ($k=1,\cdots, m$) where $l_k$'s are linear functions and $c_k$'s are constants. 
Let $\delta_k(x)=l_k(x)- c_k$. We assume that the measure $\sigma$ and the positive constant $A$ satisfy the {\it stability condition}
$$\int_{\partial \Omega} u d\sigma - \int_{\Omega}  Au dx>0~\text{for all u convex but not affine}.$$
The constant scalar curvature problem for toric varieties is reduced to solving the Abreu's equation
\begin{equation}-\sum_{i, j=1}^{n} U^{ij} \partial_{ij}\left((\det D^2 u)^{-1}\right)=A
 \label{Abeq}
\end{equation}
with the Guillemin boundary condition
\begin{equation}u(x)-\sum_{k=1}^m \delta_k(x) \text{log } \delta_k(x)\in C^{\infty}(\overline{P}).
 \label{Gull}
\end{equation}
The problem (\ref{Abeq})-(\ref{Gull}) was solved by Donaldson in $n=2$ by an ingenious combination of geometric and PDE methods in a series of papers \cite{D1, D2, D3, D4}. It 
is completely open when $n>2$.
Rubin \cite{Rubin} established, by PDE methods,
the existence and boundary regularity away from the corners for equations of the type (\ref{Abeq})-(\ref{Gull}) in dimensions $n=2$.
\subsection{The first boundary value problem } The first boundary value problem for the affine maximal surface equation 
amounts to finding an affine maximal surface given by the graph of a convex function $u$ over 
$\Omega$ whose boundary value and gradient are given by those of $\phi$. Here $\phi\in C^2(\overline{\Omega})$ is a strictly convex function. 
The first boundary value problem for the affine maximal surface equation reads:
\begin{equation}
\left\{
 \begin{alignedat}{2}
   U^{ij}w_{ij} ~& = 0 ~&&\text{in} ~ \Omega, \\\
w ~&= (\det D^2 u)^{-\frac{n+1}{n+2}}~&&\text{in}~\Omega,\\\
u~&=\phi ~&&\text{on}~\partial \Omega,\\\
Du~&=D\phi ~&&\text{on}~\partial \Omega.
 \end{alignedat} 
  \right.
\label{FBV}
  \end{equation}
The solvability of (\ref{FBV}) is a major open problem. Recalling Jenkins and Serrin's solution of the boundary value problem for minimal surfaces in Theorem \ref{JSthm},
one might wonder if a similar affine invariant phenomenon occurs for (\ref{FBV}).

If we replace the last condition in (\ref{FBV}) with a more relaxed condition by requiring that the 
image of $\Omega$ under the mapping $Du$ is contained in that of $\overline{\Omega}$ under $D\phi$, that is, $Du(\Omega)\subset D\phi(\overline{\Omega})$, then we have a 
unique {\it weak solution v} for (\ref{FBV}) as proved by Trudinger and Wang in
\cite{TW05}. This solution $v$ is obtained as the unique maximizer of the affine area function $\mathcal{A}(\cdot,\Omega)$ (defined in (\ref{calAfunc})) in the set $\bar{S}[\phi,\Omega]$ consisting of convex functions
$v$ which satisfy $v=\phi$ on $\p\Omega$ and $Dv(\Omega)\subset D\phi(\overline{\Omega})$. Trudinger and Wang proved in \cite{TW05} (for $n=2$)
and \cite{TW4} (for all n) that $v$ is strictly convex in the interior of $\Omega$
and hence smooth. Two interesting open problems are:
\begin{myindentpar}{1cm}
 (1) The regularity of $v$ near the boundary of $\Omega$.\\
 (2) Does $v$ satisfy $Dv=D\phi$ on $\p\Omega$?
\end{myindentpar}
Motivated by Theorem \ref{JSthm}, Trudinger and Wang made the following conjecture:
\begin{conj} (\cite{TW05})
 The first boundary value problem (\ref{FBV}) has a unique smooth solution $u$ if the smooth, strictly convex function $\phi$ is affine mean convex, 
 that is, the affine mean curvature of the graph of $\phi$, $H_{\mathcal{A}}[\phi]$ as defined in 
 (\ref{HAu}), is positive.
\end{conj}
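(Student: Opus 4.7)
The natural strategy is the continuity method, deforming through a one-parameter family of affine mean convex data. Fix a reference strictly convex $\psi$ for which (\ref{FBV}) is solvable explicitly---for instance $\psi(x)=|x|^{2}/2$ on a ball, where $u=\psi$ itself solves the system since $\det D^{2}\psi$ is constant, making $w$ constant and $U^{ij}w_{ij}=0$ trivially, and clearly $H_{\mathcal{A}}[\psi]>0$. After reducing the general $(\Omega,\phi)$ to a common background, interpolate via $\phi_{t}=(1-t)\psi+t\phi$, checking that affine mean convexity is preserved along the path (this uses the concavity of $\mathcal{A}$). Let $T=\{t\in[0,1]:$ (\ref{FBV}) with data $\phi_{t}$ admits a smooth locally uniformly convex solution$\}$. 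I would prove $T=[0,1]$ by showing $T$ is nonempty, open and closed.

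Openness should follow from the implicit function theorem once the linearization of the coupled system $U^{ij}w_{ij}=0$, $w=(\det D^{2}u)^{-(n+1)/(n+2)}$ is shown to be an isomorphism between appropriate weighted H\"older spaces. The prescribed pair $(u,Du)$ on $\p\Omega$ furnishes exactly two boundary conditions, matching the order four of the combined equation. Invertibility should come from Schauder theory for the linearized Monge-Amp\`ere operator $U^{ij}\p_{ij}$---drawing on the global H\"older estimates of Theorem \ref{global-h}---combined with the strict concavity of $\mathcal{A}$ on $\bar S[\phi,\Omega]$, which eliminates nontrivial kernels via the Fredholm alternative.

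Closedness requires a priori $C^{4,\alpha}(\overline{\Omega})$ estimates uniform in $t$. The Dirichlet and Neumann data give $C^{1}$ control immediately, so the bootstrap reduces to two-sided bounds on $\det D^{2}u_{t}$. An upper bound (equivalently, a positive lower bound on $w$) should come from a maximum-principle argument: $w$ solves the homogeneous linearized Monge-Amp\`ere equation, and comparison with a subsolution barrier built from $\phi_{t}$ using $H_{\mathcal{A}}[\phi_{t}]>0$ yields $w\geq c>0$. Once $\det D^{2}u_{t}$ is pinched in $[\lambda,\Lambda]$ globally, Trudinger-Wang's global $C^{2,\alpha}$ estimates for Monge-Amp\`ere combined with Theorem \ref{global-h} applied to the equation for $w$ deliver the full $C^{4,\alpha}$ estimate, closing the loop.

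The main obstacle is the \emph{lower} bound on $\det D^{2}u_{t}$ up to $\p\Omega$---equivalently, global uniform strict convexity. This is the affine analog of the Jenkins-Serrin use of boundary mean convexity in Theorem \ref{JSthm}, but the quantity to be barriered is now nonlinear in $D^{2}u$ and the governing PDE is of order four. In two dimensions, Trudinger-Wang's interior strict convexity together with the prescribed gradient $Du=D\phi$ should yield the bound through a boundary localization argument in the spirit of Savin's. In higher dimensions, producing such a barrier at points where $\det D^{2}\phi$ itself degenerates along $\p\Omega$ is precisely where the pointwise assumption $H_{\mathcal{A}}[\phi]>0$ must do delicate work, and this is where current techniques appear insufficient---explaining why the statement remains a conjecture. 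A conceivable alternative is to use the solved second boundary value problem (Theorem \ref{mainthm}) as a shooting procedure---solve with $u=\phi$ and $\det D^{2}u=g$ prescribed and adjust $g$ to enforce $Du=D\phi$---but this recasts the difficulty as a fully nonlinear boundary matching problem rather than eliminating it.
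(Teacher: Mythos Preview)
The statement you are addressing is a \emph{conjecture} in the paper, not a theorem; the paper offers no proof and explicitly presents it as open, motivated by analogy with Jenkins--Serrin. So there is nothing to compare your proposal against. You appear to recognize this yourself, since you close by identifying the boundary strict-convexity estimate as the place ``where current techniques appear insufficient---explaining why the statement remains a conjecture.'' As a research outline your proposal is reasonable in broad shape, and you have correctly located the core difficulty: the first boundary value problem gives no direct Dirichlet data for $w=(\det D^{2}u)^{-(n+1)/(n+2)}$ on $\p\Omega$, so neither the ABP-type lower bound on $\det D^{2}u$ (Lemma \ref{upwlem}) nor the Legendre-transform upper bound (Lemma \ref{boundd}) from the second boundary value problem is available, and one must instead extract two-sided Hessian control from the overdetermined pair $(u,Du)=(\phi,D\phi)$ on $\p\Omega$.

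That said, one step in your continuity setup is not justified and would likely fail as written. You assert that affine mean convexity of $\phi_{t}=(1-t)\psi+t\phi$ is preserved along the interpolation ``using the concavity of $\mathcal{A}$.'' Concavity of the functional $\mathcal{A}$ says nothing about the sign of the fourth-order quantity $H_{\mathcal{A}}[\phi_{t}]$ at intermediate $t$: $H_{\mathcal{A}}$ is highly nonlinear in $D^{2}\phi$ and $D^{4}\phi$, and there is no convexity or monotonicity of $t\mapsto H_{\mathcal{A}}[\phi_{t}]$ in general. Without this, your path may leave the admissible class and the continuity method breaks before you reach the hard a priori estimate. A workable deformation would need to be constructed differently---for instance by flowing $\phi$ within the space of affine mean convex data, or by parametrizing through the right-hand side rather than the boundary datum---and designing such a path is itself part of the open problem.
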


The crucial ingredient in establishing the interior regularity result for the weak solution $v$ to (\ref{FBV}) was the 
second boundary value problem of the prescribed affine mean curvature equation where the right hand side of the first equation of (\ref{FBV})
is replaced by a function $f$. This is the topic of the next Section \ref{SBV_sec}.
\subsection{The second boundary value problem of the prescribed affine mean curvature equation. } 
\label{SBV_sec}
Let $\Omega\subset\R^{n}$ be a bounded, smooth and strictly convex domain. We consider 
the prescribed affine mean curvature equation which prescribes the affine mean curvature of the graph of 
a locally uniformly convex function $u$ defined on 
$\Omega$. It can be written in the form
\begin{equation}
 \sum_{i, j=1}^{n}U^{ij}w_{ij}~ =f ~\text{in} ~\Omega, ~
 w~= (\det D^2 u)^{-\frac{n+1}{n+2}}~\text{in}~ \Omega
 \label{PAMCE}
\end{equation}
where as usual, $$U = (U^{ij})=(\det D^{2} u) (D^{2} u)^{-1}$$ is the matrix of cofactors of the 
Hessian matrix $D^{2}u= (u_{ij})$.

The second boundary value problem for (\ref{PAMCE}) prescribes the values of $u$ and its Hessian determinant $\det D^2 u$ on the boundary. We are thus led to 
the following
 fully nonlinear, fourth order, geometric partial differential equation:
\begin{equation}
  \left\{ 
  \begin{alignedat}{2}\sum_{i, j=1}^{n}U^{ij}w_{ij}~& =f ~&&\text{in} ~\Omega, \\\
 w~&= (\det D^2 u)^{-\frac{n+1}{n+2}}~&&\text{in}~ \Omega,\\\
u ~&=\varphi~&&\text{on}~\p \Omega,\\\
w ~&= \psi~&&\text{on}~\p \Omega.
\end{alignedat}
\right.
\label{AMCE1}
\end{equation}
It was 
introduced by Trudinger and Wang \cite{TW05} in 2005 in their investigation of the affine Plateau problem in affine geometry.  
Existence and regularity of solutions to (\ref{AMCE1}) are the key in studying the first boundary value problem for affine maximal surface equation.

 More generally, let
$G: (0,\infty) \rightarrow \mathbb{R}$ be a smooth, strictly increasing and strictly concave function on $(0, \infty)$. We consider
a fourth order, fully nonlinear, 
equation of the form
\begin{equation}
\label{AMCE}
L[u]:= U^{ij}w_{ij} =f,\quad w = G'(\det D^{2} u)\quad \text{in}~\Omega,
\end{equation}
where $u$ is a locally uniformly convex function in $\overline{\Omega}$.

The second boundary value problem for (\ref{AMCE}) is then 
\begin{equation}
\label{SBV}
u=\varphi,~~~ w=\psi~~~\text{on}~\p\Omega.
\end{equation}

The problem (\ref{AMCE1}) is a special case of (\ref{AMCE})-(\ref{SBV}) when we take $G(d)=\frac{d^{\theta}-1}{\theta}$ and $\theta=\frac{1}{n +2}$.
In the limiting case $\theta=0$ of $\frac{d^{\theta}-1}{\theta}$, we take $G(d)=\log d$ and (\ref{AMCE}) is then known as Abreu's equation in the 
context of existence of K\"ahler metric of constant scalar curvature \cite{Ab, CHLS, CLS, D1, D2, D3, D4, FS, Zhou, ZZ}. 

Observe that
(\ref{AMCE}) is the Euler-Lagrange equation, with respect to compactly supported perturbations, of 
the functional
\begin{equation}\label{functional}
J[u]= \int_{\Omega}  G(\det D^2 u)dx -\int_{\Omega}  u fdx,
\end{equation}
defined over strictly convex functions $u$ on $\Omega$. For simplicity, we call $L[u]$ in (\ref{AMCE}), where $G$ is a general concave function, the {\it generalized affine mean curvature} 
of the graph of $u$.

For a general concave function $G$, Donaldson \cite{D5} investigated local solutions of (\ref{AMCE}) with $f\equiv 0$ while Savin and the author \cite{LS}
studied regularity of (\ref{AMCE}) with Dirichlet and Neumann boundary conditions on $w$. In \cite{LS}, we considered (\ref{AMCE})
as an Euler-Lagrange equation of a Monge-Amp\`ere functional $E$ motivated by the Mabuchi functional in complex geometry:
$$E(u) =\int_{\Omega} -G(\det D^{2} u ) \, dx + \int_{\p\Omega} u d\sigma -\int_{\Omega} u dA.$$
Convex minimizers of $E$ satisfy a system of the form
\begin{equation}\label{EL-intro}
 \left\{
 \begin{alignedat}{2}
   G'(\det D^{2} u) ~&=v \h~&&\text{in} ~\Omega, \\\
 U^{ij} v_{ij}&= -dA \h~&&\text{in}~ \Omega,\\\
v &=0\h~&&\text{on}~\p \Omega,\\\
U^{\nu \nu} v_{\nu} &=- \sigma~&&\text{on}~\p \Omega,
 \end{alignedat}
 \right.
\end{equation}
where $U^{\nu\nu} = \det D^{2}_{x^{'}} u$ with $x' \perp \nu$ denoting the tangential directions along $\p \Omega$. 
A minimizer $u$ solves a fourth order elliptic equation with two nonstandard boundary conditions involving the second and third order derivatives of $u$. In \cite{LS} we apply the boundary H\"older gradient 
estimates established in \cite{LS1} and show that $u\in C^{2,\alpha}(\overline{\Omega})$ in dimensions $n=2$ under suitable conditions on the function $G$ and the measures $dA$ and $ d\sigma.$

It is an interesting problem, both geometrically and analytically, to study the solvability of the  fourth order, fully nonlinear equation
(\ref{AMCE})-(\ref{SBV}).
We recall the classical Schauder \cite{Sch} and Calderon-Zygmund \cite{CZ} theories of second order, linear, uniformly elliptic equations. A basic result in the Schauder theory is 
the following global $C^{2,\alpha}$ estimates. 
\begin{thm}(\cite[Theorems 6. 6 and 6.14]{GT})
\label{Schauthm}
 Let $\Omega$ be a $C^{2,\alpha}$ domain in $\R^n$, where $0<\alpha<1$. Let us consider the operator
 $$Lu=\sum_{i, j=1}^n a^{ij}(x) u_{ij}(x)$$
 where the coefficient matrix $(a^{ij})$ satisfies, for positive constants $\lambda,\Lambda$,
 $$\lambda I_n\leq (a^{ij})~\text{and } \|a^{ij}\|_{C^{\alpha}(\overline{\Omega})}\leq \Lambda.$$
 Then if $f\in C^{\alpha}(\overline{\Omega})$ and $\varphi\in C^{2, \alpha}(\overline{\Omega})$, the Dirichlet problem
 \begin{equation*}
 \left\{
 \begin{alignedat}{2}
   Lu ~&=f \h~&&\text{in} ~\Omega, \\\
 u&= \varphi \h~&&\text{on}~ \p\Omega,
 \end{alignedat}
 \right.
\end{equation*}
has a unique solution $u\in C^{2,\alpha}(\overline{\Omega})$ with the estimate
\begin{equation*}
 \|u\|_{C^{2, \alpha}(\overline{\Omega})} \leq C(\|u\|_{L^{\infty}(\Omega)} + \|f\|_{C^{\alpha}(\overline{\Omega})} + \|\varphi\|_{
 C^{2, \alpha}(\overline{\Omega})})
\end{equation*}
where $C$ depends on $n,\lambda,\Lambda,\overline{\Omega}$ and $\alpha$. 
\end{thm}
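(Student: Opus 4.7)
The plan is to split the theorem into an a priori global $C^{2,\alpha}$ estimate and an existence/uniqueness part; uniqueness is immediate from the classical maximum principle for uniformly elliptic operators, so the bulk of the work is the estimate plus a continuity argument to get existence.

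For the a priori estimate, I would first establish the \emph{interior} Schauder estimate via the frozen-coefficient/perturbation method. The starting point is that for a constant-coefficient operator $L_{0} v = a^{ij}(x_{0}) v_{ij}$ with $\lambda I_{n} \leq (a^{ij}(x_{0})) \leq \Lambda I_{n}$, a linear change of variables reduces $L_{0}$ to $\Delta$, and one has the Schauder estimate for the Laplacian either through the Newtonian potential or through the Campanato-type approximation by harmonic (quadratic) polynomials. Writing $a^{ij}(x_{0}) u_{ij} = f + [a^{ij}(x_{0}) - a^{ij}(x)] u_{ij}$ and using the $C^{\alpha}$ bound $\|a^{ij}\|_{C^{\alpha}} \leq \Lambda$ together with interpolation inequalities in $C^{k,\alpha}$ spaces to absorb the resulting correction, one obtains interior $C^{2,\alpha}$ estimates on compact subsets. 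Equivalently, I would phrase this in Campanato's language: show at each interior point that $u$ is approximable by a quadratic polynomial (a solution of the frozen operator) with error $O(r^{2+\alpha})$, which by Campanato's characterization gives pointwise $C^{2,\alpha}$ control with the stated structural dependence.

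The \emph{boundary} estimate would be done next by flattening $\partial\Omega$ near each boundary point using a $C^{2,\alpha}$ diffeomorphism; the transformed equation has coefficients that are still $C^{\alpha}$ with controlled norm and still uniformly elliptic, and the boundary values $\varphi$ transform to $C^{2,\alpha}$ boundary data on a piece of the hyperplane $\{x_{n} = 0\}$. Subtracting a $C^{2,\alpha}$ extension of the (transformed) boundary data reduces to the zero-Dirichlet case in a half-ball, where for the frozen constant-coefficient operator one has $C^{2,\alpha}$ up to the flat boundary by odd reflection plus the interior theory (after a linear change of variables sending $L_{0}$ to $\Delta$ that preserves the half-space up to an affine adjustment). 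The same perturbation/interpolation argument as in the interior then promotes this to the variable-coefficient case. Combining interior and boundary bounds with a covering argument yields the global estimate with constants depending only on $n, \lambda, \Lambda, \alpha$ and the $C^{2,\alpha}$ character of $\partial\Omega$.

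With the a priori estimate in hand, existence follows from the method of continuity applied to the family $L_{t} u := (1-t)\Delta u + t L u$ for $t \in [0,1]$. The set $T \subset [0,1]$ of parameters for which the Dirichlet problem is solvable in $C^{2,\alpha}(\overline{\Omega})$ for every admissible right-hand side is nonempty ($0 \in T$ by the classical theory for the Laplacian), open (by the implicit function theorem between the Banach spaces $\{u \in C^{2,\alpha}(\overline{\Omega}) : u|_{\partial\Omega} = \varphi\}$ and $C^{\alpha}(\overline{\Omega})$, the linearization being invertible via the a priori estimate and the Fredholm alternative), and closed (a Cauchy sequence $t_{k} \to t_{\infty}$ gives solutions uniformly bounded in $C^{2,\alpha}$ which converge along a subsequence in $C^{2,\alpha/2}$ to a limit solving $L_{t_{\infty}} u = f$). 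Hence $T = [0,1]$ and $1 \in T$.

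The main obstacle I anticipate is the boundary step: the flattening introduces a $C^{\alpha}$ perturbation of the coefficients and couples the geometric regularity of $\partial\Omega$ with the perturbation argument, so the bookkeeping needed to extract an estimate whose constants depend only on the structural quantities $n, \lambda, \Lambda, \alpha$ (and not on higher norms of $u$) is delicate. The key technical tool for keeping the estimate in this clean form is the standard interpolation inequality $\|u\|_{C^{k,\alpha}} \leq \varepsilon \|u\|_{C^{2,\alpha}} + C(\varepsilon) \|u\|_{L^{\infty}}$ for $k \in \{0,1,2\}$, used to absorb lower-order contributions into the principal term.
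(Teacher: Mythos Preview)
The paper does not prove this theorem at all: it is stated with an explicit citation to Gilbarg--Trudinger \cite[Theorems 6.6 and 6.14]{GT} and used as background input for the later arguments, so there is no ``paper's own proof'' to compare against. Your outline (frozen-coefficient interior estimate, boundary flattening plus reflection, interpolation to absorb perturbations, and method of continuity for existence) is a correct high-level sketch of the standard Schauder theory as presented in Gilbarg--Trudinger, and nothing more is expected here.
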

We next state a basic result in the Calderon-Zygmund theory concerning global $W^{2,p}$ estimates. 
\begin{thm}(\cite[Theorems 9.13 and 9.15]{GT})
 Let $\Omega$ be a $C^{1,1}$ domain in $\R^n$, and let us consider the operator
 $$Lu=\sum_{i, j=1}^n a^{ij}(x) u_{ij}(x)$$
 where the coefficient matrix $(a^{ij})$ satisfies, for positive constants $\lambda,\Lambda$,
 $$a^{ij}\in C^0(\overline{\Omega}), \lambda I_n\leq (a^{ij})\leq \Lambda I_n.$$
 Then if $f\in L^p(\Omega)$ and $\varphi\in W^{2,p}(\Omega)$, with $1<p<\infty$, the Dirichlet problem
 \begin{equation*}
 \left\{
 \begin{alignedat}{2}
   Lu ~&=f \h~&&\text{in} ~\Omega, \\\
 u&= \varphi \h~&&\text{on}~ \p\Omega,
 \end{alignedat}
 \right.
\end{equation*}
has a unique solution $u\in W^{2,p}(\Omega)$ with the estimate
\begin{equation*}
 \|u\|_{W^{2,p}(\Omega)} \leq C(\|u\|_{L^p(\Omega)} + \|f\|_{L^p(\Omega)} + \|\varphi\|_{W^{2, p}(\Omega)})
\end{equation*}
where $C$ depends on $n, p, \lambda,\Lambda,\overline{\Omega}$ and the moduli of continuity of the coefficients $a^{ij}$ in $\overline{\Omega}$. 
\label{CZthm}
\end{thm}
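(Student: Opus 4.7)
My plan is to establish the a priori $W^{2,p}$ estimate first and then obtain existence from it via the method of continuity, with uniqueness following from a standard maximum principle for strong solutions.

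\emph{Step 1: constant-coefficient model.} I would begin with the case $L=\Delta$ on a ball $B_r$ with zero boundary data. Representing the solution by the Newtonian potential and differentiating twice exhibits the second derivatives as classical Calderon-Zygmund singular integrals of $f$ (essentially Riesz transforms of $f$), whose $L^p$-boundedness for $1<p<\infty$ gives
\[
\|D^2 u\|_{L^p(B_r)} \leq C(n,p)\,\|f\|_{L^p(B_r)}.
\]
A linear change of variable diagonalizing the coefficient matrix extends this estimate to any constant-coefficient operator $L_0 u = a^{ij}_0 u_{ij}$ with $\lambda I_n \leq (a^{ij}_0) \leq \Lambda I_n$, at the cost of replacing $C(n,p)$ by $C(n,p,\lambda,\Lambda)$.

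\emph{Step 2: interior estimate by freezing coefficients.} For $x_0\in\Omega$ and small $\rho>0$ I would freeze the coefficients at $x_0$ and rewrite the equation as
\[
a^{ij}(x_0)\,u_{ij} = f + \bigl(a^{ij}(x_0)-a^{ij}(x)\bigr)u_{ij}.
\]
Applying Step 1 to $\chi u$ for a cutoff $\chi$ supported in $B_\rho(x_0)$ and identically $1$ on $B_{\rho/2}(x_0)$, and using the modulus of continuity $\omega$ of the $a^{ij}$ on $\overline{\Omega}$ to control the perturbation, one obtains a local inequality of the form
\[
\|D^2 u\|_{L^p(B_{\rho/2})} \leq C\omega(\rho)\,\|D^2 u\|_{L^p(B_\rho)} + C(\rho)\bigl(\|u\|_{L^p(B_\rho)} + \|f\|_{L^p(B_\rho)}\bigr).
\]
Choosing $\rho$ so small that $C\omega(\rho) \leq 1/2$ and iterating on a geometric sequence of scales (using a Gagliardo-Nirenberg interpolation to absorb the first-derivative term produced by the cutoff) hides the $D^2 u$ term on the right.

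\emph{Step 3: boundary estimate and globalization.} Near a boundary point I would use the $C^{1,1}$ regularity of $\p\Omega$ to flatten $\p\Omega$ locally by a $C^{1,1}$ diffeomorphism, which preserves ellipticity and the continuity of the coefficients up to harmless constants (the $C^{1,1}$ hypothesis, not merely $C^1$, is needed to control second derivatives of the transformation). After replacing $u$ by $u-\varphi$ to reduce to zero boundary data (the new source term is controlled by $\|\varphi\|_{W^{2,p}}$), the Step 1 estimate on a half-ball with vanishing trace on the flat piece plays the role of the model, and the freezing-cutoff-iteration of Step 2 goes through verbatim. Covering $\overline{\Omega}$ by finitely many interior balls and boundary charts and patching with a subordinate partition of unity yields the global estimate.

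\emph{Step 4: existence, uniqueness, and main obstacle.} Existence now follows from the method of continuity applied to the family $L_t = (1-t)\Delta + tL$ for $t\in[0,1]$: the $L_t$ share uniform ellipticity bounds and uniformly bounded continuity moduli, the Dirichlet problem at $t=0$ is classically solvable in $W^{2,p}$, and the a priori estimate is uniform in $t$. Uniqueness comes from the Aleksandrov-Bakelman-Pucci maximum principle applied to the difference of two solutions (with a standard approximation step when $p<n$). The hardest part is Steps 2 and 3: because the coefficients are only assumed continuous rather than H\"older (unlike the Schauder setting of Theorem \ref{Schauthm}), the perturbation $a^{ij}(x)-a^{ij}(x_0)$ is controlled only by a non-quantitative modulus $\omega$, so the admissible scale $\rho$, the covering cardinality, and ultimately the constant $C$ depend non-explicitly on this modulus; synchronizing the absorption inequality with the partition-of-unity patching without losing control on the way is the technical heart of the proof.
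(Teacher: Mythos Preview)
Your outline is a faithful sketch of the classical Calderon--Zygmund $W^{2,p}$ theory as developed in Gilbarg--Trudinger, Chapter~9: the Newtonian-potential computation for the constant-coefficient model, the freezing-of-coefficients perturbation argument, boundary flattening via a $C^{1,1}$ chart, and then continuity method plus ABP for existence and uniqueness. There is nothing wrong with it as a proof strategy.

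However, the paper you are reading does \emph{not} prove this theorem at all. Theorem~\ref{CZthm} is stated purely as background, with the citation ``[Theorems 9.13 and 9.15]{GT}'' in the header, and is used only to motivate the analogous fourth-order question for the prescribed affine mean curvature equation. The paper's own technical content concerns the linearized Monge--Amp\`ere equation, not uniformly elliptic operators with continuous coefficients, and the $W^{2,p}$ regularity for the latter is simply quoted as a classical result. So there is no ``paper's own proof'' to compare against; your proposal is essentially the proof one finds in the cited reference.
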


In dimensions $n\geq 3$, the continuity of the coefficient matrix in Theorem \ref{CZthm} is essential. Indeed, it is shown in \cite{U} and \cite{PT} that 
if the continuity is dropped in the above theorem,  then the $W^{2,p}$ estimate is false for $p\geq 1$.

Motivated by Theorems \ref{Schauthm} and \ref{CZthm}, we are led naturally to the following:
\begin{prob} Suppose the boundary data $\varphi$ and $\psi$ are smooth. Investigate the solvability of 
$C^{4,\alpha}(\overline{\Omega})$ solutions to (\ref{AMCE})-(\ref{SBV}) when $f$ is H\"older continuous and $W^{4,p}(\Omega)$ solutions when $f$ is less regular. 
\end{prob}
Note that the case of dimension $n=1$ is very easy to deal with and is by now completely settled (see also \cite{CW}). Thus we assume throughout
that $n\geq 2$. Let us recall previous results on this problem in chronological order.

Regarding $C^{4,\alpha}(\overline{\Omega})$ solutions: Trudinger-Wang \cite{TW08} solved this problem 
when $f\in C^{\alpha}(\overline\Omega)$, $f\leq 0$, $G(d)=\frac{d^{\theta}-1}{\theta}$ and $\theta\in (0, 1/n)$ and very recently, Chau-Weinkove \cite{CW} completely 
removed the sign condition 
on $f$ in this case.

Regarding $W^{4,p}(\Omega)$ solutions: For the case $G(d)=\frac{d^{\theta}-1}{\theta}$ and $\theta\in (0, 1/n)$,
the previous works of Trudinger-Wang \cite{TW08}, Chau-Weinkove \cite{CW}
and the author \cite{Le} solved this global problem in $W^{4,p}$ under some restrictions on the sign or integrability of the affine mean curvature.

In a recent paper \cite{Le16}, we remove these restrictions and obtain global $W^{4,p}$ solution and $W^{4, p}$ estimates
to the second boundary value problem
when 
the affine mean curvature belongs to $L^p$ with $p$ greater than the dimension $n\geq 2$.  Our analysis also covers the case of Abreu's equation. 
\subsection{Solvability of the second boundary value problem}
 From now on, we assume that $G: (0,\infty) \rightarrow \mathbb{R}$ is a  smooth strictly concave function on $(0,\infty)$ whose derivative $w(d)=G'(d)$ is strictly 
positive. We introduce the following set of conditions:
\begin{myindentpar}{2cm}
 (A1) $\displaystyle{w' + (1-\frac{1}{n}) \frac{w}{d} \le 0}$. \\
(A2) $G(d) - dG'(d)\rightarrow \infty$ when $d\rightarrow\infty$.\\
(A3) $\displaystyle{d^{1-1/n} w \rightarrow \infty}$ as $d \rightarrow 0$.
\end{myindentpar}

Our main result, Theorem \ref{mainthm}, asserts the solvability of (\ref{AMCE})-(\ref{SBV}) in $W^{4,p}(\Omega)$ when $f\in L^p(\Omega)$ with $p>n$ and when (A1)--(A3) are satisfied.
\begin{thm}(\cite[Theorem 1.1]{Le16}) \label{mainthm}
 Assume that (A1)--(A3) are satisfied.  
 \begin{myindentpar}{1cm}
 (i) Fix $p>n$. Let $\Omega$ be a bounded, uniformly convex domain in $\R^n$ with $\partial \Omega \in C^{3, 1}$. 
Suppose $f \in L^{p}(\Omega)$, $\varphi \in W^{4,p}(\Omega)$ and $\psi \in W^{2,p}(\Omega)$ with $\inf_{\Omega}\psi>0$.    
Then there exists  a unique uniformly convex solution $u \in W^{4, p}(\Omega)$ to the second boundary value problem (\ref{AMCE})-(\ref{SBV}).\\
(ii) Let $\Omega$ be a bounded, uniformly convex 
domain in $\R^{n}$ with $\partial \Omega \in C^{4, \alpha}$ for some $\alpha \in (0,1)$. Suppose $f \in C^{\alpha}(\overline{\Omega})$, $\varphi \in C^{4, \alpha}(\overline{\Omega})$, $\psi \in C^{2, \alpha}(\overline{\Omega})$ and 
$\inf_{\Omega}\psi>0$. Then there exists  a unique uniformly convex solution $u \in C^{4, \alpha}(\overline\Omega)$ to the second boundary value problem (\ref{AMCE})-(\ref{SBV}).
\end{myindentpar}
\end{thm}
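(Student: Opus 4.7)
My plan is to establish Theorem \ref{mainthm} via the continuity method, treating (\ref{AMCE})-(\ref{SBV}) as a coupled system consisting of a Monge-Amp\`ere equation $\det D^2 u = (G')^{-1}(w)$ with Dirichlet data $u|_{\p\Omega}=\varphi$, and a linearized Monge-Amp\`ere equation $U^{ij}w_{ij}=f$ with Dirichlet data $w|_{\p\Omega}=\psi$. I would fix an initial solution $u_0$ (for a convenient right-hand side, say the one obtained by solving $\det D^2 u_0 = (G')^{-1}(\psi_0)$ for a suitably chosen $\psi_0$) and connect it to the target problem via the family $L[u_t]=(1-t)L[u_0]+tf$ with the target boundary data, $t\in[0,1]$. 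The set $T\subset[0,1]$ of parameters admitting a uniformly convex $W^{4,p}$ (respectively $C^{4,\alpha}$) solution is nonempty, and openness follows from an implicit function theorem applied to the linearization of $L$, whose invertibility rests on the strict concavity of $G$ and the uniform ellipticity that we shall establish. Closedness of $T$ is the heart of the matter and reduces to a priori estimates independent of $t$.

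The decisive a priori estimate is the two-sided bound $0<\lambda\le \det D^2 u \le \Lambda$. The $C^0$ bound on $u$ is immediate from $u|_{\p\Omega}=\varphi$ and convexity. To bound $w=G'(\det D^2 u)$ from above (equivalently, $\det D^2 u$ from below) I would apply the Aleksandrov-Bakelman-Pucci maximum principle to $U^{ij}w_{ij}=f$, using $\det U = (\det D^2 u)^{n-1}$, the boundary datum $\psi$, and $f\in L^p$ with $p>n$; condition (A3) is precisely what rules out degeneracy as $\det D^2 u\to 0$. The reverse bound (upper bound on $\det D^2 u$, or lower bound on $w$) is the main obstacle. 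I expect to exploit the variational structure via the functional $J[u]$ in (\ref{functional}): its concavity on convex $u$ (Lemma \ref{concavelem}), combined with the coercivity furnished by (A2) (which says $G(d)-dG'(d)\to\infty$ as $d\to\infty$), rules out the Hessian determinant blowing up along a sequence of solutions. Condition (A1) provides the correct monotonicity of $w$ in $d$ that makes the Legendre-type comparison arguments uniform. The interplay of (A1)--(A3) in delivering both bounds simultaneously is where the analysis is most delicate.

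Once $\lambda\le\det D^2 u\le \Lambda$ is secured uniformly in $t$, the regularity machinery cascades. Caffarelli's interior $C^{1,\alpha}$ theorem combined with Savin's boundary localization theorem yield strict convexity of $u$ up to $\p\Omega$, making the sections of $u$ geometrically comparable to Euclidean balls; this allows the global H\"older estimate for the linearized Monge-Amp\`ere equation (Theorem \ref{global-h}) to be applied to $w$, giving $w\in C^{\alpha}(\overline{\Omega})$ and hence $\det D^2 u=(G')^{-1}(w)\in C^{\alpha}(\overline{\Omega})$. Trudinger-Wang's global $C^{2,\alpha}$ estimates for the Monge-Amp\`ere equation then upgrade $u$ to $C^{2,\alpha}(\overline{\Omega})$, and in particular the coefficients $U^{ij}$ become H\"older continuous up to the boundary. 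Now the Calder\'on-Zygmund estimate (Theorem \ref{CZthm}) applied to $U^{ij}w_{ij}=f$ with $f\in L^p$ gives $w\in W^{2,p}(\Omega)$, hence $\det D^2 u\in W^{2,p}\hookrightarrow C^{1,\alpha}(\overline{\Omega})$ for $p>n$; a second application of Trudinger-Wang's global estimates and one more Calder\'on-Zygmund step then place $u$ in $W^{4,p}(\Omega)$, proving part (i). Part (ii) follows by replacing the $W^{2,p}$ step with the Schauder estimate (Theorem \ref{Schauthm}), yielding $u\in C^{4,\alpha}(\overline{\Omega})$. Uniqueness in both classes follows from the concavity of $J[u]$ on convex competitors with the prescribed boundary data for $u$ and $w$.
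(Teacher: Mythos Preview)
Your overall architecture---reduce existence to uniform a priori estimates and then bootstrap regularity---is correct, and the regularity cascade you describe (global H\"older for $w$ via Theorem \ref{global-h}, then Trudinger--Wang $C^{2,\alpha}$ for $u$, then Calder\'on--Zygmund or Schauder for $w$) matches the paper exactly. The paper uses Leray--Schauder degree theory rather than the continuity method, recasting the problem as a fixed-point equation for $w$ alone; your route is legitimate in principle but would require you to carry out the linearization of the fourth-order map $u\mapsto L[u]$ for the openness step, which the degree-theoretic approach sidesteps.

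There is, however, a genuine gap in your derivation of the upper bound on $\det D^2 u$. Saying that ``concavity of $J$ together with the coercivity (A2) rules out $\det D^2 u$ blowing up'' is not an argument, and in fact concavity of $J$ by itself only yields an $L^1$ bound on $\det D^2 u$ and an $L^\infty$ bound on $u$ (this is Lemma \ref{udet}), not a pointwise upper bound. The paper's mechanism is specific: one passes to the Legendre transform $u^*$ on $\Omega^*=Du(\Omega)$ and applies the ABP estimate to the dual equation ${U^*}^{ij}w^*_{ij}=-f(Du^*)\det D^2 u^*$, where $w^*=G(d)-dG'(d)$; then (A2) converts the resulting bound on $\sup_{\Omega^*}w^*$ into an upper bound on $d=\det D^2 u$. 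For this ABP step to give anything, one needs $\mathrm{diam}(\Omega^*)$ bounded---that is, a global gradient bound $\sup_\Omega|Du|\le C$. This gradient bound is the missing link in your outline, and it is not automatic: the paper first uses Proposition \ref{global-holder} (boundary H\"older continuity for solutions of $U^{ij}w_{ij}=f$, which only needs the lower bound $\det U\ge\lambda^{n-1}$ already furnished by (A3)) to show $w$ stays close to $\psi>0$ near $\partial\Omega$, hence $\det D^2 u$ is bounded above in a boundary strip; then a barrier comparison against $\varphi+\mu(e^\rho-1)$ gives $u_\nu\le C$ on $\partial\Omega$, and convexity propagates this inward. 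Without this chain the Legendre-transform ABP argument has no bounded domain to work on.
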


It is quite remarkable that the integrability condition of the generalized affine mean curvature $L[u]$ in Theorem \ref{mainthm} does not depend on the concave function $G$. In the special case of
$G(d)=\frac{d^{\theta}-1}{\theta}$ with $\theta=\frac{1}{n+2}$, Theorem \ref{mainthm} tells us that we can prescribe the $L^p$ 
affine mean curvature (for any finite $p>n$) of the graph of a uniformly convex function with smooth Dirichlet boundary conditions on the function and its Hessian determinant.

\begin{rem} \label{ABrem} 
 Functions $G$ satisfying (A1)--(A3) include $$G(d) = \frac{d^{\theta}-1}{\theta}~ \text{where}~ 0\leq \theta<1/n,~\text{and }~G(d) =\frac{\log d}{\log \log (d+ e^{e^{4n}})}.$$
 Thus, Theorem \ref{mainthm} also covers the case of functions $G$ below 
the threshold of Abreu's equation where $G(d)=\log d$.
\end{rem}
\newpage
 \section{Existence of solution to the second boundary value problem }
 \subsection{Existence of solution via degree theory and a priori estimates}
By using the Leray-Schauder degree theory as in Trudinger-Wang \cite{TW05}, Theorem \ref{mainthm} follows from the following global a priori $W^{4,p}$ and $C^{2,\alpha}$ estimates for solutions 
of (\ref{AMCE})-(\ref{SBV}).
\begin{thm} 
\label{keythm} (\cite[Theorem 1.2]{Le16}) Assume that (A1)--(A3) are satisfied.
\begin{myindentpar}{1cm}
(i) Let $p> n$ and let $\Omega$ be a bounded, uniformly convex 
domain in $\R^{n}$ with $\p\Omega\in C^{3,1}$. Suppose $\varphi \in W^{4,p}(\Omega), \psi\in W^{2,p}(\Omega)$, $\inf_{\Omega}\psi>0$ and $f\in L^{p}(\Omega)$. Then, 
for any uniformly convex solution $u\in C^{4}(\overline{\Omega})$ of (\ref{AMCE})-(\ref{SBV}), we have the estimates
\begin{equation}
\label{global-est}
\norm{u}_{W^{4,p}(\Omega)}\leq C,~\text{and}~\det D^2 u\geq C^{-1}
\end{equation}
where $C$ depends 
on $n, p, G, \Omega$, $\norm{f}_{L^{p}(\Omega)}$, $\norm{\varphi}_{W^{4,p}(\Omega)}, \norm{\psi}_{W^{2,p}(\Omega)}$, and $\inf_{\Omega} \psi.$\\
(ii) Let $\Omega$ be a bounded, uniformly convex 
domain in $\R^{n}$ with $\partial \Omega \in C^{4, \alpha}$ for some $\alpha \in (0,1)$. Suppose $f \in C^{\alpha}(\overline{\Omega})$, $\varphi \in C^{4, \alpha}(\overline{\Omega})$, $\psi \in C^{2, \alpha}(\overline{\Omega})$ and 
$\inf_{\Omega}\psi>0$. Then, 
for any uniformly convex solution $u\in C^{4}(\overline{\Omega})$ of (\ref{AMCE})-(\ref{SBV}), we have the estimates
\begin{equation}\label{global-est2}\| u \|_{C^{4, \alpha}(\overline{\Omega})} \le C,
~\text{and}~\det D^2 u\geq C^{-1}\end{equation}
where $C$ depends 
on $n, \alpha, G, \Omega$, $\| f\|_{C^{\alpha}(\ov{\Omega})}$, $\| \varphi\|_{C^{4, \alpha}(\overline{\Omega})}$, $\| \psi\|_{C^{2, \alpha}(\ov{\Omega})}$ and $\inf_{\partial \Omega}\psi$.
\end{myindentpar}
\end{thm}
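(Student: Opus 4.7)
The plan is to establish the a priori estimates of Theorem \ref{keythm} in stages, first obtaining $L^\infty$ bounds on $u$ and two-sided pointwise bounds on $\det D^{2}u$, and then progressively upgrading regularity by alternating the global H\"older/$C^{2,\alpha}$/$W^{2,p}$ estimates for the linearized Monge-Amp\`ere equation with Trudinger--Wang's global $C^{2,\alpha}$ estimates for the Monge-Amp\`ere equation.

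The first step is the $L^\infty$ bound on $u$: since $u$ is convex with $u=\varphi$ on $\p\Omega$, the upper bound is immediate from comparison with affine interpolants of $\varphi$, while the lower bound follows from the Aleksandrov maximum principle provided one controls $\int_{\Omega}\det D^2 u$. Such an integral bound is extracted from the variational structure associated with the functional $J[u]$ of (\ref{functional}), together with the growth assumption (A2). The central and most delicate step is the two-sided pointwise bound on $w=G'(\det D^2 u)$. The boundary value $w=\psi\ge \inf_\Omega\psi>0$ supplies control on $\p\Omega$; to propagate this to the interior, one views $U^{ij}w_{ij}=f$ as a linear equation for $w$ with cofactor coefficients (for which $\det U=(\det D^2 u)^{n-1}$) and applies Aleksandrov-Bakelman-Pucci-type estimates. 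The conditions (A1)--(A3) convert boundary and integral information into a genuine pointwise two-sided bound on $\det D^2 u$: roughly, (A2) prevents $\det D^2 u$ from blowing up, (A3) prevents $\det D^2 u$ from degenerating to zero, and (A1), equivalent to the monotonicity of $d\mapsto d^{1-1/n}w(d)$, reconciles these two barriers and closes the iteration.

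Once $\lambda\le \det D^2 u\le \Lambda$ uniformly on $\overline\Omega$, the regularity bootstrap proceeds cleanly. The linearized equation $U^{ij}w_{ij}=f$ satisfies the Monge-Amp\`ere hypotheses of Part 2, and Theorem \ref{global-h} produces $w\in C^{\beta}(\overline\Omega)$ for some $\beta\in(0,1)$. Hence $\det D^2 u=(G')^{-1}(w)\in C^{\beta}(\overline\Omega)$, and Trudinger--Wang's global $C^{2,\alpha}$ estimates give $u\in C^{2,\alpha}(\overline\Omega)$. The coefficients $U^{ij}$ are then H\"older continuous and the linearized equation uniformly elliptic, so we enter the standard linear bootstrap. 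In part (ii), Schauder (Theorem \ref{Schauthm}) upgrades $w$ to $C^{2,\alpha}(\overline\Omega)$, whence one more round of $C^{2,\alpha}$ Monge-Amp\`ere regularity yields $u\in C^{4,\alpha}(\overline\Omega)$. In part (i), Calder\'on--Zygmund (Theorem \ref{CZthm}) gives $w\in W^{2,p}(\Omega)$; the Sobolev embedding using $p>n$ then allows one more pass through the Monge-Amp\`ere and linearized Monge-Amp\`ere estimates to conclude $u\in W^{4,p}(\Omega)$.

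The main obstacle is producing the quantitative pointwise bound $\det D^2 u\ge C^{-1}$ (and the analogous upper bound). Every subsequent tool invoked above---Caffarelli's strict convexity and $C^{1,\alpha}$ regularity, Trudinger--Wang's global $C^{2,\alpha}$ theory, and the global H\"older estimates of Theorem \ref{global-h}---requires uniform nondegeneracy of the Monge-Amp\`ere measure as a standing hypothesis. Converting the structural conditions (A1)--(A3) on $G$, together with the positive boundary datum $\psi$, into a usable two-sided pointwise bound on $\det D^2 u$ via a carefully chosen barrier/iteration argument on the coupled system (\ref{AMCE})--(\ref{SBV}) is therefore the technical heart of the proof.
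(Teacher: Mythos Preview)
Your bootstrap from the two-sided bound $\lambda\le\det D^2u\le\Lambda$ to the final $W^{4,p}$ and $C^{4,\alpha}$ estimates matches the paper's argument exactly, and your identification of the lower bound $\det D^2u\ge C^{-1}$ via ABP applied to $U^{ij}w_{ij}=f$ together with (A3) is correct (this is Lemma~\ref{upwlem}). The gap is in the upper bound on $\det D^2u$.

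Your proposal to extract the upper bound by ``ABP-type estimates'' on $w$ does not work directly. ABP applied from below gives
\[
\inf_\Omega w\ \ge\ \inf_{\p\Omega}\psi\ -\ C\left\|\frac{f}{(\det D^2u)^{(n-1)/n}}\right\|_{L^n(\Omega)},
\]
and even after inserting the lower bound $\det D^2u\ge C^{-1}$, the right-hand side can be negative when $\|f\|_{L^n}$ is large relative to $\inf\psi$; nothing in (A1)--(A3) rescues this. The paper's route is genuinely different: one first proves a \emph{global gradient bound} $\sup_\Omega|Du|\le C$ (Lemma~\ref{Dubound}), using Proposition~\ref{global-holder} to show $w$ is H\"older at the boundary, hence $\det D^2u$ is bounded \emph{near} $\p\Omega$, and then a barrier controls $u_\nu$. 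With $|Du|$ bounded, one passes to the \emph{Legendre transform} $u^*$ on $\Omega^*=Du(\Omega)$ and observes (Lemma~\ref{Legeq}) that the dual function $w^*=G(d)-dG'(d)$ satisfies ${U^*}^{ij}w^*_{ij}=-f(Du^*)\det D^2u^*$. ABP on $\Omega^*$ then gives, after changing variables back,
\[
\sup_{\Omega^*}w^*\ \le\ C+C\|f\|_{L^n(\Omega)},
\]
and now (A2) (the coercivity $G(d)-dG'(d)\to\infty$ as $d\to\infty$) converts this into an upper bound on $d=\det D^2u$. The Legendre-transform step and the preliminary gradient bound that makes $\Omega^*$ bounded are the missing ideas in your outline; (A2) enters only through this dual ABP, not through a direct barrier on $w$. (A minor correction: the $L^1$ bound on $\det D^2u$ and the $L^\infty$ bound on $u$ in Lemma~\ref{udet} rely on (A1) and the concavity of $J$, not on (A2) as you wrote.)
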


We briefly comment on the roles of conditions (A1)--(A3) in Theorem \ref{keythm}. (A1) guarantees the concavity of the 
functional $J$ defined in (\ref{functional}); (A3) gives the upper bound for $w$ while (A2) gives the upper bound for the dual $w^{*}$ of $w$ via the Legendre transform. As will be seen 
later in Lemma \ref{Legeq}, $w^{*}= G(\det D^2 u)- (\det D^2 u) G'(\det D^2 u).$ 

In \cite{Le16}, we noted that
the global $W^{4,p}$ estimate (\ref{global-est}) fails when
$f$ has integrability less than the dimension $n$ or the coercivity condition (A2) is not satisfied. 
However, it turns out that solutions 
to the second boundary value problem (\ref{AMCE})-(\ref{SBV}) 
are well-behaved near the boundary even if the coercivity condition (A2) fails. This is an interesting phenomenon: things can only go wrong in the interior in the boundary value problem
for the fourth order problem  (\ref{AMCE})-(\ref{SBV}).

Assuming Theorem \ref{keythm}, we can now complete the proof of Theorem \ref{mainthm} using the Leray-Schauder degree theory argument of Trudinger-Wang \cite{TW05, TW08}.  
Here we follow the presentation in Chau and Weinkove \cite{CW}.
\begin{proof}[Proof of Theorem \ref{mainthm}]
Let $\Omega, \varphi, \psi, f, p$ be as in the first part of Theorem \ref{mainthm}. Since $p>n$, and $\varphi\in W^{4, p}(\Omega)$, we have $\varphi\in C^3(\overline{\Omega})$
by the Sobolev embedding theorem.

Fix $\alpha \in (0,1)$. For  a large constant $R>1$ to be determined, define a bounded set $D(R)$ in $C^{\alpha}(\ov{\Omega})$ as follows:
$$D(R) = \{ v \in C^{\alpha}(\ov{\Omega}) \ | \ v \ge R^{-1}, \ \| v\|_{C^{\alpha}(\ov{\Omega})} \le R\}.$$
Next, let $\Theta: (0,\infty) \rightarrow (0,\infty)$ be the inverse function of $G': (0,\infty) \rightarrow (0,\infty).$  

For $t \in [0,1]$, we will define an operator $\Phi_t : D(R) \rightarrow C^{\alpha}(\ov{\Omega})$ as follows.
Given $w \in D(R)$, define $u \in C^{2, \alpha}(\ov{\Omega})$ to be the unique strictly convex solution to
\begin{equation}\label{LS1}
 \left\{
 \begin{alignedat}{2}
   \det D^2 u ~&=\Theta(w) \h~&&\text{in} ~\Omega, \\\
 u&= \varphi \h~&&\text{on}~ \p\Omega.
 \end{alignedat}
 \right.
\end{equation}
The existence of $u$ follows from the boundary regularity result of the Monge-Amp\`ere equation established by Trudinger and Wang \cite{TW08}; see Theorem \ref{TWC2} below.
Next, let $w_t \in W^{2,p}(\Omega)$  be the unique solution to the equation
\begin{equation}\label{LS2}
 \left\{
 \begin{alignedat}{2}
   U^{ij} (w_t)_{ij} ~&=tf \h~&&\text{in} ~\Omega, \\\
 w_t&= t\psi + (1-t) \h~&&\text{on}~ \p\Omega.
 \end{alignedat}
 \right.
\end{equation}
Because $p>n$, $w_t$ lies in  $C^{ \alpha}(\ov{\Omega})$.  We define $\Phi_t$ to be the map sending $w$ to $w_t$.

We note that:
\begin{enumerate}
\item[(i)] $\Phi_0(D(R)) = \{ 1\} $, and in particular, $\Phi_0$ has a unique fixed point.
\item[(ii)] The map $[0,1] \times D(R) \rightarrow C^{\alpha}(\ov{\Omega})$ given by $(t,w) \mapsto \Phi_t(w)$ is continuous.  
\item[(iii)] $\Phi_t$ is compact for each $t \in [0,1]$.
\item[(iv)] For every $t\in [0,1]$, if $w \in D(R)$ is a fixed point of $\Phi_t$ then $w \notin \partial D(R)$.
\end{enumerate}
Indeed, part (iii) follows from the standard \emph{a priori} estimates for the two separate equations (\ref{LS1}) and (\ref{LS2}).  For part (iv), let $w>0$ be a fixed point of $\Phi_t$.  
Then $w \in W^{2,p}(\Omega)$ and hence $u \in W^{4,p}(\Omega)$.  Next we apply Theorem \ref{keythm} to obtain $w>R^{-1}$ and $\| w \|_{C^{\alpha}(\ov{\Omega})} < R$ for some $R$ sufficiently large and depending only on the initial data.

Then the Leray-Schauder degree of $\Phi_t$ is well-defined for each $t$ and is constant on $[0,1]$ (see \cite[Theorem 2.2.4]{OCC}, for example).  $\Phi_0$ has a fixed point and hence $\Phi_1$ 
must also have a fixed point $w$, giving rise to a solution $u\in W^{4,p}(\Omega)$ of  the second boundary value problem (\ref{AMCE})-(\ref{SBV}).

In the second case of 
Theorem \ref{mainthm}, by similar arguments, $u$ will lie in $C^{4, \alpha}(\ov{\Omega})$.
Note that the solution is uniformly convex since $\det D^2u \ge C^{-1}>0$. 
\end{proof}
\subsection{Several boundary regularity results for Monge-Amp\`ere and linearized Monge-Amp\`ere equations }
In the above proof of Theorem \ref{mainthm} and that of Theorem \ref{keythm}, we use the following
global $C^{2,\alpha}$ estimates for the Monge-Amp\`ere equation established by Trudinger and Wang \cite{TW08} 
when the Monge-Amp\`ere measure is only assumed to be globally $C^{\alpha}$.
\begin{thm} ( \cite[Theorem 1.1]{TW08}) 
\label{TWC2}
Let $\Omega$ be a uniformly convex domain in
$\R^n$, with boundary $\p\Omega\in C^3$, $\phi\in C^3(\overline{\Omega})$ and $f\in
C^\alpha(\overline{\Omega})$, for some $\alpha\in (0, 1)$, satisfying $\inf
f>0$. Then the Dirichlet problem 
\begin{equation*}
 \left\{
 \begin{alignedat}{2}
   \det D^2 u ~&=f \h~&&\text{in} ~\Omega, \\\
 u&= \phi \h~&&\text{on}~ \p\Omega,
 \end{alignedat}
 \right.
\end{equation*}
has a unique strictly convex solution $u\in C^{2,\alpha}(\overline{\Omega})$. This solution satisfies the estimate
$$\|u\|_{C^{2, \alpha}(\overline{\Omega})}\le C$$
where $C$ is a constant depending on $n, \alpha$, $\inf f$,
$\|f\|_{C^\alpha(\overline{\Omega})}$, $\partial\Omega$ and $\phi$.
\end{thm}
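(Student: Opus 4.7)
The plan is to obtain the theorem by combining a priori $C^{2,\alpha}(\overline{\Omega})$ estimates with the continuity method. Uniqueness of the convex solution is immediate from strict monotonicity of $\det^{1/n}$ along convex competitors. Existence will follow from the a priori estimate by approximating $f$ by a sequence $f_k \in C^\infty(\overline{\Omega})$ with $\|f_k\|_{C^\alpha(\overline{\Omega})}\le \|f\|_{C^\alpha(\overline{\Omega})}$ and $\inf f_k \ge \tfrac{1}{2}\inf f$, invoking the classical Caffarelli-Nirenberg-Spruck existence result for smooth data, and passing to the limit via the uniform estimate. The a priori bound itself is built in layers. First, the maximum principle gives $\|u\|_{L^\infty(\Omega)}\le C$ and a global gradient bound $\|Du\|_{L^\infty(\overline{\Omega})}\le C$ from barriers of the form $\phi \pm A\,\mathrm{dist}(\cdot,\p\Omega)$, exploiting uniform convexity of $\Omega$; convexity of $u$ propagates the boundary gradient bound inward. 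Second, a global $C^{1,1}$ estimate: tangential-tangential boundary second derivatives come directly from double differentiation of $u=\phi$ along $\p\Omega$, while mixed and normal-normal boundary second derivatives are controlled by the classical barrier construction of Caffarelli-Nirenberg-Spruck/Ivochkina, which uses $\inf f>0$ and uniform convexity of $\p\Omega$. Pogorelov's interior second derivative estimate then promotes this to a global $C^{1,1}$ bound.

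The real work is upgrading from $C^{1,1}$ to $C^{2,\alpha}$ up to the boundary when $f$ is merely $C^\alpha$. In the interior this is Caffarelli's perturbation theorem, which I would invoke as a black box; it reduces matters to a boundary $C^{2,\alpha}$ estimate. For the boundary, the plan is a Caffarelli-type perturbation iteration inside Monge-Amp\`ere sections centered at boundary points. Fix $x_0\in\p\Omega$; after an affine normalization and a boundary flattening, consider the section $S_h(x_0)=\{u<\ell_{x_0}+h\}$, where $\ell_{x_0}$ is a supporting hyperplane at $(x_0,u(x_0))$. Using Savin's boundary Localization theorem (also used as a black box), these $S_h(x_0)$ are comparable to half-ellipsoids of controlled eccentricity. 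Freeze $f$ at $f(x_0)$ and let $\bar u$ solve $\det D^2\bar u=f(x_0)$ on $S_h(x_0)$ with the same Dirichlet data as $u$; classical theory for the constant right-hand side gives a sharp $C^{2,\alpha}$ estimate for $\bar u$ up to the flattened boundary. The difference $u-\bar u$ satisfies a linearized Monge-Amp\`ere equation whose right-hand side has size $\mathrm{osc}_{S_h(x_0)}f\lesssim h^{\alpha/2}$, and maximum-principle type bounds together with Trudinger-Wang $L^\infty$ estimates for the linearized equation give a quantitative contraction. Iterating across the dyadic scales $h=2^{-k}$ produces a quadratic polynomial $P_{x_0}$ approximating $u$ at $x_0$ with $\|u-P_{x_0}\|_{L^\infty(S_h(x_0))}\le C h^{1+\alpha/2}$, which in the normalized coordinates translates to the pointwise $C^{2,\alpha}$ estimate at $x_0$, uniformly in $x_0\in\p\Omega$.

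The main obstacle is this boundary iteration. One must simultaneously manage the degenerate, non-Euclidean geometry of the boundary sections, ensure compatibility of the Dirichlet data between $u$ and the frozen comparison solution $\bar u$, and verify that each dyadic step truly contracts. Savin's Localization theorem is what renders the geometric control of $S_h(x_0)$ quantitative and scale-invariant, and the $C^{2,\alpha}$ bound for $\bar u$ is what allows the frozen problem to absorb the degenerate geometry. Once the boundary $C^{2,\alpha}$ estimate is in hand, a standard covering argument welds it to Caffarelli's interior estimate and delivers $\|u\|_{C^{2,\alpha}(\overline{\Omega})}\le C$ with the stated dependence; the continuity/approximation scheme then produces the solution itself, completing the proof.
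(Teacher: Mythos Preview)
The paper does not prove this theorem; it is quoted as a black box from \cite{TW08} and explicitly listed in the Introduction among the results for which the notes are \emph{not} self-contained. So there is no ``paper's own proof'' to compare against.

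That said, your outline is largely a sketch of Savin's later approach \cite{S2} rather than Trudinger--Wang's original argument. Trudinger--Wang obtain boundary second-derivative bounds by direct barrier and differentiation methods intrinsic to the equation, and then upgrade to $C^{2,\alpha}$; Savin's route instead uses his Localization Theorem (Theorem~\ref{main_loc}) together with a scale-by-scale perturbation against constant-right-hand-side comparison solutions, bypassing any separate $C^{1,1}$ step. Your proposal mixes the two: you first argue for a global $C^{1,1}$ bound via CNS barriers and Pogorelov, and then run the Savin-type iteration on top of it. This is redundant and, more importantly, the Pogorelov step as written is a genuine weak point. The classical interior Pogorelov estimate requires differentiating the equation and therefore depends on $\|f\|_{C^{1,1}}$ (or at least $C^{0,1}$), not merely $\|f\|_{C^\alpha}$. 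In your approximation scheme the resulting $C^{1,1}$ constants are not uniform in $k$, so the step does not close. If you commit to the Savin route, simply drop the $C^{1,1}$ layer: the Localization Theorem plus the perturbation iteration delivers pointwise boundary $C^{2,\alpha}$ directly from $C^{0,1}$ control, and Caffarelli's interior $C^{2,\alpha}$ covers the rest.

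One further point of care: in your iteration you invoke ``classical theory for the constant right-hand side'' to get $C^{2,\alpha}$ for $\bar u$ up to the flat boundary piece of the (normalized) section. This is the delicate input in Savin's argument---the sections $S_h(x_0)$ are not uniformly convex domains and the lateral portion of $\partial S_h(x_0)$ carries only $u$ as boundary data---so you should point explicitly to the Pogorelov-type estimate for $\det D^2 \bar u = \mathrm{const}$ on half-space-like domains that Savin proves and uses in \cite{S2}, rather than appeal to CNS on smooth domains. With that clarification and the Pogorelov step removed, your outline is a correct high-level description of the Savin proof of Theorem~\ref{TWC2}.
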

In the proof of Theorem \ref{keythm}, we will use two sets of H\"older estimates. The first is the following global H\"older estimates for the linearized Monge-Amp\`ere equation.
\begin{thm} (\cite[Theorem 1.4]{Le})
\label{global-h}
Let $\Omega$ be a bounded, uniformly convex domain in $\R^{n}$ with $\p\Omega\in C^{3}$. Let $u: \overline{\Omega}\rightarrow \R$, $u\in C^{0,1}(\overline{\Omega})\cap C^{2}(\Omega)$  be a convex function satisfying
$$0<\lambda\leq \det D^{2}u\leq \Lambda<\infty,~
\text{and}~u\mid_{\p\Omega}\in C^{3}.$$
Denote by $U=(U^{ij})$ the cofactor matrix of $D^2 u$.
 Let $v\in C(\overline{\Omega})\cap W^{2, n}_{loc}(\Omega)$ be the solution to the linearized Monge-Amp\`ere equation
 \begin{equation*}
 \left\{
 \begin{alignedat}{2}
   U^{ij} v_{ij} ~&=g \h~&&\text{in} ~\Omega, \\\
 v&= \varphi \h~&&\text{on}~ \p\Omega,
 \end{alignedat}
 \right.
\end{equation*}
 where $\varphi\in C^{\alpha}(\p\Omega)$ for some $\alpha\in (0, 1)$ and $g\in L^n(\Omega)$. Then, $v\in C^{\beta}(\overline{\Omega})$ 
with the estimate
$$\|v\|_{C^{\beta}(\overline{\Omega})}\leq C\left(\|\varphi\|_{C^{\alpha}(\p\Omega)} + \|g\|_{L^{n}(\Omega)}\right)$$
where $\beta$ depends only on $\lambda, \Lambda, n, \alpha$, and $C$ depends only on $\lambda, \Lambda, n, \alpha$, $diam (\Omega)$, $\|u\|_{C^3(\p\Omega)}$, $\|\p\Omega\|_{C^3}$ and the uniform convexity of $\Omega.$ 
\end{thm}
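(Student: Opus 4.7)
The plan is to establish Theorem \ref{global-h} by combining an interior H\"older estimate coming from the Caffarelli-Guti\'errez theory (Theorem \ref{CGthm}) with a pointwise boundary H\"older estimate obtained through Savin's boundary Localization theorem for the Monge-Amp\`ere equation, and then gluing the two by a standard dichotomy argument. For the interior part, on each compactly contained section $S_h(x)\subset\subset\Omega$ one applies the Harnack inequality of Theorem \ref{CGthm} to the nonnegative functions $\sup_{S_h}v - v$ and $v-\inf_{S_h}v$, after absorbing the $L^{n}$ inhomogeneity via an ABP-type bound for the linearized operator. This yields geometric oscillation decay
$\operatorname{osc}_{S_{h/2}(x)}v \leq (1-\mu)\operatorname{osc}_{S_h(x)}v + C\|g\|_{L^n(S_h(x))}$, which iterates to interior H\"older continuity of $v$ with exponent $\beta_0(\lambda,\Lambda,n)$.

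For the pointwise boundary estimate at $x_0\in\partial\Omega$ I would normalize so that $x_0=0$, subtract a supporting hyperplane of $u$ at $0$, and use $u|_{\partial\Omega}\in C^3$ plus the uniform convexity of $\Omega$ to verify the hypotheses of Savin's Localization theorem. This ensures that the boundary section $S_h(0)=\{u<h\}$ is comparable, via a volume-preserving linear map $T_h$ with logarithmically controlled norms, to a half-ellipsoid of size $\sim\sqrt{h}$, and that $T_h(\partial\Omega\cap S_h(0))$ is a mildly perturbed flat disk. In the renormalized coordinates the H\"older data $\varphi$ oscillates by at most $C\|\varphi\|_{C^\alpha}h^{\alpha/2}$ on the flat portion of the boundary. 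One then constructs a barrier $W$ in $S_h(0)$, typically of the form $W = A\bigl(u+K|x-x_0|^2\bigr)^{1/2} - B\|g\|_{L^n}|S_h|^{1/n}$ or a similar expression built from $u$ itself, calibrated so that $U^{ij}W_{ij}$ has a quantitative negative sign and $W$ dominates $|\varphi(x)-\varphi(0)|$ on $\partial\Omega\cap S_h(0)$. The ABP maximum principle for the linearized Monge-Amp\`ere operator then yields the dyadic iteration
$\operatorname{osc}_{S_{h/2}(0)}(v-v(0)) \leq (1-\mu)\operatorname{osc}_{S_h(0)}(v-v(0)) + Ch^{\alpha/2}\|\varphi\|_{C^\alpha} + Ch^{\gamma}\|g\|_{L^n}$,
which, together with the volume estimate $|S_h(0)|\sim h^{n/2}$, translates to a pointwise Euclidean bound $|v(x)-v(0)| \leq C|x|^\beta\bigl(\|\varphi\|_{C^\alpha(\partial\Omega)}+\|g\|_{L^n(\Omega)}\bigr)$ for $x$ close to $0$.

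Gluing then proceeds by the usual dichotomy: for $x,y\in\overline\Omega$ one compares $|x-y|$ with $\min\{d(x,\partial\Omega),d(y,\partial\Omega)\}$ and applies either the interior or the boundary pointwise estimate, producing the global bound. The main obstacle I expect is the barrier construction in the boundary step. Sections near $\partial\Omega$ are severely anisotropic (tangential and normal scales differ by a factor of $\sqrt{h}$), so a Euclidean radial barrier is useless; one must work in Savin's renormalized coordinates and verify, using $u|_{\partial\Omega}\in C^3$, the uniform convexity of $\Omega$, and the divergence-free property of the cofactors $(U^{ij})$, that the candidate $W$ really has $U^{ij}W_{ij}$ with a quantitative negative sign while simultaneously dominating the boundary oscillation of $\varphi$ and absorbing the $L^n$ part of $g$. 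Calibrating the $L^n$ dependence needs the sharp section volume estimate $|S_h(0)|\sim h^{n/2}$ and the doubling/engulfing properties of sections, which are among the tools developed in Part 3 of the notes and will be indispensable at this point.
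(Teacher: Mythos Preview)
Your three-step outline (interior Caffarelli--Guti\'errez estimate, pointwise boundary H\"older estimate, dichotomy glue) matches the paper's structure, but you have permuted the roles of the key tools and your boundary step has a gap.

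In the paper, the boundary H\"older estimate (Proposition \ref{global-holder}) is elementary and does \emph{not} use Savin's Localization Theorem or section geometry at all. Since $\det U=(\det D^2u)^{n-1}\geq\lambda^{n-1}$, one needs only the lower bound on the determinant of the coefficient matrix, the uniform convexity of $\Omega$, and ABP. The barrier is simply \emph{linear} in the normal direction, $h_{\pm}=v-v(0)\pm\varepsilon\pm C_1 x_n(\inf_{\p B_\delta\cap\overline\Omega}y_n)^{-1}$, applied in a Euclidean cap $\Omega\cap B_\delta(0)$; on $\p B_\delta\cap\Omega$ one uses the global ABP bound $|v|\leq C_0$ together with the uniform convexity estimate $x_n\geq c\delta^2$, and on $\p\Omega\cap B_\delta$ the $C^\alpha$ oscillation of $\varphi$. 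Optimizing $\varepsilon$ against $\delta$ gives $|v(x)-v(0)|\leq C|x|^{\alpha/(\alpha+2)}$ in one shot. Savin's theorem enters the argument elsewhere: it controls the shape of the \emph{interior} maximal section $S_u(y,\bar h(y))$ tangent to $\p\Omega$ (Proposition \ref{tan_sec}), so that after normalizing this section one applies the interior estimate of Theorem \ref{inho_Holder} to the rescaled $\tilde v$, with $\|\tilde v\|_{L^\infty}$ bounded via the already-established Proposition \ref{global-holder}. The logarithmic eccentricity $|\log\bar h|$ of the normalizing map is absorbed into a slightly smaller H\"older exponent when rescaling back.

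The gap in your boundary step is twofold. First, a barrier $W$ with $U^{ij}W_{ij}\leq 0$ gives, via comparison, a direct bound $|v-v(0)|\leq W$, not an oscillation decay with contraction factor $(1-\mu)$; a genuine $(1-\mu)$ would require a boundary Harnack inequality for the linearized Monge--Amp\`ere operator, which is a separate and considerably harder result (cf.\ \cite{Le_Bdr}) and is not used here. Second, working in a boundary section $S_h(0)$ rather than a Euclidean cap, you do not know $v$ on the interior portion $\p S_h(0)\cap\Omega$, so there is nothing for your barrier to dominate there without already having an oscillation bound---this is precisely why the paper works in $\Omega\cap B_\delta(0)$ and uses the global ABP bound on that portion. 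Your elaborate barrier $W=A(u+K|x-x_0|^2)^{1/2}-\cdots$ is therefore both unjustified and unnecessary.
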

The second set of H\"older estimates  is concerned with 
boundary H\"older continuity for solutions to non-uniformly elliptic, linear equations without lower order terms where we have 
lower bound on the determinant of the coefficient matrix. 
\begin{prop}(\cite[Proposition 2.1]{Le})\label{global-holder}
Assume that $\Omega$ is a uniformly convex domain in $\R^n$. 
Let $v\in C(\overline{\Omega})\cap W^{2, n}_{loc}(\Omega)$ be the 
solution to the equation 
\begin{equation*}
 \left\{
 \begin{alignedat}{2}
   a^{ij} v_{ij} ~&=g \h~&&\text{in} ~\Omega, \\\
 v&= \varphi \h~&&\text{on}~ \p\Omega.
 \end{alignedat}
 \right.
\end{equation*}
Here,
$\varphi\in C^{\alpha}(\p\Omega)$ for some $\alpha\in (0, 1)$, $g\in L^n(\Omega)$, and
the matrix $(a^{ij})$ is assumed to be measurable, positive definite and satisfies $\det (a_{ij})\geq \lambda.$ 
Then, there exist $\delta, C$ depending only on $\lambda, n, \alpha$, $diam (\Omega)$, and the uniform convexity of $\Omega$ so that, for any $x_{0}\in\partial\Omega$, we have
$$|v(x)-v(x_{0})|\leq C|x-x_{0}|^{\frac{\alpha}{\alpha +2}}\left(\|\varphi\|_{C^{\alpha}(\p\Omega)} + \|g\|_{L^{n}(\Omega)}\right)~\text{for all}~ x\in \Omega\cap B_{\delta}(x_{0}). $$
\end{prop}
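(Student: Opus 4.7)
The plan is to combine the Alexandrov--Bakelman--Pucci (ABP) maximum principle---which requires only the lower bound $\det(a^{ij})\geq \lambda$---with an explicit boundary barrier built from the uniform convexity of $\Omega$, then iterate a localized ABP estimate on a dyadic family of caps at $x_0$ to produce the H\"older decay with exponent $\alpha/(\alpha+2)$. After translating so $x_0=0$, subtracting $\varphi(x_0)$, and rotating so that the inward unit normal at $0$ equals $e_n$, write $M:=\|\varphi\|_{C^{\alpha}(\p\Omega)}$ and $N:=\|g\|_{L^{n}(\Omega)}$. Uniform convexity supplies an exterior ball $B_R(-Re_n)$ tangent to $\p\Omega$ only at $0$, from which the function $h(x):=|x|^{2}+2Rx_{n}$ is nonnegative on $\overline{\Omega}$, vanishes at the origin, has Hessian $2I_{n}$, and---by the AM-GM inequality applied to the eigenvalues of $(a^{ij})$---satisfies $a^{ij}h_{ij}=2\,\mathrm{tr}(a^{ij})\geq 2n\lambda^{1/n}$; furthermore, uniform convexity of $\Omega$ yields $x_{n}\gtrsim |y|^{2}$ on $\p\Omega$ near $0$.

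Next I would construct the key supersolution. Since $(a^{ij})$ may be arbitrarily anisotropic, rotationally symmetric barriers such as $|x|^{s}$ fail in general. Instead, exploit the preferred direction coming from the boundary geometry: the one-variable function $W_{M}(x):=A\,x_{n}^{\alpha/2}$ has its single nonzero Hessian entry in the $(n,n)$ slot, so
\[
a^{ij}(W_{M})_{ij} \;=\; A\,\tfrac{\alpha}{2}\bigl(\tfrac{\alpha}{2}-1\bigr)\,a^{nn}\,x_{n}^{\alpha/2-2} \;\le\; 0,
\]
independently of the other entries of $(a^{ij})$. Combining $x_n\ge c_0|y|^{2}$ on $\p\Omega$ near $0$ with the strict positivity $x_n\ge c_1(\Omega)>0$ on $\p\Omega$ away from $0$ (from strict convexity), a suitable choice $A=C_\Omega M$ gives $W_M\ge M|y|^{\alpha}\ge \varphi$ on all of $\p\Omega$, while in the interior $W_M(x)\le C_\Omega M\,|x|^{\alpha/2}$.

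I would then run a localized ABP iteration. Applying the ABP estimate to $v-W_M-\tau_r$ on the cap $\Omega_r:=\Omega\cap B_r(0)$---where the additive $\tau_r$ is chosen so that the difference is nonpositive on the full boundary $\p\Omega_r$, using $W_M\ge \varphi$ on $\p\Omega\cap B_r$ and a bound on $v$ obtained from the previous scale on $\Omega\cap\p B_r$---produces an oscillation recursion
\[
\omega(r)\;\le\;\mu\,\omega(2r)\;+\;C\bigl(M r^{\alpha}+Nr\bigr),\qquad \omega(r):=\sup_{\Omega_r}|v-v(x_0)|,
\]
with contraction $\mu\in(0,1)$ depending only on $n$, $\lambda$, and the uniform convexity. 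Standard dyadic iteration converts this into $\omega(r)\le C(M+N)r^{\alpha/(\alpha+2)}$, with the exponent $\alpha/(\alpha+2)$ arising from balancing the three scales $r^\alpha$ (boundary data), $r$ (ABP drift), and $r^{-\log_{2}\mu}$ (geometric contraction) within the iteration. Repeating the argument with $-v$ and $-g$ yields the matching lower bound.

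The main obstacle is the localized ABP step: under only a determinant bound on $(a^{ij})$, individual entries can degenerate, and the only leverage is the trace bound $\mathrm{tr}(a^{ij})\ge n\lambda^{1/n}$ available for the specific function $h$, together with the uniform convexity of $\Omega$. This forces the barrier to be genuinely one-variable (in $x_n$) and the contraction $\mu$ to be proved through a careful comparison at each dyadic scale rather than via any Harnack-type machinery, which explains why the exponent $\alpha/(\alpha+2)$ is strictly smaller than the $\alpha/2$ one would obtain when $g\equiv 0$---the presence of $N$ through the ABP drift $Nr$ is what degrades the final H\"older exponent.
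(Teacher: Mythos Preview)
Your proposal has a genuine gap: the dyadic iteration rests on a contraction factor $\mu\in(0,1)$ that you never produce, and under the sole hypothesis $\det(a^{ij})\geq\lambda$ there is no evident mechanism to obtain one. An ABP application on the cap $\Omega_r=\Omega\cap B_r(0)$ bounds $v-W_M$ from above by its supremum on $\p\Omega_r$ plus $C r\|g\|_{L^n}$. On the spherical part $\Omega\cap\p B_r$ the best information is $v-W_M\leq \omega(r)$ from the previous scale, with coefficient $1$, not $\mu<1$; neither the barrier $W_M$ nor the function $h(x)=|x|^2+2Rx_n$ decays when you pass from $\p B_r$ to $\p B_{r/2}$, so no oscillation reduction is bought. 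Without $\mu<1$ the recursion $\omega(r)\leq \mu\,\omega(2r)+C(Mr^\alpha+Nr)$ degenerates and yields nothing. Your remark that the exponent $\alpha/(\alpha+2)$ comes from ``balancing three scales'' including $r^{-\log_2\mu}$ is therefore not grounded.

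The paper's proof avoids iteration entirely and is considerably simpler. After normalizing to $x_0=0$, $\Omega\subset\{x_n>0\}$, and $\|\varphi\|_{C^\alpha}+\|g\|_{L^n}=1$, it first uses a global ABP estimate to get $|v|\leq C_0$. Then, on a \emph{single} cap $A=\Omega\cap B_{\delta_2}(0)$, it applies ABP to
\[
h_{\pm}(x)=v(x)-v(0)\pm\e\pm C_1\bigl(\inf\{y_n:\ y\in\overline\Omega\cap\p B_{\delta_2}\}\bigr)^{-1}x_n,
\]
where the \emph{linear} barrier in $x_n$ is scaled precisely so that it dominates the global bound $C_1$ on the spherical part $\Omega\cap\p B_{\delta_2}$, while the constant $\e$ absorbs the boundary data on $\p\Omega\cap B_{\delta_2}$ once $\delta_2\leq\e^{1/\alpha}$. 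Uniform convexity gives $\inf\{y_n\}\gtrsim\delta_2^{2}$, so one obtains
\[
|v(x)-v(0)|\leq 2\e + C\,\e^{-2/\alpha}|x|,
\]
and the choice $\e=|x|^{\alpha/(\alpha+2)}$ balances the two terms, producing the exponent directly. Your concave barrier $Ax_n^{\alpha/2}$ is a correct supersolution and would give the sharper exponent $\alpha/2$ when $g\equiv 0$, but it does not handle the spherical boundary of the cap; the paper's linear barrier, with its $\delta_2$-dependent coefficient, is exactly what takes care of that piece in one shot.
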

\newpage
\section{Proof of global $W^{4,p}$ and $C^{4,\alpha}$ estimates}
In this section, we give the proof of Theorem \ref{keythm}, following the presentation in \cite{Le16}. We focus on the 
 global $W^{4,p}$ estimates.  The global $C^{4,\alpha}$ estimates then easily follow.
 
Let $p> n$ and let $\Omega$ be a bounded, uniformly convex 
domain in $\R^{n}$ with $\p\Omega\in C^{3,1}$. Assume $\varphi \in W^{4,p}(\Omega), \psi\in W^{2,p}(\Omega)$, $\inf_{\Omega}\psi>0$ and $f\in L^{p}(\Omega)$. 
Suppose a uniformly convex solution $u\in C^{4}(\overline{\Omega})$ solves (\ref{AMCE})-(\ref{SBV}).

We denote by $C, C', C_1, C_2, c, c_1$, etc,  {\it universal constants} that may change from line to line. Unless stated otherwise, they depend only 
on $n, p, G, \Omega$, $\norm{f}_{L^{p}(\Omega)}$, $\norm{\varphi}_{W^{4,p}(\Omega)}, \norm{\psi}_{W^{2,p}(\Omega)}$, and $\inf_{\Omega} \psi.$
We briefly explain the structure of the proof. 

The key step to global $W^{4,p}$ estimates for (\ref{AMCE})-(\ref{SBV}) consists in showing that the Hessian determinant $\det D^2 u$ is bounded between two positive universal constants $C_1 $ and $C_2$. Once this is done, the proof can be easily completed by using
two global regularity results in Theorems \ref{TWC2} and \ref{global-h}.

The proof of a uniform lower bound for $\det D^2 u$ is quite easy. It is just an application of the Aleksandrov-Bakelman-Pucci (ABP) maximum principle. The most difficult part 
of the proof is to get a uniform upper bound on $\det D^2u$.

Our key insight to prove a uniform upper bound for $\det D^2 u$ is to apply the ABP estimate to the  dual equation of (\ref{AMCE}) via the Legendre transform.
For this, we need the coercivity condition (A2) and a global gradient bound for $u$; see Lemma \ref{boundd}.

The proof of a global gradient bound for $u$ is more involved. 
First, we prove the global a priori bound on $u$ in Lemma \ref{udet}, assuming only (A1), by testing against smooth 
concave functions $\hat u$ and convex functions $\tilde u$ having generalized affine 
mean curvature $L[\tilde u]$  bounded in $L^1$ (Lemma \ref{geoc}). By (A3), we 
have a uniform lower bound for 
$\det D^2 u$ (Lemma \ref{upwlem}). 
Next, by using boundary H\"older estimates for second-order equations with lower bound on the determinant of the coefficient matrix in Proposition \ref{global-holder} to $U^{ij}w_{ij}=f$, 
we obtain a uniform bound for $\det D^2 u$ near the boundary. This, together the global bound on $u$, 
allows us to construct barriers using the strict convexity of $\p\Omega$ to obtain the global gradient bound for $u$;
see Lemma \ref{Dubound}.

\subsection{Test functions}
Our basic geometric construction is the following:
\begin{lem}\label{geoc} There exist a convex function $\tilde u\in W^{4,p}(\Omega)$ and a concave function $\hat u\in W^{4,p}(\Omega)$
 with the following properties:
 \begin{myindentpar}{1cm}
  (i) $\tilde u=\hat u=\varphi$ on $\p\Omega$,\\
  (ii) $$
\| \tilde{u} \|_{C^{3}(\ov{\Omega})} + \| \hat{u} \|_{C^{3}(\ov{\Omega})} +
\|\tilde u\|_{W^{4, p}(\Omega)} + \|\hat u\|_{W^{4, p}(\Omega)}\leq C,\quad \textrm{and } \det D^2\tilde{u} \ge C^{-1}>0,
$$
(iii) letting $\tilde{w}=G'(\det D^2 \tilde{u})$, and denoting by $(\tilde{U}^{ij})$ the 
cofactor matrix of $(\tilde{u}_{ij})$, then  the generalized affine mean curvature of the graph of $\tilde u$ is uniformly bounded in $L^p$, that is $$\left\|\tilde U^{ij}\tilde w_{ij}\right\|_{L^p(\Omega)}\leq C,$$
 \end{myindentpar}
where $C$ depends only on $n$, $p$, $\Omega$, 
$G$, and $\| \varphi\|_{W^{4, p}(\Omega)}$.
 
\end{lem}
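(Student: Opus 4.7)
The plan is to take both $\tilde u$ and $\hat u$ as explicit perturbations of the boundary data $\varphi$ by a uniformly convex defining function of $\Omega$, so that no auxiliary PDE needs to be solved. Since $\partial\Omega \in C^{3,1}$ and $\Omega$ is uniformly convex, I would first produce a function $\rho \in C^{3,1}(\overline{\Omega})$ with $\rho|_{\partial\Omega}=0$, $\rho<0$ in $\Omega$, and $D^2\rho \ge c_0 I_n$ on $\overline{\Omega}$ for some $c_0=c_0(\Omega)>0$; this is standard for uniformly convex $C^{3,1}$ domains (start from $-\mathrm{dist}(\cdot,\partial\Omega)$ near $\partial\Omega$ and interpolate). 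Since $p>n$, the Sobolev embedding $W^{4,p}(\Omega)\hookrightarrow C^{3,\alpha}(\overline{\Omega})$ with $\alpha=1-n/p$ yields $\|\varphi\|_{C^3(\overline{\Omega})} \le C\|\varphi\|_{W^{4,p}(\Omega)}$.

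Next, I would set
$$
\tilde u := \varphi + A\rho, \qquad \hat u := \varphi - A\rho,
$$
where $A=A(n,c_0,\|\varphi\|_{W^{4,p}(\Omega)})$ is chosen so that $Ac_0 \ge 1 + \|D^2\varphi\|_{L^\infty(\Omega)}$. Because $\rho$ vanishes on $\partial\Omega$, property (i) is immediate. The choice of $A$ gives $D^2\tilde u \ge I_n$ and $D^2 \hat u \le -I_n$, so $\tilde u$ is uniformly convex and $\hat u$ is uniformly concave; since $D^2\tilde u$ is also bounded above by $\|D^2\varphi\|_\infty + A\|D^2\rho\|_\infty$, the quantity $\det D^2\tilde u$ is pinched between two positive constants. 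The $C^3$ and $W^{4,p}$ bounds in (ii) follow from $\|\varphi\|_{W^{4,p}(\Omega)} + A\|\rho\|_{C^{3,1}(\overline{\Omega})} \le C$.

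The only remaining point is the $L^p$ estimate on the generalized affine mean curvature in (iii). Writing $d := \det D^2\tilde u$, the pinching of $d$ between two positive constants forces $G'(d)$, $G''(d)$ to be uniformly bounded. Expanding
$$
\tilde w_{ij} = G''(d)\, d_i\, d_j + G'(d)\, d_{ij},
$$
I note that $d_i$ involves at most third derivatives of $\tilde u$ (hence is pointwise bounded by the $C^3$ bound), while $d_{ij}$ involves at most fourth derivatives of $\tilde u$ (hence lies in $L^p(\Omega)$ by the $W^{4,p}$ bound on $\tilde u$). Since the cofactor entries $\tilde U^{ij}$ are bounded by the $C^3$ bound on $\tilde u$, one concludes
$$
\bigl\|\tilde U^{ij}\tilde w_{ij}\bigr\|_{L^p(\Omega)} \le C\bigl(1 + \|D^4\tilde u\|_{L^p(\Omega)}\bigr) \le C.
$$

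No step here is genuinely difficult; the one place that requires some care is the existence of a $C^{3,1}$ uniformly convex defining function $\rho$, which follows from standard facts about uniformly convex $C^{3,1}$ domains. The direct formula $\tilde u = \varphi \pm A\rho$ is the natural substitute for solving an auxiliary Monge-Amp\`ere problem (which would otherwise require Theorem \ref{TWC2} plus higher-regularity bootstrapping) and delivers all of (i)--(iii) at once.
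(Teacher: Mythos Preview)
Your approach is essentially the same as the paper's: the paper also picks a uniformly convex defining function $\rho$ and sets $\tilde u=\varphi+\mu(e^{\rho}-1)$, $\hat u=\varphi-\mu(e^{\rho}-1)$ for large $\mu$; your linear perturbation $\varphi\pm A\rho$ works equally well here, and the verification of (i)--(iii) proceeds identically.

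One small slip: since $\tilde w=G'(d)$, the correct expansion is $\tilde w_{ij}=G'''(d)\,d_i d_j+G''(d)\,d_{ij}$, not $G''(d)\,d_i d_j+G'(d)\,d_{ij}$. This does not affect your argument, because $d$ is pinched between positive constants and $G$ is smooth, so $G'''(d)$ is bounded just as $G''(d)$ is; the rest of your $L^p$ estimate goes through unchanged.
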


\begin{proof} 
Let $\rho$ be a strictly convex defining function of $\Omega$, that is 
$\Omega:=\{x\in \R^n: \rho(x)<0\}$, $\rho=0$ on $\p\Omega$ and $D\rho\neq 0$ on $\p\Omega$.
Then $$D^2 \rho\geq \eta I_n~ \text{and} ~\rho\geq -\eta^{-1} ~\text{in}~ \Omega$$ for some $\eta>0$ depending only on $\Omega$. 
Consider the following functions
$$\tilde u(x) =\varphi(x) + \mu (e^{\rho}-1), \hat{u}= \varphi(x) - \mu (e^{\rho}-1).$$
Then $\tilde u, \hat u \in W^{4,p}(\Omega)$. 
From
$$D^2 (e^{\rho}-1)=e^{\rho}(D^2\rho + D\rho \otimes D\rho)\geq e^{-\eta^{-1}}\eta I_n,$$
we find that for a fixed but sufficiently large $\mu$ (depending only on $n, p, \Omega$ and $\|\varphi\|_{W^{4, p}(\Omega)}$), $\tilde u$ is convex while $\hat u$ is concave and; moreover, recalling $p>n$, (i) and (ii) are satisfied. 
From (ii), the 
smoothness of $G$ and 
$$\tilde w_{ij}= G'''(\det D^2\tilde u) \tilde U^{kl}\tilde U^{rs} \tilde u_{kli}\tilde u_{rsj} + G''(\det D^2\tilde u) \tilde U^{kl}\tilde u_{klij}
+ G''(\det D^2\tilde u) \tilde U^{kl}_j \tilde u_{kli},$$
we easily obtain (iii).
\end{proof}
\subsection{$L^1$ bound and lower bound on the Hessian determinant }
The following lemma gives a uniform $L^1$ bound on $\det D^2 u$ and as a consequence, a uniform bound on $u$. 
\begin{lem}\label{udet} Assuming (A1), we have 

$$(i)~~\int_{\Omega} \det D^2 u \le C,~\text{and}~
(ii)~ \sup_{\Omega} |u| \le C,$$
where $C$ depends only on $n$, $p$, $\Omega$, 
$G$, $\| f\|_{L^1(\Omega)}$, $\| \varphi\|_{W^{4, p}(\Omega)}$, $\| \psi\|_{L^{\infty}(\Omega)}$ and $\inf_{\partial \Omega}\psi$.

\end{lem}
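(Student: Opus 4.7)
I will prove (i) first, by a test function argument exploiting hypothesis (A1), and then deduce the missing lower bound in (ii) via Aleksandrov's maximum principle; the upper bound in (ii) is immediate. Indeed, since $u$ is convex on the convex set $\overline{\Omega}$ with $u|_{\partial\Omega}=\varphi$, the maximum principle for convex functions gives $\sup_\Omega u\le\sup_{\partial\Omega}\varphi\le C\|\varphi\|_{W^{4,p}(\Omega)}$ by the Sobolev embedding $W^{4,p}\hookrightarrow C^0$ for $p>n$.

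\textbf{$L^1$ bound on $\det D^2 u$.} The central identity will come from pairing the equation $U^{ij}w_{ij}=f$ with $\phi:=\hat u-\tilde u$ from Lemma \ref{geoc}; since $\hat u=\tilde u=\varphi$ on $\partial\Omega$, $\phi$ vanishes there. The particular choice $\phi=\hat u-\tilde u$, rather than $\phi=\hat u-u$ or $\phi=\tilde u-u$, is crucial: it is the only option among these candidates that produces no boundary term depending on the unknown $u_\nu|_{\partial\Omega}$. Integrating by parts twice using $\partial_j U^{ij}=0$ (Lemma \ref{divfreeU}) and $w|_{\partial\Omega}=\psi$ yields
$$\int_\Omega f\phi\,dx\;=\;\int_\Omega w\,U^{ij}\phi_{ij}\,dx\;-\;\int_{\partial\Omega}\psi\,U^{\nu\nu}\,\phi_\nu\,dS.$$
The bulk integrand will have a definite sign: concavity of $\hat u$ together with positive definiteness of $U$ gives $U^{ij}\hat u_{ij}\le 0$, while the matrix AM-GM inequality combined with $\det D^2\tilde u\ge c>0$ (Lemma \ref{geoc}(ii)) yields $U^{ij}\tilde u_{ij}\ge n(\det U)^{1/n}(\det D^2\tilde u)^{1/n}\ge c_1(\det D^2 u)^{(n-1)/n}$. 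The boundary integrand is controlled a priori because $(\hat u-\tilde u)_\nu$ is bounded by the $C^3$ estimate of Lemma \ref{geoc}(ii) and $U^{\nu\nu}|_{\partial\Omega}=\det(\varphi_{TT})$ depends only on the Dirichlet data. These combine into a weighted estimate
$$\int_\Omega w\,(\det D^2 u)^{(n-1)/n}\,dx\;\le\;C,$$
and hypothesis (A1) — equivalent to $d\mapsto d^{1-1/n}w(d)$ being nonincreasing — together with the positive boundary value $w|_{\partial\Omega}=\psi\ge\inf\psi>0$, will upgrade this to the desired $L^1$ bound $\int_\Omega\det D^2 u\,dx\le C$.

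\textbf{Lower bound for $u$.} Once (i) is in hand, the convex function $v:=u-\sup_{\partial\Omega}\varphi$ is nonpositive on $\partial\Omega$, so Aleksandrov's maximum principle yields
$$\sup_\Omega(-v)\;\le\;c_n\,\text{diam}(\Omega)\Bigl(\int_\Omega\det D^2 u\,dx\Bigr)^{1/n}\;\le\;C,$$
whence $\inf_\Omega u\ge-C$, completing (ii).

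\textbf{Main obstacle.} The heart of the difficulty lies in part (i), specifically in the upgrade from the weighted inequality $\int w\,d^{(n-1)/n}\,dx\le C$ to the genuine bound $\int d\,dx\le C$. Under (A1) alone these two quantities are genuinely different, and the argument must invoke the positive boundary value $\psi\ge\inf\psi>0$, which pins $d|_{\partial\Omega}=(G')^{-1}(\psi)$ inside a bounded positive interval, to bridge them via the monotonicity of $d^{1-1/n}w(d)$. The second delicate point is the choice of test function: using $\phi=\hat u-u$ or $\phi=\tilde u-u$ would yield an inescapable boundary term involving the a priori uncontrolled normal derivative $u_\nu|_{\partial\Omega}$, whereas $\phi=\hat u-\tilde u$ isolates only quantities that Lemma \ref{geoc} already bounds.
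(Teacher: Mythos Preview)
Your proposal contains two genuine gaps, and both occur precisely at the points you yourself flag as delicate.

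\textbf{The boundary term is not controlled.} You assert that $U^{\nu\nu}|_{\partial\Omega}=\det(\varphi_{TT})$ depends only on the Dirichlet data. This is false. Although $u=\varphi$ on $\partial\Omega$ forces the \emph{first} tangential derivatives to agree, the tangential--tangential entries of the ambient Hessian $D^2u$ pick up the second fundamental form of $\partial\Omega$ times the normal derivative: in principal coordinates at a boundary point,
\[
D_{ij}u \;=\; D_{ij}\varphi + (u_\nu-\varphi_\nu)\kappa_i\,\delta_{ij},\qquad i,j<n,
\]
so that $U^{\nu\nu}=\det(D_{ij}u)_{i,j<n}=K\,(u_\nu)^{n-1}+O\bigl((u_\nu^+)^{n-2}\bigr)$, with $K$ the Gauss curvature of $\partial\Omega$. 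Thus the boundary integral $\int_{\partial\Omega}\psi\,U^{\nu\nu}\,\phi_\nu$ contains precisely the unknown quantity $u_\nu$ you sought to avoid; your choice $\phi=\hat u-\tilde u$ eliminates $u_\nu$ from the factor $\phi_\nu$ but not from the factor $U^{\nu\nu}$.

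\textbf{The upgrade step fails.} Even granting the weighted bound $\int_\Omega w\,d^{(n-1)/n}\,dx\le C$ with $d=\det D^2u$, hypothesis (A1) works \emph{against} you: it says $d\mapsto d^{(n-1)/n}w(d)$ is nonincreasing, hence bounded above for $d\ge1$, so the weighted integrand is itself dominated by a constant where $d$ is large. Consequently $\int w\,d^{(n-1)/n}\le C$ is strictly weaker than $\int d\le C$, and the boundary information $w|_{\partial\Omega}\ge\inf\psi>0$ (which only bounds $d$ on $\partial\Omega$) cannot bridge the gap in the interior. For the model case $G(d)=d^\theta/\theta$ one has $w\,d^{(n-1)/n}=d^{\theta-1/n}$ with $\theta<1/n$, which tends to $0$ as $d\to\infty$.

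The paper's proof takes the route you explicitly reject: it \emph{does} test against $\tilde u-u$ and $\hat u-u$, accepts the resulting boundary terms involving $u_\nu$, and exploits the identity $U^{\nu\nu}=K(u_\nu)^{n-1}+\text{lower order}$ together with $\inf_{\partial\Omega}(K\psi)>0$ to obtain a self-improving inequality of the form
\[
\int_{\partial\Omega}(u_\nu^+)^n \;\le\; C + C\int_{\partial\Omega}(u_\nu^+)^{n-1} + C\Bigl(\int_{\partial\Omega}(u_\nu^+)^n\Bigr)^{1/n},
\]
which closes by H\"older. The concave test function $\hat u$ then converts the boundary control $\int_{\partial\Omega}(u_\nu^+)^n\le C$ into $\int_\Omega\det D^2u\le C$ via the identity $\int_\Omega U^{ij}(u-\hat u)_{ij}=\int_{\partial\Omega}U^{\nu\nu}(u_\nu-\hat u_\nu)$.
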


\begin{proof}[Proof of Lemma \ref{udet}]  
Let $\tilde u$ be as in Lemma \ref{geoc}. Set $\tilde f= \tilde U^{ij}\tilde w_{ij}$.
The assumption (A1) implies that the function $\tilde G(d):= G(d^n)$ is concave because
$$\tilde G^{''}(d)= n^2 d^{n-2} \left[w'(d^n) d^n + (1-\frac{1}{n})w(d^n)\right]\leq 0.$$
Using this, $G'>0$, and the concavity of the map $M\longmapsto (\det M)^{1/n}$ in the space of symmetric matrices $M\geq 0$ (see Lemma \ref{concavelem}), we obtain
\begin{eqnarray*}
 \tilde G((\det D^2 \tilde u)^{1/n}) -\tilde G((\det D^2  u)^{1/n})&\leq& \tilde G^{'}((\det D^2 u)^{1/n})((\det D^2 \tilde u)^{1/n}-(\det D^2 u)^{1/n})\\
 &\leq & \tilde G^{'}((\det D^2 u)^{1/n})\frac{1}{n} (\det D^2 u)^{1/n-1} U^{ij} (\tilde u-u)_{ij} .
\end{eqnarray*}
Since $\tilde G^{'}((\det D^2 u)^{1/n}) = n G^{'}(\det D^2 u) (\det D^2 u)^{\frac{n-1}{n}}$, we rewrite the above inequalities as
$$G(\det D^2 \tilde u)- G(\det D^2 u)\leq wU^{ij}(\tilde u- u)_{ij}.$$
Similarly,
$$G(\det D^2 u)- G(\det D^2 \tilde u)\leq \tilde w \tilde U^{ij}(u- \tilde u)_{ij}.$$
Adding, integrating by parts twice and using the fact that $(U^{ij})$ is divergence free, we obtain
\begin{eqnarray*}
 0&\leq& \int_{\Omega} wU^{ij}(\tilde u- u)_{ij} + \tilde w \tilde U^{ij}(u- \tilde u)_{ij}\\
 &=& \int_{\partial \Omega} \psi U^{ij} (\tilde{u}_j - u_j) \nu_i + \int_{\Omega} f(\tilde{u}-u) + 
 \int_{\partial \Omega} \tilde w \tilde{U}^{ij} (u_j - \tilde{u}_j) \nu_i + \int_{\Omega} \tilde f (u-\tilde{u}) . 
\end{eqnarray*}
Here $\nu= (\nu_{1},\cdots,\nu_n)$ is the unit outer normal vector field on $\p\Omega$. It follows that
\begin{eqnarray} \label{key1}
\int_{\Omega} (f- \tilde f)u + \int_{\partial \Omega} \left(\psi U^{ij} (u_j -\tilde{u}_j ) \nu_i  +
 \tilde w \tilde{U}^{ij} (\tilde{u}_j - u_j) \nu_i\right) &\le& \int_{\Omega} (f-\tilde f) \tilde u\nonumber\\&\leq& 
(\|f\|_{L^1(\Omega)} + \|\tilde f\|_{L^1(\Omega)})\|\tilde u\|_{L^{\infty}(\Omega)}\leq 
C.
\end{eqnarray}
Let us analyze the boundary terms in (\ref{key1}). Since $u-\tilde u=0$ on $\p\Omega$, we have $(u-\tilde u)_j= (u-\tilde u)_{\nu} \nu_j$, and hence
$$U^{ij}(u-\tilde u)_j \nu_i= U^{ij}\nu_j \nu_i (u-\tilde u)_{\nu} = U^{\nu\nu}(u-\tilde u)_{\nu}\equiv (\det D^2_{x'} u)(u-\tilde u)_{\nu},$$
with $x'\perp \nu$ denoting the tangential directions along $\p\Omega$. Therefore, 

\begin{equation} \label{nunu}
U^{ij} (u_j - \tilde{u}_j) \nu_i = U^{\nu\nu} (u_{\nu} - \tilde{u}_{\nu}), \quad \tilde{U}^{ij} (\tilde{u}_j - u_j) \nu_i = \tilde{U}^{\nu\nu} (\tilde{u}_{\nu} - u_{\nu}).
\end{equation}

On the other hand, from $u-\varphi=0$ on $\p\Omega$, we have, with respect to a principal coordinate system at any point $y\in\p\Omega$ (see, e.g., 
\cite[formula (14.95) in \S 14.6]{GT})
$$D_{ij}(u-\varphi)= (u-\varphi)_{\nu}\kappa_i\delta_{ij}, i, j=1, \cdots, n-1,$$
where $\kappa_1,\cdots,\kappa_{n-1}$ denote the principal curvatures of $\p\Omega$ at $y$. Let $K=\kappa_1\cdots\kappa_{n-1}$ be the Gauss curvature of $\partial \Omega$ at
$y\in\p\Omega$. Then, at any $y\in\Omega$, by noting that 
\begin{equation}\label{Gauss1}U^{\nu\nu}=\det D^2_{x'} u =\det (D_{ij}u)_{1\leq i, j\leq n-1}
\end{equation} and taking the determinants of
\begin{equation}\label{Gauss2}D_{ij} u = u_{\nu}\kappa_i\delta_{ij}-\varphi_{\nu}\kappa_i\delta_{ij} + D_{ij}\varphi,
\end{equation}
we obtain, with $u_{\nu}^+ =\max (0, u_{\nu})$,
\begin{equation} \label{Gaussc}
U^{\nu\nu}  = K (u_{\nu})^{n-1} + E, \quad \textrm{where } |E| \le C (1+ |u_{\nu}|^{n-2}) \leq C(1 + (u_{\nu}^+)^{n-2}).
\end{equation}
In the last inequality of (\ref{Gaussc}), we used the following fact which is due to the convexity of $u$:
\begin{equation}
 \label{lowerunu} u_{\nu}\geq -\|D\varphi\|_{L^{\infty}(\Omega)}.
\end{equation}

Now, let $\hat u$ be as in Lemma \ref{geoc}. Integrating by parts twice, and using (\ref{nunu}), we find that
\begin{equation}\label{hateq}\int_{\Omega} U^{ij} (u-\hat u)_{ij}= \int_{\p\Omega} U^{ij} (u-\hat u)_{i}\nu_j= \int_{\p\Omega} U^{\nu\nu} (u_{\nu}-\hat u_{\nu}).
\end{equation}
By Lemma \ref{geoc}, $\hat u_{\nu}$ is bounded by a universal constant. 
The concavity of $\hat u$ gives $U^{ij} \hat u_{ij}\leq 0$. Thus,
using $U^{ij} u_{ij}= n\det D^2 u$, we obtain from (\ref{Gaussc})-(\ref{hateq}) the following estimates
\begin{equation}\int_{\Omega} \det D^2 u \leq  \int_{\p\Omega} U^{\nu\nu} (u_{\nu}-\hat u_{\nu}) \leq
C + C
\int_{\p\Omega}  (u_{\nu}^+)^n .
 \label{l1det}
\end{equation}
The Aleksandrov's maximum principle (see Lemma \ref{ABPmax2}) then gives

\begin{equation}\label{umax}\|u\|_{L^{\infty}(\Omega)} \leq \|\varphi\|_{L^{\infty}(\p\Omega)} + C(n) \text{diam}(\Omega)\left(\int_{\Omega} \det D^2 u\right)^{1/n} \leq C + C\left(
\int_{\p\Omega}  (u_{\nu}^+)^n \right)^{1/n}.
\end{equation}

By Lemma \ref{geoc}, $\tilde{u}_{\nu}, \tilde w$, $\tilde{U}^{\nu\nu}$ and $\|\tilde f\|_{L^1(\Omega)}$ are uniformly bounded. Taking (\ref{key1})-(\ref{Gaussc}) 
and (\ref{umax}) into account, we obtain
\begin{eqnarray*} 
 \int_{\partial \Omega} K \psi (u_{\nu}^+)^n  &\le& C + C \int_{\partial \Omega} (u_{\nu}^+)^{n-1} - \int_{\Omega} (f - \tilde f)u\\
 &\leq& C + C \int_{\partial \Omega} (u_{\nu}^+)^{n-1} + C\left(
\int_{\p\Omega}  (u_{\nu}^+)^n \right)^{1/n}.
\end{eqnarray*} 
From H\"older inequality, $n\geq 2$ and the fact that $K\psi$ has a positive lower bound, we easily obtain
$$\int_{\partial \Omega} (u_{\nu}^+)^n \le C,$$
from which the claimed uniform bound for $u$ in (ii) follows by (\ref{umax}).
Recalling (\ref{l1det}), we obtain 
the desired bound for the $L^1$ norm of $\det D^2 u$ stated in (i).
\end{proof}

We prove the uniqueness part of Theorem \ref{mainthm} in the following lemma.
\begin{lem}
The problem (\ref{AMCE})-(\ref{SBV}) has at most one strictly convex solution $u\in W^{4,p}(\Omega)$.
\end{lem}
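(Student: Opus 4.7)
The plan is to re-run the concavity-and-integration-by-parts computation opening the proof of Lemma \ref{udet}, now with the test function $\tilde u$ replaced by a second solution. Suppose $u_1, u_2 \in W^{4,p}(\Omega)$ are two strictly convex solutions, set $w_i = G'(\det D^2 u_i)$, and let $(U_i^{ij})$ denote the cofactor matrix of $D^2 u_i$. Condition (A1) makes $H(M) := G(\det M)$ concave on positive-definite symmetric matrices, so
\[ G(\det D^2 u_1) - G(\det D^2 u_2) \leq w_2\, U_2^{ij}(u_1-u_2)_{ij}, \]
together with the symmetric inequality obtained by swapping indices. Summing, integrating over $\Omega$, and integrating by parts twice --- using that $(U_i^{ij})$ is divergence free, that $u_1 - u_2 = 0$ on $\partial\Omega$, that $U_i^{ij}(w_i)_{ij} = f$, and the identity (\ref{nunu}) --- the interior contributions cancel via $\int_\Omega f(u_1 - u_2) + \int_\Omega f(u_2 - u_1) = 0$, and using $w_i|_{\partial\Omega} = \psi$ the boundary terms combine into
\[ 0 \leq \int_{\partial\Omega} \psi\,(U_2^{\nu\nu} - U_1^{\nu\nu})(u_{1,\nu} - u_{2,\nu})\, d\sigma. \]

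The next step is to show that this boundary integrand is pointwise nonpositive. Since $u_1 = u_2 = \varphi$ on $\partial\Omega$, formulas (\ref{Gauss1})--(\ref{Gauss2}) give, in principal coordinates at any boundary point, $D_{ij}(u_1 - u_2) = (u_{1,\nu} - u_{2,\nu})\kappa_i\delta_{ij}$ for $1 \leq i, j \leq n-1$. Because the $\kappa_i$ are positive (uniform convexity of $\Omega$) and each tangential Hessian of $u_k$ is positive definite (strict convexity of $u_k$), the identity $\tfrac{d}{dt}\det A(t) = \mathrm{tr}(\mathrm{adj}(A(t))\cdot\mathrm{diag}(\kappa_i))$ shows that $U_k^{\nu\nu}$ is a strictly increasing function of $u_{k,\nu}$. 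Hence $(U_2^{\nu\nu} - U_1^{\nu\nu})$ and $(u_{1,\nu} - u_{2,\nu})$ always have opposite signs; combined with $\psi > 0$, the boundary integral is $\leq 0$, and the sandwich forces it to vanish.

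The vanishing of this boundary integral, combined with the pointwise nonnegativity of the deficits in the two concavity inequalities, forces those inequalities to be equalities a.e.\ in $\Omega$. Decomposing the concavity of $H$ through the matrix concavity of $(\det M)^{1/n}$ followed by the concave increasing scalar function $\tilde G(d) = G(d^n)$, equality in the matrix step yields $D^2 u_1 = \lambda(x) D^2 u_2$ for a positive function $\lambda$, and strict concavity of $\tilde G$ (which holds whenever (A1) is strict, as is the case for the $G$'s in Remark \ref{ABrem}) then forces $\det D^2 u_1 = \det D^2 u_2$ a.e., giving $\lambda \equiv 1$ and $D^2 u_1 = D^2 u_2$ a.e. Then $u_1 - u_2$ is affine and vanishes on $\partial\Omega$, so $u_1 \equiv u_2$. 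The main obstacle is this final strictness step: if (A1) degenerates to pointwise equality somewhere, one must additionally invoke the by-product $u_{1,\nu} \equiv u_{2,\nu}$ on $\partial\Omega$ (freely available from vanishing of the boundary integrand) together with the Monge--Amp\`ere equations satisfied by the $u_i$ in order to pin down $\lambda \equiv 1$.
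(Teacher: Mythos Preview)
Your proposal is correct and follows essentially the same route as the paper: sum the two concavity inequalities, integrate by parts twice to reduce to the boundary integral, use (\ref{Gauss1})--(\ref{Gauss2}) to see that $U^{\nu\nu}$ is strictly monotone in $u_\nu$ so the integrand has a definite sign, conclude the integral vanishes, and hence the concavity inequalities are equalities a.e. The only difference is in the endgame. Once equality in the concavity step forces $\det D^2 u_1=\det D^2 u_2$, the paper finishes in one line by the Monge--Amp\`ere comparison principle (Lemma~\ref{comp-prin}): same Hessian determinant and same boundary values give $u_1=u_2$. This is shorter than your detour through the proportionality $D^2 u_1=\lambda(x)D^2 u_2$ and the affine-difference argument. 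Your caveat about strictness of (A1) is a fair observation; the paper handles this point tersely, but note that the borderline case $G(t)=t^{1/n}$ (where $\tilde G$ is affine) is excluded by the standing hypotheses since (A3) fails for it.
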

\begin{proof} Suppose $u$ and $\tilde u$ are two solutions. We use the same notation as in the proof of Lemma \ref{udet}. Then, using concavity of 
the functional $J$, we obtain as in (\ref{key1}) the estimate
\begin{eqnarray} 
\label{Gauss3}
 0\geq \int_{\partial \Omega} \psi (U^{ij}-\tilde{U}^{ij}) (u_j -\tilde{u}_j ) \nu_i=\int_{\partial\Omega} \psi (U^{\nu\nu}-\tilde U^{\nu\nu})(u_{\nu}-\tilde u_{\nu}).
\end{eqnarray}
It is clear from (\ref{Gauss1}) and (\ref{Gauss2}) that if $u_\nu>\tilde u_{\nu}$ then $U^{\nu\nu}>\tilde U^{\nu\nu}$. Therefore (\ref{Gauss3}) and (\ref{key1}) are now
actually equalities and 
$u_\nu=\tilde u_\nu$ on $\p\Omega$. Thus $Du=D\tilde u$ on $\p\Omega$.
Using the concavity of $J$ in the derivation of (\ref{key1}), we obtain $\det D^2 u=\det D^2 \tilde u$ in $\Omega$. Hence $u=\tilde u$ on $\overline{\Omega}$.
\end{proof}
The next lemma gives a uniform lower bound on the Hessian determinant $\det D^2 u$. 
\begin{lem} \label{upwlem} Assume (A3) is satisfied. 
Then, there exists a constant $C>0$ depending 
only on $n, p, G, \Omega$, $\norm{f}_{L^{n}(\Omega)}$ and $\norm{\psi}_{W^{2,p}(\Omega)}$ 
such that 
\begin{equation*}
w\leq C,~\text{and}~ \det D^2 u\geq C^{-1}.
\end{equation*}
\end{lem}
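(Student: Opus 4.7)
The approach is to apply the Aleksandrov--Bakelman--Pucci (ABP) maximum principle to the function $w$, which satisfies the linear equation $U^{ij}w_{ij}=f$ with boundary value $\psi$, and then exploit the coercivity encoded in (A3) to absorb the unknown supremum of $w$ on the right-hand side. Since $u\in C^4(\overline{\Omega})$ is uniformly convex, $w=G'(\det D^2 u)$ is a strictly positive $C^2$ function on $\overline{\Omega}$, so ABP applies classically. The key quantitative input is the identity $\det(U^{ij})=(\det D^2 u)^{n-1}$, which gives
$$\sup_{\Omega} w \;\leq\; \sup_{\partial\Omega}\psi \;+\; C(n)\,\mathrm{diam}(\Omega)\left(\int_{\Omega}\frac{(f^-)^n}{(\det D^2 u)^{n-1}}\,dx\right)^{1/n}.$$
The boundary term is controlled by $\|\psi\|_{L^\infty(\partial\Omega)}\leq C\|\psi\|_{W^{2,p}(\Omega)}$ via Sobolev embedding since $p>n$.

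The plan now is to bound the integral on the right in terms of $M:=\sup_{\Omega} w$ in a self-improving fashion. Condition (A3) states $d^{1-1/n}G'(d)\to\infty$ as $d\to 0^+$; rewritten in the variable $w=G'(d)$, for every $K>0$ there exists $\delta=\delta(K,G)>0$ such that whenever $w(x)>M_0:=G'(\delta)$, the corresponding $d=\det D^2 u(x)$ satisfies $d^{n-1}\geq K^n w^{-n}$. I split $\Omega=\Omega_1\cup\Omega_2$ according to whether $w\leq M_0$ or $w>M_0$. On $\Omega_1$, $\det D^2 u\geq \delta$, so the integrand is bounded by $\delta^{-(n-1)}(f^-)^n$. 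On $\Omega_2$, $(\det D^2 u)^{-(n-1)}\leq K^{-n}w^n \leq K^{-n}M^n$. Combining,
$$M \;\leq\; \sup_{\partial\Omega}\psi \;+\; C(n)\,\mathrm{diam}(\Omega)\bigl(\delta^{-(n-1)/n}+K^{-1}M\bigr)\|f\|_{L^n(\Omega)}.$$
Choosing $K$ so large that $C(n)\,\mathrm{diam}(\Omega)K^{-1}\|f\|_{L^n(\Omega)}\leq 1/2$ lets me absorb the $M$-term, giving $M\leq C$ with $C$ depending only on the stated quantities. The lower bound $\det D^2 u\geq C^{-1}$ then follows immediately because $G'$ is strictly decreasing and continuous, so $\det D^2 u=(G')^{-1}(w)\geq (G')^{-1}(C)>0$.

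The main subtlety I anticipate is ensuring the parameter chase is genuinely closed: $\delta$ depends on the choice of $K$, and $K$ must be chosen purely in terms of $n$, $\mathrm{diam}(\Omega)$, and $\|f\|_{L^n(\Omega)}$ (independent of $M$) for the absorption to work. Condition (A3) is precisely what guarantees that such a $\delta=\delta(K)$ exists, so no circularity arises; without (A3) the argument would fail exactly because one loses the ability to dominate $(\det D^2 u)^{-(n-1)}$ by $w^n$ with an arbitrarily small constant. Beyond that, the proof is routine: ABP plus the splitting/absorption trick, both standard tools once the right integrand has been identified.
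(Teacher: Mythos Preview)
Your proof is correct and follows essentially the same approach as the paper: apply the ABP maximum principle to $U^{ij}w_{ij}=f$ using $\det U=(\det D^2 u)^{n-1}$, then invoke (A3) to close. The paper's version is marginally more direct---it simply factors $\sup_\Omega d^{(1-n)/n}$ out of the $L^n$ norm and observes that, since $\sup_\Omega w=G'(\inf_\Omega d)$ and $\sup_\Omega d^{(1-n)/n}=(\inf_\Omega d)^{(1-n)/n}$ are both determined by $d_0:=\inf_\Omega d$, the resulting inequality $d_0^{(n-1)/n}G'(d_0)\le d_0^{(n-1)/n}\sup_{\partial\Omega}\psi+C\|f\|_{L^n}$ contradicts (A3) unless $d_0$ is bounded below---whereas your splitting and absorption achieves the same conclusion.
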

\begin{proof} Let $d:=\det D^2 u$. Then $\det U = d^{n-1}.$ 
We apply the ABP maximum principle in Theorem \ref{ABPmax} to $U^{ij}w_{ij}=f$ in $\Omega$ with
$w=\psi$ on $\p\Omega$ to find that
\begin{equation}\label{Aleksandrov}\sup_{\Omega} w \leq \sup_{\partial \Omega} \psi +C \left\|\frac{f}{d^{(n-1)/n}}\right\|_{L^n(\Omega)}
\leq \sup_{\partial \Omega} \psi +C \left\| f\right \|_{L^n(\Omega)} \sup_{\Omega} (d^{(1-n)/n}),\\
\end{equation}
where $C$ depends only on $n$ and $\Omega$.  The desired upper bound on $w$ follows from \eqref{Aleksandrov} and  assumption (A3) on $G$.  The lower bound for $\det D^2 u=d$ then follows immediately.
\end{proof}
\subsection{Gradient bound}
Now, we prove a key gradient bound for $u$.
\begin{lem}\label{Dubound}
 Assume (A1), and (A3) are satisfied. Then, there exists a constant $C>0$ depending 
only on $n, p, G, \Omega$, $\norm{f}_{L^{n}(\Omega)}$, $\|\varphi\|_{W^{4, p}(\Omega)}$, $\norm{\psi}_{W^{2,p}(\Omega)}$ and $\inf_{\p\Omega}\psi$
such that 
 $$\sup_{\Omega}|Du|\leq C.$$ 
\end{lem}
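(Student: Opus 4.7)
The plan is to separate the interior and boundary behavior of $|Du|$. First I would note that convexity of $u$ reduces the problem to bounding $|Du|$ on $\partial\Omega$: along any line segment through an interior point $x$, the function $t\mapsto Du(x+tv)\cdot v$ is monotone nondecreasing, so choosing $v=Du(x)/|Du(x)|$ and following the line to the exit point $y\in\partial\Omega$ gives $|Du(x)|\le Du(y)\cdot v\le |Du(y)|$. On $\partial\Omega$ the tangential part of $Du$ equals that of $D\varphi$, which is controlled by $\|\varphi\|_{W^{4,p}}$ via the Sobolev embedding $W^{4,p}\hookrightarrow C^3(\overline\Omega)$ (since $p>n$). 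A lower bound on the outer normal derivative $u_\nu$ is already recorded in \eqref{lowerunu}, so everything reduces to a universal upper bound on $u_\nu$ along $\partial\Omega$.

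The next step is to turn Proposition \ref{global-holder} into an upper bound for $\det D^2 u$ in a boundary strip. The linearized equation $U^{ij}w_{ij}=f$ has coefficient matrix $(U^{ij})$ satisfying $\det(U^{ij})=(\det D^2 u)^{n-1}\ge C^{-1}$ by Lemma \ref{upwlem}; moreover $\psi\in W^{2,p}\hookrightarrow C^\alpha(\overline\Omega)$ for some $\alpha\in(0,1)$ and $f\in L^p\subset L^n$. Proposition \ref{global-holder} therefore yields
\[ |w(x)-\psi(x_0)|\le C|x-x_0|^{\alpha/(\alpha+2)}\qquad\forall\,x_0\in\partial\Omega,\ x\in\Omega\cap B_{\delta_0}(x_0). \]
Since $\inf_{\partial\Omega}\psi>0$, a universal $\delta\in(0,\delta_0]$ can be chosen so that $w\ge\tfrac12\inf_{\partial\Omega}\psi$ throughout $S_\delta:=\{x\in\Omega:\operatorname{dist}(x,\partial\Omega)<\delta\}$. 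The strict concavity of $G$ makes $G'$ strictly decreasing, and inverting yields the universal upper bound $\det D^2 u\le M:=\Theta(\tfrac12\inf_{\partial\Omega}\psi)$ in $S_\delta$, where $\Theta=(G')^{-1}$.

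For each $x_0\in\partial\Omega$ I would then build a Monge-Amp\`ere lower barrier in $\Omega\cap B_\delta(x_0)$ and apply the Aleksandrov comparison principle. In local coordinates at $x_0$ (outer normal $e_n$, boundary locally $x_n=h(x')$ with $h$ concave, $h(0)=0$, $Dh(0)=0$, $-D^2 h\ge cI_{n-1}$ by uniform convexity of $\partial\Omega$), take
\[ \bar u(x)=\varphi(0)+D_{x'}\varphi(0)\cdot x'+(B-\mu)x_n+Ax_n^2-Bh(x')-\frac{E|x|^2}{\delta^2}. \]
I would choose universal constants $\mu,A,B,E$ so that: (i) $\bar u$ is convex with $\det D^2\bar u\ge M$ (using $-D^2 h\ge cI$ and $A$ sufficiently large relative to $E/\delta^2$); (ii) on $\partial\Omega\cap B_\delta(x_0)$, the quantitative bound $-h(x')\ge c|x'|^2/2$ allows the term $-(\mu+\varphi_{x_n}(0))h(x')$ to absorb the quadratic Taylor error of $\varphi$ at $x_0$, forcing $\bar u\le\varphi=u$ once $|\mu|$ is taken large; and (iii) on the spherical piece $\Omega\cap\partial B_\delta(x_0)$, the subtracted paraboloid $-E|x|^2/\delta^2=-E$ drops $\bar u$ below $-\|u\|_{L^\infty(\Omega)}$, which by Lemma \ref{udet}(ii) is $\le u$ there. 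Then $\bar u(x_0)=u(x_0)=\varphi(x_0)$, $\det D^2\bar u\ge M\ge\det D^2 u$ in $\Omega\cap B_\delta(x_0)$, and $\bar u\le u$ on $\partial(\Omega\cap B_\delta(x_0))$; the Aleksandrov Monge-Amp\`ere comparison gives $\bar u\le u$ throughout $\Omega\cap B_\delta(x_0)$, so the shared boundary value at $x_0$ yields $u_\nu(x_0)\le\bar u_\nu(x_0)=B-\mu\le C$.

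The main obstacle is the simultaneous fulfillment of (i)--(iii) with a single universal choice of $\mu,A,B,E$: conditions (i) and (iii) push $A$ and $E$ upward, which in turn threatens to spoil (ii). The uniform convexity of $\partial\Omega$, encoded in the lower bound $-h(x')\ge c|x'|^2/2$ for the defining concave function, is precisely what resolves this tension: it upgrades the naive $O(|x|)$ decay on the boundary to an $O(|x'|^2)$ quadratic gain, which exactly cancels the $O(|x'|^2)$ Taylor remainder of $\varphi$ at $x_0$. Without strict convexity of $\partial\Omega$, this cancellation fails and the barrier construction collapses.
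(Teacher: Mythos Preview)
Your overall strategy is exactly the paper's: reduce to an upper bound for $u_\nu$ on $\partial\Omega$; combine Lemma~\ref{upwlem} with Proposition~\ref{global-holder} to obtain $w\ge \tfrac12\inf_{\partial\Omega}\psi$, hence $\det D^2u\le M$, in a boundary strip; then use a Monge--Amp\`ere lower barrier and the comparison principle. The divergence is only in the barrier construction, and here there is a genuine gap.

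The conflict you identify is between (i)--(iii) and (ii), but the real tension is internal to (i) and (iii). Subtracting $E|x|^2/\delta^2$ contributes $-\tfrac{2E}{\delta^2}I_n$ to $D^2\bar u$; to keep $\bar u$ convex you are forced to take $A\gtrsim E/\delta^2$ and $B\gtrsim E/(c\delta^2)$. But then on $\Omega\cap\partial B_\delta(x_0)$ the positive terms $Ax_n^2$ and $-Bh(x')$ are of order $A\delta^2+BC_1\delta^2\gtrsim (1+C_1/c)E$, which overwhelms the $-E$ you subtract. No choice of $E$ (large or small) makes $\bar u\le -\|u\|_{L^\infty}$ on that spherical piece, and shrinking $\delta$ does not help since $A\delta^2,\,B\delta^2$ are scale-invariant in $E$. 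The uniform convexity of $\partial\Omega$, which you invoke as the resolution, is already being used to get the lower bound $-D^2h\ge cI$; it does nothing to break this circularity.

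The paper avoids this by comparing on the full strip $\Omega_\delta=\{x\in\Omega:\text{dist}(x,\partial\Omega)\le\delta\}$ with the barrier $v(x)=\varphi(x)+\mu(e^{\rho(x)}-1)$, where $\rho$ is a strictly convex defining function of $\Omega$. Here a \emph{single} parameter $\mu$ does double duty: since $D^2(e^\rho)=e^\rho(D^2\rho+D\rho\otimes D\rho)\ge \eta e^{-\eta^{-1}}I_n$, increasing $\mu$ makes $\det D^2 v$ large; and since $\rho\le -C_2<0$ on the inner boundary $\partial\Omega_\delta\cap\Omega$, increasing $\mu$ simultaneously pushes $v\le\|\varphi\|_\infty+\mu(e^{-C_2}-1)$ below $-\|u\|_{L^\infty}$. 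On $\partial\Omega$ one has $\rho=0$ and hence $v=\varphi=u$ exactly, so no (ii)-type matching is needed. Replacing your quadratic subtraction by $\mu(e^\rho-1)$ and working on $\Omega_\delta$ instead of local balls repairs your argument.
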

\begin{proof} [Proof of Lemma \ref{Dubound}]
Let $\nu$ be the unit outer normal vector field on $\p\Omega$.
The crucial point in the proof is to prove an upper bound for $u_{\nu}$. 

By Lemma \ref{upwlem}, we have
a lower bound for the Hessian determinant $\det D^2 u\geq C_1$ 
where $C_1$ depends
only on $n, p, G, \Omega$, $\norm{f}_{L^{n}(\Omega)}$ and $\norm{\psi}_{W^{2,p}(\Omega)}$. Because $p>n$, $\psi$ is clearly H\"older continuous in $\overline{\Omega}$. Since $\det U\geq C_1^{n-1}$, applying  Proposition \ref{global-holder} to $U^{ij}w_{ij}= f$ in $\Omega$, we find that
$w$ is H\"older continuous at the boundary. 

Note that (A1) implies  $(w(d)d^{1-1/n})' \le 0$ and therefore
$w(d)d^{1-1/n} \le C$ for $ d\geq 1.$
Since $w=\psi\geq \inf_{\p\Omega} \psi>0$ on 
$\p\Omega$, it follows from the boundary H\"older continuity of $w$ that $w$ is uniformly bounded from below while $\det D^2 u$  is uniformly bounded from above in a 
neighborhood $\Omega_{\delta}:= \{x\in\Omega: \text{dist}(x,\p\Omega)
\leq \delta\}$ of the boundary. Here $\delta$ is a universal constant, depending only on $n, p, G,\Omega$, $\inf_{\p\Omega} \psi$, $\|f\|_{L^n(\Omega)}$, and $\norm{\psi}_{W^{2,p}(\Omega)}$.

Let $\rho$ be a strictly convex defining function of $\Omega$, that is 
$\Omega:=\{x\in \R^n: \rho(x)<0\}$, $\rho=0$ on $\p\Omega$ and $D\rho\neq 0$ on $\p\Omega$. Then
$$D^2\rho\geq \eta I_n~\text{and}~\rho\geq -\eta^{-1}~\text{in}~\Omega$$
where $\eta>0$ depends only on $\Omega$. We easily find that,
for large $\mu$, the function
$$v(x) =\varphi(x) + \mu (e^{\rho}-1)$$
is a lower bound for $u$ in $\Omega_{\delta}$.

Indeed, there 
exists a universal $C_2>0$ such that $\rho\leq -C_2$ on $\p\Omega_{\delta}
\cap \Omega$.
Since $$ D^2 v = D^2\varphi + \mu e^{\rho}(D^2\rho + D\rho \otimes D\rho)\geq D^2\varphi + \mu\eta e^{-\eta^{-1}} I_n~\text{in}~ \Omega_{\delta},$$ $v=u$ on $\p\Omega$ while
$$v\leq \|\varphi\|_{L^{\infty}(\Omega)} + \mu (e^{-C_2}-1)~ \text{on}~ \p\Omega_{\delta}
\cap \Omega,$$ we find that for $\mu$ universally large,
$$\det D^2 v\geq \det D^2 u~\text{in} ~\Omega_{\delta}$$
and $u\geq v$ on $\p\Omega_{\delta}$ by the global bound on $u$ in Lemma \ref{udet}. Hence $u\geq v$ in $\Omega_{\delta}$ by the comparison principle (see 
Lemma \ref{comp-prin}). 

From $u=v$ on $\p\Omega$, we deduce that $u_{\nu}\leq v_{\nu}$ and this gives a uniform upper bound for $u_{\nu}$. By convexity, 
$$u_{\nu}(x) \ge  -\|D\varphi\|_{L^{\infty}(\Omega)} \quad \textrm{for all } x\in \partial \Omega. $$ Because $u=\varphi$ on $\p\Omega$, the tangential 
derivatives of $u$ on $\p\Omega$ are those of $\varphi$. 
Thus $Du$ is uniformly bounded on $\p\Omega$. Again, by convexity, we find that $Du$ is bounded in $\Omega$ by a universal constant as stated in the lemma.
\end{proof}

\subsection{Legendre transform and upper bound on Hessian determinant}

To prove a uniform upper bound for $\det D^2 u$, we use the Legendre transform:
$$y= Du(x), u^*(y) = x\cdot y - u(x) (=\sup_{z\in\Omega} \left(y\cdot z-u(z))\right).$$
The Legendre transform $u^{*}$ of $u$ is defined in $\Omega^*:=Du(\Omega)$. $u^*$ is a uniformly convex, $C^4$ smooth function in
$\Omega^*$. Furthermore the Legendre transform of $u^*$ is $u$
itself. From $y=Du(x)$ we have $x=D u^*(y)$ and
$D^2 u(x)= \left(D^2 u^*(y)\right)^{-1}. $

The Legendre transform $u^{*}$ satisfies a dual equation to (\ref{AMCE}) as stated in the following lemma.
\begin{lem}\label{Legeq}
The Legendre transform $u^*$ satisfies the equation
$${U^*}^{ij} w^*_{ij}=-f(Du^*)\,\det D^2 u^*,$$
where $({U^*}^{ij})$ is the cofactor matrix of $D^2 u^*$ and $$w^*=G\left((\det
D^2 u^*)^{-1}\right)- (\det
D^2 u^*)^{-1} G' \left((\det
D^2 u^*)^{-1}\right).$$
\end{lem}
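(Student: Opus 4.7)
The plan is to push the equation $U^{ij}w_{ij}=f$ through the Legendre change of variables $x=Du^{\ast}(y)$, using the standard identities $D^{2}u(x)=(D^{2}u^{\ast}(y))^{-1}$ and $\det D^{2}u(x)=1/D^{\ast}(y)$, where $D^{\ast}:=\det D^{2}u^{\ast}$, together with the cofactor relation ${U^\ast}^{ij}=D^{\ast}(D^{2}u^{\ast})^{-1}_{ij}$ and its dual ${U}^{ij}=\det D^{2}u\cdot(D^{2}u)^{-1}_{ij}$.

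First, setting $W(y):=w(x(y))$ and applying the chain rule $\partial_{x_i}=u_{ki}\,\partial_{y_k}$, I would expand
\[
w_{x_i x_j}=u_{ki}u_{lj}W_{y_k y_l}+u_{kij}W_{y_k},
\]
so that $U^{ij}w_{ij}$ splits into two sums. For the second-derivative sum, the cofactor identity $\sum_{i}U^{ij}u_{ki}=(\det D^{2}u)\delta_{kj}$ together with $u_{lk}=(D^{2}u^{\ast})^{-1}_{lk}={U^{\ast}}^{lk}/D^{\ast}$ collapses it to $(D^{\ast})^{-2}{U^{\ast}}^{lk}W_{y_k y_l}$. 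For the first-derivative sum, I would invoke the standard identity $\sum_{ij}U^{ij}u_{kij}=\partial_{x_k}\det D^{2}u$, combine it with $\det D^{2}u=1/D^{\ast}$, and use the chain rule once more to convert $\partial_{x_k}$ to $u_{lk}\partial_{y_l}$, producing $-(D^{\ast})^{-3}{U^{\ast}}^{lk}D^{\ast}_{y_l}W_{y_k}$. Multiplying by $(D^{\ast})^{2}$, the equation $U^{ij}w_{ij}=f$ becomes
\[
f\,(D^{\ast})^{2}={U^{\ast}}^{lk}W_{y_k y_l}-\tfrac{1}{D^{\ast}}{U^{\ast}}^{lk}D^{\ast}_{y_l}W_{y_k}.
\]

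The next step is to replace $W$ by $w^{\ast}$. With $s=1/D^{\ast}$, we have $W=G'(s)$ and $w^{\ast}=G(s)-sG'(s)$, so $\tfrac{d w^{\ast}}{ds}=-sG''(s)=-s\,\tfrac{dW}{ds}$, which gives $w^{\ast}_{y_l}=-(1/D^{\ast})W_{y_l}$, i.e., $W_{y_l}=-D^{\ast}w^{\ast}_{y_l}$. Differentiating this relation once more yields
\[
W_{y_k y_l}=\tfrac{1}{D^{\ast}}D^{\ast}_{y_l}W_{y_k}-D^{\ast}w^{\ast}_{y_k y_l}.
\]
Substituting into the previous display, the two terms involving $W_{y_k}D^{\ast}_{y_l}$ cancel exactly, leaving $f(D^{\ast})^{2}=-D^{\ast}{U^{\ast}}^{lk}w^{\ast}_{y_k y_l}$, which rearranges to ${U^{\ast}}^{ij}w^{\ast}_{ij}=-f(Du^{\ast})\det D^{2}u^{\ast}$, as claimed.

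I do not expect a genuine obstacle; the computation is bookkeeping once one recognizes that the specific form $w^{\ast}=G(s)-sG'(s)$ is precisely the Legendre transform in the $d$-variable that renders $W_{y}$ proportional to $w^{\ast}_{y}$. The only point requiring care is the bookkeeping that produces the exact cancellation of the first-order terms in $W_{y_k}D^{\ast}_{y_l}$; that cancellation is what forces the coefficient of the resulting second-order operator to equal ${U^{\ast}}^{ij}$ rather than some other matrix.
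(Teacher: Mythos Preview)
Your proposal is correct. The approach differs from the paper's in organization rather than in substance, but the difference is worth noting.

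The paper identifies the first-order relation $w_{x_j}=-w^{\ast}_{y_k}(U^{\ast})^{kj}$ \emph{before} taking the second derivative, then contracts with $U^{ij}=(d^{\ast})^{-1}u^{\ast}_{ij}$ to reduce $U^{ij}w_{ij}$ to $(d^{\ast})^{-1}\partial_{y_j}\bigl[-(U^{\ast})^{kj}w^{\ast}_{y_k}\bigr]$, and finishes with the divergence-free property $\partial_{y_j}(U^{\ast})^{kj}=0$. No third derivatives of $u$ appear and no explicit cancellation is needed.

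Your route instead expands $w_{x_ix_j}$ fully via the second-order chain rule, which brings in the third derivatives $u_{kij}$; you then dispose of them with the identity $U^{ij}u_{kij}=\partial_{x_k}\det D^{2}u$. The cancellation that the paper achieves structurally via $\partial_j(U^{\ast})^{kj}=0$ you obtain explicitly at the end, after substituting $W_{y_l}=-D^{\ast}w^{\ast}_{y_l}$ and re-expressing $-D^{\ast}_{y_l}w^{\ast}_{y_k}$ as $\tfrac{1}{D^{\ast}}D^{\ast}_{y_l}W_{y_k}$. Both arguments rest on the same essential observation, namely that $w^{\ast}=G(s)-sG'(s)$ makes $\nabla_y w^{\ast}$ a scalar multiple of $\nabla_y W$; the paper packages this as a first-order cofactor identity, while you let it drive a cancellation of first-order terms in the second-order expansion. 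Your version is slightly more hands-on but avoids invoking the divergence-free property of $U^{\ast}$.
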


This lemma was previously observed by Trudinger-Wang \cite{TW00} (in the proof of Lemma 3.2 there) and Zhou \cite{Zhou} (before the proof of Lemma 3.2 there). The idea is to observe that
$u^*$ is a critical point of the dual functional $J^*$ of $J$ under local perturbations and this gives the conclusion of Lemma \ref{Legeq}. We give here a direct proof of Lemma \ref{Legeq}.
\begin{proof}[Proof of Lemma \ref{Legeq}]
For simplicity, let
$d= \det D^2 u$ and $d^*= \det D^2 u^*.$ Then $d= {d^{*}}^{-1}$.
We denote by $(u^{ij})$ and $({u^*}^{ij})$ the inverses of the Hessian matrices $D^2 u= (u_{ij})= (\frac{\p^2 u}{\p x_i\p x_j})$ and $D^2 u^*=(u^*_{ij})=(\frac{\p^2 u^*}{\p y_i\p y_j})$.
Note that $w= G' (d)= G' ({d^{*}}^{-1}).$ Thus
$$w_{j} =\frac{\p w}{\p x_j} =\frac{\p G' ({d^*}^{-1})}{\p y_k} \frac{\p y_k}{\p x_j} = \left[\frac{\p }{\p y_k} G' ({d^*}^{-1})\right]u_{kj}=
\left[\frac{\p }{\p y_k} G' ({d^*}^{-1})\right]{u^*}^{kj}.$$
Clearly,

$${d^{*}}^{-1}\frac{\p }{\p y_k} G' ({d^*}^{-1}) = -\frac{\p}{\p y_k} \left[G ({d^*}^{-1})- {d^*}^{-1}G' ({d^*}^{-1})\right ]= - w^{*}_k,$$
from which it follows that
$w_j = - w^{*}_k (U^{*})^{kj}.$
Similarly,
$w_{ij}= \frac{\p }{\p y_l} w_j {u^*}^{li}.$
Hence, using
$$U^{ij} = (\det D^2 u) u^{ij}= (d^{*})^{-1} u^{*}_{ij},$$
and the fact that $U^{*}= ({U^*}^{ij})$ is divergence-free (see Lemma \ref{divfreeU}), we find from (\ref{AMCE}) that
\begin{eqnarray*}f(Du^{*})= U^{ij} w_{ij}=(d^{*})^{-1} u^{*}_{ij} {u^*}^{li} \frac{\p }{\p y_l} w_j  
= -(d^{*})^{-1}  \frac{\p }{\p y_j}\left\{ w^{*}_k
{U^*}^{kj}\right\} = -(d^{*})^{-1} {U^{*}}^{kj} w^{*}_{kj}.
\end{eqnarray*}
Thus, the lemma is proved.
\end{proof}

We are now ready to prove that the Hessian determinant $\det D^2 u$ is universally bounded away from $0$ and $\infty$.
\begin{lem} \label{boundd} Assume (A1)--(A3) are satisfied. Then, there exists a constant $C>0$ depending 
only on $n, p, G, \Omega$, $\norm{f}_{L^{n}(\Omega)}$, $\|\varphi\|_{W^{4, p}(\Omega)}$, $\norm{\psi}_{W^{2,p}(\Omega)}$ and $\inf_{\p\Omega}\psi$
such that 
 $$C^{-1}\leq \det D^2 u\leq C.$$
\end{lem}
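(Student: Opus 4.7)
The lower bound $\det D^2 u\geq C^{-1}$ is already contained in Lemma \ref{upwlem}, which uses only (A3). My plan for the upper bound is to pass to the Legendre transform of $u$ and apply the Aleksandrov--Bakelman--Pucci (ABP) estimate to the dual equation from Lemma \ref{Legeq}, exploiting (A2) as coercivity at infinity.

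Let $u^{*}$ denote the Legendre transform of $u$ defined on $\Omega^{*}:=Du(\Omega)$, and write $d=\det D^2 u$, so that $\det D^2 u^{*}=1/d$. Lemma \ref{Legeq} provides
$${U^{*}}^{ij}w^{*}_{ij}=-f(Du^{*})\det D^2 u^{*}\quad\text{in }\Omega^{*},\qquad w^{*}=F(d):=G(d)-dG'(d).$$
Two preliminary ingredients come for free. First, Lemma \ref{Dubound} gives $\|Du\|_{L^{\infty}(\Omega)}\leq C$, hence $\mathrm{diam}(\Omega^{*})\leq C$. Second, on $\p\Omega$ we have $w=\psi$ with $0<\inf_{\p\Omega}\psi\leq\psi\leq\sup_{\p\Omega}\psi<\infty$; since $G$ is smooth and strictly concave, $G'$ is strictly decreasing, so on $\p\Omega$ the quantity $d=(G')^{-1}(\psi)=\Theta(\psi)$ is pinched between two positive universal constants, and therefore $w^{*}=F(d)$ is uniformly bounded on $\p\Omega^{*}=Du(\p\Omega)$.

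Next I would apply the ABP estimate to $w^{*}$ on $\Omega^{*}$. Since $\det U^{*}=(\det D^2 u^{*})^{n-1}$, the same computation that produced \eqref{Aleksandrov} yields
$$\sup_{\Omega^{*}}w^{*}\leq \sup_{\p\Omega^{*}}w^{*}+C\,\mathrm{diam}(\Omega^{*})\left(\int_{\Omega^{*}}|f(Du^{*})|^{n}\det D^2 u^{*}\,dy\right)^{1/n}.$$
The key computation is the change of variables $y=Du(x)$: under it $dy=\det D^2 u(x)\,dx$, $Du^{*}(y)=x$, and $\det D^2 u^{*}(y)=1/\det D^2 u(x)$, so the Jacobian exactly cancels the weight and the integral on the right collapses to $\|f\|_{L^{n}(\Omega)}^{n}$. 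Combined with the preceding paragraph, $w^{*}=F(d)$ is bounded above by a universal constant throughout $\Omega^{*}$.

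Finally I would invoke (A2): $F(d)=G(d)-dG'(d)\to\infty$ as $d\to\infty$; moreover (A1), via $F'(d)=-d\,w'(d)\geq 0$, forces $F$ to be nondecreasing, hence proper at infinity. A uniform upper bound on $F(d)$ therefore translates into a uniform upper bound on $d=\det D^2 u$, completing the proof. The crux — and the only step where things would collapse in its absence — is this last one: without the coercivity supplied by (A2), the ABP bound on $w^{*}$ would not yield any control on $\det D^2 u$, which is precisely the obstacle the Legendre duality is designed to overcome.
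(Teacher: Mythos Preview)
Your proof is correct and follows essentially the same route as the paper: pass to the Legendre transform, apply the ABP estimate to the dual equation of Lemma \ref{Legeq}, use the change of variables $y=Du(x)$ to reduce the right-hand side to $\|f\|_{L^n(\Omega)}$, and then invoke (A2) to turn the bound on $w^*=G(d)-dG'(d)$ into an upper bound on $d$. The only cosmetic difference is that the paper first divides the dual equation by $\det D^2 u^*$ and applies ABP to ${u^*}^{ij}w^*_{ij}=-f(Du^*)$, whereas you apply ABP directly to the cofactor form ${U^*}^{ij}w^*_{ij}=-f(Du^*)\det D^2 u^*$; the resulting integrals coincide, so this is the same argument.
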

\begin{proof}[Proof of Lemma \ref{boundd}] We use the same notation as in Lemma \ref{Legeq} and its proof. 
By Lemma \ref{Dubound}, $\text{diam}(\Omega^*)$ is bounded by a universal constant $C$. With (A1) and (A3), we can apply Lemma \ref{Legeq} to conclude that
${u^{*}}^{ij} w^{*}_{ij} = -  f(Du^{*}(y))~\text{in}~\Omega^*$
with $$w^*=G(d) - d G'(d) = G({G^{'}}^{-1}(w))- {G^{'}}^{-1}(w) w= G({G^{'}}^{-1}(\psi))- {G^{'}}^{-1}(\psi) \psi~ \text{on}~ \p\Omega^*.$$
Applying the ABP estimate, Theorem \ref{ABPmax}, to $w^{*}$ on $\Omega^{*}$, and then changing of variables $y= Du(x)$ with $dy = \det D^2 u~ dx,$ we obtain
\begin{eqnarray*}
 \|w^{\ast}\|_{L^{\infty}(\Omega^*)} &\leq& \|w^{\ast}\|_{L^{\infty}(\p\Omega^*)} + C_n \text{diam} (\Omega^*) \left\|\frac{f(Du^{*})}{(\det {u^{*}} ^{ij})^{1/n}}\right\|_{L^n(\Omega^*)}\\
 &=& C + C \left(\int_{\Omega^{*}} \frac{|f|^n(Du^{*})}{ (\det D^2 u^*)^{-1}}~ dy\right)^{1/n}\\&=& 
 C + C  \left(\int_{\Omega} \frac{|f|^n(x)}{ \det D^2 u} \det D^2 u~ dx\right)^{1/n}  = C + C \|f\|_{L^n(\Omega)}.
\end{eqnarray*}
Since $w^{*} = G(d)- d G'(d)$ and the coercivity condition (A2) is satisfied, the above estimates give a uniform upper bound for $d=\det D^2 u$. The lower bound for $\det D^2 u$ follows from Lemma \ref{upwlem}.
\end{proof}

With Lemma \ref{boundd}, we can now complete the proof of the global $W^{4,p}$ and $C^{4,\alpha}$ estimates in Theorem \ref{keythm}.

\begin{proof}[Proof of theorem \ref{keythm}]~~\\
(i) By Lemma \ref{boundd}, 
$C^{-1}\leq \det D^2 u \leq C.$
Note that, by (\ref{AMCE}), $w$ is the solution to the linearized Monge-Amp\`ere equation $U^{ij}w_{ij} =f$ with boundary data $w=\psi.$ Because $\psi\in W^{2,p}(\Omega)$ with $p>n$, $\psi$ is clearly H\"older continuous on $\partial\Omega$. 
Thus, by Theorem \ref{global-h}, $w \in C^{\alpha}(\overline{\Omega})$ for some $\alpha>0$ depending on the data of (\ref{AMCE})-(\ref{SBV}). Rewriting the equation for $w$ as
$$\det D^2 u = (G')^{-1}(w),$$
with the right hand side being in $C^{\alpha}(\overline{\Omega})$
and noticing $u=\varphi$ on $\p\Omega$ where $\varphi\in C^{3}(\overline{\Omega})$ because $\varphi\in W^{4,p}(\Omega)$ and $p>n$, we obtain
$u\in C^{2,\alpha}(\overline{\Omega})$ by Theorem \ref{TWC2}. Thus the first equation of (\ref{AMCE}) is a uniformly elliptic, second order partial differential equations in $w$
with $L^p(\Omega)$ right hand side. Hence $w\in W^{2,p}(\Omega)$ and in turn $u\in W^{4, p}(\Omega)$ with desired estimate
\begin{equation*}
\norm{u}_{W^{4,p}(\Omega)}\leq C,
\end{equation*}
where $C$ depends on $n, p, G, \Omega$, $\norm{f}_{L^{p}(\Omega)}, \norm{\varphi}_{W^{4,p}(\Omega)}, \norm{\psi}_{W^{2,p}(\Omega)}$, and $\inf_{\Omega} \psi.$\\
(ii) In this case, we also obtain as in (i) that $u\in C^{2,\alpha}(\overline{\Omega})$. The first equation of (\ref{AMCE}) is now a uniformly elliptic, second order partial differential equations in $w$
with $C^{\alpha}(\overline{\Omega})$ right hand side. Hence $w\in C^{2,\alpha}(\Omega)$ and in turn $u\in C^{4, \alpha}(\overline{\Omega})$ with the estimate
\begin{equation*}\| u \|_{C^{4, \alpha}(\overline{\Omega})} \le C,\end{equation*}
where $C$ depends 
on $n, \alpha, G, \Omega$, $\| f\|_{C^{\alpha}(\ov{\Omega})}$, $\| \varphi\|_{C^{4, \alpha}(\overline{\Omega})}$, $\| \psi\|_{C^{2, \alpha}(\ov{\Omega})}$ and $\inf_{\partial \Omega}\psi$.
\end{proof}

\part{The linearized Monge-Amp\`ere equation}
\section{The linearized Monge-Amp\`ere equation and interior regularity of its solution} 
\label{LMA_sec}

\subsection{The linearized Monge-Amp\`ere equation}
The linearized Monge-Amp\`ere equation associated with a $C^2$ and locally uniformly convex potential $u$ defined on some subset of $\R^n$ is of the form
\begin{equation}L_u v:= \sum_{i, j=1}^{n} U^{ij} v_{ij}\equiv \text{trace}(U D^2 v)= g.
\label{LMAEq}
 \end{equation}
Here and throughout, $$U = (U^{ij}) = (\det D^{2} u) (D^{2} u)^{-1}$$ is the matrix of cofactors of the 
Hessian matrix $D^{2}u= (u_{ij})$. The coefficient matrix $U$ of $L_u$ arises from the linearization of the Monge-Amp\`ere operator $\det D^2 u$ because
$$U=\frac{\p (\det D^2 u)}{\p (D^2 u)}.$$
One can also note that $L_u v$ is the coefficient of $t$ in the expansion
$$\det D^2 (u + t v) =\det D^2 u + t~ \text{trace} (U D^2 v) + \cdots+ t^n \det D^2 v.$$
Typically, one assumes that $u$ solves the Monge-Amp\`ere equation
\begin{equation}\label{MAEq}
\det D^2 u = f~\text{for some function f satisfying the bounds }0 <\lambda \leq f \leq \Lambda
\end{equation} 
where $\lambda$ and $\Lambda$ are positive constants.
Given these bounds, $U$ is a positive semi-definite matrix. Hence, $L_{u}$ is a linear elliptic partial differential 
operator, possibly degenerate.

The linearized Monge-Amp\`ere operator $L_u$ captures two of the most important second order equations in PDEs from the simplest 
linear equation to one of the most important nonlinear equations. In fact, in the special case where $u$ is a quadratic polynomial, say $u(x)=\frac{1}{2}|x|^2$, $L_u$ becomes the 
Laplace operator: $\displaystyle L_u=\Delta= \sum_{i=1}^{n}\frac{\p^2}{\p x_i^2}$. On the other hand,
since $L_u u = n\det D^2 u$, the Monge-Amp\`ere equation is a special case of the linearized Monge-Amp\`ere equation. As $U= (U^{ij})$ is divergence-free (see Lemma \ref{divfreeU}), that is, $$
\displaystyle \sum_{i=1}^{n} \p_i U^{ij}=0$$ 
for all $j=1,\cdots, n$, the linearized Monge-Amp\`ere equation can be written in both divergence and double divergence form:
$$L_u v= \sum_{i, j=1}^{n} \p_i (U^{ij} v_{j}) =  \sum_{i, j=1}^{n} \p_{ij}(U^{ij} v).$$

\subsection{Linearized Monge-Amp\`ere equations in contexts}
$L_u$ appears in many contexts:
\begin{myindentpar}{1cm}
(1) Affine maximal surface equation in affine geometry (Chern \cite{Chern}, Trudinger-Wang \cite{TW00, TW05, TW08})
$$U^{ij} w_{ij}=0,~w= (\det D^2 u)^{-\frac{n+1}{n+2}} $$
(2) Abreu's equation (Abreu \cite{Ab}, Donaldson \cite{D1, D2, D3, D4}) in the context of existence 
of K\"ahler metrics of constant scalar curvature
in complex geometry
$$U^{ij}w_{ij}=-1,~ w = (\det D^2 u)^{-1}$$
A more familiar form of the Abreu's equation is
$$\sum_{i,j=1}^{n} \frac{\p^2 u^{ij}}{\p x_i\p x_j}=-1$$
where $(u^{ij}) = (D^2 u)^{-1}$ is the inverse matrix of $D^2 u$.\\
(3) Semigeostrophic equations in fluid mechanics (Brenier \cite{B}, Cullen-Norbury-Purser \cite{CNP}, Loeper \cite{Loe}).\\
(4) Regularity of the polar factorization for time dependent maps (Loeper \cite{Loe05}).
\end{myindentpar}
\subsection{Difficulties and expected regularity}
The classical regularity theory for uniformly elliptic equations with measurable coefficients deals with {\bf divergence} form operators 
$$L=\displaystyle \sum_{i, j=1}^{n}\frac{\p}{\p x_i}\left(a^{ij}\frac{\p }{\p x_j}\right)$$ or {\bf nondivergence}
form operators $$\displaystyle 
L= \sum_{i, j=1}^{n} a^{ij}\p_{ij}$$ with 
positive ellipticity
constants $\lambda$ and $\Lambda$, that is, the eigenvalues of the coefficient matrix $A= (a^{ij})$ are bounded between $\lambda$ and $\Lambda$. 
The important Harnack and H\"older estimates for {\bf divergence} form equations $Lu=0$  were established in the late 50s by De Giorgi-Nash-Moser \cite{DG, Na, Mo}.
The regularity theory in this case is connected with isoperimetric inequality, Sobolev embedding, Moser iteration, heat kernel, BMO (the space of functions of bounded mean 
oscillation).
On the other hand, the Harnack and H\"older estimates for {\bf nondivergence} form equations $Lu=0$ were established 
 only in the late 70s by Krylov-Safonov \cite{KS1, KS2}. The regularity theory is connected with the Aleksandrov-Bakelman-Pucci (ABP) maximum principle coming 
 from the Monge-Amp\`ere equation.

The linearized 
Monge-Amp\`ere theory investigates operators of the form $$\displaystyle L_u= \sum_{i, j=1}^{n} U^{ij}\p_{ij}$$ where it is only known that the {\it product of the eigenvalues} of the 
coefficient matrix $U$ is bounded between
two constants. This comes from (\ref{MAEq}) because $$\lambda^{n-1}\leq \det U =(\det D^2 u)^{n-1}\leq \Lambda^{n-1}.$$ 
Therefore, the linearized 
Monge-Amp\`ere operator $L_u$ is in 
general not uniformly elliptic, i.e., the eigenvalues of $U = (U^{ij})$ are not necessarily bounded away from $0$ and $\infty.$ Moreover, when considered in a bounded convex domain $\Omega$, $L_u$ can be possibly singular near 
the boundary. In other words, the linearized 
Monge-Amp\`ere equation can be both degenerate and singular. The degeneracy and 
singularity of $L_u$ are the main difficulties in establishing regularity results for its solutions.

A natural question is what regularity we can hope for solutions of the linearized Monge-Amp\`ere equation $L_u v=0$ under the structural assumption (\ref{MAEq}). At least on 
a heuristic level, they can be expected to be H\"older continuous. Indeed, 
strictly convex solutions of (\ref{MAEq}),
interpreted in the sense of Aleksandrov for $u$ not $C^2$ as in Definition \ref{Alek_defn}, are $C^{1,\alpha}$ for some $\alpha\in (0, 1)$ depending only 
on $n,\lambda$ and $\Lambda$. This follows from the regularity theory of the Monge-Amp\`ere equation; see Theorems \ref{C1alpha} and \ref{C1alpha2}. By 
differentiating (\ref{MAEq}), we 
see that each partial derivative $u_k=\frac{\p u}{\p x_k}$ ($k=1,\cdots, n$) is a 
solution of the inhomogeneous linearized Monge-Amp\`ere equation $$L_u u_k= f_k.$$
We can expect that the regularity for $v$ is that of $u_k$, which is $C^{\alpha}$, and hence it should be H\"older continuous. The theory of Caffarelli-Guti\'errez confirms this 
expectation.
\subsection{Affine invariance property} 
\label{AIP_sec}
The second order operator $L_u:= U^{ij}\p_{ij}$ is affine invariant, i.e., invariant with respect to linear transformations of 
the independent variable $x$ of the form $x\mapsto Tx$ with $\det T =1$. Indeed, for such $T$,
the rescaled functions
$$\tilde u(x) = u(Tx) ~\text{and}~\tilde v(x) = v (Tx)$$
satisfy the same structural conditions as in (\ref{LMAEq}) and (\ref{MAEq}) because
$$\det D^2 \tilde u(x) =\det D^2 u(Tx)= f(Tx)~\text{and}~L_{\tilde u} \tilde v (x) = L_u v(Tx)= g(Tx).$$
More generally, under the transformations
$$\tilde u(x) = u(Tx),~\tilde v(x)= v(Tx),$$
the equation (\ref{LMAEq})
becomes
$$~L_{\tilde u} \tilde v (x):=\tilde U^{ij} \tilde v_{ij}(x)= (\det T)^2 g(Tx).$$
The last equation follows from standard computation. We have
$$D \tilde u = T^{t} Du;~D^2 \tilde u = T^{t}(D^2 u) T;~D^2 \tilde v = T^{t}(D^2 v) T$$
and
\begin{eqnarray*}\tilde U = (\det D^2 \tilde u)(D^2 \tilde u)^{-1}=(\det T)^{2}(\det D^2 u) T^{-1} (D^2 u)^{-1} (T^{-1})^{t}=
(\det T)^2 T^{-1} U (T^{-1})^{t}. 
\end{eqnarray*}
Therefore, $$L_{\tilde u} \tilde v (x)= \text{trace} (\tilde U D^2\tilde v)= (\det T)^2 \text{trace} (UD^2 v(Tx))
= (\det T)^2 L_uv(Tx)= (\det T)^2 g(Tx).$$

The rest of the section will be devoted to interior regularity for solutions to the linearized Monge-Amp\`ere equation. We start by recalling Krylov-Safonov's Harnack inequality for  linear, uniformly elliptic equations
in non-divergence form.
\subsection{Krylov-Safonov's Harnack inequality}
In 1979, Krylov-Safonov \cite{KS1, KS2} established the Harnack inequality and H\"older estimates for solutions of linear elliptic equations in {\bf non-divergence} form
\begin{equation}Lv:=\sum_{i, j=1}^{n}a^{ij}\frac{\p^2 v}{\p x_i \p x_j}=0
 \label{unieq}
\end{equation}
where the eigenvalues of the coefficient matrix $A= (a^{ij})$ are bounded between two positive constants $\lambda$ and $\Lambda$, that is
\begin{equation}
 \label{unilam}
 \lambda I_n \leq (a^{ij})\leq \Lambda I_n.
\end{equation}
The following theorem is the celebrated result of Krylov-Safonov.
\begin{thm} 
[Krylov-Safonov's Harnack inequality, \cite{KS1, KS2}]
Assume $(a^{ij})$ satisfies (\ref{unilam}). Let $v$ be a
nonnegative solution of (\ref{unieq}) in $\Omega$. Then $v$ satisfies the Harnack inequality on Euclidean balls. More precisely, 
for all $B_{2r}(x_0)\subset\subset\Omega$, we have
\begin{equation}
 \label{HI1}
 \sup_{B_r(x_0)} v\leq C(n, \lambda, \Lambda) \inf_{B_r(x_0)} v. 
\end{equation}
\end{thm}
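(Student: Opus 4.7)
The plan is to follow the classical Krylov--Safonov strategy, which reduces the Harnack inequality to two complementary one-sided estimates: a \emph{weak $L^{\varepsilon}$ decay} for nonnegative supersolutions and a \emph{local maximum principle} for subsolutions. Both of these rest on a single engine, namely the Aleksandrov--Bakelman--Pucci (ABP) maximum principle for the operator $L$: if $Lv \geq -f$ in a bounded domain $\Omega$ with $v \geq 0$ on $\partial\Omega$, then
\[
\inf_\Omega v \geq -C(n,\lambda)\,\mathrm{diam}(\Omega)\,\|f^+\|_{L^n(\{v = \Gamma\})},
\]
where $\Gamma$ is the concave envelope of $-v^-$ extended by $0$. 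This in turn follows from the area formula applied to the normal mapping of $\Gamma$, combined with the arithmetic--geometric mean inequality applied to the symmetric matrix $A^{1/2}(-D^2\Gamma)A^{1/2}$, using only the ellipticity bounds \eqref{unilam}.

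The first real step is a \textbf{measure--to--measure estimate}: for some universal constants $M_0>1$ and $\mu\in(0,1)$, if $v\geq 0$ satisfies $Lv\leq 0$ in $B_{2r}(x_0)$ and $\inf_{B_{r/2}(x_0)}v\leq 1$, then
\[
\bigl|\{v\leq M_0\}\cap B_{r/2}(x_0)\bigr|\geq \mu\,|B_{r/2}(x_0)|.
\]
One proves this by constructing an explicit barrier $\varphi$ that is negative on $B_{r/2}$, vanishes outside $B_{2r}$, and satisfies $L\varphi\leq C\chi_{B_{r/4}}$ (a standard choice being of the form $\varphi(x)=c((|x-x_0|/r)^{-\alpha}-(2)^{-\alpha})_+$ truncated appropriately, with $\alpha$ depending on $\lambda,\Lambda,n$). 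Then ABP applied to $v+\varphi$ on the sublevel set $\{v+\varphi<0\}$ gives the claimed measure bound. The second step upgrades this to a \textbf{weak $L^{\varepsilon}$ estimate}
\[
|\{v>t\}\cap Q|\leq C\,t^{-\varepsilon}\,|Q|\bigl(\inf_{Q}v+\|f\|_{L^n}\bigr)^{\varepsilon},
\]
via a Calder\'on--Zygmund dyadic cube decomposition: iterating the first step on dyadic subcubes produces a geometric decay of the bad set, and the standard stopping-time argument converts it into the power decay above.

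The third step is the \textbf{local maximum principle} for nonnegative subsolutions, $\sup_{B_r}v\leq C\bigl(\fint_{B_{2r}}v^p\bigr)^{1/p}$ for any $p>0$, proved by a Moser-style rescaling argument that again uses ABP at each stage. Finally, combining the two estimates gives Harnack: applied to a nonnegative solution of \eqref{unieq} on $B_{2r}(x_0)$, the local max principle gives $\sup_{B_r}v\leq C(\fint_{B_{3r/2}}v^{\varepsilon})^{1/\varepsilon}$, and the weak decay estimate applied to $v/\inf_{B_r}v$ gives $(\fint_{B_{3r/2}}v^{\varepsilon})^{1/\varepsilon}\leq C\,\inf_{B_r}v$, yielding \eqref{HI1}. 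The main obstacle is the barrier construction and the Calder\'on--Zygmund iteration: the barrier must be explicit enough that $L\varphi$ is controlled using only the ellipticity constants without any regularity of $a^{ij}$, and the dyadic iteration must carefully handle the geometric decomposition on Euclidean cubes. It is precisely this structure (ABP $+$ barrier $+$ dyadic decomposition) that Caffarelli--Guti\'errez adapt in the linearized Monge--Amp\`ere setting by replacing Euclidean balls/cubes with sections of $u$, where the engulfing property plays the role of the Euclidean dyadic structure.
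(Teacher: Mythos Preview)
The paper does not actually prove this theorem; it is stated as a classical result with citation to \cite{KS1, KS2} and serves only as motivation and background for the Caffarelli--Guti\'errez Harnack inequality (Theorem~\ref{CGthm}), which \emph{is} proved in detail. So there is no ``paper's own proof'' to compare against here.

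That said, your outline is a faithful sketch of the classical Krylov--Safonov argument, and your closing remark is exactly right: the paper's proof of Theorem~\ref{CGthm} follows precisely the template you describe, with sections $S_u(x,h)$ replacing Euclidean balls, the engulfing property (Theorem~\ref{engulfthm}) replacing the dyadic/Vitali structure, and the growing ink-spots lemma (Lemma~\ref{inkspots}) playing the role of the Calder\'on--Zygmund decomposition. One minor difference in emphasis: the paper's proof of the Caffarelli--Guti\'errez version organizes the endgame slightly differently from your ``local max principle $+$ weak $L^\varepsilon$'' combination---instead of a separate local maximum principle, it runs a direct touching argument with the auxiliary function $h_t(x)=t(1-u(x))^{-\beta}$ and applies the decay estimate a second time to $C_1-C_2 v$ near the touching point. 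But this is a packaging choice, not a genuinely different idea; the ABP/barrier/covering engine is the same.
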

From the Harnack inequality (\ref{HI1}), we obtain a H\"older estimate

$$\abs{v(x)-v(y)}\leq C\abs{x-y}^{\alpha} \sup_{B_{2r}(x_0)} \abs{v}$$
for $x, y\in B_{r}(x_0)$ where $\alpha$ and $C$ are positive constants depending only on $n,\lambda, \Lambda$.
\begin{rem}~~
\begin{myindentpar}{1cm}
(i) The uniform ellipticity of $A(x)$ is invariant under rigid transformation of the domain, i.e., for any orthogonal matrix $O$, 
the matrix $A(O x)$ is also uniformly elliptic with the same 
ellipticity constants as $A(x)$. \\
(ii) Balls are invariant under orthogonal transformations.\\
(iii) One important fact, but hidden, in the regularity theory of uniformly elliptic equations is that the quadratic polynomials
$$P(x) = a + b\cdot x + \frac{1}{2}\abs{x}^2,~ b\in\R^n,$$
are ``potentials`` for $L$, that is
$$L(P)\approx 1$$
and level surfaces of $P(x)$ are all possible balls of $\R^{n}.$ Moreover, $$\abs{\nabla P(x)-b}\approx 1$$
for $x$ in the ring $B_2(b)\backslash B_1(b).$
\end{myindentpar}
\end{rem}
Krylov-Safonov theory makes crucial use of the ABP estimate which
bounds solution of $Lv=f$ using the boundary values of $v$ and $L^{n}$ norm of the right hand side. In general form, it states as follows; 
see \cite{Al, Ba, Pu} and also \cite[Theorem 9.1]{GT}.
\begin{thm} [ABP maximum principle]\label{ABPmax} Let $(a^{ij})$ be a measurable, positive definite matrix.
 For $u\in C^{2}(\Omega)\cap C^{0}(\overline{\Omega})$, we have
 $$\displaystyle \sup_{\Omega} u\leq \sup_{\partial\Omega} u + \frac{\emph{diam}(\Omega)}{n \omega_{n}^{1/n}}\left\|\frac{a^{ij} u_{ij}}{[\det (a_{ij})]^{1/n}}\right\|_{L^{n}(\Gamma^{+})}$$
 where $\Gamma^{+}$ is the upper contact set $$\Gamma^{+}=\{y\in\Omega| u(x) \leq u(y) + p\cdot (x-y)~\text{for all}~x\in\Omega,~\text{for some}~ p= p(y)\in \R^{n}\}
 .$$
\end{thm}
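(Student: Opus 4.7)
The plan is to follow the classical Aleksandrov strategy: bound the Lebesgue measure of the image of the upper contact set under the gradient map from below by a ball and from above by the integral of $|\det D^2 u|$, then use the matrix AM-GM inequality to replace $\det D^2 u$ by the quantity $a^{ij}u_{ij}/[\det(a_{ij})]^{1/n}$ that appears in the statement. Set $M:=\sup_{\Omega} u - \sup_{\p\Omega} u$; we may assume $M>0$, else the inequality is trivial. After subtracting $\sup_{\p\Omega} u$ from $u$ we may assume $u\le 0$ on $\p\Omega$ and $\sup_\Omega u = M>0$.

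The first key step is the geometric observation that
$$B_{M/\text{diam}(\Omega)}(0)\subset \nabla u (\Gamma^+).$$
Given any $p\in\R^n$ with $|p|<M/\text{diam}(\Omega)$, the function $u(x)-p\cdot x$ is strictly greater than its boundary maximum at some interior point where $u$ is close to $M$, since on $\p\Omega$ one has $u(x)-p\cdot x\le |p|\,\text{diam}(\Omega)<M$. Any interior maximizer $y$ of $u-p\cdot x$ satisfies $\nabla u(y)=p$ and lies in $\Gamma^+$ by definition of the upper contact set. Consequently
$$\omega_n\Bigl(\frac{M}{\text{diam}(\Omega)}\Bigr)^n \le |\nabla u(\Gamma^+)|.$$

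The second key step is the change-of-variables bound. On $\Gamma^+$, $u$ lies below its tangent hyperplanes, so $D^2 u\le 0$ and the gradient map $\nabla u:\Gamma^+\to\R^n$ has Jacobian $|\det D^2 u|=\det(-D^2 u)$. The area formula (or a covering-by-regular-values argument) gives
$$|\nabla u(\Gamma^+)|\le \int_{\Gamma^+}\det(-D^2 u)\,dx.$$
The third step is the matrix arithmetic-geometric mean inequality: for any two symmetric nonnegative definite $n\times n$ matrices $A$ and $B$, one has $[\det A\,\det B]^{1/n}\le \frac{1}{n}\,\text{trace}(AB)$. Applying this with $A=(a^{ij})$ and $B=(-u_{ij})$ on $\Gamma^+$ yields
$$\det(-u_{ij})\le \frac{(-a^{ij}u_{ij})^n}{n^n \det(a^{ij})} = \frac{1}{n^n}\cdot\frac{|a^{ij}u_{ij}|^n}{\det(a^{ij})}\quad\text{on }\Gamma^+,$$
since $-a^{ij}u_{ij}\ge 0$ on the upper contact set. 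Combining the three displays and taking $n$-th roots gives exactly
$$M\le \frac{\text{diam}(\Omega)}{n\omega_n^{1/n}}\left\|\frac{a^{ij}u_{ij}}{[\det(a_{ij})]^{1/n}}\right\|_{L^n(\Gamma^+)}.$$

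I expect the main technical subtlety to be the change-of-variables/area-formula step: a priori $u\in C^2$ so $\nabla u$ is $C^1$ and the standard change-of-variables formula applies, but one must be mindful that $\nabla u$ need not be injective on $\Gamma^+$, which is why one only gets an inequality (the measure of the image is at most the integral of the Jacobian). The matrix AM-GM step is clean once one knows both matrices are nonnegative definite, which is guaranteed precisely by restricting to $\Gamma^+$ and using the assumed positive definiteness of $(a^{ij})$.
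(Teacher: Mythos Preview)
Your proposal is correct and follows essentially the same three-step strategy as the paper: the paper factors the argument through Lemma~\ref{ABPmax2} (the $\int_{\Gamma^+}|\det D^2u|$ version, proved via a cone construction equivalent to your sliding-hyperplane argument) and then applies the matrix AM--GM inequality (Lemma~\ref{trlem}) exactly as you do. One small point of presentation: your inequality ``on $\partial\Omega$ one has $u(x)-p\cdot x\le |p|\,\mathrm{diam}(\Omega)$'' tacitly assumes the origin has been placed at a point where $u$ attains its maximum $M$; once that is said, the comparison $u(x_0)-p\cdot x_0=M>|p|\,\mathrm{diam}(\Omega)\ge u(x)-p\cdot x$ for $x\in\partial\Omega$ is exactly what you need.
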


\subsection{Harnack inequality for the linearized Monge-Amp\`ere equation}
The regularity theory for the linearized Monge-Amp\`ere equation was initiated in the fundamental paper \cite{CG} by Caffarelli and Guti\'errez. They developed an interior Harnack 
inequality theory for nonnegative solutions of the homogeneous
equations $$L_u v=0,$$
where $L_u$ is defined as in (\ref{LMAEq}), in terms of the pinching of the Hessian determinant 
\begin{equation}
\lambda\leq \det D^{2} u\leq \Lambda.
\label{pinch1}
\end{equation}
Their approach is based on that of Krylov and Safonov \cite{KS1, KS2} on the Harnack inequality and
H\"older estimates for linear, uniformly elliptic equations in general form, with sections replacing Euclidean balls. Before stating precisely the Harnack inequality theory
of Caffarelli-Guti\'errez, we would like to see, at least heuristically, what objects are prominent in this theory. 
\begin{rem}~~
\begin{myindentpar}{1cm}
(i) By the affine invariance property of the linearized Monge-Amp\`ere equations (see Section \ref{AIP_sec}), it is not hard to imagine that
good estimates for the linearized Monge-Amp\`ere equations must be formulated on domains that are invariant under affine transformations. Balls are not affine invariant.\\
(ii) Clearly, after an affine transformation, an ellipsoid becomes another ellipsoid.\\
(iii) A very important class of ellipsoid-like objects in the context of the Monge-Amp\`ere equation and the linearized Monge-Amp\`ere equation are 
sections. 
\end{myindentpar}
\label{heur_rem}
\end{rem}
The notion of sections (or cross sections) of convex solutions to the Monge-Amp\`ere equation
 was first introduced and studied by Caffarelli \cite{C1,C2,C3,C4}, and plays an important role in his fundamental interior $W^{2,p}$ estimates \cite{C2}. Sections are defined 
as sublevel sets of convex solutions after subtracting their supporting hyperplanes. 
They have the same role as Euclidean balls have in the classical theory. The section of a convex function $u$ defined on $\overline{\Omega}$ with center $x_0$ in $\overline{\Omega}$ and height $t$ is defined by
$$S_u(x_0, t)=\{x\in\overline{\Omega}: u(x) <u(x_0) + \nabla u(x_0) \cdot (x-x_0) + t\}.$$
After affine transformations, the sections of $u$ become sections of another convex function. 
\begin{exam} A Euclidean ball of radius $r$ is a section with height $r^2/2$ of the quadratic function $|x|^2/2$ whose Hessian determinant is $1$.
For $u(x)= \abs{x}^2/2$, we have
$$S_u(x, h)= B_{\sqrt{2h}}(x)\cap \overline{\Omega}.$$
\end{exam}
An important fact is the convexity of sections. They can be normalized to look like balls (John's lemma, Lemma \ref{John_lem}). Illustrating (i) and (iii) in Remark
\ref{heur_rem}, we can consider the following example.

\begin{exam}Consider the functions $u (x_1, x_2) = \frac{x_1^2}{2\varepsilon} +\frac{\varepsilon}{2} x_2^2$ and $v(x_1, x_2)= 
\frac{x_1^2}{2\varepsilon} -\frac{\varepsilon}{2} x_2^2 + 1$ in $\R^2$ where $\e\in (0, 1)$. Then $\det D^2 u =1$ and
$$U^{ij} v_{ij} = 0.$$
We can compute for $\frac{1}{4}\leq r\leq \frac{1}{2} $ and $\frac{1}{4}\leq t\leq \frac{1}{2}$
\begin{myindentpar}{1cm}
 (i) $$\sup_{B_r(0)} v =\frac{r^2}{2\varepsilon} + 1; \inf_{B_r(0)} v= 1-\frac{\varepsilon}{2}r^2;
 \sup_{B_r(0)} v\geq \frac{1}{32\varepsilon}\inf_{B_r(0)} v.$$
 (ii) $$\sup_{S_u(0, t)} v = t + 1; \inf_{S_u(0, t)} v = 1- t.$$
\end{myindentpar}
The ratio $\sup v/\inf v$ does not depend on the eccentricity of the section $S_u(0, t)$ for the given range of $t$. This ratio becomes unbounded on balls around $0$ when
$\e\rightarrow 0.$
\end{exam}
Now, if $v$ is a nonnegative solution of the linearized Monge-Amp\`ere equation $L_u v=0$ in a section $S_u (x_0, 2h)\subset\subset \Omega$ then 
Caffarelli and Guti\'errez's theorem on the Harnack inequality says that the values of $v$ in the concentric section of half height are comparable with each other. More precisely,
we have the following:
\begin{thm} [Caffarelli-Guti\'errez's Harnack inequality, \cite{CG}]
\label{CGthm}
Assume that the $C^2$ convex function $u$ satisfies the Monge-Amp\`ere equation 
\begin{equation*}
\lambda\leq \det D^{2} u\leq \Lambda~\text{in}~\Omega.
\end{equation*}
Let
$v\in W^{2, n}_{\text{loc}}(\Omega)$ be a nonnegative solution of $$L_u v:= U^{ij} v_{ij}=0$$ in a section $S_u (x_0, 2h)\subset\subset \Omega$. Then
\begin{equation}\sup_{S_{u}(x_0, h)} v\leq C(n, \lambda, \Lambda) \inf_{S_{u}(x_0, h)} v.
 \label{HI2}
\end{equation}
\end{thm}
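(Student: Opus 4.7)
The plan is to adapt the Krylov--Safonov strategy for uniformly elliptic nondivergence equations to the present degenerate/singular setting, with sections of $u$ playing the role of Euclidean balls throughout. By the affine invariance of $L_u$ recorded in Section \ref{AIP_sec} and the normalization provided by John's lemma (Lemma \ref{John_lem}), I would first apply a unimodular affine map $T$ (and subtract a supporting hyperplane) so that the normalized section $T^{-1}(S_u(x_0,h))$ is comparable to a unit Euclidean ball, while the pinching $\lambda\le \det D^2 u\le \Lambda$ is preserved and the equation $L_u v=0$ keeps the same form. This reduces matters to proving (\ref{HI2}) on normalized sections with constants depending only on $n,\lambda,\Lambda$.

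The basic analytic input is the ABP estimate (Theorem \ref{ABPmax}) applied to $L_u$ on a normalized section. Since $\det U = (\det D^2 u)^{n-1}\in[\lambda^{n-1},\Lambda^{n-1}]$, the factor $(\det(U^{ij}))^{1/n}$ in the denominator of the ABP bound is controlled, so on a normalized section ABP has the same strength as in the uniformly elliptic case. On top of ABP, the first main step is a \emph{critical density lemma}: if $v\ge 0$ satisfies $L_u v\le 0$ on $S_u(x_0,2h)$ with $\inf_{S_u(x_0,h)} v\le 1$, then the set $\{v\le M\}\cap S_u(x_0,h)$ occupies a definite fraction of $S_u(x_0,h)$ for some universal $M$. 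The natural barrier here is built from $u$ itself, since $L_u u=n\det D^2 u\in[n\lambda,n\Lambda]$: the function $\phi(x)=u(x)-u(x_0)-\nabla u(x_0)\cdot(x-x_0)-\tfrac{h}{2}$ is a subsolution that is negative on a smaller section and positive outside $S_u(x_0,h)$, so after shifting by $v$ and applying ABP on the contact set one obtains the desired measure estimate.

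With the critical density lemma in hand, I would iterate it using the geometric properties of sections developed in Part 3 --- principally the engulfing property ($y\in S_u(x,t)\Rightarrow S_u(x,t)\subset S_u(y,\theta t)$) and the volume estimate $|S_u(x,t)|\simeq t^{n/2}$ --- to run a Calder\'on--Zygmund-type covering argument on sections. This yields a power-decay (weak $L^\varepsilon$) estimate for nonnegative supersolutions: $|\{v>s\}\cap S_u(x_0,h)|\le C s^{-\varepsilon}(\inf_{S_u(x_0,h)} v)^{\varepsilon}$ for some $\varepsilon=\varepsilon(n,\lambda,\Lambda)>0$. Dually, a standard rescaling/iteration applied to subsolutions gives a local maximum principle of the form $\sup_{S_u(x_0,h/2)} v\le C\bigl(|S_u(x_0,h)|^{-1}\int_{S_u(x_0,h)} v^{p}\bigr)^{1/p}$ for small $p>0$. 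Combining these two one-sided bounds in the usual way produces the Harnack inequality (\ref{HI2}).

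The hardest step, and the genuinely new content compared with Krylov--Safonov, is the critical density lemma together with the scaling machinery that supports it. In the uniformly elliptic setting, Euclidean balls and quadratic barriers fit naturally with the ellipticity; here, the geometry of sections is dictated by $u$ itself and can be arbitrarily eccentric, so the barriers, volume estimates, and chaining arguments must all be expressed in terms of sections. The quantitative dependence of the engulfing and volume estimates on $n,\lambda,\Lambda$ only --- proved in Part 3 --- is precisely what allows the classical iteration scheme to be transplanted, and that is where the bulk of the work lies.
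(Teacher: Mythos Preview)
Your outline is a correct high-level plan and follows the classical Caffarelli--Guti\'errez/Krylov--Safonov two-sided route: critical density for supersolutions $\Rightarrow$ weak $L^\varepsilon$, a separate local maximum principle for subsolutions, then combine. The paper, however, proceeds differently at two key points. First, the measure/critical density estimate (Lemma~\ref{meas_lem}) is obtained not via the barrier $\phi=u-\ell_{x_0}-h/2$ plus ABP that you sketch, but by the sliding-paraboloid method: one slides generalized paraboloids $-a[u(x)-u(y)-Du(y)\cdot(x-y)]$ from below, uses the equation only at contact points to bound the Jacobian of the contact map, and applies the area formula. This is then paired with a separate doubling estimate (Lemma~\ref{double_lem}) whose barrier is the nontrivial function $(u-h_\varepsilon)^{-m}-2^{-m}$, with $h_\varepsilon$ solving an auxiliary Monge--Amp\`ere equation designed to neutralize the set where $\|D^2u\|$ is large; your one-line barrier does not carry this step. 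Second, the paper does \emph{not} prove a local maximum principle. Instead, once the $L^\varepsilon$ decay is in hand (via a growing ink-spots lemma on sections), the Harnack inequality is finished by the touching-function argument with $h_t(x)=t(1-u(x))^{-\beta}$: at the touching point one applies the $L^\varepsilon$ estimate to $v$ from above and, on a small section, to $C_1-C_2 v$ from below, and balances the exponents. Your route is perfectly viable (it is essentially the original \cite{CG} scheme), while the paper's route---sliding paraboloids plus the $h_t$ argument---avoids the local maximum principle altogether and is closer in spirit to \cite{S_perb,IS}.
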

This theory of Caffarelli and Guti\'errez is an affine invariant version of the classical Harnack inequality for uniformly elliptic equations with measurable coefficients. In fact, 
since the linearized Monge-Amp\`ere  operator $L_u $ can be 
written in both divergence form and non-divergence form,  Caffarelli-Guti\'errez's theorem is the affine invariant analogue of De Giorgi-Nash-Moser's theorem \cite{DG, Na, Mo} and also 
Krylov-Safonov's theorem \cite{KS1, KS2} on H\"older continuity of solutions to uniformly elliptic equations in divergence and nondivergence form, respectively. 
\begin{rem}
 The Harnack estimate (\ref{HI2}) also holds for nonnegative solutions to equations of the form
$$\text{trace}(A(x) U D^2 v)=0$$
with $A$ uniformly elliptic
$$C^{-1}I_n\leq A(x) \leq C I_{n}.$$
Thus, when $u(x) =\frac{1}{2}\abs{x}^2,$ we obtain the Krylov-Safonov's Harnack inequality for uniformly elliptic equations. Therefore, Harnack inequality also works for $$a^{ij} v_{ij}=0$$
with
$$\tilde{\lambda} (D^2 u)^{-1}\leq (a^{ij})\leq \tilde{\Lambda} (D^2 u)^{-1}.$$ In this case, we have a $\text{Hessian}^{-1}$-like elliptic equation.
\end{rem}
The Harnack inequality (\ref{HI2}) implies the geometric decay of the oscillation of the solution on sections with smaller height
and gives the $C^{\alpha}$ estimate for solution. Quantitatively, this says that if $v$ solves $L_u v=0$ in $S_u(x_0, 2)\subset\subset\Omega$ then
$v$ is $C^{\alpha}$ in $S_u(x_0, 1)$ and
$$\|v\|_{C^{\alpha}(S_u(x_0, 1))}\leq C(n, \lambda, \Lambda, S_u(x_0, 2)) \|v\|_{L^{\infty}(S_u(x_0, 2))}.$$
The important point to be emphasized here is that $\alpha$ depends only on $n, \lambda, \Lambda$ and the dependence of $C$ on $S_u(x_0, 2)$ can be actually removed in applications if we use 
affine transformations to transform the convex set $S_u(x_0, 2)$ into a convex set comparable to the unit Euclidean ball. The latter point follows from John's lemma (see Lemma
 \ref{John_lem})
on inscribing ellipsoid of maximal volume of a convex set \cite{John}. In fact, we can obtain interior H\"older estimate for inhomogeneous equations.
\begin{thm}[Interior H\"older estimate]\label{inho_Holder}
 Assume that $\lambda\leq \det D^2 u\leq \Lambda$ in a convex domain $\Omega\subset\R^n$ with $u=0$ on $\p\Omega$ where $B_1 (0)\subset \Omega\subset B_n(0)$.
 Let  $f\in L^n(B_1(0))$ and $v\in W^{2,n}_{loc}(B_1(0))$ be a solution of 
$U^{ij} v_{ij}= f$ in $B_1(0)$. Then there exist constants $\beta\in (0,1)$ and $C>0$ depending only on  $n$, $\lambda$, and $\Lambda$ such that
\[
|v(x) - v(y)|\leq C |x-y|^\beta \Big( \|v\|_{L^\infty(B_1(0))} +\|f\|_{L^n(B_1(0))}\Big)\quad \text{for all } x,y\in B_{\frac12}(0).
\]
\end{thm}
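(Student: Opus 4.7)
The plan is to reduce Theorem \ref{inho_Holder} to a geometric decay of the oscillation of $v$ on a nested sequence of sections of $u$ centered at an interior point, using the Caffarelli--Guti\'errez Harnack inequality (Theorem \ref{CGthm}) for the principal part and the ABP estimate (Theorem \ref{ABPmax}) to absorb the inhomogeneous term. Section-based H\"older continuity is then converted into Euclidean H\"older continuity via the inclusion of sections in Euclidean balls that comes from Caffarelli's interior $C^{1,\alpha}$/strict convexity theory, which is available under the normalization $B_1(0)\subset\Omega\subset B_n(0)$ with $u=0$ on $\partial\Omega$.

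The first and main analytic step is an inhomogeneous Harnack/oscillation estimate: for $x_0\in B_{1/2}(0)$ and any section $S_u(x_0,h)\subset\subset B_1(0)$, I would prove
\[
\operatorname{osc}_{S_u(x_0,h/2)} v \;\le\; \mu\,\operatorname{osc}_{S_u(x_0,h)} v \;+\; C\,h^{\eta}\,\|f\|_{L^n(B_1(0))},
\]
for some universal $\mu\in(0,1)$ and $\eta>0$. Writing $M=\sup_{S_u(x_0,h)}v$ and $m=\inf_{S_u(x_0,h)}v$, the functions $M-v$ and $v-m$ are nonnegative on $S_u(x_0,h)$ and solve $L_u(M-v)=-f$ and $L_u(v-m)=f$. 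I would split each of these as an $L_u$-harmonic part plus a ``corrector'' $w$ solving $L_u w=\pm f$ in $S_u(x_0,h)$ with zero boundary values. Theorem \ref{ABPmax}, together with $\det U\ge \lambda^{n-1}$ and the fact that $\operatorname{diam}(S_u(x_0,h))$ is controlled by a power of $h$ (via the volume and diameter estimates for sections of solutions of the Monge--Amp\`ere equation with $\lambda\le\det D^2 u\le\Lambda$), yields $\|w\|_{L^\infty}\le C h^{\eta}\|f\|_{L^n}$. Applying Theorem \ref{CGthm} to the $L_u$-harmonic part, which is nonnegative up to an additive constant of order $\|w\|_{L^\infty}$, then produces the Harnack inequality for $M-v$ and $v-m$ on $S_u(x_0,h/2)$ with the extra additive term $C h^{\eta}\|f\|_{L^n}$. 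Adding the two bounds gives the decay. Iterating it on the dyadic sequence $\{S_u(x_0,h_0 2^{-k})\}$, with a universal initial height $h_0$ chosen so that $S_u(x_0,h_0)\subset B_1(0)$ for every $x_0\in B_{1/2}(0)$ (possible by quantitative strict convexity of $u$ in $B_{1/2}(0)$), and telescoping, yields
\[
\operatorname{osc}_{S_u(x_0,h_0 2^{-k})}v \;\le\; C (h_0 2^{-k})^{\beta_0}\bigl(\|v\|_{L^\infty(B_1(0))} + \|f\|_{L^n(B_1(0))}\bigr),
\]
for some universal $\beta_0\in(0,1)$.

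To pass from this section-based H\"older decay to a Euclidean one, I would use the two-sided inclusion, valid for $x_0\in B_{1/2}(0)$ and sufficiently small $t$,
\[
B_{c t^{1/\sigma}}(x_0)\cap\Omega \;\subset\; S_u(x_0,t) \;\subset\; B_{C t^{\sigma}}(x_0),
\]
with some universal $\sigma>0$, which follows from Caffarelli's $C^{1,\alpha}$ and strict convexity theorems under the present normalization. For $x,y\in B_{1/2}(0)$ with $|x-y|$ small, I pick $t$ minimal with $y\in S_u(x,t)$, so that $t\le C|x-y|^{\sigma'}$, apply the iteration estimate, and obtain
\[
|v(x)-v(y)| \;\le\; C|x-y|^{\beta}\bigl(\|v\|_{L^\infty(B_1(0))}+\|f\|_{L^n(B_1(0))}\bigr),
\]
with $\beta=\beta_0\sigma'>0$ depending only on $n,\lambda,\Lambda$, which is the desired estimate.

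The main obstacle is the inhomogeneous Harnack step: one must run the ABP estimate inside highly anisotropic sections and obtain a power-of-$h$ bound on their diameter that is uniform in $x_0\in B_{1/2}(0)$, then combine it with Theorem \ref{CGthm}, whose proof itself rests on the engulfing property and volume estimates of sections. In other words, essentially all of the geometric machinery of sections of solutions to the Monge--Amp\`ere equation (developed in Part 3) must be available before Step 1 can be carried out; once these are in place, the iteration in the end of Step 1 and the conversion from sections to balls in Step 2 are routine.
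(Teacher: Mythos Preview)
Your proposal is correct and follows the same overall architecture as the paper: establish an inhomogeneous Harnack inequality on sections by splitting off a corrector via ABP and applying Theorem~\ref{CGthm} to the $L_u$-harmonic part, iterate to get oscillation decay on dyadic sections, then convert to Euclidean H\"older using the section--ball inclusions coming from strict convexity and the gradient bound.

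The one substantive technical difference is in the ABP step. You invoke the standard ABP (Theorem~\ref{ABPmax}) and control $\operatorname{diam}(S_u(x_0,h))$ by a power of $h$ via the size-of-sections Lemma~\ref{sec-size}, which already requires the full Caffarelli $C^{1,\alpha}$/strict convexity machinery. The paper instead proves a refined ABP (Lemma~\ref{ABP_refined}) in which $\operatorname{diam}(\Omega)$ is replaced by $|\Omega|^{1/n}$, obtained by first normalizing the convex domain via John's lemma; combined with the elementary volume bound $|S_u(x_0,h)|\le C h^{n/2}$ this gives directly $\|w\|_{L^\infty}\le C h^{1/2}\|f\|_{L^n}$. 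Your route works, but the paper's refinement is cleaner at this stage since it uses only the volume estimate rather than the diameter estimate, postponing the use of Lemma~\ref{sec-size} (or, in the paper's version, simply the gradient bound $B(z,r/2M)\subset S_u(z,r)$) to the final conversion step.
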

The Harnack inequality (\ref{HI2}) is also true for more general hypotheses on the Monge-Amp\`ere measure $\mu=\det D^2 u$ such as
a suitable doubling property. We say that the Borel measure $\mu$ is {\it doubling with respect to the center of mass} on the sections of $u$ if there exist constants $\beta>1$ and $0<\alpha<1$ such that for all sections 
$S_u (x_0, t)$,
\begin{equation}
 \label{muDC}
 \mu (S_u(x_0, t)) \leq \beta \mu (\alpha S_u (x_0, t)).
\end{equation}
Here $\alpha S_u (x_0, t)$ denotes the $\alpha$-dilation of $S_u(x_0, t)$ with respect to its center of mass $x^{\ast}$ (computed with respect to the Lebesgue measure):
$$\alpha S_u(x_0, t)= \{x^{\ast} + \alpha (x- x^{\ast}): x\in S_u(x_0, t)\}.$$

Maldonado \cite{Maldo}, extending the work of Caffarelli-Guti\'errez,  proved the following Harnack inequality for the linearized Monge-Amp\`ere equation under minimal geometric condition, namely, the doubling condition (\ref{muDC}).
\begin{thm}(\cite{Maldo}) Assume that 
$\det D^2 u=\mu$ satisfies (\ref{muDC}). 
For each compactly supported section $S_u (x, t)\subset\subset\Omega$, and any nonnegative solution $v$ of $L_u v=0$ in $S_u (x, t)$, we have 
$$\sup_{S_u(x,\tau t)} v\leq C \inf_{S_u(x, \tau t)} v$$
for universal $\tau, C$ depending only on $n, \beta$ and $\alpha$.
\label{MHolder_thm}
\end{thm}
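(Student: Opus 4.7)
The plan is to reproduce the Caffarelli-Guti\'errez architecture from the proof of Theorem \ref{CGthm}, but with every occurrence of Lebesgue measure on sections replaced by the Monge-Amp\`ere measure $\mu = \det D^2 u$, and with the pointwise pinching $\lambda \leq \det D^2 u \leq \Lambda$ replaced by the doubling hypothesis (\ref{muDC}). First, by an affine transformation as in Section \ref{AIP_sec} together with John's lemma (Lemma \ref{John_lem}), reduce to the case where the section $S_u(x, t)$ is comparable to a Euclidean ball; both the equation $L_u v = 0$ and the doubling constants $\beta, \alpha$ are preserved in this normalization. Next, one extracts from (\ref{muDC}) the engulfing property of sections: there is a universal $\theta$ with $S_u(x, t) \subset S_u(y, \theta t)$ whenever $y \in S_u(x, t)$. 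Engulfing together with doubling endow the triple $(\Omega, \{S_u\}, \mu)$ with the structure of a space of homogeneous type in the sense of Coifman-Weiss, on which Vitali-type covering and Calder\'on-Zygmund decompositions measured in $\mu$ are available.

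The analytic core is a critical $\mu$-density estimate: there exist universal $\eta, \delta, c \in (0,1)$ such that any nonnegative $v$ with $L_u v = 0$ in $S_u(x, t)$ satisfying
\[
\mu\bigl(\{v \geq 1\} \cap S_u(x, \eta t)\bigr) \geq \delta\, \mu\bigl(S_u(x, \eta t)\bigr)
\]
also obeys $v \geq c$ on $S_u(x, \eta^2 t)$. This is proved by applying the ABP maximum principle (Theorem \ref{ABPmax}) to a barrier built from $u$ itself on the normalized section; the key algebraic observation is that the ABP bound involves $(\det U)^{1/n} = (\det D^2 u)^{(n-1)/n} = \mu^{(n-1)/n}$, so $\mu$ enters the right-hand side in precisely the form compatible with the doubling hypothesis. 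From the critical density estimate one iterates using the homogeneous-space Calder\'on-Zygmund decomposition to obtain a power decay
\[
\mu\bigl(\{v > M^k\} \cap S_u(x, \tau t)\bigr) \leq (1-\delta)^k\, \mu\bigl(S_u(x, \tau t)\bigr)
\]
for some universal $M > 1$, and by the standard scaling-and-covering argument of Krylov-Safonov this upgrades to the Harnack inequality.

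The main obstacle will be the critical density step: without the pinching on $\det D^2 u$, the Lebesgue volume of a section is no longer universally comparable to $t^{n/2}$, so the classical barriers built from $u(y) - u(x) - \nabla u(x)\cdot(y-x) - t$ do not directly supply a quantitative lower bound through ABP. The fix is to carry the argument out entirely in the $\mu$-category: rather than trying to control Lebesgue volumes, one exploits the doubling of $\mu$ together with the fact that ABP naturally produces an integral of $f/\mu^{(n-1)/n}\,dx$ on the contact set, which, after H\"older's inequality and $\mu$-doubling, converts cleanly to a statement about $\mu$-densities on concentric sections. Once this translation is set up, the rest of the Krylov-Safonov/Caffarelli-Guti\'errez scheme goes through with sections replacing balls and $\mu$ replacing the Lebesgue measure.
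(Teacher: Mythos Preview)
The paper does not actually prove Theorem~\ref{MHolder_thm}; it is stated as a citation from \cite{Maldo}, so there is no in-paper proof to compare against. Your high-level architecture---replace Lebesgue measure by $\mu$ throughout, derive engulfing from the doubling hypothesis~(\ref{muDC}), treat $(\Omega,\{S_u\},\mu)$ as a space of homogeneous type, and rerun the Krylov--Safonov scheme of Section~5---is the correct blueprint and matches how the result is obtained in \cite{Maldo}, building on the proof of Theorem~\ref{CGthm}.

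That said, your account of the critical density step has a genuine gap. You propose to get it by ``applying the ABP maximum principle to a barrier built from $u$,'' noting that $(\det U)^{1/n}=\mu^{(n-1)/n}$ and then invoking H\"older and doubling. This is not the mechanism: for the homogeneous equation $L_u v=0$ the ABP estimate (Theorem~\ref{ABPmax}) applied directly yields nothing, and the $\mu^{(n-1)/n}$ observation is a red herring. What actually works is the sliding-paraboloid argument of Lemma~\ref{meas_lem}, recast in the $\mu$-category. At a contact point $x$ with vertex $y$ one has $D^2u(y)\,D_xy=D^2u(x)+\tfrac{1}{a}D^2v(x)$, hence $\det D^2u(y)\,|\det D_xy|\le C(a,n)\det D^2u(x)$; integrating over the contact set $E$ and changing variables gives $\mu(V)\le C\,\mu(E)$ \emph{directly}, with no H\"older step. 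Doubling then replaces the Lebesgue section-volume comparison $|S_1|\le C^\ast|S_\alpha|$ used at the end of the paper's proof of Lemma~\ref{meas_lem}. More seriously, you say nothing about the doubling-estimate step (Lemma~\ref{double_lem}): the paper's subsolution construction relies on the Lebesgue smallness of the ``bad set'' $H_\e=\{\|D^2u\|\ge 1/\e\}$ and on solving an auxiliary Monge--Amp\`ere equation with right-hand side $\Lambda\chi_{H_\e}$, both of which use the pinching $\lambda\le\det D^2u\le\Lambda$ in an essential way. Reworking this barrier under mere doubling is the genuinely delicate part of \cite{Maldo}, and your proposal does not address it.
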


For example, the Harnack inequality holds for $\mu$ positive polynomials.
If $u(x_1, x_2)= x_1^4 + x_2^2 $ then $\mu=\det D^2 u= C x_1^2$ is an admissible measure. The Harnack inequality applies to equation of the Grushin-type
\begin{equation}x_1^{-2}v_{11} + v_{22}=0.
 \label{Gru}
\end{equation}
\begin{rem}
Equation of the type (\ref{Gru}) is relevant in non-local equations such as fractional Laplace equation. 
By Caffarelli-Silvestre \cite{CS}, we can relate the fractional Laplacian 
$$(-\Delta )^{s} f(x) = C_{n, s} \int_{\R^{n}}\frac{f(x)-f(\xi)}{|x-\xi|^{n + 2s}} d\xi, $$
where the parameter $s$ is a real number between $0$ and $1$, and $C_{n,s}$ is some normalization constant, with solutions of the following extension problem. For a function $f : \R^n \to \R$, we consider the 
extension $v : \R^n \times [0,\infty) \to \R$ that satisfies the equations
\begin{align*}
v(x,0) = f(x) \label{eq:dirichletboundary},~
\Delta_x v + \frac{a}{y} v_y + v_{yy} = 0. 
\end{align*}
The last equation can also be written as
\begin{equation*}
\text{div} (y^{a} D v) = 0
\end{equation*}
which is clearly the Euler-Lagrange equation for the functional
\begin{equation*}
 J(v) = \int_{y>0} \abs{ D v }^2 y^{a} dX,~ X=(x, y). 
\end{equation*}
We can show that
\[ C (-\Delta)^s f = \lim_{y \to 0^+} -y^a v_y = \frac{1}{1-a} \lim_{y \to 0} \frac{v(x,y) - v(x,0)}{y^{1-a}}\]
for $s = \frac{1-a}{2}$ and some constant $C$ depending on $n$ and $s$, which reduces to the regular normal derivative in the case 
$a=0$.

If we make the change of variables $z = \left( \frac{y}{1-a} \right)^{1-a}$, we obtain a nondivergence form equation of the type (\ref{Gru})
\begin{equation*}
\Delta_x v + z^{\alpha} v_{zz} = 0
\end{equation*}
for $\alpha = \frac{-2a}{1-a}$. Moreover, $y^a v_y = v_z$. Thus, we can show that the following equality holds up to a multiplicative constant
\[ (-\Delta)^s f(x) = -\lim_{y \to 0^+} y^a v_y(x,y) = -v_z(x,0). \]
\end{rem}
\begin{rem}
The Harnack inequality in Theorem \ref{CGthm} has been recently extended to the boundary in \cite{Le_Bdr}.
\end{rem}
\newpage
\section[Interior Harnack and H\"older estimates for linearized Monge-Amp\`ere]{Interior Harnack and H\"older estimates for the linearized Monge-Amp\`ere equation}
In this section, we prove Theorems \ref{CGthm} and \ref{inho_Holder}.
\subsection[Proof of Caffarelli-Guti\'errez's Harnack inequality]{Proof of Caffarelli-Guti\'errez's Harnack inequality}
In this section, we prove Theorem \ref{CGthm} concerning Caffarelli-Guti\'errez's Harnack inequality for the linearized Monge-Amp\`ere equation.

We first briefly outline the proof of the Harnack inequality (\ref{HI2}) in Theorem \ref{CGthm}. Our proof adapts the general scheme in proving Harnack inequality
in Krylov-Safonov \cite{KS1, KS2}, Caffarelli-Cabr\'e \cite{CC}, Caffarelli-Guti\'errez \cite{CG}, Savin \cite{S_perb} and most recently Imbert-Silvestre \cite{IS}.

By using the affine invariant property of the linearized Monge-Amp\`ere equation as explained in Section \ref{AIP_sec}, we can rescale the domain, and the functions
$u$ and $v$. Furthermore, by changing coordinates and subtracting a supporting hyperplane to the graph of $u$ at $(x_0, u(x_0))$, 
we can assume that $x_0=0$, $u(0)=0$, $Du(0)=0$, $h=2$ and that the section $S_4= S_u(0, 4)\subset\subset\Omega$ is normalized, that is
$$B_1 (0) \subset S_4\subset B_n(0).$$
For simplicity, we denote $S_t= S_u(0, t)$.

A constant depending only on $\lambda,\Lambda$ and $n$ is called {\it universal}. We denote universal constants by $c, C, C_1, C_2, K, M,\delta, \cdots,$ etc.  
Their values may change from line to line.

From the engulfing property of sections in Theorem \ref{engulfthm}, we find that if $y\in S_{u}(x, t)$ then
$$S_{u}(x, t)\subset S_{u}(y, \theta_0 t)\subset S_{u}(x, \theta_0^2 t),$$
it suffices to show that if $v\geq 0$ in $S_2$ then $v\leq C(n, \lambda, \Lambda) v(0)$ in $S_1$.
\vglue 0.2cm
The idea of the proof is the following. We show that the distribution function of $v$, $|\{v>t\}\cap S_1|$ decays like $t^{-\varepsilon}$ ($L^{\e}$ estimate). Thus, $v\approx v(0)$ in $
S_1$ except a set of very small measure. If $v(x_0)\gg v(0)$ at some point $x_0$, then by the same method (now applying to $C_1- C_2 v$), we find $v\gg v(0)$ in a set of positive measure
which contradicts the above estimate. To study the distribution function of $v$, we slide generalized paraboloids associated with $u$ of constant opening, $P(x)= - a[u(x)
- u(y)-Du(y)\cdot (x-y)]$,
from below till they touch the graph of $v$ for the first time. These are the points where we use the equation and obtain the lower bound for the measure of the 
touching points. By increasing the opening of the sliding paraboloids, the set of touching points almost covers $S_1$ in measure. \\
There are three main steps in the proof of the $L^{\e}$ estimate.
\begin{myindentpar}{1cm}
{\bf Step 1: } Measure (ABP type) estimate.  
The rough idea is that
$$\text{Measure of contact points}~\geq~ c~\text{Measure of vertices}.$$
This step is not difficult. The reason why it works is the following. In the ABP estimate, we need the lower bound 
on the determinant of the coefficient matrix which is the case here.\\
{\bf Step 2: } Doubling estimate. This step is based on construction of subsolutions.\\
{\bf Step 3: } This step proves the geometric decay of $|\{v>t\}\cap S_1|$. It is based on a covering lemma which is a consequence
of geometric properties of sections. 
\end{myindentpar}
Our measure estimate in Step 1 states as follows.
\begin{lem}[Measure estimate]\label{meas_lem}
 Suppose that $v\geq 0$ is a solution of $L_u v=0$ in a normalized section $S_4$. 
 There are small, universal constants $\delta>0,\alpha>0$ and a large constant $M_1>1$ with the following properties. If 
 $\inf_{S_{\alpha}}v\leq 1$ then 
 $$|\{v>M_1\}\cap S_1|\leq (1-\delta) |S_1|.$$
\end{lem}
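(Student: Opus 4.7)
The plan is to derive this ABP-type measure estimate by applying the Aleksandrov-Bakelman-Pucci maximum principle (Theorem~\ref{ABPmax}) on $S_1$ to an auxiliary function that couples $v$ with a barrier built directly from $u$.

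We may assume, after the affine normalization already in force, that $B_1\subset S_4\subset B_n$, $u(0)=0$, $Du(0)=0$, and that $v(y_0)\leq 1$ for some $y_0\in S_\alpha$. Define the Monge-Amp\`ere barrier
\[
\Phi(x):=1-u(x),\qquad x\in\overline{S_1}.
\]
Then $\Phi\equiv 0$ on $\p S_1$, $\Phi(y_0)\geq 1-\alpha$, and a direct computation gives
\[
L_u\Phi \;=\; -\mathrm{trace}(U\,D^2 u) \;=\; -n\det D^2 u \;\in\; [-n\Lambda,\,-n\lambda].
\]
Thus $\Phi$ is a universally controlled barrier adapted to the linearized Monge-Amp\`ere operator $L_u$.

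Fix a large universal constant $K$ and consider the auxiliary function $w:=K\Phi-v$ on $\overline{S_1}$. Since $v\geq 0$, we have $w\leq 0$ on $\p S_1$, while $w(y_0)\geq K(1-\alpha)-1\geq K/2$ once $\alpha$ is small. Hence $\sup_{S_1}w\geq K/2>0$. Combining $L_u w=-Kn\det D^2 u$ with the bound $\det U=(\det D^2 u)^{n-1}\geq \lambda^{n-1}$ and applying Theorem~\ref{ABPmax} to $w$ on $S_1$ gives
\[
\frac{K}{2}\;\leq\;\sup_{S_1}w\;\leq\;\sup_{\p S_1}w \;+\; C(n)\,\mathrm{diam}(S_1)\,\Big\|\frac{L_u w}{(\det U)^{1/n}}\Big\|_{L^n(\Gamma^+)}\;\leq\;C(n,\lambda,\Lambda)\,K\,|\Gamma^+|^{1/n},
\]
where $\Gamma^+$ is the upper contact set of $w$. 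Rearranging yields a universal lower bound $|\Gamma^+|\geq c_0>0$.

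It remains to show that a definite portion of $\Gamma^+$ lies inside $\{v\leq M_1\}$ for a universal $M_1$. The key observation is that on $\Gamma^+$, $w$ agrees with its concave envelope $\bar w$, and since $\bar w$ is concave with $\bar w(y_0)\geq K/2>0$ while $\bar w\leq 0$ near $\p S_1$, the superlevel set $\{\bar w\geq 0\}$ is a convex set of positive measure. On $\Gamma^+\cap\{\bar w\geq 0\}$ one therefore has $w=\bar w\geq 0$, which combined with $w=K\Phi-v$ and $\Phi\leq 1$ forces $v\leq K\Phi\leq K=:M_1$ there. Because $|S_1|$ is universally bounded above and below (John's lemma together with $B_1\subset S_4\subset B_n$), this produces a universal $\delta>0$ with
\[
|\{v\leq M_1\}\cap S_1|\;\geq\;\delta\,|S_1|,
\]
which is equivalent to the stated estimate $|\{v>M_1\}\cap S_1|\leq (1-\delta)|S_1|$.

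\emph{Main obstacle.} The most delicate step is converting the measure lower bound $|\Gamma^+|\geq c_0$ on the ABP contact set into a lower bound on $|\{v\leq M_1\}\cap S_1|$. This requires controlling the concave envelope $\bar w$ quantitatively---specifically showing that a positive-measure portion of $\Gamma^+$ lies in $\{\bar w\geq 0\}$ rather than concentrating on its zero set---and uses in an essential way that sections of $u$, after affine normalization, enjoy doubling and engulfing properties (Theorem~\ref{engulfthm}) which mirror those of Euclidean balls and make the ABP machinery quantitatively effective on them.
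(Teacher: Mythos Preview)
Your barrier choice $\Phi=1-u$ and the resulting auxiliary function $w=K\Phi-v$ are exactly right, and the ABP computation giving $|\Gamma^+|\geq c_0$ is correct. But the step you flag as the ``main obstacle'' is a genuine gap, and the resolution you sketch via engulfing properties does not work. Nothing prevents $\Gamma^+$ from lying almost entirely in $\{\bar w<0\}$: the concave envelope $\bar w$ can have arbitrarily steep slope near $y_0$, so $\{\bar w\geq 0\}$ can be a tiny convex set while $\Gamma^+$ spreads out into the region where $w$ is very negative (i.e.\ where $v$ is large). The doubling and engulfing properties of sections say nothing about this.

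The fix is elementary and does not require any section geometry: apply Theorem~\ref{ABPmax} on the open set $\Omega_0:=\{x\in S_1:w(x)>0\}$ instead of on $S_1$. Since $w\leq 0$ on $\partial S_1$ and $w>0$ in $\Omega_0$, continuity gives $w=0$ on $\partial\Omega_0$, and $y_0\in\Omega_0$ with $w(y_0)\geq K/2$. The same ABP computation now yields $|\Gamma^+_0|\geq c_0$ for the contact set $\Gamma^+_0\subset\Omega_0$, and on $\Omega_0$ one has $v<K\Phi\leq K=:M_1$ automatically. Since $|S_1|\leq|S_4|\leq|B_n|$, this gives $|\{v\leq M_1\}\cap S_1|\geq c_0\geq\delta|S_1|$ with $\delta=c_0/|B_n|$.

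For comparison, the paper takes a genuinely different route: it slides generalized paraboloids $-a[u(x)-u(y)-Du(y)\cdot(x-y)]$ from below (with vertices $y$ ranging over $S_\alpha$), and at each contact point $x$ uses the equation $\mathrm{trace}((D^2u)^{-1}D^2v)=0$ together with $D^2v\geq -aD^2u$ to bound the Jacobian of the vertex-to-contact map $y\mapsto x$; the area formula then gives $|S_\alpha|\leq C|\{v<M_1\}\cap S_1|$. This is the Savin/Imbert--Silvestre approach. Your ABP-with-barrier argument is closer to the classical Krylov--Safonov/Caffarelli--Cabr\'e line; once patched as above, it is an equally valid and arguably more familiar proof of the same estimate.
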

The key doubling estimate for Step 2 is the following lemma.
\begin{lem}[Doubling estimate]
\label{double_lem} Suppose that $v\geq 0$ is a solution of $L_u v=0$ in a normalized section $S_4$. Let $\alpha$ be the small constant in Lemma \ref{meas_lem}.
If $v\geq 1$ in $S_\alpha$ then $v\geq c(n, \lambda, \Lambda)$ in $S_{1}.$
\end{lem}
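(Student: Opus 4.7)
The plan is to establish the doubling estimate by a barrier-comparison argument. Using the affine-invariant rescaling from the Harnack outline (so $u(0)=0$, $\nabla u(0)=0$, and $B_1\subset S_4\subset B_n$), the hypotheses become $L_u v=0$ in $S_4$, $v\ge 0$ in $S_4$, and $v\ge 1$ in $S_\alpha$. Since $v\ge 1$ already on $S_\alpha$, it suffices to exhibit a universal $c>0$ with $v\ge c$ on $S_1\setminus S_\alpha$. I will work on the annulus $A := S_2\setminus\overline{S_\alpha}$, whose outer boundary $\partial S_2$ sits in the interior of $S_4$ where $v$ is continuous, and produce a subsolution $\phi$ of $L_u$ on $A$ with $\phi\le 0$ on $\partial S_2$ and $\phi\le 1$ on $\partial S_\alpha$. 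The comparison principle---available for $L_u$ since $\det U\ge\lambda^{n-1}>0$ and hence the ABP estimate of Theorem \ref{ABPmax} applies---will then force $\phi\le v$ on $A$, and the positivity of $\phi$ at interior levels of $u$ will give the required lower bound.

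The natural ansatz is $\phi(x)=H(u(x))$ with $H$ convex and decreasing on $[\alpha,2]$, say
\[
H(t) = \left(\frac{2-t}{2-\alpha}\right)^{\gamma},\qquad \gamma>1,
\]
so that $H(2)=0$ and $H(\alpha)=1$ (matching the hypotheses on $v$ at the two boundary components of $A$). A direct computation gives
\begin{equation*}
L_u\phi \;=\; \frac{\gamma}{(2-\alpha)^2}\!\left(\frac{2-u}{2-\alpha}\right)^{\gamma-2}\!\Bigl[(\gamma-1)\,U^{ij}u_iu_j \;-\; n(2-u)\det D^2 u\Bigr],
\end{equation*}
so the subsolution condition $L_u\phi\ge 0$ on $A$ reduces to
\begin{equation*}
(\gamma-1)\,U^{ij}u_iu_j \;\ge\; n(2-u)\det D^2 u\qquad\text{on } A.
\end{equation*}
Assuming a universal positive lower bound $U^{ij}u_iu_j\ge c_0$ on $A$, the condition is forced by choosing $\gamma$ large in terms of $n$, $\lambda$, $\Lambda$, $\alpha$. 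Then comparison gives $\phi\le v$ on $A$, and on $S_1\setminus S_\alpha$ (where $u\le 1$) one has $\phi(x)\ge H(1)=(1/(2-\alpha))^{\gamma}=:c>0$, so $v\ge c$ there. Combined with $v\ge 1\ge c$ on $S_\alpha$, this yields the claim.

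The key technical obstacle is securing the pointwise lower bound $U^{ij}u_iu_j\ge c_0>0$ on $A$ with $c_0$ universal. In the model quadratic case $u=\tfrac{1}{2}Ax\cdot x$ one has the clean identity $U^{ij}u_iu_j = 2(\det A)\,u\ge 2\lambda\alpha$; in general, however, the cofactor matrix $U$ is controlled only through $\det U\ge\lambda^{n-1}$ and may be strongly anisotropic, so the bound is not immediate. My plan is to combine the convexity inequality $\nabla u(x)\cdot x\ge u(x)\ge\alpha$ on $A$ (which together with $|x|\le n$ gives $|\nabla u|\ge\alpha/n$) with the Monge--Amp\`ere bounds $\lambda\le\det D^2 u\le\Lambda$ and the engulfing and volume properties of sections to control the cofactor quadratic form from below. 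Should a direct pointwise bound prove too delicate, a fall-back is to build the barrier from Aleksandrov-type paraboloids $P_y(x)=u(x)-u(y)-\nabla u(y)\cdot(x-y)$, each a strict subsolution since $L_u P_y = n\det D^2 u\ge n\lambda>0$, by sliding the vertex $y$ over $S_\alpha$ and assembling translates to produce the required positive subsolution on $A$ without an explicit pointwise estimate on $U^{ij}u_iu_j$.
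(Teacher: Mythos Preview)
Your framework is the same as the paper's---a barrier of the form $H(u)$ on the annulus $S_2\setminus S_\alpha$, with comparison against $v$---and your reduction to the inequality $(\gamma-1)U^{ij}u_iu_j \ge n(2-u)\det D^2 u$ is correct. But you have correctly located, and then underestimated, the real obstruction. The pointwise lower bound $U^{ij}u_iu_j\ge c_0>0$ on $A$ is \emph{false} in general. Writing $U^{ij}=(\det D^2u)\,u^{ij}$ and using Lemma~\ref{uv_trace}, one has $U^{ij}u_iu_j\ge \lambda\,|Du|^2/\text{trace}(D^2u)$; your convexity argument gives $|Du|\ge\alpha/n$ on $A$, but $\text{trace}(D^2u)$ need not be bounded above when only $\lambda\le\det D^2u\le\Lambda$ is assumed. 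No amount of engulfing or volume information will produce a pointwise bound on $\text{trace}(D^2u)$.

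The paper's fix is not to salvage the pointwise estimate, but to modify the barrier so that the bad set is handled separately. First, $\int_{S_3}\|D^2u\|\le\int_{S_3}\Delta u\le C$ (divergence theorem plus the interior gradient bound), so the set $H_\varepsilon=\{x\in S_3:\|D^2u\|\ge 1/\varepsilon\}$ has measure $\le C\varepsilon$. One then solves an auxiliary Monge--Amp\`ere problem $\det D^2 h_\varepsilon=\Lambda\varphi$ in $S_4$, $h_\varepsilon=0$ on $\partial S_4$, with $\varphi\approx\chi_{H_\varepsilon}$, so that $|h_\varepsilon|+|Dh_\varepsilon|\le C\varepsilon^{1/n}$ on $S_2$ by Aleksandrov and the gradient estimate. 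The barrier is then built from $\tilde V=u-h_\varepsilon$ instead of $u$: on $H_\varepsilon$ the term $u^{ij}(h_\varepsilon)_{ij}\ge n$ (by Lemma~\ref{trlem}) makes the subsolution inequality automatic, while off $H_\varepsilon$ one has $\text{trace}(D^2u)\le n/\varepsilon$ and the argument you wrote goes through with $\gamma$ (or $m$) chosen large depending on $\varepsilon$. Your fall-back plan of assembling sliding paraboloids does not obviously produce a subsolution with the correct boundary behavior on the annulus; the auxiliary-Monge--Amp\`ere correction is the missing idea.
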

Combining Lemmas \ref{meas_lem} and \ref{double_lem}, and letting $M:= M_1 c(n,\lambda,\Lambda)^{-1}$, we obtain the following result: 
\begin{prop}[Critical density estimate]
 \label{decay_rem}
 Suppose that $v\geq 0$ is a solution of $L_u v=0$ in a normalized section $S_4$. 
 There is a small, universal constant $\delta>0$ and a large constant $M>1$ with the following properties. If 
 $$|\{v>M\}\cap S_1|> (1-\delta) |S_1|$$
 then $v>1$ in $S_1$.
\end{prop}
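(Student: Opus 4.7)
The plan is to prove Proposition \ref{decay_rem} by contraposition, combining the measure estimate (Lemma \ref{meas_lem}) and the doubling estimate (Lemma \ref{double_lem}) via a linear rescaling of $v$. Let $M_1$, $\alpha$, $\delta$ be the constants furnished by Lemma \ref{meas_lem} and let $c = c(n,\lambda,\Lambda) \in (0,1)$ be the constant from Lemma \ref{double_lem}, and define $M := M_1/c$, which is universal and exceeds $1$. I will assume the conclusion fails, i.e., there exists $x_0 \in S_1$ with $v(x_0) \leq 1$, and derive $|\{v > M\} \cap S_1| \leq (1-\delta)|S_1|$, contradicting the hypothesis.

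The first step is to convert the pointwise bound $v(x_0) \leq 1$ into an infimum bound on the inner section $S_\alpha$. For this I will use the doubling estimate in its rescaled form. If $\inf_{S_\alpha} v = 0$ then trivially $\inf_{S_\alpha} v \leq 1/c$. Otherwise, apply Lemma \ref{double_lem} to the nonnegative $L_u$-solution $w := v / \inf_{S_\alpha} v$, which satisfies $w \geq 1$ on $S_\alpha$. The lemma yields $w \geq c$ on $S_1$, so
\[
c\,\inf_{S_\alpha} v \leq \inf_{S_1} v \leq v(x_0) \leq 1,
\]
which again gives $\inf_{S_\alpha} v \leq 1/c$.

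With this inequality in hand, I will apply Lemma \ref{meas_lem} to the rescaled function $cv$, which by linearity of $L_u$ is also a nonnegative solution of $L_u(cv) = 0$ on $S_4$. Since $\inf_{S_\alpha}(cv) = c\,\inf_{S_\alpha} v \leq 1$, the measure estimate yields $|\{cv > M_1\} \cap S_1| \leq (1-\delta)|S_1|$, which rewrites as $|\{v > M\} \cap S_1| \leq (1-\delta)|S_1|$ by the definition of $M$. This is the desired contradiction. I do not expect a serious obstacle at this stage: all the geometric and PDE content has been packaged into Lemmas \ref{meas_lem} and \ref{double_lem}, and the only point requiring care is to exploit the linearity of $L_u$ in order to rescale $v$ by the universal constant $c$ so that the hypotheses of the two lemmas apply in compatible normalizations.
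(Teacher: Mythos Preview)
Your proof is correct and follows precisely the combination the paper indicates: the paper simply states that Proposition~\ref{decay_rem} is obtained by combining Lemmas~\ref{meas_lem} and~\ref{double_lem} with $M := M_1\, c(n,\lambda,\Lambda)^{-1}$, and you have filled in exactly those details via the natural rescaling of $v$.
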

From the critical density estimate and the growing ink-spots lemma stated in Lemma \ref{inkspots}, we obtain the $L^{\e}$ estimate and completing the proof of Step 3.
\begin{thm}[Decay estimate of the distribution function] \label{decay_thm} Suppose that $v\geq 0$ is a solution of $L_u v=0$ in a normalized section $S_4$ with 
$$\inf_{S_u(0, 1)} v\leq 1.$$
Then there are universal constants $C_1>1$  and $\e\in (0, 1)$ such that for all $t>0$, we have
\[ |\{ v > t \} \cap S_1 | \leq C_1 t^{-\eps}.\]
\end{thm}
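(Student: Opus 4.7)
The plan is to derive the polynomial decay of the distribution function of $v$ by iterating the critical density estimate (Proposition \ref{decay_rem}) along a geometric sequence of levels $M^{k}$, using the growing ink-spots lemma \ref{inkspots} as the crucial covering device that replaces the standard Calderón--Zygmund decomposition in the Krylov--Safonov argument. The affine invariance of the linearized Monge-Amp\`ere operator $L_u$ discussed in Section \ref{AIP_sec}, together with the engulfing property of sections from Theorem \ref{engulfthm}, will allow me to rescale the critical density estimate to arbitrary sections contained well inside $S_4$.

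First I would introduce the super-level sets $A_k = \{v > M^{k}\}\cap S_1$ for $k\geq 0$, where $M>1$ is the universal constant from Proposition \ref{decay_rem}. The hypothesis $\inf_{S_1} v\leq 1$ picks out a point $x_{0}\in S_{1}$ at which $v(x_{0})\leq 1$, which, under rescaling, will supply the ``$\inf \leq 1$'' assumption needed to invoke the critical density estimate on suitable interior sections. The goal is to establish the geometric decay
\begin{equation*}
|A_{k+1}|\leq (1-\delta')\,|A_{k}|\quad\text{for every }k\geq 0,
\end{equation*}
with a universal $\delta'\in(0,1)$. Once this is proved, iteration gives $|A_{k}|\leq (1-\delta')^{k}|S_{1}|$, and for arbitrary $t>0$ one picks $k$ with $M^{k}\leq t<M^{k+1}$ to conclude $|\{v>t\}\cap S_{1}|\leq C_{1}t^{-\varepsilon}$ with $\varepsilon=-\log(1-\delta')/\log M$.

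The core of the argument is the implication $|A_{k+1}|\leq (1-\delta')|A_{k}|$. I would apply the ink-spots lemma \ref{inkspots} with $A=A_{k+1}$ and $B=A_{k}$, so the task reduces to verifying the following covering hypothesis: for every section $S_{u}(x,r)$ meeting $A_{k}$ but not entirely contained in $A_{k+1}$, a definite fraction of $S_{u}(x,r)$ lies in $A_{k}^{c}$. Take such a section. Because $A_{k+1}$ does not exhaust it, there is $y\in S_{u}(x,r)$ with $v(y)\leq M^{k+1}$. By the engulfing property, a slightly enlarged section $\widetilde{S}=S_{u}(x,\theta r)$ contains $y$, and after the affine normalization converting $\widetilde{S}$ into a normalized section of size $4$ (possible by John's lemma and the affine invariance of $L_{u}$), the rescaled function $\tilde{v}=M^{-k-1}v$ is a nonnegative solution of the linearized Monge-Amp\`ere equation whose infimum on the image of $S_{u}(x,\alpha r)$ is at most $1$. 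The contrapositive of Proposition \ref{decay_rem} then forces
\begin{equation*}
|\{\tilde{v}\leq M\}\cap (\text{normalized }S_{1})|\geq \delta,
\end{equation*}
which, rescaled back, produces a universal lower bound on $|A_{k}^{c}\cap S_{u}(x,r)|/|S_{u}(x,r)|$.

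The main obstacle, and the step where I expect to spend the most care, is arranging the rescaling in the covering argument consistently: one must ensure that the auxiliary enlarged section $\widetilde{S}$ where Proposition \ref{decay_rem} is applied is still compactly contained in the region where $v$ solves the equation and that the engulfing constant $\theta_{0}$ from Theorem \ref{engulfthm} interacts cleanly with the dilation factor $\alpha$ appearing in Lemma \ref{meas_lem}. A secondary subtlety is that the ink-spots lemma is typically stated for a family of balls or sections at definite scales, so one needs the engulfing and volume-doubling properties of sections, which are in turn affine-invariant consequences of the Monge-Amp\`ere bounds $\lambda\leq\det D^{2}u\leq\Lambda$, to replace the Lebesgue differentiation argument that makes the classical proof work on Euclidean balls. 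Once these geometric ingredients are in place, the iteration is automatic and yields the stated $t^{-\varepsilon}$ decay with universal constants $C_{1}$ and $\varepsilon$.
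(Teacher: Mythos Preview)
Your approach is essentially the paper's: define $A_k=\{v>M^k\}\cap S_1$, verify the hypothesis of the growing ink-spots lemma \ref{inkspots} via an affine rescaling of the critical density estimate, and conclude $|A_{k+1}|\leq(1-c\delta)|A_k|$. The paper's proof is just a terse version of what you outline.

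However, your indices are misaligned in a way that would break the argument as written. With $E=A_{k+1}$ and $F=A_k$ in Lemma~\ref{inkspots}, the hypothesis you must verify (in contrapositive form) is: \emph{if $S\not\subset A_k$, then $|S\cap A_{k+1}^c|\geq\delta|S|$}. You instead wrote ``meeting $A_k$ but not entirely contained in $A_{k+1}$ $\Rightarrow$ a definite fraction lies in $A_k^c$,'' which swaps the roles of $A_k$ and $A_{k+1}$. Correspondingly, the rescaling should be $\tilde v=M^{-k}v$ (using a point $y\in S$ with $v(y)\leq M^k$, which exists because $S\not\subset A_k$), not $M^{-k-1}v$; the contrapositive of Proposition~\ref{decay_rem} then gives $|\{\tilde v\leq M\}\cap S|\geq\delta|S|$, i.e.\ $|A_{k+1}^c\cap S|\geq\delta|S|$, which is what is needed. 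Your computation as written yields $|A_{k+2}^c\cap S|\geq\delta|S|$ and then asserts this bounds $|A_k^c\cap S|$, which goes the wrong way since $A_k^c\subset A_{k+2}^c$. Once the indices are fixed, your argument and the paper's coincide; you have also correctly flagged the one genuine technical point, namely that the dilated section on which Proposition~\ref{decay_rem} is applied must remain inside $S_4$.
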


\begin{proof}[Proof of Theorem \ref{decay_thm}] Let $\delta\in (0, 1)$ and $M>1$ be the constants in Proposition \ref{decay_rem}. The conclusion of the theorem follows from the following decay estimate
for  $A_k := \{v > M^k\} \cap S_1$:
\[ |A_k| \leq C_2 M^{-\eps k}. \]
Note that $A_k$'s are open sets and $A_k \subset A_1$ for all $k\geq 1$. Recalling $\inf_{S_1} v \leq 1$, by Proposition \ref{decay_rem},
we have 
$$|A_k|\leq |A_1| \leq (1-\delta)|S_1|~\text{for all }k.$$ 

From Proposition \ref{decay_rem}, we find that if  a section $S \subset S_1$ satisfies 
$|S \cap A_{k+1}| > (1-\delta) |S|$, then $S \subset A_{k}$. Using Lemma \ref{inkspots}, we obtain
\[ |A_{k+1}| \leq (1-c\delta) |A_k|, \]
and therefore, by induction, $$|A_k| \leq (1-c\delta)^{k-1} (1-\delta) |S_1| = C_2 M^{-\eps k},$$ where $\eps = -\log(1-c\delta) / \log M$ and 
$C_2=(1-c\delta)^{-1}(1-\delta)|S_1|$.
This finishes the proof.
\end{proof}
\begin{proof}[Proof of Theorem~\ref{CGthm}]
Let $\delta\in (0, 1)$ and $M>1$ be the constants in Proposition \ref{decay_rem} and $\e\in (0, 1)$ be the constant in Theorem \ref{decay_thm}.
By a covering argument, our theorem follows from the following claim.\\
{\bf Claim 1.} $$\sup_{S_u(0,1/2)} v \leq C \inf_{S_u(0, 1/2)} v. $$
This in turns follows from the following claim.\\
{\bf Claim 2.} If $\inf_{S_u(0, 1/2)} v \leq 1 $ then for some universal constant $C$, we have $\sup_{S_u(0, 1/2)} v \leq C.$

Indeed, for each $\tau>0$, the function $$v^{\tau}= \frac{v}{\inf_{S_u(0, 1/2)} v  +\tau  }$$
satisfies
$a^{ij} v^{\tau}_{ij} = 0.$ We apply {\bf Claim 2} to $v^{\tau}$ to obtain
$$\sup_{S_u(0,1/2)} v \leq C \left(\inf_{S_u(0, 1/2)} v  +\tau \right).$$
Sending $\tau\rightarrow 0$, we get the conclusion of {\bf Claim 1.}

It remains to prove {\bf Claim 2.} Let $\beta >0$ be a universal constant to be determined later and let $h_t(x) = t(1-u(x))^{-\beta}$ be
defined in $S_u(0, 1)$. We consider the minimum value of $t$ such that
$h_t \geq v$ in $S_u(0, 1)$. It suffices to show that $t$ is universally bounded by a constant $C$ because we have then
$$\sup_{S_u(0,1/2)} v \leq C \sup_{S_u(0,1/2)} (1- u(x))^{-\beta} \leq 2^{\beta} C.$$
If $t \le 1$, we are
done. Hence, we further assume that $t \ge 1$. 

Since $t$ is chosen to be the \emph{minimum} value such that $h_t \geq
v$, then there must exist some $x_0 \in S_u(0, 1)$ such that $h_t(x_0) =
v(x_0)$. Let $r = (1-u(x_0))/2$.  Let $H_0 := h_t(x_0) = t(2r)^{-\beta}
\ge 1$. By Theorem \ref{pst},
there is a small constant $c$ and large constant $p_1=\mu^{-1}$ such that $S_u(x_0, 2cr^{p_1})\subset S_u(0, 1)$. 
We bound $t$ by estimating the measure of the set $\{v \geq H_0/2\} \cap
S_u(x_0, cr^{p_1})$ from above and below. 

The estimate from above can be done using Theorem
\ref{decay_thm} which then says that
\begin{equation} \label{up_H}  
|\{v>H_0/2\} \cap S_u(x_0, cr^{p_1})| \leq |\{v>H_0/2\} \cap S_u(0, 1)| \leq
CH_0^{-\eps} = C t^{-\eps} (2r)^{\beta \eps}.
\end{equation}

To estimate the measure of $\{v \geq H_0/2\} \cap
S_u(x_0, cr^{p_1})$ from below, we apply Theorem 
\ref{decay_thm} to $C_1-C_2v$ on a small but definite fraction of this section.  Let $\rho$ be the small universal
constant and $\beta$ be a large universal constant such that
\begin{equation}\label{beta_choice}
 M \left( 1-\rho)^{-\beta} - 1 \right)  \leq
 \frac 12,~
\beta  \geq \frac{n}{2\mu\e}.
\end{equation}

Consider the section $S_u(x_0, c_1 r^{p_1}) $ where $c_1\leq c$ is small. We claim that $1- u(x)\geq 2r-2\rho r$ in this section. Indeed, if $x\in S_u(x_0, c_1 r^{p_1}) $
then by Lemma \ref{sec-size}, we have $|x-x_0| \leq C (c_1 r^{p_1})^{\mu} \leq c \rho r$ for small $c_1$ and hence, by the gradient estimate in Lemma \ref{slope-est}
\begin{eqnarray*}
1-u(x) = 2r + u(x_0)- u(x) \geq 2r -(\sup_{S_u(0, 1)}|D u|) |x-x_0| \geq 2r-2\rho r.
\end{eqnarray*}

The maximum of $v$ in the section $S_u(x_0, c_1 r^{\gamma})$ is at most the maximum
of $h_t$ which is not greater than $t(2r-2\rho r)^{-\beta} =
(1-\rho)^{-\beta} H_0$. 
Define the 
following function for $x\in S_u(x_0, c_1 r^{p_1}) $

\[ w(x) = \frac{(1-\rho)^{-\beta} H_0
  -v(x)}{\left((1-\rho)^{-\beta} - 1
  \right) H_0}. \] Note that $w(x_0) = 1$, and $w$ is a non-negative solution of $L_u w=0$ in $S_u(x_0, c_1 r^{p_1})$.
Using Proposition \ref{decay_rem}, we obtain
\[ |\{w \leq M\} \cap S_u(x_0, 1/4 c_1 r^{p_1})| \geq \delta |S_u(x_0, 1/4 c_1 r^{p_1})|. \]

In terms of the original function $v$, this is an estimate of a set
where $v$ is larger than
\[ H_0 \left((1-\rho)^{-\beta} - M \left(
( 1-\rho)^{-\beta} - 1 \right) \right) \geq
\frac{H_0}2, \]
because of the choice of $\rho$ and $\beta$. Thus, we obtain the estimate
\[ |\{v \geq H_0/2\} \cap S_u(x_0, c_1 r^{p_1})| \geq \delta |S_u(x_0, c_1 r^{p_1})|.\]
In view of  \eqref{up_H}, and the volume estimate on sections in Theorem \ref{vol-sec1}, we find
$$C t^{-\eps} (2r)^{\beta \eps}\geq  \delta |S_u(x_0, c_1 r^{p_1})|\geq c(n,\lambda,\Lambda) r^{np_1/2} = c(n,\lambda,\Lambda) r^{\frac{n}{2\mu}}.$$
By the choice of $\beta$ in (\ref{beta_choice}), we find that $t$ is universally bounded. 
\end{proof}

In the proof of Theorem \ref{decay_thm}, we use the following consequence of Vitali's covering
lemma. It is often referred to as the growing ink-spots lemma which was first introduced by Krylov-Safonov \cite{KS2}. The term ''growing ink-spots lemma`` was coined
by E. M. Landis.

\begin{lem}[Growing ink-spots lemma] \label{inkspots}
Suppose that $u$ is a strictly convex solution to the Monge-Amp\`ere equation
$\lambda\leq \det D^2 u\leq\Lambda$ in a bounded and convex set $\Omega\subset\R^n$. Assume that for some $h>0$, 
$S_u(0, 2h)\subset\subset\Omega.$

Let $E \subset F \subset S_u(0, h)$ be two open sets. Assume that for some constant $\delta \in (0,1)$, the
following two assumptions are satisfied.
\begin{itemize}
\item If any section $S_u(x, t) \subset S_u(0, h)$ satisfies $|S_u(x, t) \cap E| > (1-\delta)
  |S_u(x, t)|$, then $S_u(x, t) \subset F$.
\item $|E| \leq (1-\delta) |S_u(0, h)|$. 
\end{itemize}
Then $|E| \leq (1-c\delta) |F|$ for some constant $c$ depending only on $n,\lambda$ and $\Lambda$.
\end{lem}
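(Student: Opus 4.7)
The plan is to run a Vitali-type covering argument on the set $E$, with sections of $u$ playing the role of Euclidean balls: the engulfing property (Theorem~\ref{engulfthm}) substitutes for the triangle inequality, and the volume comparability of sections implied by $\lambda\le\det D^{2}u\le\Lambda$ (Theorem~\ref{vol-sec1}) provides the doubling needed to pass from a disjoint subfamily to a full cover. The strategy is to cover almost all of $E$ by countably many ``critical'' sections on which the density of $E$ drops to the threshold $1-\delta$; each such section will lie in $F$ by the first hypothesis, and will therefore contribute a fixed $\delta$-fraction of its own volume to $|F\setminus E|$, which will add up to $|F\setminus E|\ge c\delta|F|$.

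More precisely, I fix a Lebesgue point $x\in E$; such points form a set of full measure in $E$. For $t\to 0^{+}$ the ratio $|S_{u}(x,t)\cap E|/|S_{u}(x,t)|$ tends to $1$ and hence exceeds $1-\delta$. On the other hand, by the engulfing property, since $x\in S_{u}(0,h)$ one has $S_{u}(0,h)\subset S_{u}(x,\theta_{0}h)$, so for $t\ge\theta_{0}h$ the section $S_{u}(x,t)$ contains all of $E$ and the ratio is at most $|E|/|S_{u}(0,h)|\le 1-\delta$ by the second hypothesis. Since both $t\mapsto|S_{u}(x,t)|$ and $t\mapsto|S_{u}(x,t)\cap E|$ depend continuously on $t$ (as $E$ is open and $u$ is strictly convex), I define $t_{x}>0$ as the smallest $t$ for which \emph{either} $|S_{u}(x,t)\cap E|=(1-\delta)|S_{u}(x,t)|$ \emph{or} $S_{u}(x,t)$ ceases to be contained in $S_{u}(0,h)$. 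For every $s<t_{x}$ the section $S_{u}(x,s)$ lies in $S_{u}(0,h)$ and has $E$-density strictly greater than $1-\delta$, so by the first hypothesis $S_{u}(x,s)\subset F$; letting $s\nearrow t_{x}$ gives $S_{u}(x,t_{x})\subset\overline{F}$, which is sufficient for the measure estimates below.

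I then apply a Vitali-type extraction, with the selection mechanism governed by the engulfing constant $\theta_{0}$, to the cover $\{S_{u}(x,t_{x})\}_{x}$ and obtain a countable disjoint subfamily $\{S_{u}(x_{i},t_{i})\}_{i}$ whose enlargements $\{S_{u}(x_{i},\theta_{0}^{2}t_{i})\}_{i}$ cover $E$ up to a null set. Volume doubling for sections gives $|E|\le\sum_{i}|S_{u}(x_{i},\theta_{0}^{2}t_{i})|\le C\sum_{i}|S_{u}(x_{i},t_{i})|$ for some universal $C=C(n,\lambda,\Lambda)$. In the generic case where $t_{i}$ was selected because the density dropped exactly to $1-\delta$, one has $|S_{u}(x_{i},t_{i})\setminus E|=\delta\,|S_{u}(x_{i},t_{i})|$ and $S_{u}(x_{i},t_{i})\subset F$, so these pairwise disjoint sections lie inside $F$ and yield
\[
|F\setminus E|\ \ge\ \delta\sum_{i}|S_{u}(x_{i},t_{i})|\ \ge\ \frac{\delta}{C}|E|.
\]
Rearranging $|E|\le|F|-|F\setminus E|\le|F|-(\delta/C)|E|$ gives $|E|\le(1-c\delta)|F|$ for a universal constant $c$.

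The main obstacle will be the alternative stopping case, in which $t_{i}$ is forced by $S_{u}(x_{i},t_{i})$ touching $\partial S_{u}(0,h)$ rather than by the density reaching $1-\delta$. In that case the density of $E$ in $S_{u}(x_{i},t_{i})$ may still exceed $1-\delta$, so the $\delta$-fraction sent into $F\setminus E$ is not automatic. The standard remedy is to apply the engulfing property once more: from such a boundary-stopping section one locates a nearby section of comparable height, still inside $S_{u}(0,h)$, in which the density does drop to $1-\delta$ (equivalently, one runs the stopping-time argument from a slightly interior center, at a cost of a universal factor coming from doubling). This loses only a universal multiplicative constant, which is absorbed into the final $c$.
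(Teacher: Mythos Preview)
Your stopping-time argument on $E$ differs from the paper's route and, as you yourself flag, leaves a genuine gap in the ``alternative stopping case'' where $t_{x}$ is determined by $S_{u}(x,t_{x})$ touching $\partial S_{u}(0,h)$ rather than by the $E$-density dropping to $1-\delta$. The remedy you sketch---locate a nearby section of comparable height, still inside $S_{u}(0,h)$, with density exactly $1-\delta$---is not a proof: nothing guarantees such a section exists with size comparable to $S_{u}(x,t_{x})$, and without it the key lower bound $|S_{u}(x_{i},t_{i})\setminus E|\ge\delta\,|S_{u}(x_{i},t_{i})|$ fails on precisely those sections, so the chain $|F\setminus E|\ge\delta\sum_{i}|S_{u}(x_{i},t_{i})|$ breaks.

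The paper avoids this entirely by reversing the covering: it covers $F$, not $E$. For each $x\in F$ one takes the \emph{maximal} section $S_{u}(x,\bar h(x))\subset F$; this lies in $F$ by construction, and the density bound $|S_{u}(x,\bar h(x))\cap E|\le(1-\delta)|S_{u}(x,\bar h(x))|$ comes not from a stopping time but from the \emph{contrapositive} of the first hypothesis: were the density larger, a slightly enlarged section would still have density $>1-\delta$ yet would exit $F$, contradicting the hypothesis. (The degenerate case where the maximal section equals $S_{u}(0,h)$ is dispatched by the second assumption.) Vitali extraction then yields a disjoint family $\{S_{j}\}\subset F$, each satisfying $|S_{j}\cap(F\setminus E)|\ge\delta|S_{j}|$ with no exceptional sections, and the $K$-enlargements cover $F$, giving $|F\setminus E|\ge(\delta/C)|F|$ directly. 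The fix to your argument is therefore structural: cover $F$ and use maximality in $F$, so that the density bound becomes a consequence of the hypothesis rather than of a stopping time, and the boundary case disappears.
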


\begin{proof}
For every $x \in F$, since $F$ is open, there exists some maximal section which is contained in $F$ and contains $x$. 
We choose one of those sections for each $x \in F$ and call it $S_u(x,\bar h(x))$.

If $S_u(x,\bar h(x)) = S_u(0, h)$ for any $x \in F$, then the result of the lemma follows immediately since 
$|E| \leq (1-\delta) |S_u(0, h)|$, so let us assume that it is not the case.

We claim that $|S_u(x,\bar h(x)) \cap E| \leq (1-\delta)|S_u(x,\bar h(x))|$. 
Otherwise, we could find a slightly larger section $\tilde S$ containing $S_u(x,\bar h(x))$ such that $|\tilde S \cap E| > (1-\delta) |\tilde S|$ and 
$\tilde S \not\subset F$, contradicting the first hypothesis.

The family of sections $S_u(x,\bar h(x))$ covers the set $F$. By the Vitali covering Lemma \ref{Vita_cov}, we can select a 
subcollection of non overlapping sections $S_j := S_u(x_j,\bar h(x_j))$ such that $F \subset \bigcup_{j=1}^{\infty} S_u(x_j, K\bar h(x_j))$ for
some universal constant $K$ depending only on $n,\lambda$ and $\Lambda$. The volume estimates in Lemma \ref{vol-sec1} then imply that $$|S_u(x_j, K\bar h(x_j))|\leq
C(n,\lambda,\Lambda) |S_u(x_j,\bar h(x_j))|$$ for each $j$.

By construction, $S_j \subset F$ and $|S_j \cap E| \leq (1-\delta) |S_j|$. Thus, we have that $|S_j \cap (F \setminus E)| \geq \delta |S_j|$. Therefore
\begin{align*}
|F \setminus E| \geq \sum_{j=1}^{\infty} |S_j \cap (F \setminus E)| 
 &\geq \sum_{j=1}^{\infty} \delta |S_j| \\
& \geq \frac{\delta}{C(n,\lambda,\Lambda)} \sum_{j=1}^\infty|S_u(x_j, K\bar h(x_j))|  \geq \frac{\delta}{C(n,\lambda,\Lambda)} |F|.
\end{align*}
Hence $|E|\leq (1-c\delta)|F|$ where $c = C(n,\lambda,\Lambda)^{-1}$.
\end{proof}
\begin{lem}[Vitali covering]\label{Vita_cov} Suppose that $\lambda\leq \det D^2 u\leq\Lambda$ in a bounded on convex set $\Omega\subset\R^n$. Then there exists a universal 
constant $K>1$ depending only on $n,\lambda$ and $\Lambda$
 with the following properties.
\begin{myindentpar}{1cm}
 (i) Let $\mathcal{S}$ be a collection of sections $S^x=S_u(x, h(x))\subset\subset\Omega$. Then there exists a countable subcollection of disjoint sections 
 $\displaystyle\bigcup_{i=1}^\infty S_u(x_i, h(x_i))$ such that
 $$\displaystyle \bigcup_{S^x\in \mathcal{S}} S^x\subset \bigcup_{i=1}^\infty S_u(x_i, Kh(x_i)).$$
 (ii) Let $D$ be a compact set in $\Omega$ and assume that to each $x\in D$ we associate a corresponding section $S_u(x, h(x))\subset\subset\Omega$. Then we can find a 
 finite number of these sections $S_u(x_i, h(x_i)), i=1,\cdots, m,$ such that
$$D \subset \bigcup_{i=1}^m S_u(x_i, h(x_i)),~\text{with}~ S_u(x_i, K^{-1} h(x_i))~\text{disjoint}.$$
\end{myindentpar}
\end{lem}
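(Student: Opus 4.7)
The plan is to reduce both parts of the lemma to a single quasi-triangle inequality for sections, playing the role of the triangle inequality in the classical Vitali covering argument for balls. Concretely, I claim
\[
S_u(x_1, h_1) \cap S_u(x_2, h_2) \neq \emptyset \ \text{and}\ h_2 \le h_1 \ \Longrightarrow\ S_u(x_2, h_2) \subset S_u(x_1, \theta_0^2 h_1),
\]
where $\theta_0$ is the engulfing constant of Theorem \ref{engulfthm}. To prove it, pick $z$ in the intersection. From $z \in S_u(x_2, h_2)$ and engulfing, $S_u(x_2, h_2) \subset S_u(z, \theta_0 h_2) \subset S_u(z, \theta_0 h_1)$. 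From $z \in S_u(x_1, h_1)$ we have $x_1 \in S_u(x_1, h_1) \subset S_u(z, \theta_0 h_1)$ by engulfing, and then one further application of engulfing, now centered at $z$, yields $S_u(z, \theta_0 h_1) \subset S_u(x_1, \theta_0^2 h_1)$. Concatenating gives the claim.

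For part (i), first observe that the compact containment $S^x \subset\subset \Omega$ in the bounded set $\Omega$, combined with $\det D^2 u \le \Lambda$, forces a uniform upper bound $H < \infty$ on the heights $h(x)$ of sections in $\mathcal{S}$ (via the volume estimate of Lemma \ref{vol-sec1} applied in $\Omega$). Partition $\mathcal{S}$ into the layers $\mathcal{S}^{(k)} := \{S^x \in \mathcal{S}: h(x) \in (H 2^{-k}, H 2^{1-k}]\}$ for $k \ge 1$, and build inductively a family $\mathcal{F}_k \subset \mathcal{S}^{(k)}$ that is maximal (via Zorn's lemma) among the subfamilies whose members are pairwise disjoint and disjoint from every section of $\mathcal{F}_1 \cup \cdots \cup \mathcal{F}_{k-1}$. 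The total selection $\mathcal{F} := \bigcup_k \mathcal{F}_k$ is countable, since any two of its members are disjoint in the bounded domain $\Omega$ and each one in layer $k$ has volume at least $c(n,\lambda,\Lambda)(H 2^{-k})^{n/2}$ by Lemma \ref{vol-sec1}. For any $S^x \in \mathcal{S} \setminus \mathcal{F}$ with $S^x \in \mathcal{S}^{(k)}$, maximality forces $S^x$ to intersect some already-chosen $S^{x_i}$ with $h(x_i) > H 2^{-k} \ge h(x)/2$. Since $h(x) \le 2 h(x_i)$, applying the quasi-triangle inequality after enlarging $S_u(x_i, h(x_i))$ to $S_u(x_i, 2 h(x_i))$ yields $S^x \subset S_u(x_i, 2 \theta_0^2 h(x_i))$. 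Taking $K := 2\theta_0^2$ therefore proves part (i).

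For part (ii), the decisive observation is that each $x \in D$ is the center of its own assigned section, so $x \in S_u(x, t)$ for every $t > 0$. Apply the extraction procedure of part (i) to the \emph{shrunken} family $\{S_u(x, K^{-1} h(x)) : x \in D\}$ with $K = 2\theta_0^2$, obtaining a pairwise disjoint subcollection $\{S_u(x_i, K^{-1} h(x_i))\}$. If $y \in D$ is not a selected center, then $S_u(y, K^{-1} h(y))$ meets some $S_u(x_i, K^{-1} h(x_i))$ with $h(x_i) > h(y)/2$, and the quasi-triangle inequality at scale $K^{-1}$ (after bounding $K^{-1} h(y) \le 2 K^{-1} h(x_i)$) gives
\[
S_u(y, K^{-1} h(y)) \ \subset\ S_u(x_i, 2 \theta_0^2 K^{-1} h(x_i)) \ =\ S_u(x_i, h(x_i)).
\]
In particular $y \in S_u(x_i, h(x_i))$, so the full-height selected sections cover $D$. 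Since sections are open sublevel sets and $D$ is compact, finitely many of them already suffice, and their shrunken versions remain disjoint as a subcollection of our original disjoint family. The main technical subtlety throughout is the bookkeeping of the constant $K$ across the factor-of-two layers in the greedy selection; once the engulfing-based quasi-triangle inequality is established, the rest of the argument is a direct transcription of the classical Vitali proof to the affine-invariant setting of sections.
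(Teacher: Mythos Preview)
Your proof is correct and follows essentially the same route as the paper: you derive the same quasi-triangle inequality from engulfing (with the same constant $K=2\theta_0^2$), run the same dyadic greedy selection for (i), and deduce (ii) by applying (i) to the shrunken family and then using compactness. The only cosmetic difference is that the paper builds the factor of $2$ directly into the quasi-triangle statement (assuming $h_2\le 2h_1$), whereas you state it for $h_2\le h_1$ and absorb the $2$ afterward.
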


\begin{proof}[Proof of Lemma \ref{Vita_cov}] We use the following fact for sections compactly inclluded in $\Omega$: There exists a universal constant $K>1$ such that 
if $S_u(x_1, h_1)\cap S_u(x_2, h_2)\neq\emptyset$ and $2h_1\geq h_2$ then $S_u(x_2, h_2)\subset
S_u(x_1, Kh_1)$. The proof of this fact is based on the engulfing property of sections in Theorem \ref{engulfthm}. Suppose that
$x\in S_u(x_1, h_1)\cap S_u(x_2,  h_2)$ and $2h_1\geq h_2$. Then we have $ S_u(x_2, h_2)\subset S_u(x, \theta_0 h_2)
\subset S_u(x, 2\theta_0 h_1)$ and 
$x_1\in S_u(x_1, h_1)
\subset S_u(x, 2\theta_0 h_1)$. Again, by the engulfing property, we have $S_u(x, 2\theta_0 h_1)\subset S_u(x_1, 2\theta^2_0 h_1)$. 
It follows that $S_u(x_2, h_2)\subset S_u(x_1, 2\theta_0^2 h_1)$. The result
follows by choosing $K=2\theta^2_0$.\\
(i) From the volume estimate for sections in Lemma \ref{vol-sec1} and $S_u(x, h(x))\subset\subset \Omega$, we find that
$$H\equiv \sup\{h(x)| S^x\in\mathcal{S}\}\leq C(n,\lambda, \Lambda,\Omega)<\infty.$$
Define $$\mathcal{S}_i\equiv \{S^x\in \mathcal{S}| \frac{H}{2^i}<h(x) \leq \frac{H}{2^{i-1}}\}~(i=1,2,\cdots).$$ We define $\mathcal{F}_i\subset \mathcal{S}_i$ as follows. 
Let $\mathcal{F}_1$ be any maximal disjoint collection of sections in $\mathcal{S}_1$. By the volume estimate in Lemma \ref{vol-sec1}, $\mathcal{F}_1$ is finite. 
Assuming $\mathcal{F}_1,\cdots, \mathcal{F}_{k-1} $ have been selected, we choose $\mathcal{F}_k$
to be any maximal disjoint subcollection of
$$\left\{S\in \mathcal{S}_k| S\cap S^{x}=\emptyset~\text{for all~} S^x\in \bigcup_{j=1}^{k-1}\mathcal{F}_j\right\}.$$ Each $\mathcal{F}_k$ is again a finite set. 

We claim that the 
countable subcollection of disjoint sections $S_u(x_i, h(x_i))$ where $S^{x_i}\in \mathcal{F}:=\bigcup_{k=1}^{\infty} \mathcal{F}_k$ satisfies the conclusion of the lemma. To see this, it 
suffices to show that for any section $S^x\in \mathcal{S}$, there exists a section $ S^y\in \mathcal{F}$ such that $S^x\cap S^y\neq \emptyset$ and $S^x\subset S_u(y, Kh(y))$. 
The proof of this fact is simple. There is an index $j$ such that $S^x\subset \mathcal{S}_j$. By the maximality of $\mathcal{F}_j$, there is a section $S^y\in \bigcup_{k=1}^j 
\mathcal{F}_k$ with $S^x\cap S^y \neq\emptyset$. Because $h(y)> \frac{H}{2^j} $ and $h(x) \leq \frac{H}{2^{j-1}}$, we have $h(x) \leq 2 h(y)$.
By the fact established above, we have $S^x\subset S_u(y, K h(y))$.\\
(ii) We apply (i) to the collection of sections $S_u(x, K^{-1}h(x))$ where $x\in D$. 
Then there exists a countable subcollection of disjoint sections 
 $\left\{S_u(x_i, K^{-1}h(x_i))\right\}_{i=1}^{\infty}$ such that
 $$\displaystyle D \subset\bigcup_{x\in D}S_u(x, K^{-1}h(x))\subset \bigcup_{i=1}^\infty S_u(x_i, h(x_i)).$$

 By the compactness of $D$, we can choose a finite number of sections $S_u(x_i, h(x_i))$ $(i=1,\cdots, m)$ which cover $D$.
 \end{proof}
\begin{proof}[Proof of Lemma \ref{meas_lem}]
 Suppose $v(x_0)\leq 1$ at $x_0\in S_{\alpha}$ where $\alpha\in (0, 1/2)$. Consider the set of vertices $V=S_{\alpha}$. We claim there there is a large constant $a$ (called the opening) such that,
 for each $y\in V$, there is a constant $c_y$ such that
 the generalized paraboloid $ -a [u(x) -D u(y)\cdot (x-y) - u(y)] + c_y$ touches the graph of $v$ from below at some point $x$ (called the contact point) in $S_1$.
 Indeed, for each $y\in V$, we consider the function
 $$P(x) = v(x) + a [u(x) -D u(y)\cdot(x-y) - u(y)]$$
 and look for its minimum points on $\overline{S_1}$. On the boundary $\p S_1$, we have
 $$P\geq a [u(x) -D u(y)\cdot(x-y) - u(y)] \geq a C_1(n,\lambda,\Lambda)$$ by the Aleksandrov maximum principle. 
 At $x_0$, we have
 $$P(x_0)\leq 1 + a [u(x_0) -D u(y)\cdot(x_0-y) - u(y)] \leq 1 + a \alpha\theta_0.$$
 The last inequality follows from the engulfing property. Indeed, we have $x_0, y\in S_{\alpha}$ and hence by the engulfing property in Theorem
 \ref{engulfthm}, $x_0, y\in S_u(0,\alpha)\subset 
 S_u(y,\theta_0 \alpha)$.
 Consequently, $$u(x_0) -Du(y)\cdot(x_0-y) - u(y) \leq \theta_0\alpha.$$
 Thus, we can fix $\alpha>0$ small, universal and $a, M_1$ large such that
 $$M_1= 2 + a \alpha\theta_0 < a C_1.$$
 Therefore, $P$ attains its minimum at a point $x\in S_1$. Furthermore
 $$v(x) \leq P(x_0)< M_1.$$
At the contact point $x\in S_1$, we have
$$D v(x)= a (D u(y)-D u(x))$$
which gives
$$Du(y) = D u(x) +\frac{1}{a}D v(x).$$
We also have 
\begin{equation}
D^2 v(x) \geq -a D^2 u(x).
\label{uvD2}
\end{equation}
Hence
\begin{equation}D^2 u(y) D_x y = D^2 u(x) +\frac{1}{a} D^2 v(x)\geq 0.
 \label{uvD22}
\end{equation}
Now using the equation at only $x$, we find that
$$\text{trace} ((D^2 u)^{-1} D^2 v(x))=0.$$
This together with (\ref{uvD2}) gives
\begin{equation}C(a, n) D^2 u(x)\geq D^2 v(x) \geq -a D^2 u(x).
 \label{uvupdown}
\end{equation}
Here we use the following basic estimates. If $A\geq -B$ and $\text{trace} (B^{-1}A)=0$ then $$CB\geq A\geq -B.$$
Indeed, we can rewrite $$B^{-1/2} A B^{-1/2}\geq -I_n, ~\text{trace}(B^{-1/2} A B^{-1/2})=0.$$
Hence $$B^{-1/2} A B^{-1/2}\leq C(n) I_n.$$
Now, taking the determinant in (\ref{uvD22}) and invoking (\ref{uvupdown}), we obtain
$$\det D^2 u (y) \abs{\det D_x y}= \det (D^2 u(x) +\frac{1}{a} D^2 v(x))\leq C(a, n) \det D^2 u(x).$$
This implies the bound 
$$\abs{\det D_x y }\leq C(a, n,\Lambda,\lambda).$$
Then, by the area formula, the set $E$ of contact points $x$ satisfies
$$|S_{\alpha}|=\abs{V}= \int_{E}\abs{\det D_x y} \leq C(a, n,\Lambda,\lambda)\abs{E}\leq C |\{v<M_1\}\cap S_1|.$$
Using the volume estimate of sections in Lemma \ref{vol-sec1}, we find that $|S_1|\leq C^{\ast} |\{v<M_1\}\cap S_1|$ for some $C^{\ast}>1$ universal. 
The conclusion of the Lemma holds with $\delta= 1/C^{\ast}.$
\end{proof}

\begin{proof}[Proof of Lemma \ref{double_lem}] Recall that $u(0)=0, Du(0)=0$ and $B_1(0)\subset S_u(0, 4)\subset B_n(0)$.
To prove the lemma, it suffices to construct a subsolution $w: S_{2}\backslash S_{\alpha}\longrightarrow \R$, i.e.,
$U^{ij}w_{ij}\geq 0$,
with the following properties
\begin{myindentpar}{1cm}
 (i) $w\leq 0$ on $\partial S_2$\\
 (ii) $w\leq 1$ on $\partial S_\alpha$\\
 (iii) $w\geq c(n,\Lambda,\lambda)$ in $S_{1}\backslash S_{\alpha}.$
\end{myindentpar}
Our first guess is
$$w= C(\alpha, m) (u^{-m}- 2^{-m})$$
where $m$ is large. 

Let $(u^{ij})_{1\leq  i, j\leq n}$ be the inverse matrix $(D^2 u)^{-1}$ of the Hessian matrix $D^2 u$. 
We can compute for $W= u^{-m}-2^{-m}$
\begin{equation}u^{ij}W_{ij} = m u^{-m-2}[(m+1) u^{ij}u_{i}u_{j}-u u^{ij} u_{ij}] = mu^{-m-2}[(m+ 1) u^{ij} u_{i}u_{j}- n u].
 \label{uW1}
\end{equation}
By Lemma \ref{uv_trace} $$u^{ij}u_{i}u_{j} \geq \frac{\abs{D u}^2}{\text{trace} (D^2 u)}.$$
If $x\in S_2\setminus S_{\alpha}$ and $y=0$ then from from the convexity of $u$, we have
$0=u(y)\geq u(x)+ D u(x) \cdot (0-x)$ and 
therefore,
$$\abs{D u(x)}\geq \frac{u(x)}{\abs{x}}\geq \frac{\alpha}{n}\equiv 2c_{n}$$
for some constant $c_n$ depending only on $n,\lambda$ and $\Lambda$.

In order to obtain $u^{ij}W_{ij}\geq 0$  using (\ref{uW1}), we only have trouble when $\|D^2u\|$ is unbounded. But the set of bad points, i.e., 
where $\|D^2 u\|$ is large, is small. Here is how we see this.
Because $S_u(0, 4)$ is normalized, we can deduce from the Aleksandrov maximum principle, Theorem \ref{Alekmp} applied to $u-4$, that
$$\text{dist}(S_u(0, 3), \p S_u(0, 4))\geq c(n,\lambda,\Lambda)$$ for some universal $c(n,\lambda,\Lambda)>0$. By Lemma \ref{slope-est},
$D u$ is bounded on $S_3$. Now let $\nu$ denote the outernormal unit vector field on $\p S_3$. Then, using the convexity of $u$, we have $\|D^2u\|\leq \Delta u$ and thus,
by the divergence theorem,
\begin{equation*}\int_{S_3}\|D^2 u\| \leq \int_{S_3}\Delta u =\int_{\partial S_3} \frac{\partial u}{\p \nu}\leq C(n, \lambda,\Lambda).
\end{equation*}
Therefore, given $\varepsilon >0$ small, the set
$$H_{\varepsilon} = \{x\in S_3\mid \|D^2 u\|\geq \frac{1}{\varepsilon}\}$$
has measure bounded from above by
$$|H_{\e}|\leq C \e.$$

To construct a proper subsolution bypassing the bad points in $H_{\e}$,
we only need to modify $w$ at bad points. Roughly speaking, the modification involves the solution to 
$$\det D^2 u_{\e} = \Lambda\chi_{H_{\varepsilon}},~ u_{\e}=0 ~\text{on}~\partial S_4.$$
Here we use $\chi_E$ to denote the characteristic function of 
the set $E$: $\chi_E(x)=1$ if $x\in E$ and $\chi_E(x)=0$ if otherwise. The problem with this equation is that the solution is not in general smooth while we need two derivatives to construct the subsolution. But this
problem can be fixed, using approximation, as follows. 

We approximate $H_\e$ by an open set $\tilde H_\e$ where $H_\e\subset \tilde H_\e\subset S_4$ and the measure of their difference is small, that is
$$|\tilde H_\e\setminus H_\e|\leq \e.$$
We introduce a smooth function $\varphi$ with the following properties:
$$\varphi=1~\text{in}~H_\e,~\varphi=\e~\text{in}~S_4\setminus \tilde H_\e,~\e\leq \varphi\leq 1~\text{in } S_4.$$
Let $h_{\e}$ be the solution to
$$\det D^2 h_{\e} = \Lambda\varphi,~ h_{\e}=0 ~\text{on}~\partial S_4;$$
see Theorem \ref{muthm}.
By Caffarelli's $C^{2,\alpha}$ estimates \cite{C2}, $h_\e\in C^{2,\alpha}(S_4)$ for all $\alpha\in (0, 1)$.
From the Aleksandrov maximum principle, Theorem \ref{Alekmp}, we have on $S_4$
$$|h_{\e}|\leq C_n \text{diam} (S_4) \left(\int_{S_4} \Lambda \varphi\right)^{1/n}.$$
We need to estimate the above right hand side.
From the definitions of $\tilde H_\e$ and $\varphi$, we can estimate
$$\int_{S_4}\Lambda\varphi= \int_{H_\e}\Lambda + \int_{\tilde H_\e\setminus H_\e}\Lambda\varphi + \int_{S_4\setminus \tilde H_\e}\e
\leq \Lambda|H_\e| + \Lambda |\tilde H_\e\setminus H_\e| + \e C(n,\lambda,\Lambda)\leq C(n,\lambda,\Lambda)\e.$$
It follows that for some universal constant $C_1(n,\lambda,\Lambda)$, 
$$|h_\e|\leq C_1(n,\lambda,\Lambda)\e^{1/n}.$$
By the gradient estimate in Lemma \ref{slope-est}, we have on $S_2$
$$|D h_{\e}(x)|\leq \frac{- h_{\e}(x)}{\text{dist}(S_3, \p S_4)}\leq C_2(n,\lambda,\Lambda)\e^{1/n}.$$
We choose $\e$ small so that 
\begin{equation}C_1(n,\lambda,\Lambda)\e^{1/n} \leq 1/4, ~C_2(n,\lambda,\Lambda)\e^{1/n}\leq c_n.
 \label{epchoice}
\end{equation}
Let $$\tilde V = (u - h_{\e})~ \text{and} ~\tilde W = \tilde V^{-m}- 2^{-m}.$$
Then
\begin{equation}
 \label{gradV} |\tilde V|\leq 3~\text{and}~
 |D \tilde V |\geq c_n~\text{on}~ S_2\setminus S_\alpha;~\alpha \leq \tilde V \leq 1 + 1/4 =5/4~\text{on}~S_1\backslash S_{\alpha}.
\end{equation}
Now, compute as before
$$u^{ij}\tilde W_{ij} = m \tilde V^{-m-2}[(m+1) u^{ij}\tilde V_{i}\tilde V_{j} - \tilde V u^{ij} \tilde V_{ij}]=
m \tilde V^{-m-2}[(m+1) u^{ij}\tilde V_{i}\tilde V_{j} + \tilde V (u^{ij} (h_{\e})_{ij}-n)].$$
We note that, by Lemma \ref{trlem},
$$u^{ij} ({h_\e})_{ij} = \text{trace} ((D^2 u)^{-1}D^2 h_{\e})\geq n (\det (D^2 u)^{-1} \det D^2 h_{\e})^{1/n}\geq n~\text{on}~H_{\e}.$$
It follows that
$$u^{ij}\tilde W_{ij}\geq 0~\text{on}~H_{\e}.$$
On $(S_2\setminus S_{\alpha})\backslash H_{\e}$, we have $\text{trace} (D^2 u)\leq n\e^{-1}$ and from (\ref{gradV})
\begin{eqnarray*}u^{ij}\tilde W_{ij} &\geq& m \tilde V^{-m-2}[(m+1) u^{ij}\tilde V_{i}\tilde V_{j}  -n\tilde V ]\\
 &\geq& m \tilde V^{-m-2}[(m+1) \frac{|D \tilde V|^2}{\text{trace} (D^2 u)}-n\tilde V]\\
 &\geq&  m \tilde V^{-m-2} [(m+1)n^{-1}\e c_n-n\tilde V] \geq 0 
 \end{eqnarray*}
if we choose $m$ large, universal.  Therefore, 
$$u^{ij}\tilde W_{ij} \geq 0~\text{on}~S_2\setminus S_{\alpha}$$
and hence $\tilde W = \tilde V^{-m}- 2^{-m}$ is a subsolution to $u^{ij} v_{ij}\geq 0$ 
on $S_2\setminus S_{\alpha}$.

Finally, by (\ref{gradV}) and $\tilde W\leq 0$ on $\p S_2$, we choose 
a suitable $C(\alpha, n,\lambda,\Lambda)$ so that 
the subsolution of the form
$$\tilde w = C(\alpha, n, \lambda,\lambda) (\tilde V^{-m}-  2 ^{-m})$$
satisfies $\tilde w \leq 1$ on $\p S_{\alpha}$. Now, 
we obtain the desired universal lower bound for $v$ in $S_{1}$ from $v\geq \tilde w$ on $S_1\backslash S_{\alpha}$ and $v\geq 1$ on $S_{\alpha}$.
 \end{proof}

\subsection{Proof of the interior H\"older estimates for the inhomogeneous linearized Monge-Amp\`ere equation} In this section, we prove Theorem \ref{inho_Holder}, following an argument
of Trudinger and Wang \cite{TW3}.

The following lemma is a refined version of the Aleksandrov-Bakelman-Pucci (ABP) maximum principle for convex domains.
\begin{lem}
\label{ABP_refined}
 Assume that $\Omega$ is a bounded, convex domain in $\R^n$. Let $$Lu(x) = \emph{trace} (A(x)D^2 u(x))$$ where $A$ is an $n\times n$ symmetric and 
 positive definite matrix in $\overline{\Omega}$. Then, for 
all $u\in C^2(\Omega)\cap C(\overline{\Omega})$,
$$\max_{\overline{\Omega}} u \leq \max_{\p\Omega} u + C(n) |\Omega|^{1/n} \left\|\frac{Lu}{(\det A)^{1/n}}\right\|_{L^n(\Omega)}.$$
\end{lem}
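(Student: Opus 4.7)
The plan is to deduce the refined estimate from the standard Aleksandrov--Bakelman--Pucci maximum principle (Theorem \ref{ABPmax}) by combining affine invariance of the inequality with John's lemma; convexity of $\Omega$ will enter exclusively through the normalization step.

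First, I would check that the asserted inequality is invariant under affine transformations $Tx=Mx+b$ of the independent variable. Setting $\tilde\Omega=T(\Omega)$, $\tilde u(y)=u(T^{-1}y)$, and $\tilde A(y)=MA(T^{-1}y)M^{t}$, a direct computation gives
\begin{equation*}
\tilde L\tilde u(y):=\text{trace}(\tilde AD^{2}\tilde u)(y)=Lu(T^{-1}y),\qquad \det\tilde A=(\det M)^{2}\,(\det A)\circ T^{-1},
\end{equation*}
together with $|\tilde\Omega|=(\det M)|\Omega|$ and $\max_{\overline{\tilde\Omega}}\tilde u-\max_{\partial\tilde\Omega}\tilde u=\max_{\overline\Omega}u-\max_{\partial\Omega}u$. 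Changing variables in the $L^{n}$ integral, the factor $(\det M)^{1/n}$ arising from $|\tilde\Omega|^{1/n}$ cancels exactly against the $(\det M)^{-1/n}$ produced by $(\det\tilde A)^{-1/n}$ combined with the Jacobian, so
\begin{equation*}
|\tilde\Omega|^{1/n}\left\|\frac{\tilde L\tilde u}{(\det\tilde A)^{1/n}}\right\|_{L^{n}(\tilde\Omega)}=|\Omega|^{1/n}\left\|\frac{Lu}{(\det A)^{1/n}}\right\|_{L^{n}(\Omega)}.
\end{equation*}
Both sides of the claimed inequality are therefore preserved under $T$.

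Second, since $\Omega$ is a bounded convex domain, John's lemma (Lemma \ref{John_lem}) produces an affine map $T$ with $B_{1}(0)\subset T(\Omega)\subset B_{n}(0)$. For this normalized domain one has $\text{diam}(T(\Omega))\leq 2n$ and $|T(\Omega)|\geq \omega_{n}$, whence $\text{diam}(T(\Omega))\leq 2\omega_{n}^{-1/n}|T(\Omega)|^{1/n}$. Applying the standard ABP estimate of Theorem \ref{ABPmax} to $\tilde u$ on $T(\Omega)$ and bounding the $L^{n}$-norm over the upper contact set by that over all of $T(\Omega)$ yields
\begin{equation*}
\max_{\overline{T(\Omega)}}\tilde u\leq \max_{\partial T(\Omega)}\tilde u+\frac{\text{diam}(T(\Omega))}{n\omega_{n}^{1/n}}\left\|\frac{\tilde L\tilde u}{(\det\tilde A)^{1/n}}\right\|_{L^{n}(T(\Omega))}\leq \max_{\partial T(\Omega)}\tilde u+C(n)\,|T(\Omega)|^{1/n}\left\|\frac{\tilde L\tilde u}{(\det\tilde A)^{1/n}}\right\|_{L^{n}(T(\Omega))}.
\end{equation*}
Transferring back through the affine invariance of the first step concludes the proof.

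The only delicate point is the bookkeeping of Jacobian factors in the first step; once those cancellations are verified, the rest of the argument is essentially immediate, and I do not anticipate a serious obstacle. It is worth emphasizing that convexity of $\Omega$ is used only to apply John's lemma, and it is precisely this normalization that upgrades the $\text{diam}(\Omega)$ in the classical ABP estimate to $|\Omega|^{1/n}$; without convexity the estimate can fail, since a thin non-convex domain may have small volume but large diameter.
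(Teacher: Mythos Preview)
Your proposal is correct and follows essentially the same approach as the paper: normalize $\Omega$ via John's lemma, apply the standard ABP estimate (Theorem \ref{ABPmax}) on the normalized domain, and track how the $L^n$ norm and volume transform under the affine map so that $\text{diam}(\Omega)$ is replaced by $|\Omega|^{1/n}$. The only cosmetic difference is that you first verify affine invariance of both sides abstractly and then specialize, whereas the paper normalizes first and computes the change of variables in the $L^n$ integral directly; the content is the same.
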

\begin{proof}
We use the ABP estimate, Theorem \ref{ABPmax}, and John's lemma, Lemma \ref{John_lem}. According to this lemma, there is an affine 
transformation $T(x) =Mx + b$ where $M$ is an $n\times n$ invertible matrix and $b\in \R^n$ such that
\begin{equation}\label{om_norm} B_1(0) \subset T(\Omega)\subset B_n(0).
\end{equation}
For $x\in T(\Omega)$, we define $$v(x) = u(T^{-1}x)~  \text{and}~ \tilde L v = \text{trace}(\tilde A(x) D^2 v(x))$$ where
$\tilde A(x) = M A(T^{-1}x) M^t$. We then compute $D^2 v(x) = (M^{-1})^t D^2 u(T^{-1}x) M^{-1}$ and hence
$$\tilde L v(x) =L u(T^{-1}(x)).$$
Applying the ABP to $v$ and $\tilde L v(x)$ on $T(\Omega)$, we find
\begin{equation}\displaystyle\max_{\overline{T(\Omega)}} v \leq \max_{\p T(\Omega)} v + C_1(n) \text{diam} (T(\Omega))
\left\|\frac{\tilde Lv}{(\det \tilde A)^{1/n}}\right\|_{L^n(T(\Omega))}.
\label{ABP_v}
 \end{equation}
By changing variables $x= T(y)$ for $x\in T(\Omega)$, we find from $\det \tilde A= (\det M)^2 \det A$ that
\begin{eqnarray}
 \left\|\frac{\tilde Lv}{(\det \tilde A)^{1/n}}\right\|_{L^n(T(\Omega))} = \frac{1}{(\det M)^{1/n}}\left\|\frac{ Lu}{(\det  A)^{1/n}}\right\|_{L^n(\Omega)}
 \label{uv_ABP}
\end{eqnarray}
From (\ref{om_norm}), we have $\det M\geq c(n)|\Omega|^{-1}$ and $\text{diam} (T(\Omega))\leq 2n$. Using these estimates in (\ref{ABP_v}) and (\ref{uv_ABP}), we obtain the conclusion of 
the lemma.
\end{proof}
By employing Lemma~\ref{ABP_refined} and the  interior Harnack inequality in Theorem \ref{CGthm} for 
nonnegative solutions to the homogeneous linearized Monge-Amp\`ere equations, we get:
\begin{lem}[Harnack inequality for inhomogeneous linearized Monge-Amp\`ere]\label{inho_Harnack}
Assume that $\lambda\leq \det D^2 u\leq \Lambda$ in a convex domain $\Omega\subset\R^n$.
Let $f\in L^n(\Omega)$ and  $v\in W^{2,n}_{loc}(\Omega)$ satisfy  $U^{ij} v_{ij}= f$ almost everywhere in $\Omega$.
Then  if  $S_u(x, t)\subset\subset \Omega$ and $v\geq 0$ in $S_u(x, t)$, we have
\begin{equation}\label{eq:Harnack}
\sup_{S_u(x, \frac{t}{2})}{v} \leq C(n,\lambda,\Lambda)\Big( \inf_{S_u(x, \frac{t}{2})}{v} + t^{\frac{1}{2}}  \, \|f\|_{L^n(S_u(x, t))}\Big).
\end{equation}
\end{lem}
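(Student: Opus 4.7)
The plan is to reduce the inhomogeneous Harnack estimate to the homogeneous one of Theorem \ref{CGthm} by a decomposition $v = v_1 + v_2$ on $S := S_u(x,t)$, in which $v_1$ inherits the nonnegative boundary values and satisfies the homogeneous linearized equation, while $v_2$ carries the right-hand side $f$ and vanishes on $\partial S$. Concretely, I would define $v_2$ as a solution of
$$U^{ij}(v_2)_{ij} = f \quad \text{in } S, \qquad v_2 = 0 \quad \text{on } \partial S,$$
and set $v_1 := v - v_2$, so that $U^{ij}(v_1)_{ij} = 0$ in $S$ and $v_1 = v \ge 0$ on $\partial S$. The maximum principle applied to the equation with coefficient matrix $U$ then yields $v_1 \ge 0$ throughout $S$, which is the hypothesis needed for the homogeneous Harnack.

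With this decomposition in hand, Theorem \ref{CGthm} applied to $v_1$ on $S_u(x,t)$ gives
$$\sup_{S_u(x, t/2)} v_1 \le C(n,\lambda,\Lambda) \inf_{S_u(x, t/2)} v_1,$$
and for the remainder $v_2$ I would invoke the refined ABP estimate of Lemma \ref{ABP_refined} on the convex domain $S$ with coefficient matrix $A = U$, obtaining
$$\|v_2\|_{L^\infty(S)} \le C(n)\, |S|^{1/n} \left\| \frac{f}{(\det U)^{1/n}} \right\|_{L^n(S)}.$$
The two inputs that convert this into the required $t^{1/2}$ scaling are the Monge-Amp\`ere identity $\det U = (\det D^2 u)^{n-1} \ge \lambda^{n-1}$, and the sectional volume estimate $|S_u(x,t)| \le C(n,\lambda,\Lambda)\, t^{n/2}$ from Lemma \ref{vol-sec1}. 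Together they yield $\|v_2\|_{L^\infty(S)} \le C(n,\lambda,\Lambda)\, t^{1/2} \|f\|_{L^n(S)}$. Assembling the pieces by $\sup v \le \sup v_1 + \|v_2\|_{L^\infty}$ and $\inf v_1 \le \inf v + \|v_2\|_{L^\infty}$ then produces \eqref{eq:Harnack} after relabeling constants.

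The main obstacle is not the final assembly, which is essentially algebraic, but the construction of the decomposition itself: since the eigenvalues of $U$ only obey the product constraint $\det U \in [\lambda^{n-1},\Lambda^{n-1}]$ and can individually blow up or collapse, the operator $L_u$ may be highly degenerate and classical Dirichlet solvability for $v_2$ is not immediate. I would handle this via the standard approximation: smooth $u$ to a strictly uniformly convex $u_\varepsilon$ with $\lambda/2 \le \det D^2 u_\varepsilon \le 2\Lambda$, carry out the decomposition for the now uniformly elliptic problem associated to $u_\varepsilon$, apply the two estimates above (whose constants depend only on $n,\lambda,\Lambda$ and not on the ellipticity of $U^\varepsilon$), and pass to the limit $\varepsilon \to 0$ using the uniform bounds together with the stability of sections under such approximations.
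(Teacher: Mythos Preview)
Your proposal is correct and follows essentially the same approach as the paper: the paper's $w$ is your $v_2$ (the solution to $U^{ij}w_{ij}=f$ with zero boundary data on the section), and $v-w$ is your $v_1$, to which Theorem~\ref{CGthm} is applied after checking nonnegativity via the maximum principle; the bound on $w$ comes from Lemma~\ref{ABP_refined} combined with the volume estimate of Theorem~\ref{vol-sec1}, exactly as you outline. Your additional paragraph on justifying the existence of $v_2$ via approximation is a point the paper simply takes for granted.
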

\begin{proof}
Let $w$ be the solution of 
$$
 U^{ij} w_{ij} =f~\text{ in }S_u(x, t),~\text{and}~
w =0~\text{ on } \partial S_u(x, t).
$$
Then, by Lemma \ref{ABP_refined} and the volume bound on sections in Theorem \ref{vol-sec1}, we get
\begin{equation}
\label{uomax}
\sup_{S_u(x, t)}{|w|} \leq  C(n, \lambda) |S_u(x, t)|^{\frac{1}{n}}   \|f\|_{L^n(S_u(x, t))} \leq Ct^{1/2}  \|f\|_{L^n(S_u(x, t))}.
\end{equation}
Furthermore, we have $U^{ij} (v-w)_{ij}=0$ in $S_u(x, t)$ and $v-w\geq 0$ on $\partial S_u(x, t)$. Thus we conclude from the ABP maximum  
principle that $v-w\geq 0$ in $S_u(x, t)$. Hence, we 
can apply the interior Harnack inequality, Theorem \ref{CGthm}, to obtain \[
\sup_{S_u(x,\frac{t}{2})} (v- w) \leq C \inf_{S_u(x, \frac{t}{2})} (v- w),
\]
for some constant $C$ depending only on $n, \lambda,$ and $
\Lambda$, 
which then implies
\[
\sup_{S_u(x,\frac{t}{2})} v \leq C'\Big( \inf_{S_u(x,\frac{t}{2})} v
+ \sup_{S_u(x,\frac{t}{2})} |w|\Big)\leq C\Big( \inf_{S_u(x, \frac{t}{2})}{v} + t^{\frac{1}{2}}  \, \|f\|_{L^n(S_u(x, t))}\Big).
\]

\end{proof}

As a consequence of Lemma~\ref{inho_Harnack}, we obtain the following  oscillation estimate:

\begin{cor}\label{inho_osci}
Assume that $\lambda\leq \det D^2 u\leq \Lambda$ in a convex domain $\Omega\subset\R^n$.
Let $f\in L^n(\Omega)$ and  $v\in W^{2,n}_{loc}(\Omega)$ satisfy  $U^{ij} v_{ij}= f$ almost everywhere in $\Omega$.
Then  if  $S_u(x, h)\subset\subset \Omega$, we have
\begin{equation*}
\text{osc}_{S_u(x,\rho)}{v} \leq C\big(\frac{\rho}{h}\big)^\alpha \Big[
\text{osc}_{S_u(x,h)}{v}  + h^{\frac{1}{2 }}  \, \|f\|_{L^n(S_u(x, h))}\Big]\quad \mbox{for all}\quad \rho\leq h,
\end{equation*}
where $C,\,\alpha>0$  depend only on $n$, $\lambda$, and $\Lambda$, and $\displaystyle \text{osc}_E v :=\sup_{E} v -\inf_{E} v$.
\end{cor}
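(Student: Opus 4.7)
My plan is to derive the oscillation decay by iterating Lemma \ref{inho_Harnack} on the concentric dyadic sections $S_u(x, h/2^k)$ for $k \geq 0$, using the standard device of applying the Harnack inequality separately to the two nonnegative functions $v - \inf_{S_u(x, h/2^{k-1})} v$ and $\sup_{S_u(x, h/2^{k-1})} v - v$ on $S_u(x, h/2^{k-1})$. Each of these satisfies a linearized Monge-Amp\`ere equation with right-hand side $\pm f$, so Lemma \ref{inho_Harnack} applies with the same-shape error term. The sections are nested in $k$ because $S_u(x,t)$ is monotone in $t$, so in particular $\|f\|_{L^n(S_u(x, h/2^{k-1}))} \leq \|f\|_{L^n(S_u(x, h))} =: F$.

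Writing $\omega_k := \text{osc}_{S_u(x, h/2^k)} v$, and $M_k$, $m_k$ for the sup and inf of $v$ on $S_u(x, h/2^k)$, the two Harnack inequalities at level $k-1$ read
$$M_k - m_{k-1} \leq C_0\bigl[(m_k - m_{k-1}) + (h/2^{k-1})^{1/2} F\bigr],$$
$$M_{k-1} - m_k \leq C_0\bigl[(M_{k-1} - M_k) + (h/2^{k-1})^{1/2} F\bigr].$$
Adding these two inequalities and rearranging gives the recursion
$$\omega_k \leq \gamma\, \omega_{k-1} + C_1 (h/2^{k-1})^{1/2} F, \qquad \gamma := \frac{C_0 - 1}{C_0 + 1} \in (0, 1),$$
with $C_1$ universal. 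Iterating from level $0$ yields
$$\omega_k \leq \gamma^k \omega_0 + C_1 F \sum_{j=1}^k \gamma^{k-j} (h/2^{j-1})^{1/2}.$$

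The only mildly delicate step is bounding this geometric-type error sum. Rewriting it as $h^{1/2}\, 2^{1/2}\, 2^{-k/2} \sum_{\ell=0}^{k-1}(\gamma\sqrt{2})^\ell$ and splitting into the three cases $\gamma\sqrt{2}<1$, $=1$, or $>1$ (the middle one producing an extra factor of $k$ that is absorbed by any strictly smaller exponent), one sees that for every $\alpha \in (0, \min\{1/2,\, \log_2(1/\gamma)\})$,
$$\omega_k \leq C\, (2^{-k})^\alpha \bigl[\omega_0 + h^{1/2} F\bigr],$$
with $C, \alpha$ depending only on $n, \lambda, \Lambda$. For an arbitrary $\rho \leq h$, choosing the integer $k \geq 0$ with $h/2^{k+1} < \rho \leq h/2^k$ gives $S_u(x, \rho) \subset S_u(x, h/2^k)$ and $2^{-k} \leq 2\rho/h$, which upgrades the discrete decay to the continuous statement of the corollary. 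I expect the summation estimate to be the main (minor) obstacle; the rest is a routine De Giorgi-type Harnack-to-oscillation argument, with the novelty being that it is carried out on sections rather than Euclidean balls.
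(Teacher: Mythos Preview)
Your proof is correct and follows essentially the same standard Harnack-to-oscillation-decay argument as the paper, carried out on the dyadic family of sections $S_u(x,h/2^k)$. The only minor difference is that the paper derives the recursion $\omega(\rho/2)\le(1-1/C)\,\omega(\rho)+\rho^{1/2}\|f\|_{L^n(S_h)}$ from a \emph{single} application of Lemma~\ref{inho_Harnack} to $\tilde v=v-\inf_{S_\rho}v$ (bounding $\sup_{S_{\rho/2}}\tilde v\le\omega(\rho)$ directly), rather than applying the lemma to both $v-m_{k-1}$ and $M_{k-1}-v$ and adding; either variant is standard and yields the same conclusion, and the paper simply invokes ``standard iteration'' where you spell out the geometric-sum bound.
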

\begin{proof}
Let us  write $S_t$ for the section  $S_u(x, t)$. 
Set
\begin{align*}
m(t) := \inf_{S_t} v,\quad  M(t) :=\sup_{S_t} v,\quad \mbox{and} \quad \omega(t) := M(t) -m(t).
\end{align*}
Let $\rho\in (0, h]$ be arbitrary.
Then since  $\tilde v := v - m(\rho)$ is a nonnegative solution of $U^{ij} \tilde v_{ij}= f$ in $S_\rho$,
we can apply Lemma~\ref{inho_Harnack} to $\tilde v$ to obtain
\[
\frac{1}{C} \sup_{S_{\frac{\rho}{2}}}\tilde v\leq \inf_{S_{\frac{\rho}{2}}}\tilde v +\rho^{\frac{1}{2 }}  \, \|f\|_{L^n(S_\rho)}.
\]
It follows that for all $\rho\in (0, h]$, we have
\begin{align*}
\omega(\frac{\rho}{2})
= \sup_{S_{\frac{\rho}{2}}}\tilde v- \inf_{S_{\frac{\rho}{2}}}\tilde v\leq \big(1- \frac{1}{C}\big)\sup_{S_{\frac{\rho}{2}}}\tilde v  +\rho^{\frac{1}{2 }}  \, \|f\|_{L^n(S_\rho)}
\leq \big(1- \frac{1}{C}\big)\omega(\rho) +\rho^{\frac{1}{2 }}  \, \|f\|_{L^n(S_h)}.
\end{align*}
Thus, by the standard iteration we deduce that
\begin{align*}
\omega(\rho)
\leq C'\big(\frac{\rho}{h}\big)^\alpha \Big[ \omega(h) +h^{\frac{1}{2 }}  \, \|f\|_{L^n(S_h)}\Big],
\end{align*}
giving the conclusion of the corollary.
\end{proof}

\begin{proof}[Proof of Theorem \ref{inho_Holder}] By Lemma \ref{slope-est}, there is a constant 
$M>1$ depending only on $n,\lambda$ and $\Lambda$ such that $|Du(z)|\leq M$ for all $z\in B_{3/4}(0)$. By Theorem \ref{strict_thm}, 
 there exists a constant $r_0>0$ depending only on $n,\lambda$ and $\Lambda$ such that $S_u(z, r_0)\subset B_{3/4}(0)$ for 
 all $z\in B_{1/2}(0)$. The gradient bound implies that $B(z, \frac{r}{2M})\subset S_u(z, r)$ for all $z\in B_{1/2}(0)$ and $r\leq r_0$.
 Fix $x\in B_{1/2}(0)$. It suffices to prove the lemma for $y\in S_u(x, r_0/4)$. Let $r\in (0, r_0/2)$ be such that $y\in S_u(x, r)\backslash S_u(x, r/2)$. Then
 $|y-x|\geq \frac{r}{4M}$. The above corollary gives
 \begin{eqnarray*}
  |v(y)-v(x)|\leq \text{osc}_{S_u(x, r)} v &\leq& C (\frac{r}{r_0})^{\alpha} \left[ \|v\|_{L^{\infty}(S_\phi(x,r_0))}  + r_0^{\frac{1}{2 }}  \, \|f\|_{L^n(S_u(x, r_0))}\right]
  \\&\leq& C|x-y|^{\alpha}\left[ \|v\|_{L^{\infty}(B_1(0))}  +   \, \|f\|_{L^n(B_1(0)))}\right].
 \end{eqnarray*}

\end{proof}

\begin{rem}
The proof of Theorem \ref{CGthm} follows the presentation in \cite{Le_Harnack} where the case of lower order terms was treated. For related results, see also \cite{Maldo_H}.
\end{rem}

\newpage

\section{Global H\"older estimates for the linearized Monge-Amp\`ere equations}
\label{global-CG}
In this section, we prove Proposition \ref{global-holder} and Theorem \ref{global-h}. 

\subsection{Boundary H\"older continuity for solutions of non-uniformly elliptic equations}
\begin{proof} [Proof of Proposition \ref{global-holder}]
By considering the equation satisfied by $\frac{v}{\|\varphi\|_{C^{\alpha}(\p\Omega)} + \|g\|_{L^{n}(\Omega)}}$,
we can assume that
$$\|\varphi\|_{C^{\alpha}(\p\Omega)} + \|g\|_{L^{n}(\Omega)}=1$$
and we need to prove that 
$$|v(x)-v(x_{0})|\leq C|x-x_{0}|^{\frac{\alpha}{\alpha +2}}~\text{for all}~ x\in \Omega\cap B_{\delta}(x_{0}). $$
Moreover, without loss of generality,  we assume that $\lambda =1$ and
$$\Omega\subset \R^{n}\cap \{x_{n}>0\},~0\in\p\Omega.$$
Take $x_{0} =0$. 
By the ABP estimate in Theorem \ref{ABPmax} and the assumption $\det (a^{ij})\geq 1$, we have
$$|v(x)|\leq \|\varphi\|_{L^{\infty}(\p\Omega)} + C_{n}\text{diam} (\Omega) \|g\|_{L^{n}(\Omega)}\leq C_{0}~\forall~ x\in \Omega$$
for a constant $C_0>1$ depending only on $n$ and $\text{diam}(\Omega)$,
and hence, for any $\varepsilon \in (0,1)$
\begin{equation}|v(x)-v(0)\pm \e|\leq 3C_{0}:= C_{1}.
\label{gen-ineq}
\end{equation}
Consider now the functions
$$h_{\pm}(x) := v(x)- v(0)\pm \e\pm C_{1} (\inf \{y_{n}: y\in \overline{\Omega}\cap\partial B_{\delta_{2}}(0)\})^{-1} x_{n}$$
in the region $A:= \Omega\cap B_{\delta_{2}}(0)$ where $\delta_{2}$ is small to be chosen later.\\
Note that, if $x\in\partial \Omega$ with $$|x|\leq \delta_{1}(\e):= \e^{1/\alpha}$$ then, we have from $\|\varphi\|_{C^{\alpha}(\p\Omega)}\leq 1$ that
\begin{equation}
\label{bdr-ineq}|v(x)-v(0)| =|\varphi(x)-\varphi(0)| \leq |x|^{\alpha} \leq \e.
\end{equation}
It follows that, if we choose $\delta_{2}\leq \delta_{1}$ then from (\ref{gen-ineq}) and (\ref{bdr-ineq}), we have
$$h_{-}\leq 0, h_{+}\geq 0~\text{on}~\partial A.$$
On the other hand,
$$a^{ij}(h_{\pm})_{ij}= g~\text{in}~A.$$
The ABP estimate in Theorem \ref{ABPmax} applied in $A$ gives
$$h_{-}\leq  C_{n}\text{diam} (A) \|g\|_{L^{n}(A)}\leq C_{n}\delta_{2}~\text{in}~ A$$
and $$
h_{+}\geq - C_{n}\text{diam} (A) \|g\|_{L^{n}(A)}\geq  -C_{n}\delta_{2}~\text{in}~ A.$$
By restricting $\e\leq C_n^{\frac{-\alpha}{1-\alpha}}$, we can assume that
$$\delta_{1} = \e^{1/\alpha}\leq \frac{\e}{C_{n}}.$$
Then, for $\delta_{2}\leq \delta_{1}$, we have $C_{n}\delta_{2}\leq \e$ and thus, for all $x\in A$, we have
$$|v(x)-v(0)|\leq 2\e + C_{1} (\inf \{y_{n}: y\in \overline{\Omega}\cap\partial B_{\delta_{2}}(0)\})^{-1} x_{n}.$$
The uniform convexity of $\Omega$ gives
\begin{equation}
\inf \{y_{n}: y\in \overline{\Omega}\cap\partial B_{\delta_{2}}(0)\} \geq C_{2}^{-1}\delta^2_{2}.
\end{equation}
Therefore, choosing $\delta_{2}= \delta_{1}$, we obtain
$$|v(x)-v(0)|\leq 2\e + C_{1} (\inf \{y_{n}: y\in \overline{\Omega}\cap\partial B_{\delta_{2}}(0)\})^{-1} x_{n}= 2\e + \frac{2C_{1}C_{2}}{\delta_{2}^2}x_{n}~\text{in}~ A.$$
As a consequence, we have just obtained the following inequality
\begin{equation}
\label{op-ineq}
|v(x)-v(0)|\leq 2\e + \frac{2C_{1}C_{2}}{\delta_{2}^2}|x| = 2\e + 2C_{1}C_{2}\e^{-2/\alpha}|x|
\end{equation}
for all $x,\e$ satisfying the following conditions
\begin{equation}
\label{xe-ineq}
|x|\leq \delta_{1}(\e):= \e^{1/\alpha}, \e\leq C_{n}^{\frac{-\alpha}{1-\alpha}}: = c_{1}(\alpha, L, K, n).
\end{equation}
Finally, let us choose 
$\e = |x|^{\frac{\alpha}{\alpha + 2}}.$
It satisfies the conditions in (\ref{xe-ineq}) if 
$$|x|\leq \min\{c_{1}^{\frac{\alpha +2}{\alpha}}, 1\}:=\delta.$$
Then, by (\ref{op-ineq}), we have for all $x\in \Omega\cap B_{\delta}(0)$
$$|v(x)-v(0)| \leq C|x|^{\frac{\alpha}{\alpha + 2}},~C=2 + 2C_{1}C_{2}.$$
\end{proof}

Proposition \ref{global-holder}
gives the boundary H\"older continuity for solutions to the linearized Monge-Amp\`ere equation
$$U^{ij} v_{ij} =g$$
where $(U^{ij})$ is the cofactor matrix of the Hessian matrix $D^2 u$ of the convex function $u$ satisfying
$$\lambda\leq \det D^{2}u\leq \Lambda.$$
This combined with the interior H\"older continuity estimates of Caffarelli-Guti\'errez 
in Theorem \ref{inho_Holder} 
gives the global H\"older estimates for solutions to the linearized Monge-Amp\`ere equations on uniformly convex domains as stated in Theorem \ref{global-h}. 
The rest of this section will be devoted to the proof of 
these global H\"older estimates.

The main tool to connect the interior and boundary H\"older continuity for solutions to the the linearized Monge-Amp\`ere equation is
Savin's Localization Theorem at the boundary for the Monge-Amp\`ere equation. 
\subsection{Savin's Localization Theorem }
We now state the main tool used in the proof of Theorem \ref{global-h}, the localization theorem.\\  Let $\Omega\subset \R^{n}$ be a bounded convex set with
\begin{equation}\label{om_ass}
B_\rho(\rho e_n) \subset \, \Omega \, \subset \{x_n \geq 0\} \cap B_{\frac 1\rho}(0),
\end{equation}
for some small $\rho>0$. Here $e_n= (0, \cdots, 0, 1)\in \R^n$. Assume that 
\begin{equation}
\text{for each $y\in\p\Omega \cap\ B_\rho(0)$ there is a ball $B_{\rho}(z)\subset \Omega$ that is tangent to}~ \p 
\Omega~ \text{at y}. 
\label{tang-int}
\end{equation}
Let $u : \overline \Omega \rightarrow \R$, $u \in C^{0,1}(\overline 
\Omega) 
\cap 
C^2(\Omega)$  be a convex function satisfying
\begin{equation}\label{eq_u}
\det D^2u =f, \quad \quad 0 <\lambda \leq f \leq \Lambda \quad \text{in $\Omega$},
\end{equation} and 
assume that
\begin{equation}\label{0grad}
u(0)=0, \quad \nabla u(0)=0.
\end{equation}
If the boundary data has quadratic growth near $\{x_n=0\}$ then, as $h \rightarrow 0$, 
the section $S_u(0, h)$ of $u$ at $0$ with level $h$ is equivalent to a half-ellipsoid centered at 0; here we recall that
$$ S_u(x,h) :=\{y\in \overline \Omega:  u(y) < u(x) + \nabla u(x)\cdot (y- x) +h\}.$$
This is the content
of Savin's Localization Theorem proved in \cite{S1,S2}. Precisely, this theorem reads as follows.

\begin{thm}[Localization Theorem \cite{S1,S2}]\label{main_loc}
 Assume that $\Omega$ satisfies \eqref{om_ass}-(\ref{tang-int}) and $u$ satisfies 
\eqref{eq_u}, 
\eqref{0grad} above and,
\begin{equation}\label{commentstar}\rho |x|^2 \leq u(x) \leq \rho^{-1} 
|x|^2 \quad \text{on $\p \Omega \cap \{x_n \leq \rho\}.$}\end{equation}
Then, for each $h<k$ there exists an ellipsoid $E_h$ of volume $\omega_{n}h^{n/2}$ 
such that
$$kE_h \cap \overline \Omega \, \subset \, S_u(0, h) \, \subset \, k^{-1}E_h \cap \overline \Omega.$$

Moreover, the ellipsoid $E_h$ is obtained from the ball of radius $h^{1/2}$ by a
linear transformation $A_h^{-1}$ (sliding along the $x_n=0$ plane)
$$A_hE_h= h^{1/2}B_1,\quad \det A_{h} =1,$$
$$A_h(x) = x - \tau_h x_n, \quad \tau_h = (\tau_1, \tau_2, \ldots, 
\tau_{n-1}, 0), $$
with
$$ |\tau_{h}| \leq k^{-1} |\log h|.$$
The constant $k$ above depends only on $\rho, \lambda, \Lambda, n$.
\end{thm}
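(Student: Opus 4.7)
The plan is to prove the two-sided inclusion via a dyadic rescaling and compactness argument, normalizing the section $S_u(0,h_j)$ at each scale $h_j = 2^{-j}$ by a unimodular linear map that slides parallel to the tangent plane $\{x_n = 0\}$. The first step is to establish rough size control on sections. Using the boundary quadratic growth (\ref{commentstar}), the bounds $\lambda \le \det D^2u \le \Lambda$, and comparison with explicit sub- and super-solutions of the form $c|x|^2 \pm \gamma x_n$ (whose Monge-Amp\`ere masses can be tuned to match), together with the ABP maximum principle, I would show that for $h$ small, $S_u(0,h)$ is contained in the strip $\{0 \le x_n \le C_1 h^{1/2}\}$, has volume comparable to $h^{n/2}$, and touches $\partial \Omega$ only inside $\{x_n \le \rho\}$ where the quadratic growth is available. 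This fixes the correct orders of magnitude in height, tangential extent, and volume.

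At each scale $h$, I would then apply John's lemma to the convex set $S_u(0,h)$, producing an ellipsoid $F_h$ with $F_h \subset S_u(0,h) \subset n F_h$. The inner ball condition (\ref{tang-int}) makes $\partial \Omega$ tangent to $\{x_n = 0\}$ at $0$, so $0$ is a vertex-like boundary contact point of $S_u(0,h)$ on an essentially flat portion of $\partial \Omega$. Combining this with the strip estimate forces $F_h$ to have semi-axes comparable to $h^{1/2}$ and tangential orientation. I would then find a volume-preserving linear map $A_h(x) = x - \tau_h x_n$ (the slide form along $\{x_n = 0\}$ is forced, since a unimodular normalization must send the near-flat boundary piece of $\partial S_u(0,h)$ to the flat disk of the half-ball $h^{1/2} B_1 \cap \{x_n \ge 0\}$ and therefore preserve $\{x_n = 0\}$) such that $A_h S_u(0,h)$ is comparable to $h^{1/2} B_1 \cap \{x_n \ge 0\}$. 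Setting $E_h := A_h^{-1}(h^{1/2} B_1)$ gives an ellipsoid of volume $\omega_n h^{n/2}$ with the two-sided inclusion $k E_h \cap \overline \Omega \subset S_u(0,h) \subset k^{-1} E_h \cap \overline \Omega$ for some $k = k(\rho, \lambda, \Lambda, n) > 0$.

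The main obstacle is the logarithmic bound $|\tau_h| \le k^{-1}|\log h|$; I would prove it by dyadic iteration. Define the rescaled solutions
\[
\tilde u_j(y) := h_j^{-1}\, u\bigl(A_{h_j}^{-1}(h_j^{1/2}\, y)\bigr),
\]
each of which satisfies a Monge-Amp\`ere equation with $\lambda \le \det D^2 \tilde u_j \le \Lambda$ on a domain comparable to the normalized half-ball $B_1 \cap \{x_n \ge 0\}$, preserving the tangent plane and the quadratic growth on its flat boundary piece. A compactness argument, using Caffarelli's interior $C^{1,\alpha}$ estimates in the interior of the normalized half-ball together with the uniform quadratic growth on the flat part, shows that any limit $\tilde u_\infty$ is strictly convex with quadratic boundary behavior, hence its section at height $1/2$ is itself comparable to a half-ball with the same tangential center up to a universally bounded error. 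This yields the per-step estimate $|A_{h_{j+1}} A_{h_j}^{-1} - I| \le C$ with $C$ universal and independent of $j$. Since $O(|\log h|)$ dyadic steps are needed to descend from scale $1$ to scale $h$, summing the tangential drifts gives $|\tau_h| \le C|\log h|$. The subtlety is that this per-step bound is additive in the slide parameter rather than multiplicative; extracting it requires the full strength of the Monge-Amp\`ere affine invariance combined with compactness on the limit model, and it is exactly here that the quadratic boundary growth is sharp: weaker growth would destroy the half-ball limit, while stronger control would remove the $\log$.
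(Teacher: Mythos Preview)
The paper does not prove this theorem. It is stated with attribution to Savin \cite{S1,S2} and is explicitly listed in the Introduction among the three results the notes do \emph{not} prove (``Savin's deep Localization theorem at the boundary for the Monge-Amp\`ere equation with bounded right hand side''). So there is no ``paper's own proof'' to compare against; your proposal must be measured against Savin's original argument.

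Your outline captures the correct architecture of Savin's proof: volume and strip estimates for $S_u(0,h)$, normalization by unimodular sliding maps $A_h(x)=x-\tau_h x_n$, and a dyadic iteration yielding a per-step bound on $A_{h_{j+1}}A_{h_j}^{-1}$ which sums to the $|\log h|$ control on $\tau_h$. However, two steps are more delicate than your sketch suggests. First, the initial size estimates cannot be obtained simply from barriers of the form $c|x|^2 \pm \gamma x_n$: the upper volume bound uses Lemma~\ref{vol-sec2}, but the lower bound and the strip containment require more work because $S_u(0,h)$ meets $\partial\Omega$ and the boundary is only tangent to $\{x_n=0\}$, not flat. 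Second, and more seriously, your compactness step invokes ``Caffarelli's interior $C^{1,\alpha}$ estimates'' on the rescaled half-ball, but those estimates (Theorems~\ref{C1alpha}, \ref{C1alpha2}) require compactly contained sections and say nothing near the flat boundary piece where the entire difficulty lies. Savin's actual argument establishes the per-step bound by a direct contradiction/barrier argument tailored to the boundary geometry, not by passing to a limit and citing interior regularity. As written, your compactness argument has a genuine gap at the boundary: you would need to prove that the rescaled solutions remain strictly convex \emph{up to} the flat face with uniformly controlled geometry there, which is essentially the content of the theorem itself.
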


 The ellipsoid $E_h$, or equivalently the linear map $A_h$, 
provides useful information about the behavior of $u$ 
near the origin. From Theorem \ref{main_loc} we also control the shape of sections that are tangent to $\p \Omega$ at the origin. 

\begin{prop}\label{tan_sec}
Let $u$ and $\Omega$ satisfy the hypotheses of the Localization Theorem \ref{main_loc} at the 
origin. Assume that for some $y \in \Omega$ the section $S_u(y,h) \subset \Omega$
is tangent to $\p \Omega$ at $0$ for some $h \le c$ with $c$ universal. Then there exists a small 
 constant $k_0>0$ depending on $\lambda$, $\Lambda$, $\rho $ and $n$ such that
$$ D u(y)=a e_n 
\quad \mbox{for some} \quad   a \in [k_0 h^{1/2}, k_0^{-1} h^{1/2}],$$
$$k_0 E_h \subset S_u(y,h) -y\subset k_0^{-1} E_h, \quad \quad k_0 h^{1/2} \le dist(y,\p \Omega) \le k_0^{-1} h^{1/2}, \quad $$
with $E_h$ the ellipsoid defined in the Localization Theorem \ref{main_loc}.
\end{prop}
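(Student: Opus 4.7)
The first step is purely geometric. Since $\nabla u(0)=0$, the outward normal to $\partial S_u(y,h)=\{x:u(x)=u(y)+Du(y)\cdot(x-y)+h\}$ at the boundary point $0$ is proportional to $\nabla u(0)-Du(y)=-Du(y)$. On the other hand, the interior ball $B_\rho(\rho e_n)\subset\Omega$ tangent to $\partial\Omega$ at $0$ forces the outward normal of $\partial\Omega$ at $0$ to be $-e_n$. The assumed tangency at $0$ therefore yields $Du(y)=ae_n$ with $a>0$. Evaluating the defining identity $u(0)=u(y)+Du(y)\cdot(0-y)+h$ at the origin then gives the key relation $u(y)=ay_n-h$, and the section becomes $S_u(y,h)=\{x\in\overline\Omega:u(x)<ax_n\}$. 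Equivalently, the convex function $v(x):=u(x)-ax_n$ (still satisfying $\det D^2 v\in[\lambda,\Lambda]$) has interior minimum $v(y)=-h$, vanishes on $\partial S_u(y,h)$, and has $\nabla v(y)=0$.

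\textbf{Reduction to unit scale.} I will apply the Localization Theorem~\ref{main_loc} to $u$ at $0$ to produce the sliding maps $A_t$ with $\det A_t=1$ and the ellipsoids $E_t$, and then reduce the problem to the scale $h=1$ via the Monge-Amp\`ere-invariant rescaling
\[
\tilde u(\tilde x):=h^{-1}u(h^{1/2}A_h^{-1}\tilde x),\qquad \tilde\Omega:=h^{-1/2}A_h\Omega.
\]
This preserves the determinant bounds; $S_u(0,h)$ transforms into $S_{\tilde u}(0,1)$, which by Localization is comparable to the half-ball $B_1\cap\{x_n\ge 0\}$ up to a universal factor; and $S_u(y,h)$ transforms into $S_{\tilde u}(\tilde y,1)$ with $\tilde y=h^{-1/2}A_h y$. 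Using $(A_h^{-1})^Te_n=e_n$ (since $\tau_h\cdot e_n=0$), one checks $\nabla\tilde u(\tilde y)=\tilde a\,e_n$ with $\tilde a=a\,h^{-1/2}$. The three conclusions of the proposition are thus equivalent, in the normalized picture, to showing that $\tilde a$, $\tilde y_n$, and the shape of $S_{\tilde u}(\tilde y,1)-\tilde y$ are all two-sidedly controlled by universal constants.

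\textbf{Quantitative bounds in the normalized picture.} Since $\nabla\tilde u(0)=0$ is a subgradient of the convex function $\tilde u$ at the boundary point $0$, we have $\tilde u\ge 0$ on $\overline{\tilde\Omega}$; combined with $\tilde u(\tilde y)=\tilde a\,\tilde y_n-1\ge 0$ this gives $\tilde a\,\tilde y_n\ge 1$. For the upper bound on $\tilde a$ I will use that the supporting hyperplane $\ell(x)=\tilde a x_n-1$ satisfies $\ell\le \tilde u\le C|x|^2$ on $\partial\tilde\Omega$ near $0$ (the quadratic growth being inherited from~\eqref{commentstar} after rescaling), which forces $\tilde a\le C$ after evaluating at a boundary point of universal size. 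For $\tilde y_n$, the upper bound is immediate from $\tilde y\in\overline{\tilde\Omega}$, and the lower bound follows from $\tilde a\tilde y_n\ge 1$ combined with the now-universal upper bound on $\tilde a$. For the shape of the section, the inclusion $S_{\tilde u}(\tilde y,1)\subset S_{\tilde u}(0,C')\subset C''(B_1\cap\{x_n\ge 0\})$ is obtained from $\tilde a x_n\le C$ inside $S_{\tilde u}(\tilde y,1)$ and the Localization estimate on $S_{\tilde u}(0,C')$; the reverse inclusion comes from the Alexandrov maximum principle applied to $v=\tilde u-\tilde a x_n$ on its zero sub-level set (since $v(\tilde y)=-1$) combined with convexity of the section. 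Undoing the rescaling returns $a\in[k_0 h^{1/2},k_0^{-1}h^{1/2}]$, $\operatorname{dist}(y,\partial\Omega)\sim h^{1/2}$, and $k_0 E_h\subset S_u(y,h)-y\subset k_0^{-1}E_h$.

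\textbf{Main obstacle.} The hardest point is the upper bound $\tilde a\le C$. This bound does \emph{not} follow from the convexity of $\tilde u$ and the Monge-Amp\`ere bounds alone; it requires the quadratic boundary growth~\eqref{commentstar} to hold at a point $\overline x\in\partial\tilde\Omega$ with $\overline x_n$ of universal positive size, which in turn relies on the normalization provided by the Localization Theorem (the image $\tilde\Omega$ has a universal interior ball at $0$). This is the place where the structural hypotheses on $\Omega$ and the boundary behavior of $u$ interact nontrivially. Once this bound is obtained, the remaining two-sided ellipsoid comparison amounts to comparing two sections of comparable heights whose centers lie in a controlled portion of the normalized half-ball, a matter of convex geometry plus the volume and diameter estimates supplied by the Localization Theorem.
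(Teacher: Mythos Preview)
Your overall strategy---rescale by $h^{-1/2}A_h$ to reduce to unit height and then argue with universal constants in the normalized picture---is reasonable and different from the paper's proof, but the proposal as written has a genuine gap in precisely the step you flag as the ``main obstacle''.

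The problem is that the boundary quadratic growth \eqref{commentstar} does \emph{not} rescale with a universal constant. Since $\|A_h^{-1}\|\le C|\log h|$, the bound $u\le\rho^{-1}|x|^2$ on $\partial\Omega$ becomes $\tilde u(\tilde x)\le \rho^{-1}\|A_h^{-1}\|^2|\tilde x|^2\sim |\log h|^2|\tilde x|^2$ on $\partial\tilde\Omega$, so your inequality $\ell\le\tilde u\le C|\tilde x|^2$ does not hold with a universal $C$. Worse, $\partial\tilde\Omega$ near $0$ is essentially flat: a point $\bar x\in\partial\tilde\Omega$ with $\bar x_n$ of order $1$ must have $|\bar x'|\sim h^{-1/4}$, so there is no boundary point of universal size to evaluate at. Relatedly, your claim that the upper bound on $\tilde y_n$ is ``immediate from $\tilde y\in\overline{\tilde\Omega}$'' fails because $\tilde\Omega=h^{-1/2}A_h\Omega$ has diameter $\sim h^{-1/2}$.

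Both gaps are fixable within your framework, but by a different mechanism: use the \emph{interior} information from the Localization Theorem rather than the boundary growth. The rescaled lower inclusion $kB_1\cap\overline{\tilde\Omega}\subset S_{\tilde u}(0,1)$ furnishes the point $(k/2)e_n\in\tilde\Omega$ (one checks $(k/2)h^{1/2}(e_n+\tau_h)\in B_\rho(\rho e_n)\subset\Omega$ for $h$ small) with $\tilde u((k/2)e_n)<1$; the supporting hyperplane $\tilde a x_n-1\le\tilde u$ at this point gives $\tilde a<4/k$ immediately. Once $\tilde a$ is bounded, the inclusion $S_{\tilde a}'\subset S_{\tilde u}(0,C')$ (the rescaled form of the paper's inclusion \eqref{f_sub}) together with $S_{\tilde u}(0,C')\subset\{x_n\le k^{-1}(C')^{1/2}\}$ (which survives rescaling since $A_h$ preserves $x_n$) gives the universal bound on $\tilde y_n$.

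For comparison, the paper avoids rescaling entirely. It works directly with the one-parameter family $S_a'=\{u<ax_n\}$ at scale $h$ and shows that $S_{kh^{1/2}}'$ is equivalent to $E_h$: the upper inclusion $S_{kh^{1/2}}'\subset S_u(0,h)\subset k^{-1}E_h$ follows from convexity and the Localization upper bound, while the lower \emph{volume} bound $|S_{kh^{1/2}}'|\ge ch^{n/2}$ comes from applying the ABP estimate to $u-kh^{1/2}x_n$ on $S_{kh^{1/2}}'$, after locating a point $y\in\partial S_u(0,\theta h)$ with $y_n\ge k(\theta h)^{1/2}$ (again from Localization) where this function is $\le -\delta h$. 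This sidesteps the $|\log h|$ distortions altogether and makes the role of ABP---which is the real engine behind the lower bounds---more transparent.
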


Proposition \ref{tan_sec}, proved in \cite{S3}, is a consequence of Theorem \ref{main_loc}. 
We sketch its proof here.
\begin{proof}[Proof of Proposition \ref{tan_sec}]
Assume that the hypotheses of the Localization Theorem \ref{main_loc} hold at the origin. For $a\ge 0$ we denote
$$S_a':=\{ x \in \overline \Omega| \quad u(x)<ax_n\} ,$$
and clearly $S_{a_1}'\subset S_{a_2}'$ if $a_1 \le a_2$.
The proposition easily follows once we show that $S_{ch^{1/2}}'$ has the shape of the ellipsoid $E_h$ for all small $h$.

From Theorem \ref{main_loc} we know $$S_u(0, h):=\{u<h\} \subset k^{-1} E_h \subset \{x_n \le k^{-1} h^{1/2} \} $$ and since $u(0)=0$ we use the convexity of $u$ and obtain
\begin{equation}\label{f_sub}
S_{kh^{1/2}}' \subset S_u(0, h) \cap \Omega.
\end{equation}
This inclusion shows that in order to prove that $S_{kh^{1/2}}'$ is equivalent to $E_h$ it suffices to bound its volume by below
$$|S_{kh^{1/2}}'| \ge c|E_h|.$$

From Theorem \ref{main_loc}, there exists $y \in \partial S_{\theta h}$ such that $y_n \ge k(\theta h)^{1/2}$. 
We evaluate $\tilde u:=u-k h^{1/2}x_n, $ at $y$ and find $$\tilde u(y) \le \theta h - k h^{1/2} k (\theta h)^{1/2} \le -\delta h,$$ for some $\delta>0$ provided that we choose $\theta$ small depending on $k$. 
Since $\tilde u=0$ on $\p S_{kh^{1/2}}'$ and $ \det D^2 \tilde u \le \Lambda$, we apply Lemma \ref{ABP_refined} to $-\tilde u$ which solves $U^{ij}(-\tilde u)_{ij}=
-n\det D^2 u$. We have 
$$\delta h \leq \max_{ S_{kh^{1/2}}'} -\tilde u \le C(\Lambda, n)|S_{kh^{1/2}}'|^{2/n},$$ hence $$ c h^{n/2} \le |S_{kh^{1/2}}'|.$$
\end{proof}

The quadratic separation from tangent planes on the boundary for solutions to the Monge-Amp\`ere equation is a crucial assumption in the Localization Theorem \ref{main_loc}. This is the case for $u$ in Theorem \ref{global-h} as proved in \cite[Proposition 3.2]{S2}.
\begin{prop}
Let $u$ be as in Theorem \ref{global-h}. Then, on $\p \Omega$, 
$u$ separates quadratically from its tangent planes on $\p \Omega$. This means that
if $x_0 \in 
\p \Omega$ then
\begin{equation}
 \rho\abs{x-x_{0}}^2 \leq u(x)- u(x_{0})-\nabla u(x_{0})\cdot (x- x_{0}) \leq 
\rho^{-1}\abs{x-x_{0}}^2,
\label{eq_u1}
\end{equation}
for all $x \in \p\Omega,$ for some small constant $\rho$ universal. 
\label{quadsep}
\end{prop}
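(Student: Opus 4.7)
The plan is to fix $x_0\in\partial\Omega$ and, after translating, rotating, and subtracting the supporting affine function $u(x_0)+\nabla u(x_0)\cdot(x-x_0)$, to reduce to the normalized configuration $x_0=0$, $u(0)=0$, $0\in\partial u(0)$, with the inward unit normal to $\partial\Omega$ at $0$ being $e_n$. Under these reductions, $u\ge 0$ on $\overline\Omega$, and uniform convexity together with $\partial\Omega\in C^3$ yield a local graph representation $\partial\Omega\cap B_{r_0}(0)=\{x_n=\phi(x')\}$ with $\phi\in C^3$, $\phi(0)=0$, $\nabla\phi(0)=0$, and $D^2\phi(0)\ge c_0 I_{n-1}$. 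Moreover, since $\{x_n=0\}$ is tangent to $\partial\Omega$ at $0$ and $\Omega$ is convex, $\overline\Omega\setminus\{0\}\subset\{x_n>0\}$. The claim becomes $\rho|x|^2\le u(x)\le \rho^{-1}|x|^2$ for $x\in\partial\Omega$.

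For the upper bound, I would Taylor expand the pull-back $\tilde\varphi(x'):=u(x',\phi(x'))$, which is $C^3$ with $\tilde\varphi(0)=0$ and $\nabla_{x'}\tilde\varphi(0)=0$ (the latter because the tangential component of the chosen subgradient coincides with the intrinsic gradient of $u|_{\partial\Omega}$). This gives $|\tilde\varphi(x')|\le C|x'|^2$ locally with $C$ depending only on $\|u\|_{C^3(\partial\Omega)}$, $\|\phi\|_{C^2}$, and the Lipschitz bound on $u$; uniform convexity of $\partial\Omega$ gives $|x|^2\asymp|x'|^2$ on $\partial\Omega\cap B_{r_0}$, yielding the local quadratic upper bound. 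For $|x|\ge r_0$, the crude estimate $|u(x)|\le\|u\|_{L^\infty}\le C$ is absorbed by enlarging $\rho^{-1}$ relative to $\mathrm{diam}(\Omega)$.

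The lower bound is the substantive direction, and I would attack it by a Monge-Amp\`ere barrier/comparison argument. In a small normalized neighborhood $U=B_r(0)\cap\overline\Omega$ of $0$, I aim to construct a smooth convex barrier $\underline w$ with $\det D^2\underline w\ge\Lambda\ge\det D^2 u$ and $\underline w\le u$ on $\partial U$, so that the Monge-Amp\`ere comparison principle delivers $\underline w\le u$ in $U$. A candidate of the form $\underline w(x)=a|x|^2-bx_n$ with $a=\tfrac{1}{2}\Lambda^{1/n}$ satisfies $\det D^2\underline w=\Lambda$; using $\phi(x')\ge\tfrac{c_0}{2}|x'|^2$ on $\partial\Omega$, one can choose $b$ so that $\underline w\le 0\le u$ on $\partial\Omega\cap B_r(0)$. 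To upgrade this non-negativity into a strictly quadratic lower bound, I would refine $\underline w$ with additional quadratic and lower-order corrections tuned so that, on $\partial\Omega\cap B_r(0)$, the cancellation between the isotropic quadratic term and the boundary drift $-bx_n$ leaves a positive quadratic of universal size; the boundary trace would then give $u(x)\ge\rho|x|^2$. Away from $0$ on $\partial\Omega$, strict convexity of $u$ (Caffarelli's theorem, based on $\det D^2 u\ge\lambda$) combined with $u(0)=0$ gives a uniform positive lower bound, which dominates $\rho|x|^2$ after shrinking $\rho$ and using $\mathrm{diam}(\Omega)<\infty$.

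The main obstacle will be the simultaneous tuning of the barrier parameters $a,b$ and the radius $r$ so that $\underline w$ produces the sharp quadratic lower bound on $\partial\Omega\cap U$ while remaining below $u$ on the interior cap $\Omega\cap\partial B_r(0)$. The latter requires a quantitative lower estimate for $u$ at interior points just inside $\partial B_r(0)$, especially at points where $x_n$ is small (and hence $\underline w$ is close to $a|x|^2=ar^2$, not negative), which I would obtain from the Aleksandrov maximum principle applied to $u-\underline w$ together with the $C^3$ control of $\partial\Omega$ and the boundary data; this balancing is the technical core of the proof and is where the hypothesis $\partial\Omega\in C^3$ is used essentially.
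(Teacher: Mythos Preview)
Your setup and the upper bound are fine and match the paper: after normalizing so that $x_0=0$, $u\ge 0$, $u(0)=0$, and $\Omega\subset\{x_n>0\}$, the trace $\tilde\varphi(x')=u(x',\phi(x'))$ is $C^3$ with vanishing value and gradient at $0$, so Taylor gives $u\le C|x'|^2\asymp C|x|^2$ on $\partial\Omega$ near $0$, and the far region is handled by the $L^\infty$ bound.

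The lower bound, however, has a genuine circularity. In the region $U=B_r(0)\cap\Omega$ the boundary decomposes as $\partial U=(\partial\Omega\cap\overline{B_r})\cup(\Omega\cap\partial B_r)$, and the comparison principle requires $\underline w\le u$ on \emph{both} pieces as input; it then returns $\underline w\le u$ only in the interior of $U$. On $\partial\Omega\cap B_r$ you know a priori only $u\ge 0$, so verifying the hypothesis forces $\underline w\le 0$ there, and then the conclusion on that piece is just the input again --- it cannot yield $u\ge\rho|x|^2$ on $\partial\Omega$. Your ``refinement'' asks for $\underline w$ to be simultaneously $\le u$ and $\ge\rho|x|^2$ on $\partial\Omega\cap B_r$; without already knowing $u\ge\rho|x|^2$ there, these two requirements are incompatible. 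No tuning of $a,b,r$ resolves this, because the quantity you want to estimate lives on $\partial U$, where the comparison principle is silent.

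The paper proceeds differently. After writing the boundary trace as $Q_0(x')+o(|x'|^3)$ with $Q_0$ having no linear part and quadratic part $\sum_{i<n}\tfrac{\mu_i}{2}x_i^2$ (necessarily $\mu_i\ge 0$ since the trace is nonnegative), the issue reduces to ruling out $\mu_i=0$. If, say, $\mu_1=0$, then also the $x_1^3$ coefficient vanishes (this is where $C^3$ is used), so on $\partial\Omega$ the set $\{u<h\}$ contains $\{|x_1|\le r(h)h^{1/3}\}\cap\{|x'|\le ch^{1/2}\}$ with $r(h)\to\infty$. Taking the convex hull and using the uniform convexity of $\Omega$ gives $|S_u(0,h)|\ge c'\,r(h)^3 h^{n/2}$, while Lemma~\ref{vol-sec2} (which uses only $\det D^2 u\ge\lambda$) forces $|S_u(0,h)|\le C(\lambda,n)h^{n/2}$; letting $h\to 0$ gives a contradiction. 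So the lower bound is obtained by a section--volume argument, not by a barrier placed below $u$.
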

\begin{proof}
We prove the Proposition for the case $x_0\in\p\Omega$. By rotation of coordinates, we can assume that $x_0=0$ and $$\Omega\subset \{x\in\R^n: x_n>0\}.$$
We denote a point $x=(x_1, \cdots, x_{n-1}, x_n)\in\R^n$ by $x= (x', x_n)$ where $x'= (x_1,\cdots, x_{n-1})$.
By the Aleksandrov maximum principle, we have that $u$ is universally bounded. Since $\Omega$ is uniformly convex at the origin and $\det D^2 u$ is bounded from above, we can use barriers and obtain that $l_0$, the tangent plane at the origin, has bounded slope. 
The proof of this fact is quite similar to that of Lemma \ref{Dubound}.
After subtracting this linear function from $u$ and $\phi=u|_{\p\Omega}$, we may 
assume $l_0=0$. Thus, $u\geq 0$ and it suffices to show that
\begin{equation}\rho\abs{x-x_{0}}^2 \leq u(x) \leq 
\rho^{-1}\abs{x-x_{0}}^2,
\label{u_quad1}
\end{equation}
for all $x\in\p\Omega$. Since $u$ is universally bounded, we only need to prove (\ref{u_quad1}) for $|x|$ universally small. 

Since $\phi=u|_{\p\Omega}$, $\p \Omega $ are $C^3$ at the origin,  we find that 
\begin{equation}\phi(x)=Q_0(x') + o(|x'|^3)~\text{for} ~x=(x', x_n)\in\p\Omega,
 \label{varphi_eq}
\end{equation}
 with $Q_0$ a cubic polynomial. Indeed, locally around $0$, $\p\Omega$ is given by the graph of a $C^3$ function $\psi$: for some $c$ small,
$$\p\Omega\cap B_c(0)=\{(x', x_n): x_n=\psi(x')\}.$$
Thus, we can write for $(x', x_n)\in \p\Omega\cap B_c(0):$
\begin{equation}x_n= Q_1 (x') +  o(|x'|^3)
 \label{psi_eq}
\end{equation}
 with $Q_1$ a cubic polynomial. Since $\phi\in C^3(\overline{\Omega})$, we can again write around $0$:
 \begin{equation*}\phi=Q_2(x) + o(|x|^3) ~\text{for} ~x=(x', x_n)\in \overline{\Omega}
\end{equation*}
 with $Q_2$ a cubic polynomial. Substituting (\ref{psi_eq}) into this equation, we obtain (\ref{varphi_eq}) as claimed.

Now we use (\ref{varphi_eq}). Because $u=\phi \ge 0$ on $\p\Omega$, $Q_0$ has no linear part and its quadratic part is given by, say
$$\sum_{i<n} \frac {\mu_i}{2} x_i^2  , \quad \mbox{with} \quad \mu_i \ge 0.$$
We need to show that $\mu_i>0$.

If $\mu_1=0$, then the coefficient of $x_1^3$ is $0$ in $Q_0$. Thus, if we restrict to $\p \Omega$ in a small neighborhood near the origin, then for 
all small $h$ the set $\{\phi <h \}$ contains $$ \{|x_1| \le r(h)h^{1/3}\} \cap \{ |x'| \le c h^{1/2} \} $$
for some $c>0$ and with $$r(h) \to \infty \quad \mbox{as $h \to 0$}.$$ Now $S_u(0, h)$ contains the convex set 
generated by $\{\phi <h\}$ thus, since $\Omega$ is uniformly convex,
$$|S_u(0, h)| \ge c'(r(h)h^{1/3})^3 h^{(n-2)/2} \ge c' r(h)^3 h^{n/2}.$$
On the other hand, since $\det D^2 u\geq \lambda$ and $$0 \le u \le h \quad \mbox{in $S_u(0,h)$}$$ we obtain from Lemma \ref{vol-sec2} that 
$$|S_u(0, h)| \le C(\lambda, n) h^{n/2},$$ and we contradict the inequality above as $ h \to 0$.
\end{proof}
\subsection{Proof of global H\"older estimates for the linearized Monge-Amp\`ere equation} 
\begin{proof}[Proof of Theorem \ref{global-h}] We recall from Proposition \ref{quadsep} that $u$ separates quadratically from its tangent planes on $\p\Omega$. Therefore, Proposition \ref{tan_sec} applies.
Let $y\in \Omega $ with $r:=\text{dist} (y,\partial\Omega) \le c,$ for $c$ universal, and consider the maximal section $S_u(y,\bar{h}(y))$ centered at $y$, i.e.,
$$\bar{h}(y)=\max\{h\,| \quad S_u(y,h)\subset \Omega\}.$$
When it is clear from the context, we write $\bar{h}$ for $\bar{h}(y)$.
By Proposition \ref{tan_sec} applied at the point $x_0\in \p S_u(y,\bar h) \cap \p \Omega,$ we have
 \begin{equation}\bar h^{1/2} \approx r,
\label{hr}
\end{equation}
and $S_u(y,\bar h)$ is equivalent to an ellipsoid $E$ i.e
$$cE \subset S_u(y, \bar h)-y \subset CE,$$
where
\begin{equation}E :=\bar h^{1/2}A_{\bar{h}}^{-1}B_1(0), \quad \mbox{with} \quad \|A_{\bar{h}}\|, \|A_{\bar h}^{-1} \| \le C |\log \bar h|; \det A_{\bar{h}}=1.
\label{eh}
\end{equation}
We denote $$u_y:=u-u(y)-D u(y)\cdot (x-y).$$
The rescaling $\tilde u: \tilde S_1 \to \R$ of $u$ 
$$\tilde u(\tilde x):=\frac {1}{ \bar h} u_y(T \tilde x) \quad \quad x=T\tilde x:=y+\bar h^{1/2}A_{\bar{h}}^{-1}\tilde x,$$
satisfies
$$\det D^2\tilde u(\tilde x)=\tilde f(\tilde x):=f(T \tilde x),  $$
and
\begin{equation}
\label{normalsect}
B_c(0) \subset \tilde S_1 \subset B_C(0), \quad \quad \tilde S_1=\bar h^{-1/2} A_{\bar h}(S_u(y, \bar h)- y),
\end{equation}
where $\tilde S_1$ represents the section of $\tilde u$ at the origin at height 1.

We define also the rescaling $\tilde v$ for $v$
$$\tilde v(\tilde x):= v(T\tilde x)- v(x_{0}),\quad \tilde x\in \tilde S_{1}.$$
Then $\tilde v$ solves
$$\tilde U^{ij} \tilde v_{ij} = \tilde g(\tilde x):= \bar{h} g(T\tilde x).$$
Now, we apply Caffarelli-Guti\'errez's interior H\"older estimates in Theorem \ref{inho_Holder} to $\tilde v $ to obtain
$$\abs{\tilde v (\tilde z_{1})-\tilde v(\tilde z_{2})}\leq C\abs{\tilde z_{1}-\tilde z_{2}}^{\beta} \{\norm{\tilde v }_{L^{\infty}(\tilde S_{1})} + \norm{\tilde g}_{L^{n}(\tilde S_{1})}\},\quad\forall \tilde z_{1}, \tilde z_{2}\in \tilde S_{1/2},$$
for some small constant $\beta\in (0,1)$ depending only on $n, \lambda, \Lambda$.\\
By (\ref{normalsect}), we can decrease $\beta$ if necessary and thus we can assume that
$$2\beta\leq \frac{\alpha}{\alpha + 2}: =2\gamma.$$ Note that, by (\ref{eh})
$$ \norm{\tilde g}_{L^{n}(\tilde S_{1})} = \bar{h}^{1/2}\norm{g}_{L^{n}(S_u(y, \bar{h}))}.$$
We observe that (\ref{hr}) and (\ref{eh}) give
$$B_{C r\abs{\log r}}(y)\supset S_u(y,\bar{h}) \supset S_u(y,\bar{h}/2)\supset B_{c\frac{r}{\abs{\log r}}}(y)$$
and
$$\text{diam} (S_u(y,\bar{h}))\leq Cr\abs{\log r}.$$
By Proposition \ref{global-holder}, we have
$$\norm{\tilde v }_{L^{\infty}(\tilde S_{1})} \leq C \text{diam} (S_u(y, \bar{h}))^{2\gamma} \leq C (r\abs{\log r})^{2\gamma}.$$
Hence
$$\abs{\tilde v (\tilde z_{1})-\tilde v(\tilde z_{2})}\leq C\abs{\tilde z_{1}-\tilde z_{2}}^{\beta}\{(r\abs{\log r})^{2\gamma}  + \bar{h}^{1/2}\norm{g}_{L^{n}(S_u
(y, \bar{h}))}\}~\forall \tilde z_{1}, \tilde z_{2}\in \tilde S_{1/2}.$$
 Rescaling back and using
$$\tilde z_1-\tilde z_2=\bar h^{-1/2}A_{\bar h}(z_1-z_2),$$
and the fact that
$$\abs{\tilde z_1-\tilde z_2}\leq \norm{\bar h^{-1/2}A_{\bar h}}\abs{z_1-z_2} \leq C \bar{h}^{-1/2}\abs{\log \bar{h}}\abs{z_1-z_2}\leq
C r^{-1}\abs{\log r}\abs{z_1-z_2},$$
we find
\begin{equation}|v(z_1)-v( z_2)|  \le  |z_1-z_2|^{\beta} \quad \forall  z_1, z_2 \in  S_u(y,\bar h/2) .
\label{oscv}
\end{equation}
Notice that this inequality holds also in the Euclidean ball $B_{c\frac{r}{\abs{\log r}}}(y)\subset S_u(y,\bar h/2)$. Combining this with Proposition \ref{global-holder}, we easily 
obtain that $$\|v\|_{C^\beta(\bar \Omega)} \le C,$$ for some $\beta\in (0,1)$, $C$ universal.\\ For completeness, we include the details. By rescaling the domain, we can assume that
$\Omega\subset B_{1/100}(0).$
We estimate $\frac{\abs{v(x)-v (y)}}{\abs{x-y}^{\beta}}$ for $x$ and $y$ in $\Omega$. Let $r_{x} = \text{dist}(x, \p\Omega)$ and $r_{y}= \text{dist} (y, \p\Omega).$ Suppose that 
$r_{y}\leq r_{x},$ say. Take $x_{0}\in\p\Omega$ and $ y_{0}\in \p\Omega$ such that $r_{x}= \abs{x- x_{0}}$ and $r_{y} = \abs{y-y_{0}}.$ From the interior H\"older estimates of Caffarelli-Guti\'errez, we only need to consider the case $r_{y}\leq r_{x}\leq c.$\\
Assume first that
$\abs{x-y}\leq c \frac{r_{x}}{\abs{\log r_{x}}}.$
Then $y\in B_{c \frac{r_{x}}{\abs{\log r_{x}}}}(x)\subset S_u(x,\bar{h}(x)/2).$ By (\ref{oscv}), we have
$$\frac{\abs{v(x)-v (y)}}{\abs{x-y}^{\beta}}\leq 1.$$
Assume finally that
$
\abs{x-y}\geq c \frac{r_{x}}{\abs{\log r_{x}}}.$
We claim that 
$r_{x}\leq C\abs{x-y}\abs{\log \abs{x-y}}.$
Indeed, if 
$$1>r_{x}\geq \abs{x-y}\abs{\log \abs{x-y}}\geq \abs{x-y} $$
then
$$r_{x}\leq \frac{1}{c}\abs{x-y}\abs{\log r_{x}}\leq \frac{1}{c}\abs{x-y}\abs{\log \abs{x-y}}.$$
Now, we have
$$\abs{x_0-y_0}\leq r_{x} + \abs{x-y} + r_{y}\leq C \abs{x-y}\abs{\log \abs{x-y}}.$$
Hence, by Proposition \ref{global-holder} and recalling $2\gamma =\frac{\alpha}{\alpha + 2},$
\begin{eqnarray*}\abs{v(x)-v (y)}&\leq& \abs{v(x)- v(x_{0})} + 
\abs{v(x_{0})- v(y_{0})} + \abs{v(y_{0})- v(y)}\\ & \leq& C \left(r_{x}^{2\gamma} + \abs{x_{0}- y_{0}}^{\alpha} + r_{y}^{2\gamma}\right) \\ &\leq& C\left(\abs{x-y}\abs{\log \abs{x-y}}\right)^{2\gamma}\leq C \abs{x-y}^{\beta}.
\end{eqnarray*}

\end{proof}

\part{The Monge-Amp\`ere equation}
In the following Sections \ref{MA1_sec} and \ref{MA2_sec}, we present the most basic geometric properties of solutions to the Monge-Amp\`ere equation that were used in the 
previous section. Good references for these sections
include the books by Guti\'errez \cite{G} and Figalli \cite{Fi}, the survey papers by Trudinger and Wang \cite{TW3}, De Philippis and Figalli \cite{DPF} and Liu and Wang \cite{LW}.

Important results in these sections include:
\begin{myindentpar}{1cm}
 1. Aleksandrov's maximum principle, Theorem \ref{Alekmp};\\
 2. John's lemma, Lemma \ref{John_lem};\\
 3. The comparison principle, Lemma \ref{comp-prin};\\
 4. The solvability of the nonhomogeneous Dirichlet problem with continuous boundary data, Theorem \ref{muthm}. \\
 5. The volume estimate for sections, Theorem \ref{vol-sec1};\\
 6. Caffarelli's localization theorem, Theorem \ref{Caf_loc};\\
 7. The size of sections, Lemma \ref{sec-size};\\
 8. Caffarelli's $C^{1,\alpha}$ regularity of strictly convex solutions, Theorems \ref{C1alpha} and \ref{C1alpha2};\\
 9. The engulfing property of sections, Theorem \ref{engulfthm};\\
10. The inclusion and exclusion property of sections, Theorem \ref{pst}.
\end{myindentpar}

\newpage
\section{Maximum principles and sections of the Monge-Amp\`ere equation}
\label{MA1_sec}
In this introductory section on the Monge-Amp\`ere equation, we will prove various maximum principles including 
Aleksandrov's maximum principle in Theorem \ref{Alekmp}, the Aleksandrov-Bakelman-Pucci maximum principle in Theorem \ref{ABPmax} and the comparison principle in Lemma \ref{comp-prin}.
 We will also prove John's lemma in Lemma \ref{John_lem} and use it to obtain 
  optimal volume estimates for sections in Theorem \ref{vol-sec1}. Moreover, we establish the solvability of the nonhomogeneous Dirichlet problem for
  the Monge-Amp\`ere equation with continuous boundary data
  in Theorem \ref{muthm}. 
\subsection{Basic definitions}  
Let $\Omega$ be an open subset of $\R^{n}$ and $u: \Omega\rightarrow \R$ .
\begin{defn}[Supporting hyperplane]
Given $x_0\in \Omega$, a supporting hyperplane to the graph of $u$ at $(x_0, u(x_0))$ is an 
affine function $$l(x) = u(x_0) + p\cdot(x-x_0)$$ where $p\in\R^n$ such that $u(x) \geq l(x)$ for all $x\in\Omega.$ 
\end{defn}
\begin{defn} (The normal mapping/subdifferential of $u$) The normal mapping $\p u(x_0)$ of $u$ at $x_0$ is the set of slopes of supporting hyperplanes
to the graph of $u$ at $(x_0, u(x_0))$: 
$$\p u(x_0)=\{p\in\R^n: u(x) \geq u(x_0) + p\cdot(x-x_0)~\text{for all~} x\in\Omega\}.$$
\label{pdefn}
\end{defn}
\begin{rem}
We note that $\p u(x_0)$ can be empty. 
If $u\in C^1 (\Omega)$ and $\p u(x) \neq\emptyset$ then $\p u(x) =\{Du(x)\}.$
If $u\in C^2(\Omega) $ and $\p u(x) \neq\emptyset$ then $D^2 u(x) \geq 0$, that is, the Hessian matrix $D^2u(x)$ is nonnegative definite.
The proof of the later fact is simple, using Taylor's theorem. Indeed, we have
$$u(x+ h) = u(x) + Du(x) \cdot h + \frac{1}{2}D^2 u(\xi) h\cdot h$$
where $\xi$ is on the segment between $x$ and $x+ h$. Now, since $\p u(x) =\{Du(x)\}$, we use $u(x+ h) \geq u(x) + Du(x) \cdot h$ to conclude.
\end{rem}
Central to the theory of the Monge-Amp\`ere equation is the Monge-Amp\`ere measure. The following definition and its content are due to Aleksandrov.
\begin{defn} [The Monge-Amp\`ere measure]
\label{MAdef}
Let $u:\Omega\rightarrow \R$ be a convex function.
Given $E\subset \Omega$, we define
$$\p u(E) = \bigcup_{x\in E} \p u(x).$$
Let
$$Mu(E) = |\p u(E)|.$$
Then $Mu: \mathcal{S}\rightarrow \bar \R=\R\cup\{+\infty\}$ is a measure, finite on compact sets where $\mathcal{S}$ is the Borel $\sigma$-algebra defined by
$$\mathcal{S}=\{E\subset \Omega: \p u(E) ~\text{is Lebesgue measurable}\}.$$
$Mu$ is called the Monge-Amp\`ere measure associated with the convex function $u$.
\end{defn}
We will prove the statements in Definition \ref{MAdef} in Section \ref{Ex_sec}.
One way to prove that $Mu$ defined above is a measure is to use the Legendre transform.
\begin{defn}[Legendre transform] The Legendre transform of $u:\Omega\subset\R^n\rightarrow\R$ is the function $u^{\ast}:\R^n\rightarrow\R\cup\{+\infty\}$ defined by
$$u^{\ast}(p)=\sup_{x\in\Omega} (x\cdot p- u(x)).$$
\label{Leg_defn}
 \end{defn}
 Since $u^{\ast}$ is a supremum of linear functions, it is a convex function in $\R^n$. If $\Omega$ is bounded and $u$ is bounded on $\Omega$ then $u^{\ast}$ is finite.

\begin{defn}[Aleksandrov solutions] Given an open convex set $\Omega\subset\R^n$ and a Borel measure $\mu $ on $\Omega$, a convex
function $u:\Omega \to \R$ is called an {\it Aleksandrov solution} to the Monge-Amp\`ere equation
\[
\det D^{2} u =\mu,
\]
if $\mu=Mu$ as Borel measures. When $\mu=f~dx $ we will simply say that $u$ solves 
\begin{equation*}
\det D^2 u=f
\end{equation*}
and this is the notation we use in these notes.
Similarly, when we write  $\det D^2 u \geq  \lambda \ (\leq \Lambda)$ we mean that $Mu \ge \lambda \,dx \ (\leq \Lambda\,dx)$.
\label{Alek_defn}
\end{defn}

\subsection{Examples and properties of the normal mapping and the Monge-Amp\`ere measure}
\label{Ex_sec}
Here are some examples of the normal mapping and the Monge-Amp\`ere measure. 
\begin{exam} [The normal mapping and Monge-Amp\`ere measure of a cone]
\label{cone_eg}
Let $\Omega= B_R(x_0)$ and $u(x) = a|x-x_0|$ for $x\in\Omega$ where $a>0$. Then
\begin{equation*}  
\p u(x) =\left\{
 \begin{alignedat}{1}
  a \frac{x-x_0}{|x-x_0|} ~&\text{if} ~  0<|x-x_0|<R, \\\
\overline{B_a(0)}~&\text{if}~x=x_0.
 \end{alignedat} 
  \right.
\end{equation*} 
As a consequence, we have
$Mu = |B_a(0)| \delta_{x_0}$ where $\delta_{x_0}$ is the Dirac measure at $x_0$.
\end{exam}
\begin{exam}\label{examC2}
If $u\in C^2 (\Omega)$ is convex then 
$Mu= (\det D^2 u) dx.$
\end{exam}
\begin{proof} [Proof of Example \ref{cone_eg}]
 If $0<|x-x_0|<R$, then $u$ is differentiable at $x$ with $Du(x)= a \frac{x-x_0}{|x-x_0|}.$ Hence $\p u(x)=\{a \frac{x-x_0}{|x-x_0|}\}.$ 
 
 Let us now show that 
 $\p u(x_0)=\overline{B_a(0)}$. If $p\in\R^n$ with $|p|\leq a$, then, for all $x\in\Omega$, we have
 $$u(x_0) + p\cdot(x- x_0)\leq |p||x-x_0|\leq a |x-x_0|= u(x).$$
 Therefore, $\overline{B_a(0)}\subset \p u(x_0)$. On the other hand, if $p\in \p u(x_0)$, then
 in the inequality
 $$u(x)\geq p\cdot(x-x_0) + u(x_0)= p\cdot(x- x_0)~\text{for all}~ x\in\Omega,$$
 we can choose $x= x_0 + R\frac{p}{|p|+\e}\in\Omega$ for any $\e>0$ to obtain
 $a \frac{R |p|}{|p|+\e}\geq \frac{R|p|^2}{|p|+\e},$
 or $|p|\leq a.$ Hence $\overline{B_a(0)}\subset \p u(x_0) $.
\end{proof}
The proof of Example \ref{examC2} is based on the following theorem.
\begin{thm} [Sard's theorem]
Let $\Omega\subset \R^{n}$ be an open set and $g: \Omega\rightarrow \R^{n}$ a $C^{1}$ function in $\Omega.$ If 
$S_0=\{x\in \Omega: \det~Jac~ g=0\},$
then
$|g(S_0)|=0.$
\end{thm}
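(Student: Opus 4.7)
The plan is to establish the measure zero result by a local reduction followed by a cube-counting argument, using only $C^1$ regularity. First, I would write $\Omega$ as a countable union of closed cubes $Q_m\subset\subset\Omega$. Since a countable union of null sets is null, it suffices to prove that $|g(S_0\cap Q)|=0$ for any fixed closed cube $Q\subset\subset\Omega$. On such a $Q$, the Jacobian matrix $Dg$ is bounded, say $\|Dg\|\le M$ on $Q$, and is uniformly continuous: for every $\varepsilon>0$ there exists $\delta>0$ such that $\|Dg(x)-Dg(y)\|<\varepsilon$ whenever $x,y\in Q$ and $|x-y|\le\delta$.

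Next, I would fix $\varepsilon>0$, subdivide $Q$ into $N^n$ congruent closed sub-cubes of side length $s=\ell(Q)/N$, choosing $N$ large so that each sub-cube has diameter at most $\delta$. Consider a sub-cube $Q_j$ that meets $S_0$ at some point $x_0$. By the fundamental theorem of calculus applied to $t\mapsto g(x_0+t(x-x_0))$, for every $x\in Q_j$,
\[
g(x)-g(x_0)=Dg(x_0)(x-x_0)+R(x,x_0),\qquad |R(x,x_0)|\le \varepsilon\,|x-x_0|\le \varepsilon\sqrt{n}\,s.
\]
Since $\det Dg(x_0)=0$, the image of the linear map $Dg(x_0)$ is contained in a linear subspace $V\subset\R^n$ of dimension at most $n-1$, and on $Q_j$ one has $|Dg(x_0)(x-x_0)|\le M\sqrt{n}\,s$. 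Choosing coordinates aligned with $V$ and its orthogonal complement, $g(Q_j)$ is contained in a rectangular box with $n-1$ sides of length at most $2(M+\varepsilon)\sqrt{n}\,s$ and one side of length at most $2\varepsilon\sqrt{n}\,s$. Hence
\[
|g(Q_j)|\le C(n)\,(M+\varepsilon)^{n-1}\,\varepsilon\,s^n.
\]

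Finally, summing over the at most $N^n$ sub-cubes that meet $S_0$ gives
\[
|g(S_0\cap Q)|\le \sum_j |g(Q_j)|\le N^n\,C(n)\,(M+\varepsilon)^{n-1}\,\varepsilon\,s^n=C(n)\,(M+\varepsilon)^{n-1}\,\varepsilon\,|Q|.
\]
Since $\varepsilon>0$ is arbitrary, $|g(S_0\cap Q)|=0$, and the full conclusion follows by the countable exhaustion.

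The main obstacle, or rather the technical heart of the argument, is the dimension-drop estimate: at a critical point $x_0$ one must exploit that $Dg(x_0)$ fails to be surjective, so its image lies in an $(n-1)$-dimensional hyperplane, while the $C^1$ modulus of continuity controls the transverse deviation by $\varepsilon\sqrt{n}\,s$. This produces an extra factor of $\varepsilon$ in the volume estimate that is not present in a naive Lipschitz bound, and it is precisely this factor that is then sent to zero. Everything else is bookkeeping: the $s^n$ coming from one cube's volume cancels the $N^n$ coming from the total number of sub-cubes, leaving the free $\varepsilon$. Note that no higher smoothness than $C^1$ is needed here, because for equal source and target dimension Sard's theorem is the mildest case of the general theorem.
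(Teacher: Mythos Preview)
Your argument is correct and is the standard proof of the equidimensional $C^1$ Sard theorem. Note, however, that the paper does not prove this statement at all: it is simply stated as a classical result and then invoked in the proof of Example~\ref{examC2}, so there is no ``paper's own proof'' to compare against.
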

\begin{proof} [Proof of Example \ref{examC2}]
We claim that: if u is convex and $C^2$ then $Du$ is one-to-one on
$$A=\{x\in\Omega: D^2 u(x)>0\}.$$
Indeed, suppose that $x_1, x_2\in A$ with $Du(x_1)= Du(x_2)$ then using convexity, we have
\begin{equation*}
 u(x_1)- u(x_2)\geq  Du (x_2) \cdot(x_1- x_2),~ u(x_2)- u(x_1)\geq Du (x_1) \cdot(x_2- x_1).
\end{equation*}
It follows that
\begin{equation}u(x_1)- u(x_2)= Du (x_1) \cdot(x_1- x_2)= Du(x_2)\cdot(x_1-x_2).
\label{Du11}
\end{equation}
Now, we use Taylor's formula
$$u(x_1)= u(x_2) + Du (x_2)\cdot(x_1- x_2) + \int_{0}^{1} t D^2 u(x_2 + t(x_1- x_2))(x_1- x_2)\cdot (x_1- x_2) dt.$$
Since $x_2\in A,$ we have $x_2 + t(x_1-x_2)\in A$ for $t$ small. Thus, from (\ref{Du11}), we must have $x_1= x_2.$

Since $u\in C^2$, we have $g= Du\in C^1(\Omega)$. Let $S_0=\Omega\backslash A$. Now, for any Borel $E\subset \R^{n}$, 
$$Du(E)= Du (E\cap S_0) \cup Du(E\backslash S_0).$$
From Sard's theorem, we have  $Mu(E\cap S_0)=|Du(E\cap S_0|\leq |g(S_0)|=0$. Since $\det D^2 u(x)=0$ on $S_0$, 
we have by the change of variables $y= Du(x)$ with $dy =\det D^2 u(x) dx,$
\begin{eqnarray*}
Mu(E)& =& Mu (E\cap S_0) + Mu(E\backslash S_0) = |Du (E\backslash S_0)|\\ &=&\int_{Du(E\backslash S_0)} dy =
\int_{E\backslash S_0} \det D^2 u(x) dx = \int_{E}\det D^2 u(x) dx.
\end{eqnarray*}
This shows that $Mu = (\det D^2 u) dx.$
\end{proof}
Some properties of normal mapping hold for general continuous functions. However, we will mostly restrict ourselves to convex functions on convex domains. One of the nice things about
this restriction is that if a hyperplane is locally below the graph of a convex function then it is also globally below. We record this locality property, whose proof
is geometrically obvious, in the following remark.
\begin{rem}
\label{locality}
 Let $u$ be a convex function on a convex set $\Omega\subset\R^n$. If $x_0\in\Omega, p\in\R^n$ and $u(y)\geq u(x_0) + p\cdot (y-x_0)$ for all $y$ in an open 
 set $\Omega'\subset\Omega$ containing
 $x_0$ then $u(y)\geq u(x_0) + p\cdot (y-x_0)$ for all $y$ in $\Omega$. 
 \end{rem}
The next lemma give a quantitative estimate for the boundedness of the set of normal mappings in the interior of the domain.
\begin{lem} [Estimates of the size of the slopes of supporting hyperplanes to a convex function]
\label{slope-est}
Let $\Omega\subset \R^n$ be a bounded convex set and $u$ a convex function in $\overline{\Omega}$. If $p\in\p u(x)$ where $x\in\Omega$, then
$$|p|\leq \frac{\max_{y\in\p\Omega} u (y) -u(x)}{\emph{dist} (x, \p\Omega)}.$$
\end{lem}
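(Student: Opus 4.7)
The plan is to exploit the definition of the subdifferential (Definition \ref{pdefn}) by choosing a well-placed point on $\partial\Omega$ along the ray emanating from $x$ in the direction of $p$. The geometric intuition is that the supporting hyperplane at $x$ with slope $p$ grows at rate $|p|$ along that ray, and to stay below the graph of $u$ it cannot exceed the boundary value of $u$ where the ray exits $\Omega$.

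First I would dispose of the trivial case $p=0$: in that case $u(y)\geq u(x)$ for every $y\in \Omega$, so by continuity $\max_{\partial\Omega} u\geq u(x)$, and the right-hand side is nonnegative while the left-hand side is zero. For $p\neq 0$, set $d=\mathrm{dist}(x,\partial\Omega)$, and consider the ray $\gamma(t)=x+t\,p/|p|$, $t\geq 0$. Since $\Omega$ is bounded and convex, there is a largest $s>0$ for which $\gamma(s)\in \overline{\Omega}$; call this point $y^{*}$, which lies on $\partial\Omega$. Because $\overline{B_{d}(x)}\subset \overline{\Omega}$, the ray stays in $\overline{\Omega}$ at least up to $t=d$, so $s=|y^{*}-x|\geq d$.

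Next, I apply the subgradient inequality at $y^{*}$. Strictly speaking, Definition \ref{pdefn} gives the inequality $u(y)\geq u(x)+p\cdot (y-x)$ for $y\in\Omega$, but taking an interior sequence $y_n\to y^{*}$ and using that $u$ is convex (hence continuous on $\Omega$ and lower-semicontinuous up to the boundary) yields
\[
u(y^{*})\geq u(x)+p\cdot (y^{*}-x)=u(x)+s|p|.
\]
In particular $u(y^{*})-u(x)\geq s|p|\geq 0$, so dividing by $s$ and then using $s\geq d$ (which enlarges the quotient because the numerator is nonnegative) gives
\[
|p|\leq \frac{u(y^{*})-u(x)}{s}\leq \frac{u(y^{*})-u(x)}{d}\leq \frac{\max_{y\in\partial\Omega}u(y)-u(x)}{\mathrm{dist}(x,\partial\Omega)},
\]
which is the desired estimate.

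The proof is essentially a two-line computation once the correct test point $y^{*}$ is chosen; the only minor point requiring care is the justification that the subgradient inequality extends from interior points to the boundary point $y^{*}$, but this is immediate from the convexity/continuity of $u$ on $\overline{\Omega}$. There is no real obstacle beyond selecting the right direction along which to probe the supporting hyperplane.
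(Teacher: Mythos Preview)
Your proof is correct and follows essentially the same approach as the paper: probe the subgradient inequality along the direction $p/|p|$. The only cosmetic difference is that the paper stays at the interior point $y_0=x+r\,p/(|p|+\varepsilon)$ (avoiding any boundary-continuity remark, but then invoking that a convex function is bounded above by its boundary maximum) and passes to the limit $\varepsilon\to 0$, whereas you go straight to the exit point $y^{*}\in\partial\Omega$ and extend the inequality by continuity.
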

\begin{proof} For $r:=\text{dist} (x,\p\Omega)$, and $\e>0$, we have
$y_0 = x + r \frac{p}{|p|+\e}\in\Omega$.
Using convexity and 
$u(y) \geq u(x) + p\cdot(y-x)~\text{for all}~ y\in \Omega,$
we find that 
$$\max_{y\in\p\Omega} u(y)\geq u(y_0) \geq u(x) + r \frac{|p|^2}{|p|+\e}.$$
Thus
$$|p|-\e\leq \frac{|p|^2}{|p|+\e}\leq \frac{\max_{y\in\p\Omega} u (y) -u(x)}{\text{dist} (x,\p\Omega)}.$$
Letting $\e\rightarrow 0$, we obtain the desired estimate for $p$.
\end{proof}
Lemma \ref{slope-est}, though simple, implies the following properties of convex functions and their normal mappings:
 \begin{lem} \label{Rade_lem} Let $u$ be a convex function in $\Omega\subset\R^n$. Then:
 \begin{myindentpar}{1cm}
  (i) If $E\subset\subset\Omega$ then $\p u(\bar E)$ is a compact set and $u$ is uniformly Lipschitz in $E$, that is, there is a constant $C= C(u, \text{dist}(\bar E,\p\Omega))$ such that
  $$|u(x)-u(y)|\leq C|x-y|~\text{for all~} x, y\in E.$$
  (ii) $u$ is differentiable a.e. in $\Omega$. 
 \end{myindentpar}
\end{lem}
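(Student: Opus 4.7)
The proof will split into the two assertions, with (i) doing most of the work and (ii) following from a standard result once (i) gives local Lipschitz continuity.

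For part (i), the plan is to first fix an intermediate compact set $E'$ with $E \subset\subset E' \subset\subset \Omega$ and note that $u$, being convex on an open convex set, is continuous and hence bounded on $\overline{E'}$. The key step is to apply Lemma \ref{slope-est} on $E'$: for any $x\in\overline{E}$ and any $p\in\partial u(x)$, we get the uniform bound
\[
|p|\le \frac{\max_{y\in\partial E'}u(y)-u(x)}{\operatorname{dist}(x,\partial E')}\le \frac{2\|u\|_{L^\infty(\overline{E'})}}{\operatorname{dist}(\overline{E},\partial E')}=:L.
\]
This gives boundedness of $\partial u(\overline{E})$. For closedness, take $p_k\in\partial u(\overline{E})$ with $p_k\to p$ and corresponding $x_k\in\overline{E}$ with $p_k\in\partial u(x_k)$; after extracting a subsequence we may assume $x_k\to x\in\overline{E}$, and then passing to the limit in $u(y)\ge u(x_k)+p_k\cdot(y-x_k)$ for each fixed $y\in\Omega$ (using continuity of $u$) yields $p\in\partial u(x)\subset\partial u(\overline{E})$. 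Compactness follows.

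Uniform Lipschitz continuity on $E$ is then obtained by the two-sided sandwich: at interior points $x,y\in E\subset\subset\Omega$ the subdifferential is nonempty (a standard consequence of convexity), so choosing $p\in\partial u(x)$ and $q\in\partial u(y)$ gives
\[
p\cdot(y-x)\le u(y)-u(x)\le q\cdot(y-x),
\]
and the previous bound $|p|,|q|\le L$ yields $|u(y)-u(x)|\le L|y-x|$.

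For part (ii), I would apply part (i) on a countable exhaustion $E_k\subset\subset\Omega$ of $\Omega$ to conclude that $u$ is locally Lipschitz in $\Omega$. Then Rademacher's theorem gives differentiability almost everywhere in each $E_k$, and the countable union argument yields the claim on all of $\Omega$.

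The only step that requires even a touch of care is the passage to the limit in the closedness argument, where one has to remember that $u$ is automatically continuous on $\Omega$ because it is convex on an open convex set; everything else is a direct application of Lemma \ref{slope-est} together with classical facts about convex functions. No step is expected to be a genuine obstacle.
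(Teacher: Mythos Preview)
Your proposal is correct and follows essentially the same approach as the paper: both use Lemma \ref{slope-est} to bound $\partial u(\bar E)$, a sequential argument plus continuity of $u$ for closedness, the subgradient inequality for the Lipschitz bound, and Rademacher's theorem for (ii). The only cosmetic differences are that you introduce an explicit intermediate set $E'$ (the paper applies Lemma \ref{slope-est} directly on $\Omega$) and you write out the two-sided sandwich rather than the paper's ``swap $x$ and $y$'' phrasing; just make sure your $E'$ is taken to be convex so that Lemma \ref{slope-est} applies as stated.
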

\begin{proof} (i) Using the compactness of $\bar E$ and the continuity of $u$, we can show that $\p u(\bar E)$ is a closed set in $\R^n$. By Lemma \ref{slope-est},
$\p u(\bar E)$ is a bounded set. Hence $\p u(\bar E)$ is a compact set.

To prove that $u$ is  uniformly Lipschitz in $E$, we just note that since $u$ is convex, its graph has a supporting hyperplane at any $(x, u(x))$ where $x\in E$ and hence $\p u(x)\neq\emptyset$.
If $p\in \p u(x)$ then by Lemma \ref{slope-est}, we have
$|p|\leq C(u, \text{dist}(\bar E,\p\Omega))$.
Therefore,
for all $y\in E$, we have
$$u(y)\geq u(x) + p\cdot(y-x)\geq u(x)-C|y-x|.$$
Reversing the role of $x$ and $y$, we obtain the desired Lipschitz estimate.

(ii) The conclusion follows from (i) and Rademacher's theorem which says that
  a Lipschitz continuous function on $\R^n$ is differentiable a.e.
\end{proof}
We next record the following relationship between normal mapping and the Legendre transform.

\begin{lem}[Normal mapping and the Legendre transform]\label{pLeg}
Let $u^{\ast}$ be the Legendre transform of $u$ as defined in Definition \ref{Leg_defn}.
If $p\in \p u(x)$ then $x\in \p u^{\ast}(p)$. 
\end{lem}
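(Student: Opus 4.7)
The plan is to unwind the definitions of the Legendre transform and the subdifferential, which should make the statement essentially tautological. The key observation is that $p \in \partial u(x)$ is precisely the condition under which the supremum defining $u^{\ast}(p)$ is achieved at the point $y = x$.

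First I would rewrite the hypothesis $p \in \partial u(x)$ as the inequality
\[
y \cdot p - u(y) \leq x \cdot p - u(x) \quad \text{for all } y \in \Omega,
\]
which is obtained by rearranging $u(y) \geq u(x) + p \cdot (y-x)$. Taking the supremum over $y \in \Omega$ on the left, this identifies
\[
u^{\ast}(p) = x \cdot p - u(x).
\]

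Next, to check that $x \in \partial u^{\ast}(p)$, I would verify directly that for every $q \in \R^n$,
\[
u^{\ast}(q) \geq u^{\ast}(p) + x \cdot (q - p).
\]
By the definition of $u^{\ast}$ as a supremum, we have the trivial bound $u^{\ast}(q) \geq x \cdot q - u(x)$. Substituting the identity $u^{\ast}(p) = x \cdot p - u(x)$ obtained above, the right-hand side becomes exactly $x \cdot q - u(x)$, so the inequality holds.

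There is no real obstacle here; the only subtle point worth flagging is that $u^{\ast}$ is defined on all of $\R^n$ (rather than on $\Omega$), so the test vectors $q$ in the subgradient condition range over $\R^n$, matching the convention of Definition \ref{pdefn} applied to the convex function $u^{\ast}$.
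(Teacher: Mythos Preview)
Your proof is correct and follows essentially the same approach as the paper's: rewrite $p\in\partial u(x)$ as $p\cdot y - u(y)\le p\cdot x - u(x)$, deduce $u^{\ast}(p)\le x\cdot p - u(x)$ (you in fact note the equality, which is fine since $x\in\Omega$), and then combine with the trivial bound $u^{\ast}(q)\ge x\cdot q - u(x)$ to obtain the subgradient inequality for $u^{\ast}$ at $p$. The paper uses only the inequality $u^{\ast}(p)\le x\cdot p - u(x)$ rather than the equality, but this is an inessential difference.
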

\begin{proof}Since $p\in\p u(x)$, we have for all $y\in\Omega$, 
$u(y)\geq u(x) + p\cdot(y-x)$, and hence $p\cdot x-u(x)\geq p\cdot y- u(y)$. From the definition of $u^{\ast}$, we find $u^{\ast}(p)\leq p\cdot x-u(x)$. 
It follows that, for any $z\in\R^n$, we also have
from the definition of $u^\ast$ that
$u^\ast (z)\geq x\cdot z- u(x) \geq u^{\ast}(p) + x\cdot (z-p)$. Therefore, $x\in\p u^{\ast}(p).$
\end{proof}
A simple consequence of Lemma \ref{pLeg} is the following:
\begin{lem}
 \label{2touch}
 Let $\Omega\subset\R^n$ be an open, bounded set and $u$ be a continuous function on $\Omega$. Then the set of slopes of supporting hyperplanes that touch the graph of $u$
 at more than one point has Lebesgue measure zero.
\end{lem}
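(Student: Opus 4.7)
The plan is to reduce the multi-touch condition to a non-differentiability condition for the Legendre transform $u^{\ast}(p) = \sup_{x\in\Omega}(p\cdot x - u(x))$, which is always convex as a pointwise supremum of affine functions, regardless of whether $u$ itself is convex. By Lemma \ref{pLeg}, whenever $p \in \partial u(x_1)\cap\partial u(x_2)$ with $x_1\neq x_2$, the subdifferential $\partial u^{\ast}(p)$ contains both $x_1$ and $x_2$ and hence is not a singleton. Thus the set $S$ to be bounded is contained in $\{p\in\R^n : |\partial u^{\ast}(p)|\geq 2\}$, and it suffices to show that this larger set is Lebesgue null.

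First I would check that every $p\in S$ lies in the effective domain $D:=\{p\in\R^n : u^{\ast}(p)<\infty\}$: if $p\in\partial u(x_1)$ then $u(y)-p\cdot y\geq u(x_1)-p\cdot x_1$ for every $y\in\Omega$, so in fact the supremum defining $u^{\ast}(p)$ is attained and $u^{\ast}(p)=p\cdot x_1 - u(x_1)<\infty$. The set $D$ is convex because it is the effective domain of the convex function $u^{\ast}$. Then I would invoke the classical fact that a finite convex function on an open convex subset of $\R^n$ is locally Lipschitz and hence, by Rademacher's theorem, differentiable almost everywhere; applied to $u^{\ast}$ on $\mathrm{int}(D)$ this gives $\partial u^{\ast}(p)=\{\nabla u^{\ast}(p)\}$ for almost every $p\in \mathrm{int}(D)$.

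Finally, I would dispose of the boundary of the effective domain: either $\mathrm{int}(D)=\emptyset$, in which case the convex set $D$ lies in an affine hyperplane and is itself Lebesgue null, or $\mathrm{int}(D)\neq\emptyset$, in which case the topological boundary of the convex set $D$ is Lebesgue null. Either way the set of $p\in D$ where $\partial u^{\ast}(p)$ has two or more elements has measure zero, and the lemma follows. There is no genuine obstacle in the argument; the only minor point of care is that $u^{\ast}$ may take the value $+\infty$, which forces the short case split on $\mathrm{int}(D)$ in the last step and the verification that $S\subset D$ at the beginning.
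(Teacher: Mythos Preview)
Your proof is correct and follows essentially the same route as the paper's: both reduce the multi-touch set $S$ to the non-differentiability set of the Legendre transform $u^{\ast}$ via Lemma~\ref{pLeg}, and then invoke Rademacher's theorem (the paper does so through Lemma~\ref{Rade_lem}). Your version is simply more explicit about the effective domain $D$ of $u^{\ast}$ and the case split on $\mathrm{int}(D)$, a point the paper leaves implicit.
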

\begin{proof}
 Let  $p$ be the slope of a supporting hyperplane that touches the graph of $u$ at $(x, u(x))$ and $(y, u(y))$, that is $p\in \p u(x)\cap \p u (y)$, where $x\neq y$.
 By Lemma \ref{pLeg}, we have $x, y\in \p u^{\ast}(p)$. Thus, $u^{\ast}$ is not differentiable at $p$. By Lemma \ref{Rade_lem}, the set of such $p$ has Lebesgue measure zero.
\end{proof}
\begin{rem} Let $u$ be a convex function on $\Omega$.
 \label{strict_rem}
 Suppose that $p\in \p u(x_0)$ and the supporting hyperplane $l(x)= u(x_0) + p\cdot (x-x_0)$ touches the graph of $u$ at only $(x_0, u(x_0))$
 then $v(x):=u(x)-l(x) (\geq 0)$  is strictly convex at $x_0$, that is $v(x)>0$ for all $x\neq x_0$. Indeed, if $v(x_1)=0$ for some $x_1\neq x_0$ then $0\in \p v(x_1)$ which
 implies that $p\in \p u(x_1)$, a contradiction.
\end{rem}

\begin{proof}[Proof of Definition \ref{MAdef}]
 The main observation in the proof is the following fact, which is an easy consequence of Lemma \ref{2touch}: If $A$ and $B$ are disjoint subsets of $\Omega$ then $\p u(A)$ and $\p u(B)$ are also disjoint in the measure-theoretic sense,
 that is $|\p u(A)\cap \p u(B)|=0.$ 
 
 First we show $\Omega\subset\mathcal{S}$. This follows from writing $\Omega$ as a union of compact sets whereas by Lemma \ref{Rade_lem} (i), they belong to $\mathcal{S}$.

 Next, we show that if $E\in\mathcal{S}$  then $\Omega\backslash E\in \mathcal{S}$. Indeed, from
 $$\p u(\Omega\backslash E)= \left(\p u(\Omega) \backslash \p u(E) \right)\cup \left(\p u(\Omega\backslash E) \cap \p u(E)\right)$$
 and from the observation, we have $|\p u(\Omega\backslash E) \cap \p u(E)|=0$ and hence  $\Omega\backslash E\in \mathcal{S}$.
 
 Finally, we show that $Mu$ is $\sigma$-additive. This means that if $\{E_i\}_{i=1}^\infty$ is a sequence of disjoint sets in $\mathcal{S}$, then we must show that
 $$|\p u (\bigcup_{i=1}^\infty E_i)|= \sum_{i=1}^\infty|\p u(E_i)|.$$
 This easily follows from the identities
 $$\p u (\bigcup_{i=1}^\infty E_i)=\bigcup_{i=1}^\infty \p u ( E_i)
 =\p u (E_1)\cup \bigcup_{i=2}^{\infty}  \left(\p u(E_i)\setminus \bigcup_{k=1}^{i-1} \p u(E_k)\right)
  $$
 and the fact that $\{\p u(E_i)\}_{i=1}^{\infty}$ are disjoint in measure as observed above.
\end{proof}
A very basic fact of the Monge-Amp\`ere measure is its weak continuity property stated as follows.
\begin{lem}[Weak continuity of Monge-Amp\`ere measure] \label{weakMA} Let $\{u_k\}$ be a sequence of convex functions on $\Omega$ which converges to $u$ uniformly on compact subsets
of $\Omega$. 
Then $Mu_k$ converges weakly to $Mu$, that is, for all continuous functions $f$ with compact support in $\Omega$, we have
$$\lim_{k\rightarrow \infty}\int_{\Omega} f(x) dMu_k(x) = \int_{\Omega} f(x) dMu(x).$$
\end{lem}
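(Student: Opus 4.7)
By a standard Portmanteau-type characterization of weak convergence of locally finite Borel measures on the open set $\Omega$, it suffices to establish the two inequalities
$$\text{(a)}\quad \limsup_{k\to\infty} Mu_k(K) \le Mu(K) \quad \text{for every compact } K \subset \Omega,$$
$$\text{(b)}\quad \liminf_{k\to\infty} Mu_k(U) \ge Mu(U) \quad \text{for every open } U \subset\subset \Omega.$$
Together with a layer-cake decomposition applied to $f^{+}$ and $f^{-}$, these imply $\int f\, dMu_k \to \int f\, dMu$ for every $f \in C_c(\Omega)$.

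For (a), first choose a compact neighborhood $V$ of $K$ in $\Omega$. The uniform convergence $u_k \to u$ on $V$ ensures $\sup_k \|u_k\|_{L^\infty(V)} < \infty$, so Lemma~\ref{slope-est} yields $\bigcup_k \p u_k(K) \subset B_R(0)$ for some $R > 0$. The decisive property is a closedness statement for the multi-valued map $x \mapsto \p u(x)$: if $p_{k_j} \in \p u_{k_j}(x_{k_j})$ with $x_{k_j} \in K$, $x_{k_j} \to x \in K$, and $p_{k_j} \to p$, then passing to the limit in $u_{k_j}(y) \ge u_{k_j}(x_{k_j}) + p_{k_j} \cdot (y - x_{k_j})$ (using local uniform convergence) gives $p \in \p u(x) \subset \p u(K)$. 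Consequently $\bigcap_N \overline{\bigcup_{k \ge N} \p u_k(K)} \subset \p u(K)$, and the reverse Fatou lemma for Lebesgue measure applied inside $B_R(0)$ produces $\limsup_k Mu_k(K) \le Mu(K)$.

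For (b), fix $U \subset\subset \Omega$ open and $K \subset U$ compact. Let $\mathcal N$ denote the set of slopes $p$ with $p \in \p u(x_1) \cap \p u(x_2)$ for some $x_1 \neq x_2$; by Lemma~\ref{2touch}, $|\mathcal N| = 0$. I claim that for every $p \in \p u(K) \setminus \mathcal N$, one has $p \in \p u_k(U)$ for all sufficiently large $k$. To see this, let $x_0 \in K$ be the unique contact point, so that $\phi := u - p \cdot x$ attains its minimum over $\Omega$ only at $x_0$; pick $\delta > 0$ with $\overline{B_\delta(x_0)} \subset U$, and set $m := \min_{\partial B_\delta(x_0)} \phi - \phi(x_0) > 0$. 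Uniform convergence of $u_k$ on $\overline{B_\delta(x_0)}$ forces, for $k$ large, $\min_{\partial B_\delta(x_0)} \phi_k > \phi_k(x_0)$, where $\phi_k := u_k - p \cdot x$; so $\phi_k$ attains its minimum over $\overline{B_\delta(x_0)}$ at some interior point $y_k \in U$. The resulting inequality $u_k(x) \ge u_k(y_k) + p \cdot (x - y_k)$ on a neighborhood of $y_k$ extends to all of $\Omega$ via the locality property together with convexity of $u_k$ (Remark~\ref{locality}), giving $p \in \p u_k(y_k) \subset \p u_k(U)$. Therefore $\p u(K) \setminus \mathcal N \subset \bigcup_N \bigcap_{k \ge N} \p u_k(U)$, and Fatou's lemma for sets yields $Mu(K) = |\p u(K) \setminus \mathcal N| \le \liminf_k Mu_k(U)$. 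Inner regularity $Mu(U) = \sup_{K \subset U} Mu(K)$ completes (b).

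The main obstacle is the claim in (b): that almost every slope in $\p u(K)$ is already realized in $\p u_k(U)$ for $k$ large. The crucial ingredients are Lemma~\ref{2touch}, which discards the measure-zero set of multi-contact slopes, and the strict local minimum of $\phi$ at its unique contact point, which persists under uniform convergence and is globalized by convexity into a genuine supporting-hyperplane statement for $u_k$.
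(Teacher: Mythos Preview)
Your proof is correct and follows essentially the same approach as the paper's: reduce to the two Portmanteau-type inequalities, prove the $\limsup$ bound on compacta via the closedness of the subdifferential graph under local uniform convergence, and prove the $\liminf$ bound on open sets by discarding the null set of multi-contact slopes (Lemma~\ref{2touch}) and using the persistence of a strict interior minimum under uniform convergence, globalized via Remark~\ref{locality}. The only cosmetic differences are that you work on a small ball $\overline{B_\delta(x_0)}\subset U$ where the paper minimizes over $\bar U$ directly, and that you are more explicit about the uniform boundedness of $\bigcup_k \p u_k(K)$ needed for reverse Fatou.
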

The Monge-Amp\`ere equation is well-behaved under rescaling using affine transformation.
\begin{lem}[Rescaling the Monge-Amp\`ere equation]
\label{rescaling_lem}
 Suppose that $u$ is a convex function on a convex domain $\Omega\subset\R^n$. Let $Tx = Ax + b$ where $A$ is an invertible $n\times n$ matrix and
 $b\in\R^n$. Define the following function on $T^{-1}\Omega$:
 $v(x)= u(Tx).$
 If $u$ is smooth then 
  $$\det D^2 v(x)= (\det A)^2\det D^2 u(Tx).$$
 If, in the sense of Aleksandrov,
 $$\lambda\leq \det D^2 u\leq \Lambda,$$
 then we also have, in the sense of Aleksandrov,
 $$\lambda|\det A|^2\leq\det D^2 v \leq \Lambda|\det A|^2.$$
\end{lem}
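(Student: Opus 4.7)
The plan is to handle the two assertions in parallel, with the smooth identity serving as motivation for the Aleksandrov statement. First, for the smooth case, I would compute directly from $v(x)=u(Ax+b)$. Differentiating twice with the chain rule gives $Dv(x)=A^{T}Du(Tx)$ and $D^{2}v(x)=A^{T}D^{2}u(Tx)A$, so that
$$\det D^{2}v(x)=\det(A^{T})\det D^{2}u(Tx)\det A=(\det A)^{2}\det D^{2}u(Tx).$$
This takes only a line and will serve as a sanity check for the nonsmooth version.

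For the Aleksandrov case the heart of the argument is the identity $\partial v(x)=A^{T}\partial u(Tx)$ for every $x\in T^{-1}\Omega$. I would prove it by unfolding the definition in Definition \ref{pdefn}: if $p\in\partial v(x)$, then $v(y)\ge v(x)+p\cdot(y-x)$ for all $y\in T^{-1}\Omega$, which after the substitution $z=Ty$, $z_{0}=Tx$ and $y-x=A^{-1}(z-z_{0})$ reads $u(z)\ge u(z_{0})+(A^{-T}p)\cdot(z-z_{0})$ for all $z\in\Omega$; hence $A^{-T}p\in\partial u(Tx)$. Reversing the computation gives the other inclusion, so $\partial v(x)=A^{T}\partial u(Tx)$. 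Taking unions over $x\in E$ yields $\partial v(E)=A^{T}\bigl(\partial u(TE)\bigr)$ for every Borel set $E\subset T^{-1}\Omega$. Measurability of $\partial u(TE)$ together with the linear change of variables by $A^{T}$ then gives
$$Mv(E)=\bigl|\partial v(E)\bigr|=|\det A^{T}|\,\bigl|\partial u(TE)\bigr|=|\det A|\,Mu(TE).$$

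Now I would insert the hypothesis $\lambda\,dz\le dMu\le\Lambda\,dz$ applied to the Borel set $TE\subset\Omega$, obtaining $\lambda|TE|\le Mu(TE)\le\Lambda|TE|$. Since $|TE|=|\det A|\,|E|$ by the standard change of variables for Lebesgue measure, combining this with the identity $Mv(E)=|\det A|\,Mu(TE)$ produces
$$\lambda|\det A|^{2}|E|\le Mv(E)\le\Lambda|\det A|^{2}|E|,$$
which is precisely the claim that $\lambda|\det A|^{2}\le\det D^{2}v\le\Lambda|\det A|^{2}$ in the Aleksandrov sense.

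I do not expect a serious obstacle here; the only point that requires care is the bookkeeping with $A$ versus $A^{T}$ (and with $A^{-T}$) when translating the subgradient inequality through the change of variables, which must be done consistently so that the factor $|\det A|^{2}$ appears exactly once at the end rather than, say, $|\det A|$ or $|\det A|^{3}$. Everything else is a direct application of Definition \ref{MAdef} and the linear change of variables formula for Lebesgue measure.
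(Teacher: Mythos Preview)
Your proposal is correct and follows essentially the same route as the paper: compute $D^{2}v=A^{T}D^{2}u(Tx)A$ in the smooth case, establish $\partial v(x)=A^{T}\partial u(Tx)$ from the definition of the normal mapping, and then combine $Mv(E)=|\det A|\,Mu(TE)$ with $|TE|=|\det A|\,|E|$ to extract the factor $|\det A|^{2}$. Your write-up in fact spells out the two inclusions for $\partial v(x)=A^{T}\partial u(Tx)$ in more detail than the paper does, which is fine.
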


\begin{proof}[Proof of Lemma \ref{weakMA}]
To prove the lemma, we only need to verify the following:
\begin{myindentpar}{1cm}
 (i) If $K\subset\Omega$ is a compact set then
 $$\limsup_{k\rightarrow\infty} |\p u_k (K)|\leq |\p u(K)|.$$
 (ii) If $K$ is a compact set and $U$ is open such that $K\subset U\subset\subset \Omega$ then
 $$ |\p u(K)|\leq \liminf_{k\rightarrow\infty} |\p u_k(U)|.$$
\end{myindentpar}
The proof of these inequalities uses Fatou's lemma together with Lemma \ref{2touch},
and the following inclusions:
\begin{myindentpar}{1cm}
 (a) $$\limsup_{k\rightarrow\infty}\p u_k (K):=\bigcap_{i=1}^{\infty}\bigcup_{k=i}^{\infty}  \p u_k(K) \subset\p u(K) $$
 (b) $$\p u(K)\backslash S\subset \liminf_{k\rightarrow\infty} \p u_k(U):= \bigcup_{i=1}^{\infty}\bigcap_{k=i}^{\infty}  \p u_k(U),$$
 where
 $$S=\{p\in\R^n|\text{there are } x\neq y\in\Omega~\text{such that}~ p\in \p u(x)\cap \p u(y) \}.$$
\end{myindentpar}

To prove (a), let $p\in \limsup_{k\rightarrow\infty}\p u_k (K)$. Then for each $i$, there is $k_i$ and $x_{k_i}\in K$ such that $p\in \p u_{k_i}(x_{k_i})$. 
Since $K$ is compact, extracting a subsequence, still
labeled $x_{k_i}$, we have $x_{k_i}\rightarrow x_0\in K$. Thus, using 
$u_{k_i}(x)\geq u_{k_i}(x_{k_i}) + p\cdot(x- x_{k_i})~\text{for all}~x\in\Omega$,
and the uniform convergence of $u_{k_i}$ to $u$ on compact subsets of $\Omega$, we obtain
$$u(x)\geq u(x_0) + p\cdot(x-x_0)~\text{for all}~x\in\Omega$$ and therefore, $p\in \p u(x_0)\subset \p u(K)$.

To prove (b), let $p\in \p u(x_0)\subset \p u(K)\backslash S$. Then, by Remark \ref{strict_rem}, $u(x)- l(x)(\geq 0)$ where $$l(x)= u(x_0) + p\cdot (x-x_0),$$ is strictly convex at $x_0$.
By subtracting $l(x)$ from $u_k$ and $u$, it suffices to show that $0\in \p u_k (x_k)$ for all $k$ large and some $x_k\in U$. Recalling Remark \ref{locality}, 
we prove this by choosing a minimum point
$x_k$ of the continuous function
$u_k$ in the compact set $\bar U$. It remains to show that $x_k\not\in \p U$ when $k$ is large. This is easy. Indeed, from the strict convexity of $u$ at $x_0\in K\subset U$, we 
can find some $\delta>0$ such that
$u(x)\geq \delta $ on $\p U$. Hence, from the uniform convergence of $u_k$ to $u$ on compact sets, we find that $u_k\geq \delta/2$ on $\p U$ if $k$ is large. On the other hand, since
$u(x_0)=0$, we also find that $u_k(x_0)\leq \delta/4$ when $k$ is large. Therefore, $x_k\not\in \p U$ when $k$ is large.
\end{proof}

\begin{proof} [Proof of Lemma \ref{rescaling_lem}] The proof is simple in the case $u$ is smooth. 
In this case, we have 
$$Dv(x) = A^{t} Du (Tx)~\text{and}~D^2 v(x) = A^t (D^2 u(Tx))A.$$
Therefore,
$$\det D^2 v(x) =(\det A)^2 \det D^2 u(Tx).$$
For a general convex function $u$, we use the normal mapping. Note that, for any $x\in T^{-1}(\Omega)$, we get by using
the definition of the normal mapping in Definition \ref{pdefn}
\begin{equation}\p v(x) = A^{t}\p u(Tx).
 \label{uvD}
\end{equation}
Now, let $E\subset T^{-1}(\Omega)$ be a Borel set. Then, by (\ref{uvD})
\begin{eqnarray*}Mv(E)=|\p v(E)|=|A^t \p u(T(E))|=|\det A^t| |Mu (T(E))|=|\det A| |Mu (T(E))|.
  \end{eqnarray*}
Because
$\lambda\leq \det D^2 u\leq \Lambda$
 in the sense of Aleksandrov,
we have
$$\lambda |\det A||E|=\lambda|T(E)|\leq |Mu (T(E))| \leq \Lambda |T(E)|=\Lambda |\det A||E|.$$
The conclusion of the lemma then follows from
$$\lambda |\det A|^2|E|\leq Mv(E) \leq \Lambda |\det A|^2|E|.$$
\end{proof}
\subsection{Maximum principles}
\label{mp1-sec}
The following basic maximum principle for convex functions roughly says that if two convex functions defined on the same domain and having the same boundary values, 
the one below the other will 
have larger total opening. 
\begin{lem}[Maximum principle]\label{mp1}
Let $\Omega\subset \R^n$ be a bounded open set and $u, v\in C(\overline{\Omega})$. If $u=v$ on $\p\Omega$ and $v\geq u$ in $\Omega$ then
$$\p v(\Omega)\subset \p u(\Omega).$$
\end{lem}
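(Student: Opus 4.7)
The plan is to pick an arbitrary $p \in \partial v(\Omega)$ and produce an interior point of $\Omega$ at which $p$ serves as the slope of a supporting hyperplane to the graph of $u$. This will be done by sliding an affine function of slope $p$ until it first touches $u$ from below, with the key point being that because $u \le v$ inside $\Omega$ and $u = v$ on $\partial \Omega$, this first contact must occur in the interior.

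More concretely, let $p \in \partial v(\Omega)$, so there exists $x_0 \in \Omega$ with $p \in \partial v(x_0)$. By definition this means $v(y) - p\cdot y \ge v(x_0) - p\cdot x_0 =: m$ for every $y \in \Omega$, and by continuity the same holds on $\overline\Omega$. Now set $w(y) := u(y) - p\cdot y$ and let $x_1 \in \overline\Omega$ be a point where the continuous function $w$ attains its minimum over the compact set $\overline\Omega$. I will next show $x_1$ may be taken in $\Omega$. On the one hand $w(x_0) = u(x_0) - p\cdot x_0 \le v(x_0) - p\cdot x_0 = m$. On the other hand, since $u = v$ on $\partial\Omega$, for every $y \in \partial\Omega$ one has $w(y) = v(y) - p\cdot y \ge m$. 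Therefore $\min_{\partial\Omega} w \ge m \ge w(x_0)$, so the global minimum of $w$ over $\overline\Omega$ is attained at some interior point (and in the equality case $x_0$ itself is such a point).

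Having secured $x_1 \in \Omega$ with $u(y) - p\cdot y \ge u(x_1) - p\cdot x_1$ for every $y \in \overline\Omega$, the inequality
\[
u(y) \ge u(x_1) + p\cdot(y - x_1) \qquad \text{for all } y \in \Omega
\]
is exactly the statement that $p \in \partial u(x_1)$, hence $p \in \partial u(\Omega)$. Since $p \in \partial v(\Omega)$ was arbitrary, the inclusion $\partial v(\Omega) \subset \partial u(\Omega)$ follows.

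I do not expect any serious obstacle here: the only subtle point is making sure the minimum of $w$ can be placed in the interior rather than on $\partial\Omega$, which is handled by the chain of inequalities $w(x_0) \le m \le w\big|_{\partial\Omega}$ arising from the two hypotheses $v \ge u$ in $\Omega$ and $v = u$ on $\partial\Omega$. No convexity of $u$ or $v$ is actually used — only continuity and the definitional form of the normal mapping from Definition \ref{pdefn}.
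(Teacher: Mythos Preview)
Your proof is correct and follows essentially the same route as the paper: given $p\in\partial v(x_0)$, slide the affine function of slope $p$ down until it first touches the graph of $u$, and argue that the touching point must be interior. The paper phrases this via the sliding amount $a=\sup_{x\in\Omega}\{v(x_0)+p\cdot(x-x_0)-u(x)\}$ (in the pointwise Lemma~\ref{mp_pw}, of which Lemma~\ref{mp1} is a corollary), whereas you minimize $w(y)=u(y)-p\cdot y$ directly; these are the same argument written in dual form, and your handling of the equality case (taking $x_1=x_0$) matches the paper's case $a=0$.
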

In the above maximum principle, no convexity on the functions nor the domain is assumed.
Lemma \ref{mp1} is a corollary of the following pointwise maximum principle:
\begin{lem}[Pointwise Maximum Principle]\label{mp_pw}
Let $\Omega\subset \R^n$ be a bounded open set and $u, v\in C(\overline{\Omega})$. If $u\geq v$ on $\p\Omega$ and $v(x_0)\geq u(x_0)$ where $x_0\in \Omega$ then
$\p v(x_0)\subset \p u(\Omega).$
\end{lem}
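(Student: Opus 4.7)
The plan is to prove $\partial v(x_0)\subset \partial u(\Omega)$ by a direct ``slide the tangent hyperplane'' argument: given any slope $p\in \partial v(x_0)$, I will produce a point $x_1\in \Omega$ at which the affine function of slope $p$ becomes a supporting hyperplane of the graph of $u$.

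Fix $p\in \partial v(x_0)$ and consider the affine function
\[
L(x)=v(x_0)+p\cdot(x-x_0).
\]
By the definition of the normal mapping, $v(x)\ge L(x)$ for every $x\in \Omega$, and by continuity this also holds on $\partial\Omega$. Next introduce the auxiliary function $h(x)=u(x)-p\cdot x$, which is continuous on the compact set $\overline{\Omega}$, and consider a minimum point $x_1\in \overline{\Omega}$. The first key step is to show that $x_1$ can be chosen in $\Omega$. At the given point $x_0\in\Omega$ we have $h(x_0)=u(x_0)-p\cdot x_0 \le v(x_0)-p\cdot x_0$ by the hypothesis $u(x_0)\le v(x_0)$. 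For any boundary point $y\in\partial\Omega$, the hypothesis $u\ge v$ on $\partial\Omega$ together with $v\ge L$ gives
\[
h(y)=u(y)-p\cdot y \;\ge\; v(y)-p\cdot y \;\ge\; L(y)-p\cdot y \;=\; v(x_0)-p\cdot x_0 \;\ge\; h(x_0).
\]
Thus $h$ attains its minimum over $\overline{\Omega}$ at an interior point $x_1\in\Omega$ (for instance, $x_1=x_0$ works if no smaller value is realized in $\Omega$; otherwise a smaller one is realized at an interior point since boundary values dominate $h(x_0)$).

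At the interior minimum $x_1\in\Omega$ one has $h(x)\ge h(x_1)$ for all $x\in\Omega$, i.e.\
\[
u(x)\ge u(x_1)+p\cdot(x-x_1)\qquad \text{for all } x\in\Omega,
\]
which is exactly the statement $p\in \partial u(x_1)\subset \partial u(\Omega)$. Since $p\in\partial v(x_0)$ was arbitrary, the desired inclusion $\partial v(x_0)\subset \partial u(\Omega)$ follows.

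There is no serious obstacle here; the only point requiring a little care is the ``interior minimum'' step, where one must compare the value of $h$ at $x_0$ with the values of $h$ on $\partial\Omega$ using both hypotheses ($u\ge v$ on $\partial\Omega$ and $v(x_0)\ge u(x_0)$) in the correct order, together with the fact that $L$ lies below $v$ globally because $p$ is a subgradient of $v$ at $x_0$. Note that no convexity of $u$, $v$, or $\Omega$ is used anywhere in the argument, which is consistent with the statement of the lemma and with Lemma~\ref{mp1} being stated as a corollary.
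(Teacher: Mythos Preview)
Your proof is correct and follows essentially the same ``slide the supporting hyperplane'' idea as the paper: minimizing $h(x)=u(x)-p\cdot x$ over $\overline{\Omega}$ is equivalent to the paper's computation of $a=\sup_{x\in\Omega}\{v(x_0)+p\cdot(x-x_0)-u(x)\}$, and your interior minimum $x_1$ is exactly the paper's touching point. The only cosmetic difference is that the paper splits into the cases $a>0$ and $a=0$, whereas your observation that $h(y)\ge h(x_0)$ on $\partial\Omega$ lets you choose an interior minimizer directly and handle both cases at once.
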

\begin{proof}[Proof of Lemma \ref{mp_pw}]
Let $p\in \p v(x_0)$. Then, $p$ is the slope of the tangent hyperplane to the graph of $v$ at $(x_0, v(x_0))$, that is
\begin{equation}v(x) \geq v(x_0) + p\cdot(x-x_0)~\text{for all } x\in\overline{\Omega}.
 \label{pdef}
\end{equation}
We will slide down this hyperplane to obtain a tangent hyperplane for the graph of $u$. Let
$$a= \sup_{x\in\Omega}\{v(x_0) + p\cdot(x-x_0)- u(x)\}.$$
This is the amount that we will slide down. We now claim that $$l(x):=v(x_0) + p\cdot(x-x_0)-a$$ is a supporting hyperplane to the graph of $u$ at some point $(z, u(z))$ where 
$z\in \Omega$. Indeed, since $v(x_0)\geq u(x_0)$, we have $a\geq 0$; moreover, if $v(x_0)>u(x_0)$ then $a>0$.
Let $x_1\in\overline{\Omega}$ be such that
$$a=v(x_0) + p\cdot(x_1-x_0)- u(x_1) \geq v(x_0) + p\cdot(x-x_0)- u(x)~\text{for all } x\in\Omega.$$
It follows that
\begin{equation}u(x) \geq p\cdot(x-x_1) + u(x_1)= v(x_0) + p\cdot(x-x_0)-a:= l(x).
 \label{slide_l}
\end{equation}

If $a>0$ then $x_1\not\in \p\Omega$ because otherwise, we have $u(x_1)\geq v(x_1)$ and $a\leq v(x_0) + p(x_1- x_0)- v(x_1)\leq 0$, a contradiction to the definition of $p$
in (\ref{pdef}). In this case, we deduce from 
(\ref{slide_l}) that $l(x)$ is a supporting hyperplane to the graph of $u$ at $(x_1, u(x_1))$.

If $a=0$ then there is no sliding down. In this case, $u(x_0)= v(x_0)$. Hence, by
(\ref{slide_l}), $l(x)$ is a supporting hyperplane to the graph of $u$ at $(x_0, u(x_0))$.
\end{proof}

The next theorem, due to Aleksandrov \cite{Alek68}, is of fundamental importance in the theory of Monge-Amp\`ere equations. It says that a convex function having bounded 
Monge-Amp\`ere measure, or more generally, having finite total Monge-Amp\`ere measure, can only drop its value when it steps into the domain.
\begin{thm} [Aleksandrov's maximum principle]
If $\Omega\subset \R^n$ is a bounded, open and convex set with diameter $D$, and $u\in C(\overline{\Omega})$ is a convex function with $u=0$ on $\p\Omega$, then
$$|u(x_0)|^{n}\leq C_n D^{n-1} \emph{dist} (x_0, \p\Omega)|\p u(\Omega)|$$
for all $x_0\in\Omega$ where $C_n$ is a constant depending only on the dimension $n$.
\label{Alekmp}
\end{thm}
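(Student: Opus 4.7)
The plan is to compare $u$ with the convex cone based on $\partial\Omega$ with vertex at $(x_0, u(x_0))$, and then to use Lemma \ref{mp1} (the maximum principle for normal mappings) to transfer a lower bound on the subdifferential of the cone to a lower bound on $|\partial u(\Omega)|$.

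Concretely, first I would define the cone
\[
 v(x) \,=\, \inf\bigl\{\ell(x) : \ell \text{ affine},\ \ell(x_0)=u(x_0),\ \ell\le 0 \text{ on }\partial\Omega\bigr\}
\]
(equivalently, the convex function whose graph is the boundary of the convex hull of $(x_0,u(x_0))$ and $\partial\Omega\times\{0\}$). Writing any $x\in\Omega$ as $x=(1-t)x_0+ty$ with $y\in\partial\Omega$, the convexity of $u$ together with $u(y)=0$ gives $u(x)\le (1-t)u(x_0)=v(x)$; thus $v\ge u$ in $\Omega$ and $v=u=0$ on $\partial\Omega$. Lemma \ref{mp1} then yields $\partial v(\Omega)\subset\partial u(\Omega)$, so it will suffice to bound $|\partial v(x_0)|$ from below, since $\partial v(x_0)\subset \partial v(\Omega)\subset \partial u(\Omega)$.

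Next I would identify $\partial v(x_0)$ explicitly. The cone representation and the same ray argument show
\[
 \partial v(x_0) \,=\, \bigl\{p\in\R^n : p\cdot(y-x_0) \le |u(x_0)| \text{ for every } y\in\overline{\Omega}\bigr\},
\]
a convex polar-type body. The core step is then the geometric lower bound for $|\partial v(x_0)|$ in terms of $D$ and $d:=\mathrm{dist}(x_0,\partial\Omega)$. Let $x^*\in\partial\Omega$ achieve the distance to $x_0$ and set $e=(x^*-x_0)/d$. The hyperplane through $x^*$ orthogonal to $e$ supports $\Omega$, so $\sup_{y\in\Omega}e\cdot(y-x_0)=d$, and hence $\tfrac{|u(x_0)|}{d}\,e\in\partial v(x_0)$. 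On the other hand, for every unit vector $\eta$ one has $|\eta\cdot(y-x_0)|\le D$, so the ball $B_{|u(x_0)|/D}(0)$ is contained in $\partial v(x_0)$. Since $\partial v(x_0)$ is convex, it contains the cone obtained as the convex hull of the transversal ball $B_{|u(x_0)|/D}(0)\cap e^{\perp}$ and the apex $\tfrac{|u(x_0)|}{d}e$. An elementary volume computation for this $n$-dimensional cone gives
\[
 |\partial v(x_0)| \,\ge\, c_n\,\Bigl(\tfrac{|u(x_0)|}{D}\Bigr)^{n-1}\cdot\tfrac{|u(x_0)|}{d} \,=\, \tfrac{c_n\,|u(x_0)|^n}{D^{n-1}\,d}.
\]
Combining this with $|\partial v(x_0)|\le|\partial u(\Omega)|$ and rearranging yields $|u(x_0)|^n\le C_n D^{n-1}\,d\,|\partial u(\Omega)|$, as required.

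The main obstacle I expect is the quantitative geometric bound on $|\partial v(x_0)|$: one must capture the asymmetry of $\Omega$ relative to $x_0$ (which makes $\partial v(x_0)$ elongated in the direction of the closest boundary point) in order to get the sharp $D^{n-1}d$ dependence rather than the cruder $D^n$ one obtains from the inclusion $B_{|u(x_0)|/D}(0)\subset\partial v(x_0)$. Singling out the closest boundary point $x^*$, producing the asymmetric apex $\tfrac{|u(x_0)|}{d}e$, and taking the convex hull with the transversal ball is the key geometric idea that turns the crude radius-over-diameter bound into the sharp Aleksandrov estimate. Everything else—verifying $v\ge u$, invoking Lemma \ref{mp1}, and the final algebra—is routine.
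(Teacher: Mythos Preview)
Your proposal is correct and follows essentially the same route as the paper: compare $u$ with the cone $v$ over $\partial\Omega$ with vertex $(x_0,u(x_0))$, apply Lemma~\ref{mp1} to get $\partial v(x_0)\subset\partial u(\Omega)$, then exhibit inside $\partial v(x_0)$ both the ball $B_{|u(x_0)|/D}(0)$ and the far point $\tfrac{|u(x_0)|}{d}e$ and bound the volume of their convex hull. The only cosmetic differences are your polar-body description of $\partial v(x_0)$ and your use of the transversal disc $B_{|u(x_0)|/D}(0)\cap e^{\perp}$ as the cone base; the paper instead takes the convex hull of the full ball with the apex, but the resulting volume bound $\tfrac{\omega_{n-1}}{n}\bigl(\tfrac{|u(x_0)|}{D}\bigr)^{n-1}\tfrac{|u(x_0)|}{d}$ is the same.
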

\begin{proof}[Proof of Theorem \ref{Alekmp}]
Let $v$ be the convex function whose graph is the cone with vertex $(x_0, u(x_0))$ and the base $\Omega$, with $v=0$ on $\p\Omega$. Since
$u$ is convex, $v\geq u$ in $\Omega$. By the maximum principle in Lemma \ref{mp1}, 
$\p v(\Omega)\subset \p u(\Omega).$
The proof is based on the following observations:
\begin{myindentpar}{1cm}
(1) $\p v(\Omega)=  \p v(x_0)$ and thus $\p v(\Omega)$ is convex. \\
 (2) $\p v(\Omega)$ contains $B_{\frac{|u(x_0)|}{D}}(0)$.\\
 (3) There is $p_0\in \p v(\Omega)$ such that
$|p_0|= -\frac{u(x_0)}{\text{dist} (x_0, \p\Omega)}.$
\end{myindentpar}
Assuming (1)-(3), we see that $\p v(\Omega)$ contains the convex hull of  $B_{\frac{|u(x_0)|}{D}}(0)$ and $p_0$. This convex hull has measure at least
$$\max\left\{\omega_n \left(\frac{|u(x_0)|}{D}\right)^{n}, \frac{ \omega_{n-1}}{n} \left(\frac{|u(x_0)|}{D}\right)^{n-1} |p_0|\right\}.$$
Since $ |\p v(\Omega)| \leq  |\p u(\Omega)|$, the conclusion of the theorem now follow from
$$|u(x_0)|^{n}\leq \min\left\{C_n D^{n-1} \text{dist} (x_0, \p\Omega)|\p u(\Omega)|,~ \omega_n^{-1} D^n |\p u(\Omega)|\right\}~
\text{with}~
C_n= \frac{n}{\omega_{n-1}}.$$
Let us now verify (1)-(3). To see (1), we note that if $p\in\p v(\Omega)$ then there is $x_1\in\Omega$ such that $p= \p v(x_1)$. It 
suffices to consider the case $x_1\neq x_0.$ Since the graph of $v$ is a cone, $v(x_1) + p\cdot(x-x_1)$ is a supporting hyperplane to the graph of $v$ at $(x_0, v(x_0))$, that is $p\in \p v(x_0).$

For (2) and (3), we note that, since the graph of $v$ is a cone with vertex $(x_0, v(x_0))=(x_0, u(x_0))$ and the base $\Omega$, $p\in \p v(x_0)$
if and only if $v(x)\geq v(x_0) + p\cdot(x-x_0)$ for all $x\in\p\Omega$. Thus (2) is straightforward. 

To obtain (3), 
take $x_1\in\p\Omega$ such that $|x_1- x_0| = \text{dist} (x_0, \p\Omega)$. Then $p_0= -u(x_0)\frac{x_1-x_0}{|x_1-x_0|^2}$ is the desired slope. Indeed, for any $x\in\p\Omega$, $
(x-x_0)\cdot \frac{x_1-x_0}{|x_1-x_0|}$ is the vector projection of $x-x_0$ onto the ray from $x_0$ to $x_1$. 
Using the convexity of $\Omega$, we find
$$(x-x_0) \cdot\frac{x_1-x_0}{|x_1-x_0|} \leq |x_1-x_0|$$
and hence, from the formula for $p_0$, we find that for all $x\in\p\Omega$
$$0= v(x)= u(x_0) + |p_0| |x_1-x_0|\geq v(x_0) + p_0 \cdot(x-x_0).$$

\end{proof}
 \begin{lem}\label{ABPmax2}
 For $u\in C^{2}(\Omega)\cap C^{0}(\overline{\Omega})$, we have
 $$\sup_{\Omega} u\leq \sup_{\partial\Omega} u + \frac{\emph{diam}(\Omega)}{\omega_{n}^{1/n}}\left(\int_{\Gamma^{+}}|\det D^2 u|\right)^{1/n}$$
 where $\Gamma^{+}$ is the upper contact set $$\Gamma^{+}=\{y\in\Omega| u(x) \leq u(y) + p\cdot (x-y)~\text{for all}~x\in\Omega,~\text{for some}~ p= p(y)\in \R^{n}\}
 .$$
\end{lem}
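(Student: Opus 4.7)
\bigskip

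\noindent\textbf{Proof proposal.} The plan is the classical vertical-sliding / normal-mapping argument for the ABP estimate, which reduces the statement to measuring the image of $Du$ on the upper contact set and then applying the area formula. After normalizing, I would show that a certain Euclidean ball in the gradient space is covered by $Du(\Gamma^+)$, and then compare measures via the Jacobian $|\det D^2u|$.

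First, set $M := \sup_{\Omega} u - \sup_{\p\Omega} u$. If $M \le 0$ the inequality is trivial, so assume $M > 0$ and, by subtracting a constant, that $\sup_{\p\Omega} u = 0$. Pick $x^{\ast}\in\Omega$ with $u(x^{\ast}) = M$. For each $p\in\R^n$ with $|p| < M/\mathrm{diam}(\Omega)$, consider the function $\Phi_p(x) := u(x) - p\cdot(x - x^{\ast})$ on $\overline{\Omega}$. At $x=x^{\ast}$ we have $\Phi_p(x^{\ast}) = M$, while on $\p\Omega$
\[
\Phi_p(x) \le 0 + |p|\,\mathrm{diam}(\Omega) < M.
\]
Hence $\Phi_p$ attains its maximum at some interior point $y_p\in\Omega$. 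Since $u\in C^2$, the critical point conditions give $Du(y_p) = p$ and $D^2 u(y_p)\le 0$, and the global maximality of $\Phi_p$ at $y_p$ reads $u(x) \le u(y_p) + p\cdot(x-y_p)$ for all $x\in\Omega$, so $y_p\in\Gamma^+$ with slope $p=Du(y_p)$. Thus the map $p \mapsto y_p$ produces a preimage in $\Gamma^+$ for every $p\in B_{M/\mathrm{diam}(\Omega)}(0)$, i.e.
\[
B_{M/\mathrm{diam}(\Omega)}(0) \subset Du(\Gamma^+).
\]

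Next, apply the area formula for the $C^1$ map $Du:\Gamma^+\to\R^n$. Since $D^2 u\le 0$ on $\Gamma^+$, we have $|\det D^2 u(y)| = (-1)^n\det D^2 u(y)$, and
\[
|Du(\Gamma^+)| \;\le\; \int_{\Gamma^+} |\det D^2 u(y)|\,dy.
\]
Combining this with the inclusion above,
\[
\omega_n\left(\frac{M}{\mathrm{diam}(\Omega)}\right)^{\!n} \;=\; |B_{M/\mathrm{diam}(\Omega)}(0)| \;\le\; \int_{\Gamma^+} |\det D^2 u|,
\]
and taking $n$-th roots yields exactly the asserted bound
\[
\sup_{\Omega} u - \sup_{\p\Omega} u \;\le\; \frac{\mathrm{diam}(\Omega)}{\omega_n^{1/n}}\left(\int_{\Gamma^+}|\det D^2 u|\right)^{\!1/n}.
\]

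The only genuinely delicate point is the covering step, namely the verification that the interior maximum $y_p$ exists for the full range $|p|<M/\mathrm{diam}(\Omega)$; once that is in hand, the rest is Jacobian bookkeeping. Note also that this result can be viewed as the special case of Theorem \ref{ABPmax} in which $(a^{ij}) = -D^2 u$ restricted to $\Gamma^+$ (where $-D^2 u\ge 0$), so an alternative, shorter route would be simply to invoke Theorem \ref{ABPmax} with this choice of coefficient matrix; I prefer the direct proof above because it isolates the geometric content and avoids assuming uniform positivity of the coefficient matrix on all of $\Omega$.
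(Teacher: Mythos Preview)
Your proof is correct and follows essentially the same approach as the paper: both establish the inclusion $B_{M/\mathrm{diam}(\Omega)}(0)\subset Du(\Gamma^+)$ and then bound the measure via the Jacobian $|\det D^2 u|$. The only cosmetic difference is that the paper obtains the inclusion by first building the cone over the maximum point and invoking the pointwise maximum principle (Lemma~\ref{mp_pw}), whereas you obtain it directly by the sliding-hyperplane argument $\Phi_p(x)=u(x)-p\cdot(x-x^\ast)$; these are the same geometric step in different packaging.
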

\begin{proof} [Proof of Lemma \ref{ABPmax2}] The proof is similar to that of Theorem \ref{Alekmp} but for the sake of completeness, we include it here. Let $D=\text{diam}(\Omega)$.
By considering $\hat u:= - (u-\sup_{\p\Omega} u)$ instead of $u$, we need to show that an equivalent statement: 
If $u\in C^{2}(\Omega)\cap C^{0}(\overline{\Omega})$ with $\inf_{\p\Omega} u=0$ then
 \begin{equation}-\inf_{\Omega} u \leq  \frac{D}{\omega_{n}^{1/n}}\left(\int_{\mathcal{C}}|\det D^2 u|\right)^{1/n}
  \label{abpeq1}
 \end{equation}
where $\mathcal{C}$ is the lower contact set $$\mathcal{C}=\{y\in\Omega| u(x) \geq u(y) + p\cdot (x-y)~\text{for all}~x\in\Omega,~\text{for some}~ p= p(y)\in \R^{n}\}.$$
 Clearly, $y\in\mathcal{C}$ if and only if $\p u(y)\neq\emptyset$. As seen before, since $u\in C^2(\Omega)$, we have $D^2 u(y)\geq 0$ when $y\in\mathcal{C}$. We have then
 \begin{equation}|\p u(\Omega)| = |\p u(\mathcal{C})|= \int_{\mathcal{C}}|\det D^2 u|.
  \label{abpeq2}
 \end{equation}
The last equality follows from the proof of Example \ref{examC2} where the set $A$ there only needs to be modified to $A=\{x\in\mathcal{C}: D^2 u(x)>0\}$ and $S_0=\mathcal{C}\backslash
A$.

From $\inf_{\p\Omega} u=0$, it suffices to consider the case where the minimum of $u$ on $\overline{\Omega}$ is attained at $x_0\in \Omega$ with $u(x_0)<0$.
Let $v$ be the convex function whose graph is the cone with vertex $(x_0, u(x_0))$ and the base $\Omega$, with $v=0$ on $\p\Omega$. By Lemma \ref{mp_pw},
\begin{equation}\p v(\Omega)= \p v(x_0)\subset \p u(\Omega).
 \label{abpeq3}
\end{equation}
Moreover, $$\p v(\Omega)=\p v(x_0)\supset B_{\frac{|u(x_0)|}{D}}(0).$$
Hence, from (\ref{abpeq3}) and (\ref{abpeq2}), we have
$$\omega_n \left(\frac{|u(x_0)|}{D}\right)^n\leq |\p v(\Omega)|\leq |\p u(\Omega)|= \int_{\mathcal{C}}|\det D^2 u|.$$
Thus (\ref{abpeq1}) is proved.
 \end{proof}
We now give the proof of the ABP maximum principle stated in Theorem \ref{ABPmax}.
\begin{proof}[Proof of Theorem \ref{ABPmax}] Since $u\in C^2(\Omega)$, on the upper contact set $\Gamma^{+}$, we have $D^2 u\leq 0$.
 Using Lemma \ref{trlem} for $A=-D^2 u$, $B= (a^{ij})$, we have on $\Gamma^{+}$
 $$|\det D^2 u| =\det(-D^2 u)\leq \frac{1}{\det (a^{ij})} \left(\frac{-a^{ij} u_{ij}}{n}\right)^n.$$
 Hence
 $$\left(\int_{\Gamma^{+}}|\det D^2 u|\right)^{1/n} \leq \frac{1}{n}\left\|\frac{a^{ij} u_{ij}}{(\det (a_{ij})^{1/n})}\right\|_{L^{n}(\Gamma^{+})}$$
 Now, applying Lemma \ref{ABPmax2}, we obtain the ABP estimate in Theorem \ref{ABPmax}.
\end{proof}

The conclusion of Theorem \ref{Alekmp} raises the following question:
\begin{quest}Will a convex function drop its value when stepping inside the domain?
\label{dropquest}
\end{quest}
Clearly, without a lower bound on the Monge-Amp\`ere measure $Mu$, the answer is in 
the negative as can be seen from the constant $0$. We will prove in Theorem \ref{drop-thm} that the above 
question has a positive answer when the density of the Monge-Amp\`ere measure $Mu$ has a positive lower bound.
\subsection{John's lemma}
In this section, we will prove a crucial result, due to John \cite{John}, in the investigation of the Monge-Amp\`ere equations. It says that all convex 
bodies are equivalent to balls modulo affine transformations.
\begin{lem}[John's Lemma]\label{John_lem}
Let $K\subset \R^{n}$ be a convex body (that is, a compact, convex set with nonempty interior). Then there is an ellipsoid (the image of the unit ball $B_1(0)$ in $\R^n$ under a positive definite affine map) so that if $c$ is the center of $E$ then
$$E\subset K \subset c + n (E-c),$$
where $$c + n(E-c)=\{c + n(x-c), x\in E\}.$$
\end{lem}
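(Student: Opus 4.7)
My plan is to take $E$ to be the (unique) ellipsoid of maximal volume contained in $K$, and then show that the dilation $c + n(E-c)$ already covers $K$. The first step is an existence argument: the family of ellipsoids contained in $K$ can be parametrized by their center $c \in K$ and a positive semi-definite symmetric matrix $A$ with $\{c + Ax : |x| \le 1\} \subset K$, and this parameter set is compact (one uses $B_1(0) \subset K$ up to affine rescaling to bound $\|A\|$ from above, and boundedness of $K$ to bound the center). Since the volume $\omega_n \det A$ is continuous in the parameters, the supremum is attained.

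The second step is a normalization. Because the statement of the lemma is affine-invariant (an affine map sends ellipsoids to ellipsoids, preserves ratios of volumes, and preserves the center-dilation relation $c + n(E - c)$), I can apply an affine transformation that sends the maximal inscribed ellipsoid $E$ to the unit ball $B_1(0)$. After this reduction we have $B_1(0) \subset K$, the center $c = 0$, and the task becomes: show that $K \subset B_n(0)$.

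The third, and hardest, step is the contradiction argument. Assume there exists $x_0 \in K$ with $|x_0| > n$; after a rotation take $x_0 = a e_1$ with $a > n$. Since $K$ is convex and contains both $B_1(0)$ and $x_0$, it contains their convex hull, the ``ice-cream cone'' $C$. The plan is to exhibit an ellipsoid $E'$ with $E' \subset C$ and $|E'| > |B_1(0)|$, which contradicts the maximality of $E = B_1(0)$. I will look for $E'$ of the form
\[
E' = \bigl\{(y_1, y') \in \R \times \R^{n-1} : \tfrac{(y_1 - t)^2}{\alpha^2} + \tfrac{|y'|^2}{\beta^2} \le 1 \bigr\},
\]
with $\alpha > 1$ slightly, $\beta < 1$ slightly, and $t > 0$ chosen so that $E'$ touches the conical part of $\partial C$ tangentially along the circle where $B_1(0)$ meets the cone. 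The containment $E' \subset C$ reduces to a one-variable inequality on the tangent-line condition, which becomes an equality constraint relating $(t,\alpha,\beta)$ and $a$; subject to this constraint one checks $\alpha \beta^{n-1} > 1$ whenever $a > n$ by a direct calculation (this is the numerology ``$n$'' in the statement of the lemma, and is where the dimension enters). This yields $|E'| > |B_1(0)|$, the desired contradiction.

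The main obstacle is the third step: the explicit construction of the better ellipsoid $E'$ and the verification that the function $\alpha \beta^{n-1}$ along the one-parameter family of ellipsoids tangent to the cone through the equator of $B_1(0)$ exceeds $1$ precisely when $a > n$. Everything else (existence of a maximizer, reduction to $E = B_1(0)$, convexity of $C \subset K$) is routine. Uniqueness of the maximal ellipsoid is not needed for the statement, so I will not pursue it.
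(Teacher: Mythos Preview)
Your proposal is correct and follows essentially the same approach as the paper: existence of a maximal-volume inscribed ellipsoid by compactness, affine normalization to $E=B_1(0)$, and then a contradiction via a larger ellipsoid inside the cone $\mathrm{conv}(B_1(0)\cup\{ae_1\})$ when $a>n$. The only cosmetic difference is that the paper parametrizes the competitor ellipsoid as $\Psi^\lambda_t(B_1(0))$ with $\Psi^\lambda_t(x)=(-1+e^t(x_1+1),e^{-\lambda t}x')$ and differentiates at $t=0$ (obtaining the constraints $\tfrac{1}{a-1}<\lambda<\tfrac{1}{n-1}$), whereas you describe the same family geometrically via $(t,\alpha,\beta)$; your tangency condition ``along the circle where $B_1(0)$ meets the cone'' is slightly stronger than what the paper checks (it only verifies the image of that circle lies strictly below the tangent line), but either route works.
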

Lemma \ref{John_lem} motivates the following definition.
\begin{defn}[Normalized convex set]
 If $\Omega\subset\R^n$ is a convex body then there is an affine transformation $T$ such that
 $$B_1(0)\subset T(\Omega)\subset B_n(0).$$
 We say that $T$ {\it normalizes} $\Omega$.
  A convex set $\Omega\subset\R^n$ is called {\it normalized} if $B_1(0)\subset \Omega\subset B_n(0).$
\end{defn}
\begin{proof}[Proof of Lemma \ref{John_lem}] 
We first show that $K$ contains an ellipsoid of maximal volume. Let $N:= n^2 + n.$ The set of ordered pairs $(A, b)$ where $A$ is an $n\times n$ matrix and $b$ is a vector in $\R^n$ is identified as $\R^N$. Let
$$\mathcal{E}:= \{(A, b)\in \R^N: AB_1(0) + b\subset K\}.$$
Then $\mathcal{E}$ is a non-empty, closed, and bounded. Thus it is a compact set of $\R^N$. The map 
$$(A, b)\rightarrow |AB_1(0) + b|\equiv \omega_n|\det A|$$
is a continuous function on $\mathcal{E}$. Thus there is an $(A_0, b_0)\in \mathcal{E}$ that maximizes this function on $\mathcal{E}$. Then $E:= A_0 B_1(0) + b_0$ is 
the desired ellipsoid.

Now, replacing $K$ by $A_{0}^{-1}(K-b_0)$ if necessary, we can assume 
that $B_1(0)= A_0^{-1}(E-b_0)$ is an ellipsoid of maximal volume in $K$. Hence, to prove the lemma, it suffices to show that if $p\in K$ then $|p|\leq n.$

Assume that there is a point $p\in K$ with $a:=|p|>n$. Choose orthogonal coordinates $(x_1, x_2,\cdots, x_n)$ on $\R^n$ such that 
$p= (a, 0,\cdots, 0)$. Consider the following affine map $\Psi^{\lambda}_t$
$$(x_1, x_2, \cdots, x_n)\mapsto (-1 + e^t (x_1 + 1), e^{-\lambda t}x_2, \cdots, e^{-\lambda t} x_n).$$
We claim that for small $t$, $$\Psi_t^{\lambda}(B_1(0))\subset C_a^n,$$ provided that
$\lambda>\frac{1}{a-1}$ 
where $ C_a^n\subset K$ is the convex hull of $B_1(0)\subset\R^n$ and $p$. 

Granted the claim, then, the volume of the ellipsoid $\Psi_t^{\lambda}(B_1(0))$ is
$$|\Psi_t^{\lambda}(B_1(0))|= e^{(1-(n-1)\lambda)t} |B_1(0)|> |B_1(0)|$$
provided that $t>0$ small and 
$\lambda <\frac{1}{n-1}.$

Therefore, when $a>n$, we can choose $\lambda$ satisfying $\frac{1}{n-1}>\lambda>\frac{1}{a-1}.$ This choice of $\lambda$ contradicts the maximality of the volume of $B_1(0)$.

We now prove the claim. By symmetry, it suffices to consider $n=2$
and we need to show that, under the map $\Psi_t^{\lambda}$, the "top" point $(z_1, z_2)=(\frac{1}{a}, \frac{\sqrt{a^2-1}}{a})$ is below the tangent line $l$ from $(a, 0)$ to 
$B_1(0)\subset\R^2$ at this point.
The equation for the tangent line is
$$\frac{x_1-a}{\sqrt{a^2-1}} + x_2=0.$$
The criterion for $(x_1, x_2)$ being below $l$ is that
$\frac{x_1-a}{\sqrt{a^2-1}} + x_2<0.$
We show that the above inequality holds for $\Psi^{\lambda}_t(z_1, z_2)\equiv (-1 + e^t (z_1 + 1), e^{-\lambda t} z_2)$ by evaluating
$$f(t) = \frac{e^t (z_1+ 1) -1-a}{\sqrt{a^2-1}} + e^{-\lambda t} z_2.$$
We have
$$f^{'}(0) = \frac{z_1+1}{\sqrt{a^2-1}} -\lambda z_2 = \frac{1+a}{a\sqrt{a^2-1}} -\lambda \frac{\sqrt{a^2-1}}{a}<0$$
provided that
$\lambda > \frac{1}{a-1}.$
Thus $f(t) < f(0) = 0$ for small positive values of $t$.
\end{proof}

\subsection{Comparison principle and applications}
\label{comp-sec}
We start this section with a converse of Lemma \ref{mp1}. It implies that if two convex functions having the same boundary values, the function with larger opening at all scales 
(that is, with larger Monge-Amp\`ere measure) is in fact smaller than the other one in the interior.
\begin{lem}[Comparison principle]\label{comp-prin}
Let $u, v\in C(\overline{\Omega})$ be convex functions such that 
$$\det D^2 v \geq \det D^2 u$$
in the sense of Aleksandrov, that is, 
$|\p u(E)|\leq |\p v(E)|$
for all Borel set $E\subset \Omega$. Then
$$\min_{x\in\overline{\Omega}} (u(x)- v(x)) =\min_{x\in\p\Omega} (u(x)- v(x)).$$
In particular, if $u\geq v$ on $\p\Omega$ then $u\geq v$ in $\Omega$.
\end{lem}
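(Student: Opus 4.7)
The plan is to argue by contradiction after a harmless normalization. Since the Monge-Amp\`ere measure is unaffected by adding a constant, I would replace $u$ by $u - c$, where $c := \min_{\partial\Omega}(u-v)$; this preserves the hypothesis $\det D^2 u \leq \det D^2 v$ in the Aleksandrov sense and reduces the claim to the following sharper statement: if $u \geq v$ on $\partial\Omega$, then $u \geq v$ throughout $\overline\Omega$.

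Suppose, for contradiction, that there is $x_0 \in \Omega$ with $u(x_0) < v(x_0)$. Let $d := \operatorname{diam}(\Omega)$ and pick $\epsilon > 0$ so small that $v(x_0) - \epsilon d^2 > u(x_0)$. I would then introduce the quadratic perturbation
$$\tilde v(x) := v(x) + \epsilon\bigl(|x-x_0|^2 - d^2\bigr).$$
Since $|x - x_0| \leq d$ on $\overline\Omega$, we have $\tilde v \leq v$ on $\overline\Omega$, and in particular $\tilde v \leq u$ on $\partial\Omega$; at $x_0$, however, $\tilde v(x_0) > u(x_0)$. Consequently the open set $G := \{x \in \Omega : u(x) < \tilde v(x)\}$ is nonempty, contains $x_0$, and by continuity $\overline G \subset \Omega$ with $u = \tilde v$ on $\partial G$.

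Now I would apply the maximum principle of Lemma \ref{mp1} on $G$ to the pair $\tilde v \geq u$ with equal boundary values, yielding $\partial \tilde v(G) \subset \partial u(G)$, hence $|\partial \tilde v(G)| \leq |\partial u(G)|$. On the other hand, the quadratic perturbation should enlarge the Monge-Amp\`ere mass by a definite amount: for $v \in C^2$, Minkowski's determinant inequality $\det(D^2 v + 2\epsilon I)^{1/n} \geq \det(D^2 v)^{1/n} + 2\epsilon$ forces $\det D^2 \tilde v \geq \det D^2 v + (2\epsilon)^n$ pointwise, so integrating over $G$ gives $|\partial \tilde v(G)| \geq |\partial v(G)| + (2\epsilon)^n |G|$. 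Combining this with the hypothesis $|\partial v(G)| \geq |\partial u(G)|$ produces
$$|\partial u(G)| \;\geq\; |\partial \tilde v(G)| \;\geq\; |\partial v(G)| + (2\epsilon)^n |G| \;\geq\; |\partial u(G)| + (2\epsilon)^n |G|,$$
which forces $|G| = 0$, contradicting the fact that $G$ is a nonempty open set.

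The main technical obstacle I anticipate is justifying the Minkowski-type estimate $|\partial \tilde v(G)| \geq |\partial v(G)| + (2\epsilon)^n|G|$ when $v$ is merely continuous and convex rather than $C^2$. I would handle this by approximating $v$ uniformly on compact subsets by smooth strictly convex functions $v_k$ (e.g.\ by standard mollification), applying the pointwise determinant inequality to $\tilde v_k := v_k + \epsilon(|x-x_0|^2 - d^2)$, and then passing the resulting measure inequality to the limit via the weak continuity of the Monge-Amp\`ere measure recorded in Lemma \ref{weakMA}. Once this is in place, the rest of the argument is a clean chain of inclusions and is essentially a soft application of Lemma \ref{mp1}.
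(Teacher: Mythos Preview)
Your argument is correct and is essentially the same as the paper's: both perturb $v$ by the strictly convex quadratic $\epsilon|x-x_0|^2$, pass to the open set where $u$ lies strictly below the perturbed function, apply Lemma \ref{mp1} there, and then use the Minkowski determinant inequality (extended to non-$C^2$ $v$ by mollification and Lemma \ref{weakMA}) to force a contradiction with the hypothesis. The only cosmetic difference is that the paper works with $w=u-v-\epsilon|x-x_0|^2$ and chooses the sublevel set $\{w<-\tfrac{3}{4}M\}$, which guarantees $\overline E\subset\Omega$; your set $G$ may touch $\partial\Omega$, so the assertion ``$\overline G\subset\Omega$'' is not quite justified, but since $u=\tilde v$ on $\partial G$ regardless and Remark \ref{locality} identifies $\partial_G u$ with $\partial_\Omega u$ on the open set $G$, this does not affect the proof.
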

\begin{proof} By adding a constant to $v$, we can assume that $\min_{x\in\p\Omega} (u(x)- v(x))=0$ and hence $u\geq v$ on $\p\Omega$. We need to show that $u\geq v$ in $\Omega$.
Arguing by contradiction, we suppose $u-v$ attains its minimum at $x_0\in\Omega$ with
$u(x_0)- v(x_0)=-M<0.$
Choose $\e>0$ small such that
$\varepsilon (\text{diam } \Omega)^2 <M/2.$
Let us consider
$$w(x) = u(x) - v(x) - \e |x-x_0|^2.$$
If $x\in\p\Omega$ then $w(x)\geq -\e (\text{diam }\Omega)^2 \geq -\frac{M}{2}$ while at $x_0$,
$w(x_0) =-M <-M/2.$
Thus, $w$ attains its minimum value at $a\in\Omega$.

The choice $w$ comes from simple investigation in the case $u$ and $v$ are smooth. 
In this case $\det D^2 u(x) \leq \det D^2 v(x)$ for all $x\in \Omega$ and we try to use a more quantitative version of the following facts:
$$Du(x_0) = Dv(x_0), \det D^2 u(x_0)\geq \det D^2 v(x_0).$$
This almost gives a contradiction. The 
 minimum point $a$ of $w$ actually helps us do this. In fact, we have $D^2 w(a)\geq 0$
 and hence
$D^2 u(a) \geq D^2 v(a) + 2\e I_n.$
Therefore, by Lemma \ref{concavelem}, we have
$$\det D^2 u(a) \geq \det (D^2 v(a) +2\varepsilon I_n)\geq \left[(\det D^2 v(a))^{1/n} + (\det (2\e I_n))^{1/n}\right]^n>\det D^2 v(a),$$
thus obtaining a contradiction to the assumption $\det D^2 u(x) \leq \det D^2 v(x)$ for all $x\in \Omega$.

Let us now derive a contradiction for general convex functions $u$ and $v$. Let
$$E=\{x\in\bar \Omega: w<-\frac{3}{4}M\}.$$
Then $E$ is open, nonempty because $a\in E$ and, furthermore $$\p E=\{x\in\bar \Omega: w=-\frac{3}{4}M\}\subset\Omega.$$ 
The function $u$ is below $v+ \e |x-x_0|^2-\frac{3}{4}M $ in $E$ and they coincide on $\p E$. 
By applying the maximum principle in Lemma \ref{mp1}
to these functions in $E$, we get
$$\p u(E)\supset \p (v+ \e |x-x_0|^2-\frac{3}{4}M) (E)= \p (v+ \e |x-x_0|^2) (E).$$
It follows that
\begin{equation}|\p u(E)|\geq |\p (v+ \e |x-x_0|^2) (E)|.
 \label{uvE}
\end{equation}
We claim that
\begin{equation}|\p (v+ \e |x-x_0|^2) (E)|\geq |\p v(E)| + |\p \e |x-x_0|^2) (E)|= |\p v(E)| + (2\e)^n |E|> |\p v(E)|
 \label{MA_add}
\end{equation}
and thus by (\ref{uvE}) obtaining a contradiction to the hypothesis $|\p u(E)|\leq |\p v(E)|$. 

It remains to prove the claim. If $v\in C^2(\Omega)$ then, by Lemma \ref{concavelem},  we have
\begin{eqnarray*}|\p (v+ \e |x-x_0|^2) (E)|& =& \int_{E} \det (D^2 v + 2\e I_n)
\\ & \geq& \int_E (\det D^2 v + (2\e)^n) = |\p v(E)| + (2\e)^n |E|.
 \end{eqnarray*}
In general, we can approximate $v$ by convex $C^2$ functions $v^{\eta}$ ($\eta>0$) such that as $\eta\rightarrow 0$, $v^{\eta}$ converges uniformly to $v$ on compact subsets of $\Omega$. Thus can by done by 
setting $v^{\eta}= v\ast \phi_{\eta}$ where $\varphi_{\eta}$ is a standard mollifier, that is, $\phi_\eta$ is smooth with support in $B_{\eta}(0)$ and 
$\int_{\R^n}\phi_{\eta}=1$. The claim holds for $v^{\eta}$ and by letting $\eta
\rightarrow 0$, using Theorem \ref{weakMA}, we obtain the claim for $v$.
\end{proof}
Finally, we give a positive answer to Question \ref{dropquest}. The following theorem says that a convex function with a positive lower bound on its opening at all scale will drop its values when stepping inside the domain.
\begin{thm}\label{drop-thm}
If $B_1(0)\subset \Omega\subset B_n(0)$ where $\Omega$ is a convex set, and if $u$ is a convex function on $\Omega$ with $u=0$ on $\p\Omega$ and
$$\lambda \leq \det D^2 u \leq \Lambda,$$
then
$$c(\lambda, n) \leq |\min_{\Omega} u| \leq C(\Lambda, n).$$
\end{thm}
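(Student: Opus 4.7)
The two inequalities are proved separately by comparing $u$ with explicit quadratic barriers, using the comparison principle (Lemma~\ref{comp-prin}) and the normalization $B_1(0)\subset\Omega\subset B_n(0)$. Note first that $u\leq 0$ on $\overline{\Omega}$ by convexity and the boundary condition, so $|\min_{\Omega} u| = -\min_{\Omega} u$.

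\textbf{Upper bound $|\min_{\Omega}u|\leq C(\Lambda,n)$.} I would set
\[
b(x) := \frac{\Lambda^{1/n}}{2}\bigl(|x|^2 - n^2\bigr),
\]
so that $\det D^2 b = \Lambda$ on $\R^n$ and $b\leq 0$ on $B_n(0)\supset \Omega$. On $\p\Omega$ we have $u=0\geq b$, and in $\Omega$ we have $\det D^2 b = \Lambda \geq \det D^2 u$. Applying Lemma~\ref{comp-prin} with the roles ``$u$''$=u$ and ``$v$''$=b$ yields $\min_{\overline{\Omega}}(u-b)=\min_{\p\Omega}(u-b)\geq 0$, hence $u\geq b$ throughout $\Omega$. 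Therefore
\[
\min_{\Omega} u \;\geq\; \min_{\overline{\Omega}} b \;\geq\; -\tfrac{1}{2}\Lambda^{1/n} n^2,
\]
which is the desired upper bound. (Alternatively, Aleksandrov's maximum principle, Theorem~\ref{Alekmp}, combined with $|\p u(\Omega)|\leq \Lambda|\Omega|\leq \Lambda\omega_n n^n$ and $\mathrm{diam}(\Omega)\leq 2n$, gives the same conclusion.)

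\textbf{Lower bound $|\min_{\Omega}u|\geq c(\lambda,n)$.} Here I would work on the inscribed ball $B_1(0)\subset\Omega$ and compare $u$ with the paraboloid
\[
\tilde{w}(x) := \frac{\lambda^{1/n}}{2}\bigl(|x|^2 - 1\bigr),
\]
which satisfies $\det D^2 \tilde{w}=\lambda$ and $\tilde{w}=0$ on $\p B_1(0)$. In $B_1(0)$ we have $\det D^2 u \geq \lambda = \det D^2 \tilde{w}$, so applying Lemma~\ref{comp-prin} with the roles ``$u$''$=\tilde{w}$ and ``$v$''$=u$ gives
\[
\min_{\overline{B_1(0)}}(\tilde{w}-u) \;=\; \min_{\p B_1(0)}(\tilde{w}-u).
\]
On $\p B_1(0)$, $\tilde{w}=0\geq u$, so the right-hand side is nonnegative. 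Hence $\tilde{w}\geq u$ on $\overline{B_1(0)}$, and evaluating at the origin,
\[
\min_{\Omega} u \;\leq\; u(0) \;\leq\; \tilde{w}(0) \;=\; -\tfrac{1}{2}\lambda^{1/n},
\]
yielding $|\min_{\Omega} u|\geq \tfrac{1}{2}\lambda^{1/n}$, as required.

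\textbf{Remarks.} No step looks like an obstacle; both bounds are direct consequences of the comparison principle once the correct barrier is chosen. The only subtlety is keeping straight which direction of the inequality the comparison principle needs: one must place the function with the \emph{larger} Monge-Amp\`ere measure in the ``$v$'' slot of Lemma~\ref{comp-prin}, which is why for the lower bound we compare $\tilde{w}$ to $u$ (and not vice versa), while for the upper bound we compare $u$ to $b$.
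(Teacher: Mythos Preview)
Your proof is correct and follows essentially the same approach as the paper: both bounds are obtained via the comparison principle (Lemma~\ref{comp-prin}) against the quadratic barriers $\frac{\Lambda^{1/n}}{2}(|x|^2-n^2)$ and $\frac{\lambda^{1/n}}{2}(|x|^2-1)$. The only cosmetic difference is that for the lower bound the paper compares on all of $\Omega$ (noting that $\frac{\lambda^{1/n}}{2}(|x|^2-1)\geq 0=u$ on $\partial\Omega$ since $|x|\geq 1$ there), whereas you restrict to $B_1(0)$ and use $u\leq 0$ on $\partial B_1(0)$; both routes yield $u(0)\leq -\tfrac{1}{2}\lambda^{1/n}$.
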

\begin{proof}
The inequality
$ |\min_{\Omega} u| \leq C(\Lambda, n)$
follows from Aleksandrov maximum principle, Theorem \ref{Alekmp}. However, we give here a proof of the theorem using the comparison principle in Lemma \ref{comp-prin}.
Consider
$$v(y)=\frac{\lambda^{1/n}}{2} (|y|^2 - 1).$$ Then, since $B_1(0)\subset \Omega$, $$v\geq 0=u~ \text{on} ~\p\Omega~ \text{and}~ \det D^2 v =\lambda\leq \det D^2 u~ \text{in}~ \Omega.$$ Thus, by Lemma \ref{comp-prin}, we have
$u\leq v$ in $\Omega$. Similarly, we have
$$\frac{\Lambda^{1/n}}{2} (|y|^2 - n^2) \leq u(y) \leq \frac{\lambda^{1/n}}{2} (|y|^2 - 1).$$
It follows that
$$ -\frac{\Lambda^{1/n}n^2}{2}  \leq \min_{\Omega} u \leq u(0) \leq -\frac{\lambda^{1/n}}{2}, $$
completing the proof of the theorem.
\end{proof}
\subsection{The Dirichlet problem and Perron's method}
The main result of this section is the solvability of the nonhomogeneous Dirichlet problem for the Monge-Amp\`ere equation with continuous boundary data. We essentially follow the presentation in Rauch-Taylor
 \cite{RT}.
\begin{thm}
\label{muthm}
 Let $\Omega\subset\R^n$ be an open, bounded and strictly convex domain. Let $\mu$ be a Borel measure in $\Omega$ with $\mu(\Omega)<\infty$. 
 Then for any $g\in C(\p\Omega)$, the problem
 \begin{equation*}
 \left\{
 \begin{alignedat}{2}
   \det D^2 u~&=\mu \h~&&\text{in} ~\Omega, \\\
 u&= g \h~&&\text{on}~ \p\Omega,
 \end{alignedat}
 \right.
\end{equation*}
has a unique convex solution $u\in C(\overline\Omega)$ in the sense of Aleksandrov.
\end{thm}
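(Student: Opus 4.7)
The plan is Perron's method. Uniqueness is immediate: if $u_{1}, u_{2}$ are two Aleksandrov solutions sharing boundary data $g$, applying the comparison principle (Lemma~\ref{comp-prin}) twice with the roles of $u_{1}, u_{2}$ swapped forces $u_{1} \equiv u_{2}$ on $\overline{\Omega}$.

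For existence, define the Perron class
\[
\mathcal{F} := \bigl\{v \in C(\overline{\Omega}) : v \text{ convex},\ \det D^{2} v \geq \mu \text{ in } \Omega,\ v \leq g \text{ on } \partial\Omega\bigr\},
\]
and set $u(x) := \sup\{v(x) : v \in \mathcal{F}\}$. I would verify nonemptiness of $\mathcal{F}$ by approximating $\mu$ by a finite sum of Dirac masses $\mu_{N} = \sum_{i=1}^{N} c_{i}\delta_{x_{i}}$ and noting that the cones of Example~\ref{cone_eg}, summed as $\sum_{i} a_{i}|x - x_{i}|$ with $\omega_{n} a_{i}^{n} = c_{i}$, have Monge-Amp\`ere measure at least $\mu_{N}$ (since $\det(A+B) \geq \det A + \det B$ for $A, B \geq 0$ implies $M(v_{1}+v_{2}) \geq Mv_{1} + Mv_{2}$ for smooth convex $v_{1}, v_{2}$, extended to general convex functions by mollification and Lemma~\ref{weakMA}); then one shifts downward by a large constant to enforce $v \leq g$ on $\partial\Omega$ and takes $N \to \infty$. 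Uniform upper boundedness of $\mathcal{F}$ is automatic: any constant $C \geq \sup_{\partial\Omega} g$ makes $v - C$ convex with boundary values $\leq 0$, so $v \leq C$ in $\Omega$. Thus $u$ is a well-defined, bounded, convex function. To show $\det D^{2} u = \mu$ in $\Omega$, I apply the standard Perron lifting argument on each closed ball $\overline{B} \subset \Omega$: Theorem~\ref{TWC2} applied to smooth strictly positive densities approximating $\mu|_{B}$, together with the weak continuity Lemma~\ref{weakMA}, produces a convex $\tilde{u}$ on $\overline{B}$ with $\tilde u = u$ on $\partial B$ and $\det D^{2} \tilde u = \mu|_{B}$; the comparison principle gives $\tilde u \geq u$ on $B$, so the lifted function $U$ (equal to $\tilde u$ on $B$ and to $u$ elsewhere) lies in $\mathcal{F}$, forcing $U = u$ by maximality, whence $\det D^{2} u = \mu$ on $B$. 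Since $B$ is arbitrary, $\det D^{2} u = \mu$ on all of $\Omega$.

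The main obstacle is verifying $u = g$ on $\partial\Omega$ continuously, and here the strict convexity of $\Omega$ is essential. Fix $x_{0} \in \partial\Omega$ and $\eps > 0$. For the upper bound $\limsup_{x \to x_{0}} u(x) \leq g(x_{0}) + \eps$: strict convexity of $\Omega$ yields an affine $h$ with $h(x_{0}) = 0$ and $h < 0$ on $\overline{\Omega} \setminus \{x_{0}\}$ (the inward-oriented supporting hyperplane); by continuity of $g$ together with the uniform gap of $h$ away from $x_{0}$, for $K > 0$ large the affine function $\ell_{\eps}(x) := g(x_{0}) + \eps - K h(x)$ satisfies $\ell_{\eps} \geq g$ on $\partial\Omega$. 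For any $v \in \mathcal{F}$, the convex function $v - \ell_{\eps}$ is $\leq 0$ on $\partial\Omega$ and hence $\leq 0$ on $\overline{\Omega}$, giving $u \leq \ell_{\eps}$ and thus $u(x_{0}) \leq g(x_{0}) + \eps$. For the matching lower bound $\liminf_{x \to x_{0}} u(x) \geq g(x_{0}) - \eps$, I would construct an explicit subsolution $v_{\eps} \in \mathcal{F}$ with $v_{\eps}(x_{0}) \geq g(x_{0}) - \eps$ by combining the cone/paraboloid subsolutions from the nonemptiness step with a linear lift of the form $g(x_{0}) - \eps + \eta h$: the strict negativity of $h$ off $x_{0}$ furnishes a gap that, for $\eta$ large, absorbs the bounded oscillation of the subsolution piece and forces $v_{\eps} \leq g$ on $\partial\Omega$, while at $x_{0}$ the correction is zero and $v_{\eps}(x_{0}) = g(x_{0}) - \eps$. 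The hardest part will be carrying out this lower-barrier construction while simultaneously maintaining $\det D^{2} v_{\eps} \geq \mu$ as measures: the finiteness $\mu(\Omega) < \infty$ is precisely what keeps the required opening of the subsolution piece controlled, and strict convexity of $\Omega$ is precisely what creates the $h$-gap that accommodates it. Sending $\eps \to 0$ yields $u(x_{0}) = g(x_{0})$, and the two-sided barriers simultaneously produce continuity of $u$ up to $\partial \Omega$, completing the construction of the desired Aleksandrov solution.
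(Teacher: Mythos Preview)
Your barrier construction for boundary continuity is sound, but the lifting step that should force $\det D^2 u = \mu$ contains a circularity. You propose to solve $\det D^2 \tilde u = \mu|_B$ on a ball $B$ with boundary values $u|_{\partial B}$ via Theorem~\ref{TWC2}; but that theorem requires $C^3$ boundary data, and your Perron envelope $u$ is merely continuous. Patching this by also approximating the boundary data and passing to a limit amounts to proving Theorem~\ref{muthm} on balls first---which is the very statement at issue. Even granting such a $\tilde u$, your claim that the lifted function $U$ lies in $\mathcal F$ needs $MU \geq \mu$ on $\Omega\setminus\overline B$; since $U=u$ there this reduces to $Mu \geq \mu$, which you have not established and which does not follow automatically from $u$ being a supremum of subsolutions. (Invoking TWC2 here is also logically backwards within these notes: it is a deep global regularity result, whereas Theorem~\ref{muthm} is foundational.)

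The paper avoids these traps by a three-stage reduction: first solve $\mu\equiv 0$ using only affine competitors (Theorem~\ref{mu0thm}); then solve the case of a finite sum of Diracs (Lemma~\ref{dirac_lem}), where nonemptiness uses a \emph{finite} sum of cones plus a $\mu=0$ boundary corrector, the inequality $Mu\geq\mu$ is obtained via an explicit approximation property (Steps~3 and~5), and the lifting on balls uses only the already-established $\mu=0$ solvability, so no circularity; finally, for general finite $\mu$, approximate by Dirac sums and pass to the limit in the \emph{solutions} via the compactness Lemma~\ref{comp_lem}. Note that your nonemptiness argument---summing cones $\sum_i a_i|x-x_i|$ with $\omega_n a_i^n = c_i$ and sending $N\to\infty$---also fails as written, since $\sum_i a_i = \omega_n^{-1/n}\sum_i c_i^{1/n}$ typically grows like $N^{1-1/n}$; the paper never constructs a single global subsolution for general $\mu$, precisely because it passes to the limit at the level of solutions rather than subsolutions.
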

We will use the Perron method \cite{Per} which was designed to solve the Dirichlet problem for the Laplace equation with continuous boundary data. Let us brieftly recall this powerful
method in solving the following problem:
\begin{prob}
Let $\Omega\subset\R^n$ be an open, bounded and smooth domain and  $\varphi\in C(\p\Omega)$.
Find a solution $u\in C(\overline{\Omega})$ solving
 \begin{equation*}
 \left\{
 \begin{alignedat}{2}
   \Delta u~&=0 \h~&&\text{in} ~\Omega, \\\
 u&= \varphi \h~&&\text{on}~ \p\Omega.
 \end{alignedat}
 \right.
\end{equation*}
\label{Lap_Dir}
\end{prob}
Relevant to Problem \ref{Lap_Dir} are the following sets of candidates of supersolutions and subsolutions:
\begin{myindentpar}{1cm}
 (i) Overshooting paths
 $$S^{\varphi}=\{u\in C(\overline{\Omega})\mid u\geq \varphi ~\text{on}~\p\Omega~\text{and}~\text{u is superharmonic, that is, } -\Delta u\geq 0~\text{in }\Omega\}.$$
 (ii) Undershooting paths
 $$S_{\varphi}=\{v\in C(\overline{\Omega})\mid v\leq \varphi ~\text{on}~\p\Omega~\text{and}~\text{v is subharmonic, that is, } -\Delta v\leq 0~\text{in }\Omega\}.$$
\end{myindentpar}
\begin{thm} (Perron \cite{Per})
\label{Perthm}
 The function
 $u(x)=\sup_{v\in S_{\varphi}} v(x)$
 is harmonic in $\Omega$.
\end{thm}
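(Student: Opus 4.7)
The plan is to follow the classical Perron argument, where the two ingredients we can freely use are (i) solvability of the Dirichlet problem on Euclidean balls via the Poisson formula, and (ii) the strong maximum principle for harmonic functions. First I would check that $u$ is well defined and bounded: the constant $\min_{\partial\Omega}\varphi$ lies in $S_{\varphi}$, so $S_{\varphi}\neq\emptyset$, and for every $v\in S_{\varphi}$ the maximum principle for subharmonic functions forces $v\le \max_{\partial\Omega}\varphi$ in $\Omega$; hence $\min\varphi\le u\le\max\varphi$ in $\Omega$.

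The central device is the \emph{harmonic lifting}. Fix $x_{0}\in\Omega$ and a closed ball $\overline{B}=\overline{B_{r}(x_{0})}\subset\Omega$. For $v\in S_{\varphi}$, define $L_{B}v$ on $\Omega$ by $L_{B}v=v$ on $\Omega\setminus B$ and $L_{B}v$ equal on $B$ to the harmonic function with boundary data $v|_{\partial B}$ (constructed via the Poisson integral). I would verify the three standard properties: $L_{B}v$ is continuous on $\overline{\Omega}$, $L_{B}v\ge v$ in $\Omega$ (since $v$ is subharmonic on $B$), and $L_{B}v\in S_{\varphi}$ (it is subharmonic in $\Omega$ because it is harmonic in $B$ and subharmonic off $B$, and agrees with $v\le\varphi$ on $\partial\Omega$). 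Consequently $L_{B}v(x_{0})\le u(x_{0})$ for every $v\in S_{\varphi}$.

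Next I would exhibit $u$ as a harmonic function on $B$. Choose a sequence $v_{k}\in S_{\varphi}$ with $v_{k}(x_{0})\to u(x_{0})$; replacing $v_{k}$ by $\max(v_{k},\min\varphi)$ and then by $L_{B}v_{k}$, I may assume each $v_{k}$ is uniformly bounded and harmonic on $B$ with $v_{k}(x_{0})\to u(x_{0})$. By the standard interior derivative estimates for harmonic functions (or equicontinuity via Harnack), a subsequence converges locally uniformly in $B$ to a harmonic function $h\colon B\to\mathbb R$, with $h(x_{0})=u(x_{0})$ and $h\le u$ throughout $B$.

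The main obstacle, and the step I would treat most carefully, is showing $h\equiv u$ in $B$. Suppose to the contrary that $h(y)<u(y)$ for some $y\in B$. Pick $w\in S_{\varphi}$ with $w(y)>h(y)$, and form $V_{k}:=L_{B}\bigl(\max(v_{k},w)\bigr)\in S_{\varphi}$; each $V_{k}$ is harmonic on $B$, bounded, and $V_{k}\ge v_{k}$, $V_{k}\ge w$ on $B$. Passing to a further subsequence, $V_{k}\to H$ locally uniformly in $B$ with $H$ harmonic, $h\le H\le u$ on $B$, $H(x_{0})=u(x_{0})$, and $H(y)\ge w(y)>h(y)$. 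Then $H-h\ge 0$ is harmonic in $B$, vanishes at the interior point $x_{0}$, so by the strong maximum principle $H\equiv h$ in $B$, contradicting $H(y)>h(y)$. Hence $u=h$ is harmonic in $B$, and since $x_{0}\in\Omega$ and $r>0$ were arbitrary with $\overline{B_{r}(x_{0})}\subset\Omega$, $u$ is harmonic in $\Omega$.
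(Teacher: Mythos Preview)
Your proof is correct and is precisely the classical Perron argument. The paper does not give a proof of this theorem; it only lists the key ingredients --- the maximum principle for harmonic functions, solvability of the Dirichlet problem on balls via the Poisson integral, and stability of the harmonic lifting --- and your argument uses exactly these three components in the standard way.
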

Key ingredients in the proof of Theorem \ref{Perthm} include:
\begin{myindentpar}{1cm}
 (a) The maximum principle for harmonic functions.\\
 (b) The solvability of the Dirichlet problem for $\Omega$ being any ball $B$ and $\varphi$ is any continuous function on $\p B$. This uses essentially the Poisson integral of $\varphi$.\\
 (c) The stability of the lifting of subharmonic and harmonic functions. More precisely, suppose $\bar u$ is harmonic in a ball $B\subset\subset \Omega$ and $u$ is subharmonic
 in $\Omega$. Define the lifting of $\bar u$ and $u$ by
 $$U(x) = \left\{\begin{array}{rl}
 \bar u (x) &  x\in B,\\
u (x)  & x\in  \Omega\setminus B.
\end{array}\right.$$
 Then $U$ is also a subharmonic function in $\Omega$.
\end{myindentpar}
Features of Perron's method:
\begin{myindentpar}{1cm}
 (F1) It separates the interior existence problem from that of the boundary behavior of solutions.\\
 (F2) It can be extended easily to more general classes of second order elliptic equations.
\end{myindentpar}
The main question regarding Perron's method is:
\begin{quest}
\label{Dquest}
 Does $u$ defined by Perron's theorem satisfy $u=\varphi$ on $\p\Omega$?
\end{quest}
The answer to Question \ref{Dquest} depends on local behavior of $\p\Omega$ near each boundary point $x_0\in\p\Omega$. But the answer is always YES if $\Omega$ is convex. This is 
based on the concept
of {\it barriers}; see \cite[Chapter 2]{GT} for more details.

Before proving Theorem \ref{muthm}, we consider a simpler theorem regarding the solvability of the homogeneous Dirichlet problem for the Monge-Amp\`ere equation with continuous boundary data.
\begin{thm}
\label{mu0thm}
Let $\Omega\subset\R^n$ be an open, bounded and strictly convex domain. Then for any $g\in C(\p\Omega)$, the problem
 \begin{equation*}
 \left\{
 \begin{alignedat}{2}
   \det D^2 u~&=0 \h~&&\text{in} ~\Omega, \\\
 u&= g \h~&&\text{on}~ \p\Omega,
 \end{alignedat}
 \right.
\end{equation*}
has a unique convex solution $u\in C(\overline\Omega)$ in the sense of Aleksandrov.
\end{thm}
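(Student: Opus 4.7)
The plan is to construct $u$ via Perron's method as the upper envelope of an appropriate family of convex subsolutions. Set
$$\mathcal{F} = \{v : \overline\Omega \to \R \mid v \text{ convex on } \overline\Omega,\ v \leq g \text{ on } \p\Omega\},\qquad u(x) := \sup_{v\in\mathcal{F}} v(x).$$
The family $\mathcal{F}$ is nonempty (it contains the constant $\min_{\p\Omega} g$), and any $v\in\mathcal{F}$ attains its maximum on $\p\Omega$, so $v\leq \max_{\p\Omega} g$ on $\overline\Omega$; hence $u$ is finite. As a pointwise supremum of convex functions, $u$ is convex, and being bounded and convex on the convex set $\overline\Omega$ it is automatically continuous in $\Omega$. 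Uniqueness will follow from the comparison principle (Lemma \ref{comp-prin}) applied in both directions to two putative solutions agreeing on $\p\Omega$.

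To obtain continuity up to $\p\Omega$ and $u|_{\p\Omega} = g$, I would construct affine barriers at each $x_0\in\p\Omega$ using the strict convexity of $\Omega$. Fix $\e>0$ and pick $\delta>0$ with $|g(x)-g(x_0)|<\e/2$ for $x\in\p\Omega\cap B_\delta(x_0)$. Strict convexity yields a unit vector $a$ with $a\cdot(x-x_0)\leq 0$ on $\overline\Omega$ and $a\cdot(x-x_0)\leq -c<0$ on $\p\Omega\setminus B_\delta(x_0)$. For $M$ sufficiently large, the affine functions
$$l^-_\e(x) = g(x_0)-\e + M a\cdot(x-x_0),\qquad l^+_\e(x) = g(x_0)+\e - M a\cdot(x-x_0)$$
satisfy $l^-_\e\leq g \leq l^+_\e$ on $\p\Omega$ and $l^\pm_\e(x_0) = g(x_0)\pm\e$. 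Thus $l^-_\e\in\mathcal{F}$, giving $u(x_0)\geq g(x_0)-\e$, while for any $v\in\mathcal{F}$ the convex function $v-l^+_\e$ is nonpositive on $\p\Omega$, hence $v\leq l^+_\e$ on $\overline\Omega$ and therefore $u\leq l^+_\e$. Sending $x\to x_0$ and then $\e\to 0$ yields $\lim_{x\to x_0} u(x) = g(x_0)$, establishing the continuous extension and the boundary condition.

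The crux is to verify $\det D^2 u = 0$ in the Aleksandrov sense, equivalently $|\p u(\Omega)|=0$. I would argue that every $p\in\p u(\Omega)$ is the slope of a supporting hyperplane that touches the graph of $u$ at more than one point; once this is established, Lemma \ref{2touch} immediately yields $|\p u(\Omega)|=0$. Suppose for contradiction that $p\in\p u(x_0)$ with $x_0\in\Omega$, and the affine function $l(x)=u(x_0)+p\cdot(x-x_0)$ satisfies $l<u$ on $\overline\Omega\setminus\{x_0\}$. Since $u-l>0$ on the compact set $\p\Omega$, some $\eta>0$ satisfies $u\geq l+\eta$ on $\p\Omega$, so that the open set $A=\{l+\eta>u\}$ is compactly contained in $\Omega$ and contains $x_0$. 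Define $\tilde u := \max(u,\, l+\eta)$, which is convex on $\overline\Omega$ as the pointwise maximum of two convex functions and coincides with $u$ on $\p\Omega$; in particular $\tilde u\leq g$ on $\p\Omega$, so $\tilde u\in\mathcal{F}$. However $\tilde u(x_0) = u(x_0)+\eta > u(x_0)$, contradicting the definition of $u$ as the supremum over $\mathcal{F}$. The main obstacle is precisely this lifting step: it requires the strict inequality $l<u$ off $x_0$ to produce a valid competitor in $\mathcal{F}$, and depends on $x_0$ being an interior point so that the strict separation $u\geq l+\eta$ on $\p\Omega$ can be arranged.
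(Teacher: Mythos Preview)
Your proof is correct and follows the same Perron-method route as the paper: affine barriers from strict convexity for the boundary behaviour, and a maximality argument together with Lemma~\ref{2touch} for the vanishing Monge--Amp\`ere measure. The only cosmetic difference is that the paper takes $\mathcal{F}$ to be the \emph{affine} functions $a\le g$ on $\p\Omega$ (which produces the same envelope, since any convex competitor is the sup of its supporting affines) and then argues that the support plane $l$ at $x_0$ must hit $g$ at some $y\in\p\Omega$ (else $l+\e\in\mathcal{F}$), hence touches $u$ along the whole segment $[x_0,y]$; your lifting $\tilde u=\max(u,l+\eta)$, made possible by the larger competitor class, is an equivalent way to exploit maximality.
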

\begin{proof}[Proof of Theorem \ref{mu0thm}] We first note that, if there is such a convex solution $u\in C(\overline{\Omega})$, then it is unique by the comparison principle (Lemma 
\ref{comp-prin}). Our main task now is to show the 
existence. Heuristically, we look at the supremum of subsolutions which are convex functions $u\in C(\overline{\Omega})$ satisfying
$-\det D^2 u(x)\leq 0~\text{in}~\Omega~\text{and}~u\leq g~\text{on}~\p\Omega.$
By we can simply try affine functions. 
Let $$\mathcal{F}=\{a(x): \text{a is an affine function and } a\leq g~\text{on}~\p\Omega\}.$$
Since $g$ is continuous, $\mathcal{F}\neq\emptyset$ because
$a(x)\equiv \min_{y\in\p\Omega}g(y)\in\mathcal{F}.$\\
{\bf Claim:} 
\begin{equation*}
 u(x)=\sup_{a\in\mathcal{F}} a(x),~x\in\overline{\Omega}
\end{equation*}
is the unique desired solution.

Clearly, $u$ is convex and $u\leq g$ on $\p\Omega$.
The proof of the claim is proceeded in 3 steps.\\
{\bf Step 1}: $u=g$ on $\p\Omega$.\\
{\bf Step 2}: $u\in C(\overline{\Omega})$.\\
{\bf Step 3:} 
$$\p u(\Omega)\subset\{p\in\R^n: \text{there are}~x\neq y\in\Omega~\text{such that}~p\in\p u(x)\cap \p u(y)\}.$$
Assuming all these steps have been verified, we conclude the proof as follows. By Lemma \ref{2touch}, we have $|\p u(\Omega)|=0$ and hence $Mu=0$ in $\Omega$, or 
$\det D^2 u=0~ \text{in} ~\Omega$ in the sense of Aleksandrov. This completes the proof of Theorem \ref{mu0thm}.\\
{\bf Proof of Step 1}. Let $x_0\in\p\Omega$. We show $u(x_0)\geq g(x_0)$. Without loss of generality, we can assume that $x_0=0\in\p\Omega$ and that
$\Omega\subset \{x\in\R^n: x_n>0\}.$
From the continuity of $g$, given $\e>0$, there exists $\delta>0$ such that
\begin{equation}
 \label{gcts}
 |g(x)- g(0)| <\e~\text{for all~} x\in\p\Omega\cap B_{\delta}(0).
\end{equation}
Since $\Omega$ is strictly convex, there exists $\eta>0$ such that $x_n\geq \eta$ for all $x\in\p\Omega\setminus B_{\delta}(0)$.\\
{\bf Claim 1.} The following function belongs to $\mathcal{F}$: $a(x)= g(0)-\e- C_1 x_n\in \mathcal{F}$ where
\begin{equation}C_1 = \frac{2\|g\|_{L^{\infty}(\p\Omega)}}{\eta}.
 \label{C1eq}
\end{equation}
The proof of {\bf Claim 1} is quite elementary. If $x\in \p\Omega\cap B_{\delta}(0)$ then by (\ref{gcts}), we have $g(x)> g(0)-\e\geq a(x)$. If 
$x\in\p\Omega\setminus B_{\delta}(0)$ then $x_n\geq\eta$ and hence 
$$a(x)\leq g(0)-\e -C_1\eta <g(0)-2\|g\|_{L^{\infty}(\p\Omega)} \leq g(x).$$
From {\bf Claim 1}, and the definition of $u$, we have
$u(0)\geq a(0)= g(0)-\e$. This holds for all $\e>0$ so $u(0)\geq g(0)$. {\bf Step 1} is proved.\\
{\bf Proof of Step 2}. We note that the proof of {\bf Step 2} in 
Rauch-Taylor \cite{RT} relies on
\begin{myindentpar}{1cm}
 (a) The maximum principle for harmonic functions.\\
 (b) The solvability of the Dirichlet problem for the Laplace equation: There is a unique solution $w\in C(\overline{\Omega})$ solving the equation
 \begin{equation*}
 \left\{
 \begin{alignedat}{2}
   \Delta u~&=0 \h~&&\text{in} ~\Omega, \\\
 u&= g \h~&&\text{on}~ \p\Omega.
 \end{alignedat}
 \right.
\end{equation*}
\end{myindentpar}
We present here a proof without using (a) nor (b).

Since $u$ is convex in $\Omega$, it is continuous there. It remains to prove that $u$ is continuous on $\p\Omega$. Let us assume that $x_0=0\in\p\Omega$
and also $\Omega$ is as in {\bf Step 1.} Let $\{y_k\}_{k=1}^{\infty}\subset\overline{\Omega}$ be such that $y_k\rightarrow 0$. We show that $u(y_k)\rightarrow u(0)=g(0)$. Let $a$ be as in 
{\bf Step 1.} Then $u(x)\geq a(x)$. Hence
$u(y_k)\geq a(y_k)$. Thus, for all $\e>0$, we have
\begin{equation*}
 \liminf_{k\rightarrow\infty} u(y_k)\geq  \liminf_{k\rightarrow\infty} a(y_k) = g(0)-\e.
\end{equation*}
It follows that
$ \liminf_{k\rightarrow\infty} u(y_k)\geq g(0).$
To prove the continuity of $u$ on $\p\Omega$, we are left with showing 
\begin{equation} \limsup_{k\rightarrow\infty} u(y_k)\leq g(0).
 \label{usupg}
\end{equation}
It relies on the following claim:\\
{\bf Claim 2.} Let $A(x)= g(0)+ \e +  C_1 x_n$ where $C_1$ is as in (\ref{C1eq}). 
Then $A(x)\geq g(x)$ on $\p\Omega$.

The proof of {\bf Claim 2} is also quite elementary. If $x\in \p\Omega\cap B_{\delta}(0)$ then by (\ref{gcts}), we have $g(x)< g(0)+ \e\leq A(x)$. If 
$x\in\p\Omega\setminus B_{\delta}(0)$ then $x_n\geq\eta$ and hence 
$$A(x)\geq g(0)+ \e + C_1\eta >g(0) + 2\|g\|_{L^{\infty}(\p\Omega)} \geq g(x).$$
Now, returning to the proof of {\bf Step 2}. If $a\in\mathcal{F}$ then $a(x)\leq g(x)\leq A(x)$ for all $x\in\p\Omega$. Since both $a$ and $A$ are affine, we have $a(x)\leq A(x)$ for all 
$x\in\overline{\Omega}$. By taking the supremum over $a\in\mathcal{F}$, we find $u(x)\leq A(x)$ for all $x\in\overline{\Omega}$.
In particular, (\ref{usupg}) then follows from 
\begin{equation*}
 \limsup_{k\rightarrow\infty} u(y_k)\leq  \limsup_{k\rightarrow\infty} A(y_k) = g(0)+ \e.
\end{equation*}
{\bf Proof of Step 3.} Let $p\in \p u(\Omega)$. Then $p\in\p u(x_0)$ for some $x_0\in\Omega$, and hence
\begin{equation}
 \label{pstep3} u(x)\geq u(x_0) + p\cdot(x-x_0):= a(x)~\text{for all } x\in\Omega.
\end{equation}
{\bf Claim 3}: There is $y\in\p\Omega$ such that $g(y)=a(y)$.

Indeed, from (\ref{pstep3}), $u\equiv g$ on $\p\Omega$ by {\bf Step 1}, and the continuity of both $u$ and $g$, we find $g(x)\geq a(x)$ for all $x\in\p\Omega$. 
If {\bf Claim 3} does not hold, then by the continuity of $g$ and $a$, there is $\e>0$ such that $g(x)\geq a(x) +\e$ for all $x\in\p\Omega$. Therefore, $a+\e\in\mathcal{F}$. By the definition of $u$,
we have $u(x)\geq a(x) +\e$ for all $x\in\Omega$ but this contradicts $u(x_0)= a(x_0)$. \\
{\bf Claim 4}: $a(x)$ is a supporting hyperplane to the graph of $u$ at $(z, u(z))$ for $z$ on a whole open segment $I$ connecting $x_0\in\Omega$ to $y\in\p\Omega$.

To prove {\bf Claim 4}, we show that $u(z)\leq a(z)$ for all $z\in I$ because we already have $u(x)\geq a(x)$ for all $x\in\Omega$. Let $z=\alpha x_0 + (1-\alpha) y$ where
$0\leq\alpha\leq 1$. By convexity and the fact that $a$ being affine,
$$u(z)\leq \alpha u(x_0) + (1-\alpha) u(y) = \alpha a(x_0)+ (1-\alpha) a (y) = a(\alpha x_0 + (1-\alpha)y)= a(z).$$
From {\bf Claim 4}, we have $p\in u(z)$ for all $z\in I$ and we are done with {\bf Step 3}.
\end{proof}

{\bf Strategy of the proof of Theorem \ref{muthm}.} We use the Perron method as in the case $\mu\equiv 0$. Let
$$\mathcal{F}(\mu, g)=\{v\in C(\overline{\Omega}): \text{ v convex}, \det D^2 v\geq \mu~\text{in}~\Omega, v=g~\text{on}~\p\Omega\}$$
and
$$u(x) = \sup_{v\in \mathcal{F}(\mu, g)} v(x).$$
Our goal is to show that $u$ is the desired solution.

When trying to work out the details, the first obstacle we encounter is to show that $\mathcal{F}(\mu, g)\neq\emptyset.$
This should not be too difficult, at least heuristically, for the following reason. If we focus on a point $x_0\in\Omega$, then locally, $\mu$ can be viewed as being 
squeezed between two extremes: $0$ (corresponding to $v$ being affine) and $\infty$ (corresponding to $v$ being a cone with vertex at $x_0$). Thus, we can construct an element of 
$\mathcal{F}(\mu, g)$ from the above two extremes. But it is in fact easier to work directly with the extreme cases.

\begin{rem} 
 \label{rem_reduce}
There exists a sequence of measures $\mu_j$ converging weakly to $\mu$ such that each $\mu_j$ is a finite combination of delta masses with positive coefficients and 
$\mu_j(\Omega)\leq A<\infty$ for some constant $A$.
\end{rem}
Theorem \ref{muthm} follows from the following lemmas.
\begin{lem} \label{dirac_lem} Let $\Omega\subset\R^n$ be an open, bounded and strictly convex domain and  $g\in C(\p\Omega)$. 
Let $\displaystyle\mu=\sum_{i=1}^N a_i \delta_{x_i}$ where $x_i\in\Omega$ and $a_i>0$. Then
 the problem
 \begin{equation*}
 \left\{
 \begin{alignedat}{2}
   \det D^2 u~&=\mu \h~&&\text{in} ~\Omega, \\\
 u&= g \h~&&\text{on}~ \p\Omega,
 \end{alignedat}
 \right.
\end{equation*}
has a unique convex solution $u\in C(\overline\Omega)$ in the sense of Aleksandrov.
\end{lem}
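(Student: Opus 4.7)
The strategy is a Perron-type construction building on the homogeneous case (Theorem \ref{mu0thm}). Uniqueness is immediate from the comparison principle (Lemma \ref{comp-prin}) applied to any two solutions. For existence, I introduce the subsolution class
\[
\mathcal{F}(\mu, g) := \bigl\{ v \in C(\overline{\Omega}) : v \text{ convex},\ Mv \geq \mu,\ v \leq g \text{ on } \p\Omega \bigr\},
\]
and take $u(x) := \sup_{v \in \mathcal{F}(\mu, g)} v(x)$ as the candidate solution.

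To exhibit an element of $\mathcal{F}$, set $c_i := (a_i/\omega_n)^{1/n}$ and, for $A$ larger than $\max_i c_i\,\text{diam}(\Omega) + \|g\|_{L^\infty(\p\Omega)}$, define $v_0(x) := \max_{1 \le i \le N} (c_i |x - x_i| - A)$. Then $v_0 \leq g$ on $\p\Omega$ by the choice of $A$, and since $v_0(x_i) = -A < c_j|x_i - x_j| - A$ for $j \neq i$, $v_0$ coincides with the single cone $c_i|x - x_i| - A$ in a neighborhood of each $x_i$; Example \ref{cone_eg} then yields $|\p v_0(x_i)| = \omega_n c_i^n = a_i$, so $M v_0 \geq \mu$. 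The homogeneous solution $\hat u$ produced by Theorem \ref{mu0thm} with boundary data $g$ serves as the upper barrier: for any $v \in \mathcal{F}$, the comparison principle (using $Mv \geq \mu \geq 0 = M\hat u$ and $v \leq g = \hat u$ on $\p\Omega$) gives $v \leq \hat u$ in $\overline{\Omega}$. Thus $v_0 \leq u \leq \hat u$, and $u$ is convex as a supremum of convex functions.

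The boundary identity $u|_{\p\Omega} = g$ will be obtained by barriers from below; the upper bound $u \leq g$ on $\p\Omega$ is automatic. For the matching lower bound, I fix $x_0 \in \p\Omega$ and small $\epsilon > 0$, and consider the competitor $v^{x_0,\epsilon}(x) := \max\bigl(\ell^{x_0,\epsilon}(x),\, v_0(x) + s\bigr)$, where $\ell^{x_0,\epsilon}$ is the affine barrier from the proof of Theorem \ref{mu0thm} (satisfying $\ell^{x_0,\epsilon} \leq g$ on $\p\Omega$ and $\ell^{x_0,\epsilon}(x_0) = g(x_0) - \epsilon$), and $s$ is the largest shift making $v_0 + s \leq g$ on $\p\Omega$. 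For $\epsilon$ small, the large slope of $\ell^{x_0,\epsilon}$ together with $\text{dist}(x_i, \p\Omega) > 0$ forces $v_0(x_i) + s > \ell^{x_0,\epsilon}(x_i)$ at every $x_i$, so the max coincides with $v_0 + s$ near each $x_i$, preserving $|\p|(x_i) \geq a_i$; hence $v^{x_0,\epsilon} \in \mathcal{F}$ and $u(x_0) \geq v^{x_0,\epsilon}(x_0) \geq g(x_0) - \epsilon$. Continuity of $u$ on $\overline\Omega$ then follows from the sandwich $v_0 \leq u \leq \hat u$, the lower semicontinuity of $u$, and the just-established boundary equality.

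Finally, I verify $Mu = \mu$. For $Mu \geq \mu$, I select by separability a countable subfamily $\{v_k\} \subset \mathcal{F}$ with $\sup_k v_k = u$ pointwise, note that pairwise maxima remain in $\mathcal{F}$ (the subdifferential estimate at each $x_i$ survives because locally near $x_i$ one of the two functions dominates), extract a monotone sequence $u_k \nearrow u$, and apply the weak continuity of Monge-Amp\`ere measures (Lemma \ref{weakMA}). For the reverse direction, a Perron lifting will localize $Mu$ to $\{x_1,\ldots,x_N\}$: for each ball $B \subset\subset \Omega \setminus \{x_1,\ldots,x_N\}$, let $\tilde u$ solve $\det D^2 \tilde u = 0$ in $B$ with $\tilde u = u$ on $\p B$ (Theorem \ref{mu0thm}); comparison gives $\tilde u \geq u$ in $B$, and a segment-by-segment check (using $\tilde u \geq u$ on $B$ and the convexity of $u$ along chords crossing $\p B$) shows that the glued function $U := \tilde u$ on $B$, $U := u$ on $\Omega \setminus B$ is globally convex, hence lies in $\mathcal{F}$; the maximality of $u$ then forces $U = u$ and $Mu$ vanishes on $B$. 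The hardest step of the plan will be upgrading the pointwise inequalities $|\p u(x_i)| \geq a_i$ to equalities: the idea is that if $|\p u(x_i)| > a_i$ strictly, then a local flattening of the conical singularity at $x_i$ by $\max\bigl(u,\, u(x_i) + \delta + h_{K'}(\cdot - x_i)\bigr)$, for a suitably scaled convex body $K' \subsetneq \p u(x_i)$ with $|K'| = a_i$ and small $\delta > 0$, should yield a strictly larger member of $\mathcal{F}$, contradicting the maximality of $u$; the technical subtlety is implementing this perturbation while preserving both the boundary bound $v \leq g$ and the subdifferential constraints at the other atoms.
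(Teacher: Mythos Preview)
Your Perron strategy is essentially the paper's, with two harmless variations: you take the competitor class with $v\le g$ on $\partial\Omega$ (the paper uses $v=g$), and your proposed endgame for $|\partial u(x_i)|=a_i$ is a local flattening rather than the paper's $\lambda^{-1/n}$ rescaling on a sublevel set.

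The genuine gap is your construction of $v_0$. You set $v_0(x)=\max_{1\le i\le N}\bigl(c_i|x-x_i|-A\bigr)$ and assert that near $x_i$ it coincides with the $i$-th cone. But the inequality you cite, $-A<c_j|x_i-x_j|-A$ for $j\ne i$, says precisely that at $x_i$ the $i$-th cone is the \emph{smallest} among the competing cones, not the largest; hence in a neighborhood of $x_i$ the maximum is realized by some $j$-th cone with $j\ne i$, which is smooth at $x_i$. Your $v_0$ therefore carries \emph{no} Dirac mass at any $x_i$, and $v_0\notin\mathcal F(\mu,g)$. The standard repair is to use a \emph{sum} of cones: with $w(x)=\omega_n^{-1/n}\sum_i a_i^{1/n}|x-x_i|$, the subdifferential of $w$ at each $x_i$ is a translate of $\overline{B_{c_i}(0)}$, so $Mw(\{x_i\})\ge a_i$; shifting $w$ down by a large constant then places it below $g$ on $\partial\Omega$. (The paper, needing $v=g$ on the boundary, additionally adds a homogeneous Monge--Amp\`ere correction; with your weaker class the shift alone suffices.) Once $v_0$ is fixed, the rest of your outline is sound in spirit, though your one-line justification that pairwise maxima remain in $\mathcal F$ (``locally one dominates'') is not quite right either: near $x_i$ neither function need dominate on a full neighborhood, and the correct observation is that if $v_1(x_i)\ge v_2(x_i)$ then $\partial v_1(x_i)\subset\partial\bigl(\max(v_1,v_2)\bigr)(x_i)$ globally, which already yields the required atom.
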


\begin{lem}
\label{comp_lem}
 Let $\Omega\subset\R^n$ be an open, bounded and strictly convex domain. Let $\mu_j, \mu$ be Borel measures in $\Omega$ such that $\mu_j(\Omega)\leq A<\infty$
 and $\mu_j$ converges weakly to $\mu$ in $\Omega$. Let 
 $g_j, g\in C(\p\Omega)$ be such that $g_j$ converges uniformly to $g$ in $C(\p\Omega)$. Let  $u_j\in C(\overline\Omega)$ be the unique convex solution in the sense of Aleksandrov
 to
 \begin{equation*}
 \left\{
 \begin{alignedat}{2}
   \det D^2 u_j~&=\mu_j \h~&&\text{in} ~\Omega, \\\
 u_j&= g_j \h~&&\text{on}~ \p\Omega.
 \end{alignedat}
 \right.
\end{equation*}
Then $\{u_j\}$ contains a subsequence, also denoted by $\{u_j\}$, such that $u_j$ converges uniformly on compact subsets of $\Omega$ to 
the unique convex solution $u\in C(\overline{\Omega})$ in the sense of Aleksandrov to
\begin{equation*}
 \left\{
 \begin{alignedat}{2}
   \det D^2 u~&=\mu \h~&&\text{in} ~\Omega, \\\
 u&= g \h~&&\text{on}~ \p\Omega.
 \end{alignedat}
 \right.
\end{equation*}
\end{lem}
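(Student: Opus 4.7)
The plan is to combine compactness of the sequence $\{u_j\}$ with the weak continuity of Monge--Amp\`ere measures and a barrier argument at the boundary. The proof breaks into three stages: extract a subsequential limit $u$; identify $\det D^2 u = \mu$; and verify that $u$ attains $g$ continuously at $\partial\Omega$. Uniqueness of the limit $u$ will follow from Lemma \ref{comp-prin}, and this uniqueness justifies the subsequential phrasing of the lemma.

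For the compactness stage, since $g_j \to g$ uniformly on $\partial\Omega$, I set $M := \sup_j \|g_j\|_{L^\infty(\partial\Omega)} < \infty$. Convexity of $u_j$ gives $u_j \le M$ in $\overline\Omega$, while comparing $u_j$ with the constant $\min g_j\ge -M$ (whose Monge--Amp\`ere measure vanishes) via Lemma \ref{comp-prin} gives $u_j \ge -M$. Lemma \ref{slope-est} then yields the uniform Lipschitz bound $|p|\le 2M/\mathrm{dist}(\Omega',\partial\Omega)$ for every $p \in \partial u_j(x)$ with $x \in \Omega'\subset\subset\Omega$. Arzel\`a--Ascoli produces a subsequence (not relabeled) converging uniformly on compact subsets to a convex $u \in C(\Omega)$. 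By the weak continuity of Monge--Amp\`ere measures (Lemma \ref{weakMA}), $Mu_j \to Mu$ weakly in $\Omega$; combined with the assumption $Mu_j = \mu_j \to \mu$ weakly, uniqueness of weak limits yields $Mu = \mu$, so $u$ is an Aleksandrov solution on $\Omega$.

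For the boundary stage, fix $x_0 \in \partial\Omega$ and $\varepsilon > 0$. After rotation assume $x_0 = 0$ and $\Omega\subset\{x_n>0\}$. Strict convexity of $\Omega$ and continuity of $g$ at $0$ furnish, as in Step 2 of the proof of Theorem \ref{mu0thm}, an affine function $A_\varepsilon(x) = g(0)+\varepsilon + C_\varepsilon x_n$ with $A_\varepsilon \ge g$ on $\partial\Omega$. For $j$ large enough that $\|g_j-g\|_\infty<\varepsilon$, one has $A_\varepsilon+\varepsilon \ge g_j$ on $\partial\Omega$ and $\det D^2(A_\varepsilon+\varepsilon)=0 \le \mu_j$, so Lemma \ref{comp-prin} gives $u_j \le A_\varepsilon+\varepsilon$ in $\Omega$; passing to the limit $j\to\infty$ and then $x\to 0$ yields $\limsup_{x\to 0} u(x)\le g(0)+2\varepsilon$. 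For the matching lower bound my plan is to dominate $u_j \ge w_j+\phi_j$, where $w_j \in C(\overline\Omega)$ solves $\det D^2 w_j = 0$ with $w_j = g_j$ on $\partial\Omega$ (which exists by Theorem \ref{mu0thm}) and $\phi_j \in C(\overline\Omega)$ solves $\det D^2 \phi_j = \mu_j$ with $\phi_j = 0$ on $\partial\Omega$; the existence of $\phi_j$ comes from Lemma \ref{dirac_lem} once one reduces (via Remark \ref{rem_reduce}) to $\mu_j$ being a finite combination of Dirac masses. The Brunn--Minkowski inequality for Monge--Amp\`ere measures then ensures $\det D^2(w_j+\phi_j)\ge \mu_j = \det D^2 u_j$ in the Aleksandrov sense, and since $w_j+\phi_j = g_j = u_j$ on $\partial\Omega$, Lemma \ref{comp-prin} gives $u_j \ge w_j+\phi_j$. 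Meanwhile $w_j \to w$ uniformly on $\overline\Omega$ (with $w$ the unique solution of $\det D^2 w=0$, $w=g$) by comparing $w_j$ with $w\pm\|g_j-g\|_\infty$, and Aleksandrov's maximum principle (Theorem \ref{Alekmp}) bounds $|\phi_j(x)|^n \le C_n \mathrm{diam}(\Omega)^{n-1}\mathrm{dist}(x,\partial\Omega)\,A$ uniformly in $j$. Combining, $\liminf_{x\to 0} u(x)\ge g(0)$, which with the upper barrier forces $u(x)\to g(x_0)$ at every boundary point.

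\textbf{Main obstacle.} I expect the construction of the lower barrier to be the delicate step. Because $\mu_j$ is typically concentrated (sums of Dirac masses after the reduction of Remark \ref{rem_reduce}), it cannot be dominated pointwise by any smooth, absolutely continuous Monge--Amp\`ere measure; a single affine or quadratic subsolution will therefore fail. The decomposition $u_j \ge w_j+\phi_j$ sidesteps this by splitting the two competing requirements --- correct boundary values and sufficient Monge--Amp\`ere mass --- into separate auxiliary Dirichlet problems already solved in Theorem \ref{mu0thm} and Lemma \ref{dirac_lem}, with Brunn--Minkowski serving as the glue that shows the sum is still a valid lower barrier in the Aleksandrov sense.
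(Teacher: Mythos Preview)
Your overall architecture---compactness via uniform bounds and Arzel\`a--Ascoli, identification of the limit via Lemma \ref{weakMA}, and barrier arguments at the boundary---matches the paper's. The upper barrier and the handling of the varying $g_j$ (which the paper does not treat) are fine. The issue is the lower barrier.

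Your construction of $\phi_j$ is circular in the paper's logical order. You appeal to Lemma \ref{dirac_lem} to produce $\phi_j$ solving $\det D^2\phi_j=\mu_j$ with zero boundary data, but in this development Lemma \ref{dirac_lem} is proved \emph{after} Lemma \ref{comp_lem} and in fact invokes Lemma \ref{comp_lem} in its Step~3 (to pass from the approximating sequence $v_m$ to a limit in $\mathcal{F}(\mu,g)$). So you cannot cite it here. The invocation of Remark \ref{rem_reduce} does not help either: that remark approximates a single measure $\mu$ by Dirac sums, whereas here the $\mu_j$ are given arbitrary Borel measures; replacing each $\mu_j$ by a Dirac approximation $\mu_j^{(k)}$ and letting $k\to\infty$ would again require precisely the compactness lemma you are proving.

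The paper bypasses the need for $\phi_j$ entirely. With $a(x)=g(0)-\varepsilon-C_1 x_n$ the affine barrier satisfying $a\le g$ on $\partial\Omega$ (Claim~1 in the proof of Theorem \ref{mu0thm}), the function $u_j-a$ is still convex (because $a$ is affine), is $\ge 0$ on $\partial\Omega$, and carries the same Monge--Amp\`ere measure $\mu_j$. Corollary \ref{cor_Alekmp} then gives directly
\[
u_j(x)\ \ge\ a(x)-C_n\,\mathrm{dist}(x,\partial\Omega)^{1/n}\,\mathrm{diam}(\Omega)^{(n-1)/n}\,A^{1/n},
\]
which is exactly the lower estimate your decomposition $u_j\ge w_j+\phi_j$ was aiming for, without solving any auxiliary Dirichlet problem. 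In effect, the role of your $\phi_j$ is absorbed into Aleksandrov's estimate applied to $u_j-a$ itself, and the role of $w_j$ is played by the cruder affine $a$; neither requires anything beyond Theorem \ref{mu0thm} and Theorem \ref{Alekmp}.
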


The proofs here follow closely the arguments in \cite{G} and \cite{RT}.

We give a proof of Lemma \ref{comp_lem} in the special case where $\Omega$ is strictly convex and $g_j=g$ for all $j$. This suffices to prove Theorem \ref{muthm}.

We first observe a simple consequence of Aleksandrov's maximum principle, Theorem \ref{Alekmp}.
\begin{cor} 
Let $\Omega\subset \R^n$ be a bounded, open and convex set with diameter $D$, and let $u\in C(\overline{\Omega})$ be a convex function with $u\geq 0$ on $\p\Omega$. Then
$$u(x)\geq -C_n [\emph{dist} (x, \p\Omega)]^{1/n}D^{\frac{n-1}{n}} |\p u(\Omega)|^{1/n}$$
for all $x\in\Omega$ where $C_n$ is a constant depending only on the dimension $n$.
\label{cor_Alekmp}
\end{cor}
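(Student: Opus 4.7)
The plan is to reduce Corollary \ref{cor_Alekmp} to Theorem \ref{Alekmp} by restricting attention to the open sublevel set $\Omega':=\{y\in\Omega:u(y)<0\}$. If $u(x)\geq 0$, the right-hand side of the target inequality is nonpositive and the bound is trivial, so I assume $u(x)<0$, which forces $x\in\Omega'$. Since $u$ is convex, $\Omega'$ is open, bounded, and convex (as a strict sublevel set of a convex function). A short check using continuity of $u$ and the hypothesis $u\geq 0$ on $\p\Omega$ shows that $u\equiv 0$ on all of $\p\Omega'$: on $\p\Omega'\cap\Omega$ this is immediate from continuity, while on $\p\Omega'\cap\p\Omega$ one has simultaneously $u\geq 0$ (from the hypothesis) and $u\leq 0$ (from limits in $\Omega'$). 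Hence Theorem \ref{Alekmp} applies to $u$ on $\Omega'$ and gives
$$|u(x)|^n\leq C_n\,[\text{diam}(\Omega')]^{n-1}\,\text{dist}(x,\p\Omega')\,|\p_{\Omega'}u(\Omega')|,$$
where $\p_{\Omega'}u$ denotes the normal mapping of $u$ viewed as a convex function on $\Omega'$.

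It then remains to replace each factor on the right by the corresponding quantity over $\Omega$. Trivially $\text{diam}(\Omega')\leq D$. Since $\p\Omega\subset\R^n\setminus\Omega'$, any segment from $x\in\Omega'$ to a point of $\p\Omega$ must cross $\p\Omega'$, giving $\text{dist}(x,\p\Omega')\leq \text{dist}(x,\p\Omega)$. The remaining comparison $|\p_{\Omega'}u(\Omega')|\leq |\p u(\Omega)|$ is the one nontrivial point and is the step I expect to require the most care: a priori the subdifferential grows as the domain shrinks, so elements of $\p_{\Omega'}u$ need not belong to $\p_\Omega u$.

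I will handle this by proving the stronger equality $\p_{\Omega'}u(y)=\p_\Omega u(y)$ for every $y\in\Omega'$. The inclusion $\p_\Omega u(y)\subset\p_{\Omega'}u(y)$ is immediate from the definitions. For the reverse, given $p\in\p_{\Omega'}u(y)$ and $z\in\Omega\setminus\Omega'$, I let $w=ty+(1-t)z$, $t\in(0,1)$, be the point where the segment $[y,z]$ meets $\p\Omega'$; by continuity $u(w)=0$ and the supporting inequality at $y$ extends to $\overline{\Omega'}$, giving $(1-t)\,p\cdot(z-y)=p\cdot(w-y)\leq u(w)-u(y)=|u(y)|$. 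Combined with the convexity estimate $0=u(w)\leq tu(y)+(1-t)u(z)$ (which yields $(1-t)u(z)\geq t|u(y)|$), a short calculation gives $u(y)+p\cdot(z-y)\leq t|u(y)|/(1-t)\leq u(z)$, hence $p\in\p_\Omega u(y)$. Substituting the three estimates into the Aleksandrov bound on $\Omega'$ and rearranging yields the desired inequality with the same constant $C_n$ as in Theorem \ref{Alekmp}. The only genuinely nontrivial ingredient, as noted, is the normal-mapping comparison, whose resolution hinges on the fact that $u\equiv 0$—not merely $u\geq 0$—holds on $\p\Omega'$.
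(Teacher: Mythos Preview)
Your proof is correct and follows essentially the same approach as the paper: restrict to the sublevel set $E=\{u<0\}$, note $u=0$ on $\partial E$, apply Theorem~\ref{Alekmp}, and bound each factor by the corresponding quantity on $\Omega$. The normal-mapping comparison you carefully work out is handled in the paper without comment, since it follows immediately from Remark~\ref{locality} (for a convex function on a convex domain, a locally supporting hyperplane is globally supporting), so your explicit verification---while correct---is more than strictly necessary.
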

\begin{proof}[Proof of Corollary \ref{cor_Alekmp}]
 If $u(x)\geq 0$ for all $x\in\Omega$ then we are done. If this is not the case, then
 $E=\{x\in\Omega: u(x)<0\}$
 is a convex domain, with $u=0$ on $\p E$. We apply Aleksandrov's maximum principle, Theorem \ref{Alekmp}, to conclude that for each $x\in E$,
 $$(-u(x))^n=|u(x)|^{n}\leq C_n (\text{diam}(E))^{n-1} \text{dist} (x, \p E)|\p u(E)|\leq C_n D^{n-1} \text{dist} (x, \p\Omega)|\p u(\Omega)|.$$
 The corollary follows.
\end{proof}
\begin{proof} [Proof of Lemma \ref{comp_lem} in the special case]  This is the case where $\Omega$ is strictly convex and $g_j=g$ for all $j$.
Let $U\in C(\overline{\Omega})$ be the convex solution to 
\begin{equation*}
 \left\{
 \begin{alignedat}{2}
   \det D^2 U~&=0 \h~&&\text{in} ~\Omega, \\\
 U&= g \h~&&\text{on}~ \p\Omega.
 \end{alignedat}
 \right.
\end{equation*}
This $U$ exists by Theorem \ref{mu0thm}. Since $\det D^2 U\leq \det D^2 u_j$ in $\Omega$ and $U= u_j$ on $\Omega$, we have
\begin{equation}
 \label{upuj}
 u_j\leq U
\end{equation}
by the comparison principle in Lemma \ref{comp-prin}. In particular, $\{u_j\}$ are uniformly bounded from above.

Now, we try to obtain a good lower bound for $u_j$ from below that matches $U$ locally. Fix a boundary point $x_0\in\p\Omega$. We can assume that $x_0=0\in\p\Omega$ and that
$$\Omega\subset \{x\in\R^n: x_n>0\}.$$
From the continuity of $g$, given $\e>0$, there exists $\delta>0$ such that
\begin{equation*}
 |g(x)- g(0)| <\e~\text{for all~} x\in\p\Omega\cap B_{\delta}(0).
\end{equation*}
Let $a(x)= g(0)-\e-C_1 x_n$
be as in the proof of Theorem \ref{mu0thm} (see Claim 1 there). Then $a\leq g$ on $\p\Omega$. Consider
$$v_j(x)= u_j(x)- a(x).$$
Then $v_j\geq 0$ on $\p\Omega$ and $\det D^2 v_j= \det D^2 u_j=\mu_j$. By Corollary \ref{cor_Alekmp}, we have for all $x\in\Omega$
$$v_j(x)\geq -C_n [\text{dist} (x, \p\Omega)]^{1/n}(\text{diam} (\Omega))^{\frac{n-1}{n}} |\p u_j(\Omega)|^{1/n},$$
or, since $ |\p u_j(\Omega)|= \mu_j(\Omega)\leq A$ and $\text{dist}(x,\p\Omega)\leq x_n$,
\begin{equation}
 \label{uj_below}
 u_j(x)\geq g(0)-\e- C_1 x_n- C_n x_n^{1/n}(\text{diam} (\Omega))^{\frac{n-1}{n}} A^{1/n}.
\end{equation}
Hence $\{u_j\}$ are uniformly bounded from below.

From (\ref{upuj}) and (\ref{uj_below}), we can use Lemma \ref{slope-est} to conclude that $\{u_j\}$ are locally uniformly Lipschitz in $\Omega$. Hence, by the Arzela-Ascoli theorem, 
$\{u_j\}$ contains a subsequence, also denoted by $\{u_j\}$, such that $u_j$ converges uniformly on compact subsets of $\Omega$ to 
a convex function $u$ in $\Omega$. From (\ref{upuj}) and (\ref{uj_below}), we also have $u\in C(\overline{\Omega})$ and $u=g$ on $\p\Omega$. That $\det D^2 u=\mu$ follows from the weak
compactness property of the Monge-Amp\`ere measure in Lemma \ref{weakMA}.
\end{proof}
\begin{proof}[Proof of Lemma \ref{dirac_lem}] Recall that $\displaystyle\mu=\sum_{i=1}^N a_i \delta_{x_i}$ where $x_i\in\Omega$ and $a_i>0$.
  Let
$$\mathcal{F}(\mu, g)=\{v\in C(\overline{\Omega}): \text{ v convex}, \det D^2 v\geq \mu~\text{in}~\Omega, v=g~\text{on}~\p\Omega\}$$
and
\begin{equation}u(x) = \sup_{v\in \mathcal{F}(\mu, g)} v(x).
 \label{uPerron}
\end{equation}
Our goal is to show that $u$ is the desired solution. We proceed with the following steps.\\
{\bf Step 1:} $\mathcal{F}(\mu, g)\neq\emptyset$ and there is $v_0\in \mathcal{F}(\mu, g)$ with $Mv_0(\Omega)<\infty$.\\
{\bf Step 2:} If $v_1, v_2\in \mathcal{F}(\mu, g)$ then $\max\{v_1, v_2\}\in \mathcal{F}(\mu, g)$.\\
{\bf Step 3:} (Approximation property of $u$): 
The function $u$ defined by (\ref{uPerron}) is bounded from above. Moreover, for each $y\in\Omega$, there exists a sequence $v_m\in \mathcal{F}(\mu, g)$, converging uniformly on compact subsets
of $\Omega$ to a function $w\in \mathcal{F}(\mu, g)$ so that $w(y)= u(y)$.\\
{\bf Step 4:} $u\in C(\overline{\Omega})$.\\
{\bf Step 5:} $\det D^2 u=Mu\geq \mu$ in $\Omega$.\\
{\bf Step 6:} $Mu$ is concentrated on the set $X=\{x_1, \cdots, x_N\}$.\\
{\bf Step 7:} $\det D^2 u=Mu\leq \mu$ in $\Omega$.\\
{\bf Proof of Step 1.} We use the fact that $M (|x-x_i|)= \omega_n \delta_{x_i}$ where we recall that $\omega_n= |B_1(0)|$.  Let
$$u(x) = \frac{1}{\omega_n^{1/n}}\sum_{i=1}^N a_i^{1/n} |x-x_i|.$$
Then $\det D^2 u(x)\geq \mu.$ By Theorem \ref{mu0thm}, there exists a unique convex solution $U_1\in C(\overline{\Omega})$ to 
\begin{equation*}
 \left\{
 \begin{alignedat}{2}
   \det D^2 U_1~&=0 \h~&&\text{in} ~\Omega, \\\
 U_1&= g-u \h~&&\text{on}~ \p\Omega.
 \end{alignedat}
 \right.
\end{equation*}
Let $v_0= u+ U_1$. Then $v_0\in C(\overline{\Omega})$ and $v_0=g$ on $\p\Omega$. Since both $u$ and $U_1$ are convex, we have as in (\ref{MA_add})
$$\det D^2 v_0 = \det D^2 (u + U_1)\geq \det D^2 u + \det D^2 U_1\geq\mu.$$
Therefore, $v_0\in \mathcal{F}(\mu, g)$ and $Mv_0 (\Omega)<\infty$.\\
{\bf Proof of Step 2.} Let $v=\max\{v_1, v_2\}$. Given a Borel set $E\subset\Omega$, we write $E=E_0\cup E_1\cup E_2$, $E_i\subset \Omega_i$, where
$$\Omega_0=\{x\in\Omega: v_1(x)= v_2(x)\}, ~\Omega_1=\{x\in\Omega: v_1(x) >v_2(x)\}, ~\Omega_2=\{x\in\Omega: v_1(x)< v_2(x)\}.$$
We show that for each $i=0, 1,2$, $$Mv(E_i)\geq \mu(E_i).$$

The cases $i=1, 2$ are similar so we consider $i=1$. We only need show that $\p v_1(E_1)\subset \p v(E_1)$. Indeed, if $p\in \p v_1 (x)$ where $x\in E_1\subset\Omega_1$, then
$p\in \p v(x)$. This is because
$v(x)= v_1(x)$ and 
for all $y\in\Omega$, we have
$$v(y)\geq v_1 (y)\geq v_1(x) + p\cdot(y-x)= v(x) + p\cdot(y-x).$$
It remains to consider the case $i=0$. Then the same argument as above shows that $\p v_1(E_0)\subset \p v(E_0)$, and $ \p v_2(E_0)\subset \p v(E_0)$ and we are done.\\
{\bf Proof of Step 3.} 
By Theorem \ref{mu0thm}, there exists a unique convex solution $W\in C(\overline{\Omega})$ to 
\begin{equation*}
 \left\{
 \begin{alignedat}{2}
   \det D^2 W~&=0 \h~&&\text{in} ~\Omega, \\\
 W&= g \h~&&\text{on}~ \p\Omega.
 \end{alignedat}
 \right.
\end{equation*}
For any $v\in \mathcal{F}(\mu, g)$, we have 
$\det D^2 W\leq \det D^2 v$ in $\Omega$ while $W= v$ on $\Omega$. Hence
$
 v\leq W
$
by the comparison principle in Lemma \ref{comp-prin}. In particular, $v$ is uniformly bounded from above and so is the function  $u$ defined by (\ref{uPerron}).

Now, let $y\in\Omega$. Then, by the definition of $u$,
there is a sequence $\bar v_m\in \mathcal{F}(\mu, g)$ such that $\bar v_m(y)\rightarrow u(y)$ as $m\rightarrow \infty$.
By {\bf Step 1}, there is $v_0\in \mathcal{F}(\mu, g)$ with $Mv_0(\Omega)<\infty$.
Let $$v_m=\max\{v_0, \bar v_m\}.$$ By {\bf Step 2}, we have $v_m\in \mathcal{F}(\mu, g)$. Moreover, $v_m\leq u$ in $\Omega$ while $\bar v_m(y)\leq v_m(y)\leq u(y)$ and so $v_m(y)\rightarrow u(y)$ as
$m\rightarrow\infty$.
It follows from $v_0= v_m$ on $\p\Omega$ and
$v_0(x)\leq v_m(x)$
that $\p v_m(\Omega)\subset \p v_0 (\Omega)$
and 
$$M v_m (\Omega)\leq Mv_0(\Omega):=A<\infty.$$
Up to extracting a subsequence, $Mv_m$ converges weakly to a Borel measure $\nu$ in $\Omega$ with $\nu\geq \mu$. By Lemma \ref{comp_lem}, 
$\{v_m\}$ contains a subsequence, also denoted by $\{v_m\}$, such that $v_m$ converges uniformly on compact subsets of $\Omega$ to 
the unique convex solution $w\in C(\overline{\Omega})$, in the sense of Aleksandrov, to
\begin{equation*}
 \left\{
 \begin{alignedat}{2}
   \det D^2 w~&=\nu \h~&&\text{in} ~\Omega, \\\
 w&= g \h~&&\text{on}~ \p\Omega.
 \end{alignedat}
 \right.
\end{equation*}
Clearly $w\in  \mathcal{F}(\mu, g)$ and $w(y)= u(y)$.\\
{\bf Proof of Step 4.} It suffices to show that $u$ is continuous on the boundary. We use the same notation as in {\bf Step 3}. As in the proof of Lemma
\ref{comp_lem}, at a boundary point, say $0\in\p\Omega$ where $\Omega\subset\{x\in\R^n: x_n>0\}$, we have
$$g(0)-\e-C_1 x_n - Cx_n^{1/n}\leq v_m(x)\leq u(x)\leq W(x).$$
The continuity of $u$ at $0\in\p\Omega$ follows.\\
{\bf Proof of Step 5.} It suffices to prove that $Mu(\{x_i\})\geq a_i$ for each $i=1, \cdots, N$. We prove this estimate for $i=1$. By {\bf Step 3}, 
there exists a sequence $v_m\in \mathcal{F}(\mu, g)$, converging uniformly on compact subsets
of $\Omega$ to a convex function $w$ with $Mw\geq \mu$ so that $w(x_1)= u(x_1)$ and $w\leq u$ in $\Omega$. Thus $Mw(\{x_1\})\geq a_1$. If $p\in \p w(x_1)$ then
$p\in \p u(x_1)$ because for all $x\in\Omega$, we have
$$u(x)\geq w(x)\geq w(x_1) + p\cdot(x-x_1)=u(x_1) + p\cdot(x-x_1).$$
Therefore $\p u(x_1)\supset \p w(x_1)$ and hence $$Mu (\{x_1\})= |\p u(x_1)|\geq |\p w(x_1)|= Mw(\{x_1\})\geq a_1.$$
{\bf Proof of Step 6.} We use a lifting argument to show that $Mu$ is concentrated on the set $X=\{x_1, \cdots, x_N\}$. Let $x_0\in\Omega\setminus X$. We can choose $r>0$ such that
$B_{2r}(x_0)\in \Omega\setminus X$. Let $B=B_r(x_0)$ and $v\in C(\overline{B})$ be the convex solution to 
\begin{equation*}
 \left\{
 \begin{alignedat}{2}
   \det D^2 v~&=0 \h~&&\text{in} ~B, \\\
 v&= u \h~&&\text{on}~ \p B.
 \end{alignedat}
 \right.
\end{equation*}
Define the lifting $w$ of $u$ and $v$ by
 $$w(x) = \left\{\begin{array}{rl}
 v (x) &  x\in B,\\
u (x)  & x\in  \Omega\setminus B.
\end{array}\right.$$
Then $w\in C(\overline{\Omega})$ with $w=g$ on $\p\Omega$. We claim that $w\in \mathcal{F}(\mu, g).$ Since $\det D^2 u\geq 0= \det D^2 v$ in $B$ and $u=v$ on $\p B$, we have $v\geq u$ in $B$. Thus
$w$ is convex. 

We now verify that $Mw(E)\geq \mu(E)$ for each Borel set $E\subset \Omega.$ Let $E= E_1 \cup E_2$ where $E_1 = E\cap B$ and $E_2= E\cap (\Omega\setminus B).$
As in {\bf Step 2}, we have $Mw(E_1)\geq Mv (E_1)$ and $Mw(E_2)\geq Mu(E_2)$. Hence,
$$Mw(E)= Mw(E_1) + Mw(E_2)\geq Mv(E_1)+ Mu(E_2)\geq Mu(E_2)\geq \mu (E_2)\geq \mu (E\cap X)= \mu (E).$$
This shows that $w\in \mathcal{F}(\mu, g)$. From the definition of $u$, we have $w\leq u$. By the above argument, we have $w=v\geq u$ in $B$. Thus, we must have $u=v$
in $B$. It follows that $Mu(B)=0$ for any ball $B= B_r(x_0)$ with $B_{2r}(x_0)\in \Omega\setminus X$. Hence, if $E$ is a Borel set with $E\cap X=\emptyset$ then $Mu(E)=0$
by the regularity of $Mu$. Therefore, $Mu$ is concentrated on the set $X$, that is
$$Mu=\sum_{i=1}^n \lambda_i a_i \delta_{x_i}$$
with $\lambda_i\geq 1$ for all $i=1, \cdots, N$.\\
{\bf Proof of Step 7.} We show that $\lambda_i=1$ for all $i$. We argue by contradiction. Suppose that $\lambda_i>1$ for some $i$. To fix the idea, we can assume that $a_i=1$ and 
in some ball, say $B_r(0)$, we have $Mu=\lambda \delta_0$ with $\lambda>1$ while  $\mu=\delta_0$.
The main idea here is to locally insert a cone with Monge-Amp\`ere measure $\delta_0$ that is above $u$. This will contradict the maximality of $u$. 

Since $\p u(0)$ is convex, there is a ball $B_{\e}(p_0)\subset \p u (0).$ Then $u(x)\geq u(0) + p\cdot x$ for all $p\in B_{\e}(p_0)$ and all $x\in\Omega$. By subtracting a 
linear function $p_0\cdot x$ from $u$ and $g$, we can assume that for all $x\in\Omega$, 
\begin{equation}u(x)\geq u(0) + \e |x|.
 \label{ucone}
\end{equation}

Indeed, let $v(x)= u(x)-p_0\cdot x$. Then $v(x)\geq v(0) + (p-p_0)\cdot x$ for all  $p\in B_{\e}(p_0)$ and all $x\in\Omega$. Given $x\in\Omega$, we take $p-p_0=\e x/|x|$ and so
$v(x)\geq v(0) + \e |x|.$

Given (\ref{ucone}), we continue the proof as follows. By subtracting a constant from $u$ and $g$ we can assume that $u(0)<0$ but $|u(0)|$ is small while $u(x)\geq 0$ for $|x|\geq r$. 
The set 
$E=\{x\in\Omega: u(x)<0\}$
is a convex set of $\Omega$. It contains a neighborhood of $0$.  On $E$, we have $M (\lambda^{-1/n} u )= \delta_0.$
We now define the lifting of $u$ and $\lambda^{-1/n} u $ by
 $$w(x) = \left\{\begin{array}{rl}
 u (x) &  \text{if } x\in \Omega\setminus E,\\
\lambda^{-1/n} u (x) & \text{if } x\in E.
\end{array}\right.$$
As in {\bf Step 6}, we have 
$w\in \mathcal{F}(\mu, g)$ but $w(0)=\lambda^{-1/n} u(0)> u(0)$. This contradicts the definition of $u$. Therefore $\lambda=1$ and the proof of Theorem \ref{muthm} is complete.
\end{proof}

\subsection{Sections of convex functions}
We first observe that the Monge-Amp\`ere equation
$\det D^2 u(x) = f(x)$ is invariant under affine transformation: we can "stretch" $u$ in one direction and at the same time "contract" it in other directions 
to get another solution. If $T$ is an invertible affine transformation then by Lemma \ref{rescaling_lem},
$$\tilde u(x) = (\det T)^{-2/n} u(Tx)$$
solves
$$\det D^2 \tilde u(x) = \tilde f(x):= f(Tx).$$
In particular, the Monge-Amp\`ere equation is invariant under the special linear group 
$$SL(n)=\{n\times n ~\text{matrix}~ A~\text{such that}~\det A=1\}.$$
This property is extremely important in studying fine properties of solutions to the  Monge-Amp\`ere equation.
The John lemma and rescaling the  Monge-Amp\`ere equation using 
the invariant group $SL(n)$ allow us to focus on the domains that are roughly Euclidean balls.

A central notion in the theory of Monge-Amp\`ere equation is that of sections of convex functions, introduced and investigated by 
Caffarelli \cite{C1, C2, C3, C4}. They play the role that balls have in the uniformly elliptic equations.
\begin{defn}(Section)
\label{sec_def}
Let $u$ be a convex function on $\overline{\Omega}$ and let $p\in \p u(x)$ be a subgradient of $u$ at $x\in\Omega$. The section of $u$ centered $x$ with slope $p$ and 
height $h$, denoted by
$S_{u}(x, p, h)$, is defined by
$$S_u(x, p, h) =\{y\in\overline{\Omega}: u(y) < u(x) +p\cdot (y-x) + h\}.$$
\end{defn}
\begin{rem}
 When $u$ is $C^1$ at $x$, if $p\in \p u(x)$ then $p= Du(x)$. In this case, we simply write $S_u(x, h)$ for $S_u(x, Du(x), h)$.
\end{rem}

As an example, consider the sections of $u(x) = \frac{M}{2}|x|^2$ for a positive constant $M$, defined on $\R^n$. Then the section of $u$ at the origin with height $h$ is
\begin{eqnarray*}
S_u(0, h)=\{y\in\R^n: u(y)<h\}=B_{\sqrt{2h/M}} (0).
\end{eqnarray*}
Note that
$$|S_u(0, h)| =\omega_n\frac{2^{n/2} h^{n/2}}{M^{n/2}}= \frac{\omega_n 2^{n/2} h^{n/2}}{(\det D^2 u)^{1/2}}.$$
Remarkably, up to a factor of $h^{n/2}$, the volume growth of sections $S_u(x, p, h)$ that are compactly included in the domain depends only on the bounds on the 
Monge-Amp\`ere measure, as stated in the following theorem.
\begin{thm}[Volume of sections]\label{vol-sec1}
Suppose that $u$ is a convex solution to the Monge-Amp\`ere equation
$\lambda \leq \det D^2 u \leq \Lambda~\text{in}~\Omega$. Suppose that $p\in \p u(x)$ 
and that $S_u(x, p, h)\subset\subset \Omega.$
Then
$$c(\Lambda, n)h^{n/2} \leq |S_u(x, p, h)| \leq C(\lambda, n) h^{n/2}.$$
Thus, up to constants depending only on $n,\lambda,\Lambda$, the volume of a compactly included section $S_u(x, p, h)$ does not depend on the subgradient $p$. 
\end{thm}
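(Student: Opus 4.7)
The plan is to reduce to a normalized situation via John's lemma and then invoke the two-sided bound of Theorem \ref{drop-thm} (which is itself a direct comparison with explicit quadratic barriers). This exploits the affine invariance of the Monge-Amp\`ere equation recorded in Lemma \ref{rescaling_lem} and turns the seemingly shape-dependent quantity $|S_u(x,p,h)|$ into an algebraic relation between $h$ and the Jacobian of the normalizing map.

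First, set $\bar u(y) := u(y) - u(x) - p\cdot(y-x) - h$, which has the same Monge-Amp\`ere measure as $u$ since we have only subtracted an affine function. Because $p \in \partial u(x)$, we have $\bar u \ge -h$ on $\Omega$, while $\bar u(x) = -h$ and $\bar u = 0$ on $\partial S$ where $S := S_u(x,p,h)$. In particular $\min_{\overline S}\bar u = -h$. Now apply John's lemma (Lemma \ref{John_lem}) to the convex body $\overline S$: there exists an affine map $Tz = Az + b$ with $\det A \ne 0$ such that $B_1(0) \subset T(\overline S) \subset B_n(0)$, and consequently $\omega_n \le |T(S)| = |\det A|\,|S| \le \omega_n n^n$.

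Next, push $\bar u$ forward by setting $\tilde u(z) := \bar u(T^{-1}z)$ on $\overline{T(S)}$. Then $\tilde u = 0$ on $\partial T(S)$, $\min \tilde u = -h$, and by Lemma \ref{rescaling_lem},
\[
\lambda (\det A)^{-2} \le \det D^2\tilde u \le \Lambda (\det A)^{-2}\quad\text{in}~T(S),
\]
in the Aleksandrov sense. Since $T(S)$ is normalized, Theorem \ref{drop-thm} applies to $\tilde u$ and yields universal constants (depending only on the dimension) such that
\[
\frac{\lambda^{1/n}}{2}\,|\det A|^{-2/n}\;\le\;h\;=\;|\min_{T(S)}\tilde u|\;\le\;\frac{n^2\,\Lambda^{1/n}}{2}\,|\det A|^{-2/n}.
\]
Solving for $|\det A|$ gives $\lambda^{1/2}(2h)^{-n/2} \le |\det A| \le n^n\,\Lambda^{1/2}(2h)^{-n/2}$. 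Combined with $\omega_n \le |\det A|\,|S| \le \omega_n n^n$, this produces precisely the two-sided estimate $c(\Lambda,n) h^{n/2} \le |S| \le C(\lambda,n) h^{n/2}$.

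The only delicate point is justifying the application of Theorem \ref{drop-thm}: one must check that $\tilde u$ is continuous up to $\overline{T(S)}$ (which follows from $S\subset\subset\Omega$ so that $\bar u \in C(\overline S)$) and that the rescaled Monge-Amp\`ere bounds indeed hold in the Aleksandrov sense (handled by Lemma \ref{rescaling_lem}). The key conceptual observation — and what makes the argument work without any dependence on the subgradient $p$ — is that the minimum of $\tilde u$ on the normalized domain is \emph{exactly} $-h$: the upper inequality $\min \ge -h$ comes from $p\in\partial u(x)$, and the lower inequality $\min \le -h$ comes from evaluating at $Tx$. This extracts from $h$ precisely the information about $|\det A|$, which in turn controls the volume distortion $|S|/|T(S)|$.
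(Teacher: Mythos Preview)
Your proof is correct and follows essentially the same route as the paper: normalize the section via John's lemma, then invoke Theorem~\ref{drop-thm} to relate $h$ to the Jacobian of the normalizing map, and finally convert that into volume bounds. The only cosmetic difference is that the paper rescales $\bar u$ by $(\det A)^{-2/n}$ so that the Monge--Amp\`ere bounds stay at $\lambda,\Lambda$ and the minimum becomes $h(\det A)^{-2/n}$, whereas you leave $\bar u$ unscaled and let the Monge--Amp\`ere bounds carry the factor $(\det A)^{-2}$; the two choices are algebraically equivalent.
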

\begin{proof} 
Let 
$\bar u(y)= u(y) -[u(x) + p\cdot (y-x)+ h].$
Then 
$\bar u\mid_{\p S_u(x, p, h)} =0$ and 
$\bar u$ achieves its minimum $-h$ at $x$. 
By John's lemma, we can 
find an affine transformation $Tx =Ax+ b$ such that
\begin{equation}B_1 (0)\subset T^{-1} (S_u(x, p, h))\subset B_n(0).
 \label{B1n}
\end{equation}
Let $\tilde u (y) = (\det A)^{-2/n} \bar u(Ty). $
Then, by Lemma \ref{rescaling_lem}, we have
$$\lambda \leq \det D^2 \tilde u(y) \leq \Lambda,$$
and $\tilde u =0$ on $\p T^{-1}(S_u(x, p, h)).$
Now, by Theorem \ref{drop-thm}, we know that
$$c(\lambda, n) \leq |\min_{T^{-1}(S_u(x, p, h))} \tilde u| \leq C(\Lambda, n).$$
However, since $|\min_{S_u(x, p, h)} \bar u| =h$, we have
$$c(\lambda, n) \leq h (\det A)^{-2/n} \leq C(\Lambda, n).$$
This gives
$$c(\Lambda, n)h^{n/2} \leq  \det A \leq C(\lambda, n) h^{n/2}.$$
On the other hand, by (\ref{B1n}), we deduce that
$$\omega_n\leq (\det A)^{-1}|S_u(x, p, h)| \leq n^n \omega_n,$$
hence 
$$\omega_n \det A\leq |S_u(x, p, h)| \leq n^n \omega_n\det A$$
and the desired inequalities follow.
\end{proof}
An important question is the following:
\begin{quest}\label{strict_quest}
 Under what conditions can we conclude from $\lambda\leq \det D^2 u\leq \Lambda$ in $\Omega$ that for all $x\in \Omega$, there is a small $h(x)>0$ such that 
 $S_u(x, p, h(x))\subset\subset
 \Omega$ for all $p\in\p u(x)$?
\end{quest}
An answer will be given in a later theorem using Caffarelli's localization theorem \cite{C1}. 

The following lemma partially 
extends the volume estimates in Theorem \ref{vol-sec1} for sections of $u$ that may not be compactly supported in the domain. 
\begin{lem} \label{vol-sec2}
Assume that $\det D^2 u\geq \lambda$ in a bounded and convex domain $\Omega$ in $\R^n$. Then for any section
$$S_{u}(x, p, h)=\{y\in\overline\Omega: u(y) < u(x) + p\cdot (y-x) + h\},~x\in\overline{\Omega},~p\in\p u(x),$$
we have
$$|S_{u}(x, p, h)|\leq C(\lambda, n) h^{n/2}.$$
\end{lem}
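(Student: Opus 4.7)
My strategy is to reduce to a normalized section via John's lemma and then bound the Monge--Amp\`ere mass of a definite ball.

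First, I would subtract the affine function $l(y)=u(x)+p\cdot(y-x)$ and work with $v(y):=u(y)-l(y)$ on $\overline\Omega$. Since $p\in\partial u(x)$, we have $v\ge 0$ with $v(x)=0$, and since $v$ differs from $u$ by an affine function, $\det D^2 v=\det D^2 u\ge \lambda$ in the Aleksandrov sense; also $S=\{y\in\overline\Omega:v(y)<h\}$. Next, applying John's Lemma~\ref{John_lem} to the convex body $\overline S$, I would find an affine map $Ty=Ay+b$ with
\[
B_1(0)\subset D:=T^{-1}(\overline S)\subset B_n(0).
\]
Then I would rescale out the height by defining $\tilde v(y):=(\det A)^{-2/n}\,v(Ty)$ on $T^{-1}(\overline\Omega)$. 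By Lemma~\ref{rescaling_lem} (together with the usual scalar rescaling $M(c\hat v)=c^n M\hat v$), one checks $\det D^2\tilde v\ge \lambda$, $\tilde v\ge 0$, and $\tilde v\le M:=h(\det A)^{-2/n}$ on $D$.

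The heart of the argument is to show $M\ge c(n,\lambda)$. The geometric observation is that because $B_1(0)\subset D$, every $y\in B_{1/2}(0)$ has $B_{1/2}(y)\subset B_1(0)\subset D\subset T^{-1}\overline\Omega$; in particular $B_{1/2}(0)$ lies in the interior $T^{-1}\Omega$, where the Aleksandrov inequality $|\partial\tilde v(E)|\ge \lambda|E|$ is available. For any such $y$ and any nonzero $q\in\partial\tilde v(y)$, the point $z:=y+\tfrac12\,q/|q|$ belongs to $B_1(0)\subset D$, so the subgradient inequality gives
\[
\tfrac12|q|=q\cdot(z-y)\le \tilde v(z)-\tilde v(y)\le M-0=M,
\]
hence $\partial\tilde v(B_{1/2}(0))\subset \overline{B_{2M}(0)}$. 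Combining with the lower bound on the Monge--Amp\`ere measure,
\[
\lambda\,\omega_n\,(1/2)^n\le \lambda\,|B_{1/2}(0)|\le |\partial\tilde v(B_{1/2}(0))|\le \omega_n(2M)^n,
\]
so $M\ge \lambda^{1/n}/4$.

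Unwinding, $h(\det A)^{-2/n}\ge \lambda^{1/n}/4$ gives $\det A\le 4^{n/2}\lambda^{-1/2}h^{n/2}$, and since $D\subset B_n(0)$,
\[
|S|=|\det A|\,|D|\le |\det A|\, n^n\omega_n\le C(n,\lambda)\,h^{n/2}.
\]
The main obstacle, which is the reason Theorem~\ref{vol-sec1} cannot be applied directly, is that $S$ may meet $\partial\Omega$, so one cannot assume $\tilde v$ vanishes on $\partial D$ and cannot simply invoke the Aleksandrov maximum principle or Theorem~\ref{drop-thm}. The key workaround is the step above: John's normalization forces a full ball $B_{1/2}(0)$ of ``deep interior'' points on which subgradients are controlled by $M$ alone, bypassing any need for boundary values of $\tilde v$ on $\partial D$.
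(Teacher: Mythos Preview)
Your argument is correct, and it takes a genuinely different route from the paper's after the common first step of John normalization. The paper subtracts the full affine function $u(x)+p\cdot(y-x)+h$ to obtain $\bar u\le 0$ on $\partial S_u(x,p,h)$, rescales to $\tilde u$ on a normalized domain, and then compares with the explicit barrier $\tfrac{\lambda^{1/n}}{2}(|x|^2-1)$ via the comparison principle (Lemma~\ref{comp-prin}) to force $\min\tilde u\le -\lambda^{1/n}/2$; this yields $|\det A|^{2/n}h\ge \lambda^{1/n}/2$. You instead keep $\tilde v\ge 0$ with $\tilde v\le M$ on $D$, use the slope bound (the idea behind Lemma~\ref{slope-est}) on the definite interior ball $B_{1/2}(0)$ to trap $\partial\tilde v(B_{1/2}(0))$ in $\overline{B_{2M}(0)}$, and then pit this against the Aleksandrov lower bound $|\partial\tilde v(B_{1/2}(0))|\ge\lambda|B_{1/2}(0)|$ to force $M\ge\lambda^{1/n}/4$. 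Your route is slightly more elementary in that it avoids the comparison principle entirely, relying only on the definition of the Monge--Amp\`ere measure and the subgradient inequality; the paper's barrier argument is the more standard one and yields a marginally sharper constant. Both correctly handle the obstruction you identified---that $S$ may meet $\partial\Omega$---the paper by noting $\bar u\le 0$ holds on all of $\partial S$ regardless, you by working only on a ball guaranteed to sit in the open interior.
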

\begin{proof} Because $\Omega$ is bounded and convex, the section $S_{u}(x, p, h)$ is also bounded and convex.
Let
$\bar u(y)= u(y)- [u(x) + p\cdot (y-x)+ h].$
Then
$\bar u\mid_{\p S_{u}(x, p, h)}\leq 0$ and $|\min_{S_{u}(x, p, h)}\bar u|=h.$ By John's lemma, there is an affine transformation $Tx =Ax+ b$ that normalizes $S_{u}(x, p, h)$, that is
\begin{equation}B_1(0) \subset \tilde \Omega = T (S_{u}(x, p, h))\subset B_{n}(0).
 \label{uJohn}
\end{equation}
Let $\tilde u (x) = |\det A|^{2/n} \bar u(T^{-1}x).$
Then $$\det D^2 \tilde u(x) =\det D^2 \bar u (T^{-1}x)\geq \lambda,~\tilde u\mid_{\p\tilde\Omega}\leq 0.$$
Let $v(x) =\frac{\lambda^{1/n}}{2}(|x|^2-1)$. Then 
$$\det D^2 v=\lambda \leq \det D^2 \tilde u~\text{in}~\tilde \Omega.$$
Using (\ref{uJohn}), we have on $\p\tilde\Omega$, $v\geq 0\geq \tilde u$.
By the comparison principle in Lemma \ref{comp-prin}, we find that $\tilde u\leq v$ in $\tilde\Omega$. It follows that
$\min_{\tilde\Omega} \tilde u\leq \min_{\tilde\Omega} v = -\frac{\lambda^{1/n}}{2}.$
Therefore,
$$|\det A|^{2/n} h=|\min_{\tilde\Omega} \tilde u|\geq \frac{\lambda^{1/n}}{2}.$$
The result now follows from (\ref{uJohn}) since
$$|S_{u}(x, p, h)|\leq |\det A|^{-1} C(n) \leq C(\lambda, n) h^{n/2}.$$
\end{proof}
\begin{rem}
 In what follows, we simply write $S_u(x, h)$ for the section of $u$ centered $x$ with slope $p\in \p u(x)$ and 
height $h$. This is due to the fact that all of our statements will not depend on a specific choice of $p$ in $\p u(x)$ (in case $u$ is not $C^1$ at $x$).
\end{rem}

\newpage
\section{Geometry of sections of solutions to the Monge-Amp\`ere equation}
\label{MA2_sec}
In this section, we discuss some compactness results in the Monge-Amp\`ere setting and use them to prove Caffarelli's celebrated $C^{1,\alpha}$ regularity of strictly convex solutions. 
A very important result in this section is
Caffarelli's localization theorem, Theorem \ref{Caf_loc}. Refined geometric properties of sections will be proved, including:
 Estimates on the size of sections in terms of their height and the Monge-Amp\`ere measure, Lemma \ref{sec-size}; 
 Engulfing property of sections, Theorem \ref{engulfthm}; and 
 Inclusion and exclusion property of sections, Theorem \ref{pst}.
 
 The functions $u$ involved in the Monge-Amp\`ere equations in this section are always assumed to be convex. Similarly, unless otherwise indicated, $\Omega$ is also assumed
 to be a convex set in $\R^n$.
\subsection{Compactness of solutions to the Monge-Amp\`ere equation}
The first result gives compactness of a family of convex functions on a normalized domain with zero boundary value and having Monge-Amp\`ere measure bounded from above.
\begin{thm} \label{Blaschke_thm}
Let 
$$C_{\Lambda}=\{(\Omega, u): \det D^2 u\leq\Lambda, u\mid_{\p\Omega}=0, B_1(0)\subset \Omega\subset B_{C(n)}(0)\}.$$
Then $C_{\Lambda}$ is compact in the following sense. For any sequence $\{(\Omega_i, u_i)\}_{i=1}^\infty\subset C_{\Lambda}$, we can find a subsequence, still labeled $\{(\Omega_i, u_i)\}_{i=1}^\infty$, and $(\Omega, u)\in C_\Lambda$ such that
\begin{myindentpar}{1cm}
(i) $\Omega_i$ converges to $\Omega$ in the Hausdorff distance;\\
 (ii) $u_i$ converges to $u$ locally uniformly on compact subsets of $\Omega$.
\end{myindentpar}
Similar statement holds for the set
$$C_{\lambda, \Lambda}=\{(\Omega, u): \lambda\leq \det D^2 u\leq\Lambda, u\mid_{\p\Omega}=0, B_1(0)\subset \Omega\subset B_{C(n)}(0)\}.$$
\end{thm}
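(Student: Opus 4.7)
The plan is to obtain the claimed compactness by combining the Blaschke selection theorem at the level of domains, uniform a priori bounds on the functions, and the weak continuity of the Monge-Amp\`ere measure for the passage to the limit. I will give the argument for $C_\Lambda$ in detail; the case $C_{\lambda,\Lambda}$ will be an easy addendum.

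First, since each $\Omega_i$ is a compact convex subset of $B_{C(n)}(0)$ containing $B_1(0)$, the Blaschke selection theorem yields a subsequence (which I will not relabel) and a compact convex set $\Omega$ with $B_1(0)\subset \Omega\subset B_{C(n)}(0)$ such that $\Omega_i\to\Omega$ in Hausdorff distance. Next I will establish uniform a priori bounds on $u_i$. Since $u_i\le 0$ on $\Omega_i$ by convexity (as $u_i=0$ on $\partial\Omega_i$) and $|\partial u_i(\Omega_i)|=Mu_i(\Omega_i)\le \Lambda|\Omega_i|\le C(n,\Lambda)$, Aleksandrov's maximum principle (Theorem \ref{Alekmp}) gives the key quantitative decay
\[
|u_i(x)|^n\le C(n)\,(\mathrm{diam}\,\Omega_i)^{n-1}\,\mathrm{dist}(x,\partial\Omega_i)\,|\partial u_i(\Omega_i)|\le C(n,\Lambda)\,\mathrm{dist}(x,\partial\Omega_i).
\]
In particular $\|u_i\|_{L^\infty(\Omega_i)}\le C(n,\Lambda)$. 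Combined with Lemma \ref{slope-est}, this shows that on any compact set $K\subset\Omega$ the functions $u_i$ are uniformly Lipschitz for all $i$ large enough (for which $K\subset\subset\Omega_i$, using Hausdorff convergence and the fact that $\mathrm{dist}(K,\partial\Omega_i)\to\mathrm{dist}(K,\partial\Omega)>0$).

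With these equicontinuity and uniform bounds in hand, Arzel\`a--Ascoli plus a standard exhaustion of $\Omega$ by compacts and a diagonal subsequence produces a further subsequence converging locally uniformly on $\Omega$ to a convex function $u:\Omega\to\R$. The delicate step — and what I expect to be the main obstacle — is to show that $u$ extends continuously to $\overline{\Omega}$ with $u|_{\partial\Omega}=0$, so that $(\Omega,u)$ is an admissible pair. Here the boundary decay estimate above is crucial: passing to the limit for each fixed $x\in\Omega$ (for which $\mathrm{dist}(x,\partial\Omega_i)\to\mathrm{dist}(x,\partial\Omega)$ by Hausdorff convergence) yields
\[
|u(x)|\le C(n,\Lambda)\,\mathrm{dist}(x,\partial\Omega)^{1/n}\quad \text{for all } x\in\Omega,
\]
so $u$ extends continuously to $\overline{\Omega}$ with zero boundary values.

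It remains to verify that $\det D^2 u\le \Lambda$ in $\Omega$. This follows from the weak continuity of the Monge-Amp\`ere measure, Lemma \ref{weakMA}: since $u_i\to u$ locally uniformly on $\Omega$, one has $Mu_i\rightharpoonup Mu$, and the inequality $Mu_i\le \Lambda\,dx$ is preserved in the limit by testing against nonnegative continuous compactly supported functions. Hence $(\Omega,u)\in C_\Lambda$, as required. For the class $C_{\lambda,\Lambda}$ the same scheme applies verbatim; the only addition is that the inequality $Mu_i\ge \lambda\,dx$ also passes to the weak limit, giving $\det D^2 u\ge \lambda$. Thus both compactness statements are proved by the same line of argument, with the boundary behavior of the limit being the one place where genuine care is required.
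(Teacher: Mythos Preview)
Your proof is correct and follows essentially the same line as the paper's: Blaschke selection for the domains, Aleksandrov's maximum principle for the uniform $L^\infty$ bound and the boundary decay $|u_i(x)|\le C\,\mathrm{dist}(x,\partial\Omega_i)^{1/n}$, Arzel\`a--Ascoli for local uniform convergence, and weak continuity of the Monge--Amp\`ere measure for the limit. The only cosmetic difference is that the paper derives a uniform global H\"older estimate $|u(x)-u(y)|\le C|x-y|^{1/n}$ on $\overline\Omega$ (via Aleksandrov plus a convexity interpolation) to get equicontinuity, whereas you use Lemma~\ref{slope-est} to get local Lipschitz bounds on compacts; both yield the needed equicontinuity and the boundary behavior is handled identically.
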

\begin{proof} 

Observe the following uniform H\"older estimate: There is a universal constant $C$ depending only on $n$ and $\Lambda$ such that if $(\Omega, u)\in C_{\Lambda}$ then
\begin{equation}|u(x)- u(y)|\leq C|x-y|^{1/n}~\forall x, y\in \overline{\Omega}.
\label{uHolder}
\end{equation}
Indeed, if $y\in\p\Omega$ and $x\in\Omega$ then the above inequality follows from Aleksandrov's maximum principle (Theorem \ref{Alekmp}). Consider now the case $x, y\in\Omega$. 
Suppose the ray $yx$ intersects $\p\Omega$ at $z$. Then $x= \alpha y + (1-\alpha) z$ for some $\alpha\in (0,1)$. It follows that $x-y= (1-\alpha)(z-y)$. By convexity,
$u(x)\leq \alpha u(y) + (1-\alpha)u(z) =\alpha u(y),$ which implies
$$u(x)- u(y) \leq (\alpha-1) u(y)= (1-\alpha)|u(y)|\leq C(1-\alpha)|z-y|^{1/n}\leq C(1-\alpha)^{\frac{n-1}{n}}|x-y|^{1/n}.$$
Suppose we are given any sequence $\{(\Omega_i, u_i)\}_{i=1}^\infty\subset C_{\Lambda}$.
By the Blaschke selection theorem,  we can find a subsequence, still labeled $\{(\Omega_i, u_i)\}_{i=1}^\infty$, such that
$\Omega_i$ converges to $\Omega$ in the Hausdorff distance. By Aleksandrov's theorem, Theorem \ref{Alekmp}, $|u_i|\leq C(n,\Lambda)$ on $\Omega_i$ for all i.  

It follows from (\ref{uHolder}) and the Arzela-Ascoli theorem that, up to extracting a  further subsequence, $u_i\rightarrow u$ locally uniformly in $\Omega$. The bound on 
the Monge-Amp\`ere 
measure of $u$ follows from Lemma \ref{weakMA}. It remains to show that $u=0$ on $\p\Omega$. Because $u_i\leq 0$ in $\Omega_i$, we have $u\leq 0$ on $\p\Omega$. Let
$K\subset \Omega$ be such that $\text{dist}(x, \p\Omega)\leq \delta$ for all $x\in K$ where $\delta>0$. Then there is a large $i_0$ depending on $K$ and $\delta$
such that $K\subset \Omega_i$ and $\text{dist} (x, \p\Omega_i)<2\delta$ for all $x\in K$ and for $i\geq i_0$. By Aleksandrov's
theorem, Theorem \ref{Alekmp}, we have $|u_i|\leq C(n,\Lambda)\delta^{1/n}$ in $ K$. Thus, from 
$u_i\rightarrow u$ locally uniformly in $\Omega$, we conclude that $u=0$ on $\p\Omega$.
\end{proof}
\subsection{Caffarelli's localization theorem}
\begin{defn} (Extremal point) Let $\Omega\subset \R^n$ be a convex set. A point $x_0\in\p\Omega$ is called an extremal point of $\Omega$ if $x_0$ is not a convex combination of other points in $\overline{\Omega}$.
\end{defn}
Let $\Omega\neq\emptyset$ be a closed convex and bounded subset of $\R^n$. Then the set $E$ of extremal points of $\Omega$ is non-empty.
\begin{lem}[Balancing] \label{balance_lem} Suppose that $u$ satisfies
$\lambda\leq \det D^2 u\leq \Lambda$ in $\Omega$, and $u=0$ on $\p\Omega.$
Let $l$ be a line segment in $\Omega$ with 2 endpoints $z^{'}, z^{''}\in\p\Omega.$ Let $z\in l$ be such that $$u(z)\leq \alpha\inf_{\Omega} u~(0<\alpha<1).$$
Then
$$|z'-z|\geq c(n,\alpha,\lambda,\Lambda)|z^{''}-z'|.$$
\end{lem}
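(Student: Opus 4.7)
\textbf{Proof plan for Lemma \ref{balance_lem}.} My plan is to reduce the problem to the case where $\Omega$ is a normalized convex set via an affine transformation, then combine Aleksandrov's maximum principle (Theorem~\ref{Alekmp}) applied at $z$ with the uniform lower bound on $|\inf_\Omega u|$ coming from Theorem~\ref{drop-thm}.

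First, I would observe that the ratio $|z-z'|/|z''-z'|$ is affine-invariant, since $z,z',z''$ are collinear and an affine map preserves ratios of lengths along a fixed line. Combining this with Lemma~\ref{rescaling_lem} (which shows that the bounds $\lambda \leq \det D^2 u \leq \Lambda$ are preserved up to constants under affine rescaling), together with the fact that the inequality $u(z)\leq \alpha\inf_\Omega u$ is scaling invariant (both sides scale by the same positive factor), I can reduce to the case where $T(x)=Ax+b$ normalizes $\Omega$ via John's lemma, so that $B_1(0)\subset \Omega\subset B_n(0)$, while still keeping $\lambda\leq \det D^2 u \leq \Lambda$, $u|_{\partial\Omega}=0$, and $u(z)\leq \alpha\inf_\Omega u$.

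Next, in the normalized setting, Theorem~\ref{drop-thm} provides a two-sided bound $c(\lambda,n)\leq |\inf_\Omega u|\leq C(\Lambda,n)$, and the hypothesis on $z$ yields
\[
|u(z)|\geq \alpha|\inf_\Omega u|\geq \alpha\,c(\lambda,n).
\]
On the other hand, since $\det D^2 u\leq \Lambda$, we have $|\partial u(\Omega)|=Mu(\Omega)\leq \Lambda|\Omega|\leq \Lambda n^n\omega_n$, and $\mathrm{diam}(\Omega)\leq 2n$. Aleksandrov's maximum principle applied to $u$ at the point $z\in\Omega$ then gives
\[
|u(z)|^n\leq C_n\,\mathrm{diam}(\Omega)^{n-1}\,\mathrm{dist}(z,\partial\Omega)\,|\partial u(\Omega)|\leq C(n,\Lambda)\,\mathrm{dist}(z,\partial\Omega).
\]
Combining the last two estimates,
\[
\mathrm{dist}(z,\partial\Omega)\geq \frac{(\alpha\,c(\lambda,n))^n}{C(n,\Lambda)}=:c_0(n,\alpha,\lambda,\Lambda).
\]
Since $z'\in\partial\Omega$, we have $|z-z'|\geq \mathrm{dist}(z,\partial\Omega)\geq c_0$, while $|z''-z'|\leq \mathrm{diam}(\Omega)\leq 2n$. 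Therefore $|z-z'|\geq (c_0/(2n))\,|z''-z'|$, which is the desired conclusion in the normalized case and, by affine invariance of the ratio, also in the original domain.

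The argument is not technically hard; the main conceptual step is recognizing that everything in the statement (the Monge--Amp\`ere pinching, the hypothesis on $u(z)$, and the conclusion) is affine-invariant, so John's lemma reduces us to a situation where both Aleksandrov's upper bound on $|u|$ near the boundary and the lower bound on $|\inf_\Omega u|$ from Theorem~\ref{drop-thm} are quantitatively available with universal constants.
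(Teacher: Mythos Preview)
Your proposal is correct and follows essentially the same approach as the paper: normalize $\Omega$ via John's lemma (using affine invariance of the ratio $|z-z'|/|z''-z'|$ and of the hypothesis $u(z)\le \alpha\inf_\Omega u$), then combine the lower bound on $|\inf_\Omega u|$ from Theorem~\ref{drop-thm} with Aleksandrov's maximum principle (Theorem~\ref{Alekmp}) to get a universal lower bound on $\mathrm{dist}(z,\partial\Omega)$, from which the conclusion follows since $|z-z'|\ge \mathrm{dist}(z,\partial\Omega)$ and $|z''-z'|\le \mathrm{diam}(\Omega)$. Your write-up is in fact more explicit than the paper's, which compresses the same argument into a few lines.
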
 
\begin{proof}
Note that the ration $\frac{|z'-z|}{|z^{''}-z^{'}|}$ is invariant under linear transformations. Hence by making a linear 
transformation using John's lemma, we may assume that $B_1(0)\subset \Omega\subset B_n(0)$. By Theorem \ref{drop-thm}, we have $\inf_{\Omega} u\leq -c(n,\lambda).$ Hence, when $u(z)\leq -\alpha c(n,\lambda)$, 
we have $\text{dist} (z,\p\Omega)\geq c(n,\alpha,\lambda, \Lambda)$ by Aleksandrov's estimate, Theorem \ref{Alekmp}. The lemma then follows.
\end{proof}
\begin{thm} [Caffarelli's localization theorem \cite{C1}] Suppose that $\lambda\leq \det D^2 u\leq \Lambda$ in $\Omega$. 
Then for any point $x_0\in\Omega$, either the contact set $\mathcal{C}= \{x\in\Omega: u(x)= l_{x_0}(x)\}$ is a single point, where 
$l_{x_0}$ is a supporting hyperplane to the graph of the function $u$ at $(x_0, u(x_0))$, or $\mathcal{C}$
has no interior extremal points in $\Omega$.
\label{Caf_loc}
\end{thm}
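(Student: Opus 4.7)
I would argue by contradiction. Suppose $\mathcal{C}$ contains at least two points and has an extremal point $x^{*}\in\mathrm{int}(\Omega)$. Subtracting $l_{x_0}$ from $u$, I may assume $u\geq 0$ on $\Omega$ and $\mathcal{C}=\{u=0\}$; translating, set $x^{*}=0$, and pick $y\in\mathcal{C}\setminus\{0\}$. Observe first that $\mathcal{C}$ cannot contain an $n$-dimensional ball: otherwise $u$ would vanish identically on that ball, forcing its Monge-Amp\`ere measure to be zero there and violating $\det D^2 u\geq\lambda>0$. Hence $\mathcal{C}$ is contained in an affine subspace of dimension at most $n-1$, while having dimension at least $1$.

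The plan is a rescaling-and-compactness argument at the extremal point $0$. For $h>0$, consider the sections $S_h:=\{x\in\overline{\Omega}:u(x)<h\}$; since $0\in\mathrm{int}(\Omega)$ and $\mathcal{C}\subset S_h$, for $h$ small $S_h\subset\subset\Omega$, and Theorem \ref{vol-sec1} gives $ch^{n/2}\leq|S_h|\leq Ch^{n/2}$ while $\mathrm{diam}(S_h)\geq|y|>0$. Normalize each $S_h$ via John's Lemma (Lemma \ref{John_lem}) by an affine map $T_h x=A_hx+b_h$ with $B_1\subset T_h(S_h)\subset B_n$; the volume identity then forces $\det A_h\asymp h^{-n/2}$. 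Applying Lemma \ref{rescaling_lem} to
\[
\tilde u_h(x)\,:=\,(\det A_h)^{2/n}\bigl(u(T_h^{-1}x)-h\bigr)
\]
produces a convex function on the normalized set $\tilde S_h:=T_h(S_h)$, vanishing on $\partial\tilde S_h$, satisfying $\lambda\leq\det D^2\tilde u_h\leq\Lambda$, and with $\tilde u_h(T_hz)$ bounded between two negative universal constants for every $z\in\mathcal{C}$.

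By the compactness result Theorem \ref{Blaschke_thm}, along a subsequence $h_k\to 0$ the pairs $(\tilde u_{h_k},\tilde S_{h_k})$ converge in the class $C_{\lambda,\Lambda}$ to some $(\tilde u_\infty,\tilde S_\infty)$ with $\tilde u_\infty=0$ on $\partial\tilde S_\infty$; refining further, the images $\tilde 0_k:=T_{h_k}(0)$ and $\tilde y_k:=T_{h_k}(y)$ converge to points $z_0,z_y\in\overline{\tilde S_\infty}$ at which $\tilde u_\infty$ attains its minimum value, some negative number $-m$ with $m$ bounded away from $0$. The crux is that the extremality of $0$ in $\mathcal{C}$ forces $z_0\in\partial\tilde S_\infty$. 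Given this, continuity of $\tilde u_\infty$ up to the boundary yields $\tilde u_\infty(z_0)=0$, contradicting $\tilde u_\infty(z_0)=-m<0$.

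The main obstacle is making the claim $z_0\in\partial\tilde S_\infty$ rigorous. The underlying intuition is that, iterating the standard convex-analytic fact that an extremal point of a convex body is extremal in some supporting face of strictly lower dimension, one obtains a unit vector $\nu$ satisfying $\nu\cdot x\geq 0$ for every $x\in\mathcal{C}$ with $\nu\cdot y>0$; in this direction $\mathcal{C}$ does not extend past $0$. A quantitative barrier argument, comparing $u$ with a suitable quadratic of matching Monge-Amp\`ere mass in a neighborhood of $0$ inside the half-space $\{\nu\cdot x\leq 0\}$ where no contact points lie, should then show that $S_h$ extends only a transverse amount past $0$ in direction $-\nu$, and that after normalization by $A_h$ this extent collapses to zero in the limit. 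Carrying out this barrier-versus-normalization bookkeeping, and in particular confirming that the stretching rates of $A_h$ align with the extremal direction in the expected way, is the technical heart of Caffarelli's original argument and the principal difficulty in this proposal.
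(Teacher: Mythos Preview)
Your proposal has two substantive gaps, one of which you flag yourself and one you do not.

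The unflagged gap is the claim ``for $h$ small $S_h\subset\subset\Omega$.'' The contradiction hypothesis is only that $\mathcal{C}$ has an extremal point $0$ lying in the interior of $\Omega$; it does \emph{not} say that all of $\mathcal{C}$ lies in the interior. The theorem explicitly allows $\mathcal{C}$ to reach $\partial\Omega$, so $S_h\supset\mathcal{C}$ may well meet $\partial\Omega$ for every $h>0$. Without $S_h\subset\subset\Omega$ you cannot invoke Theorem~\ref{vol-sec1} for the two-sided volume bound, and the entire normalization machinery stalls. (You could try to localize first to a compact slab around $0$ before forming sections, but at that point you are essentially rediscovering the paper's argument.)

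The gap you do flag---showing that the image of the extremal point is pushed to $\partial\tilde S_\infty$ under the normalizing maps---is the whole content of the theorem, and your sketch (choose a direction $\nu$ in which $\mathcal{C}$ lies on one side, build a barrier on the other side, track stretching rates of $A_h$) is not close to a proof. In particular, there is no reason the eigenvectors of $A_h$ should align with your direction $\nu$, so the ``bookkeeping'' you allude to is genuinely delicate.

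The paper's proof avoids both difficulties by a short direct argument. After placing the extremal point at $0$ with $\mathcal{C}\subset\{x_n\le 0\}$ and choosing $\delta>0$ so that $\mathcal{C}\cap\{-\delta\le x_n\le 0\}\subset\subset\Omega$, one \emph{tilts} the function: set $v_\varepsilon(x)=u(x)-\varepsilon(x_n+\delta)$ and $G_\varepsilon=\{v_\varepsilon\le 0\}$. By construction $G_\varepsilon\subset\{x_n\ge -\delta\}$, and since $G_\varepsilon$ shrinks to $\mathcal{C}\cap\{-\delta\le x_n\le 0\}$ as $\varepsilon\to 0$, it is compactly contained in $\Omega$ for small $\varepsilon$---this is how the paper manufactures the compact containment that your approach simply assumed. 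One then observes that $v_\varepsilon(0)=-\varepsilon\delta$ is a fixed fraction of $\inf_{G_\varepsilon}v_\varepsilon\ge -\varepsilon(M+\delta)$, and applies the Balancing Lemma~\ref{balance_lem} (a one-line consequence of Aleksandrov's maximum principle after John normalization) on the segment through $0$ joining a point $x''\in\mathcal{C}\cap\{x_n=-\delta\}$ to a point $x^\varepsilon\in\partial G_\varepsilon\cap\{x_n\ge 0\}$. The lemma forces $x_n^\varepsilon$ to stay bounded away from $0$, but $x^\varepsilon\to 0$ as $\varepsilon\to 0$, a contradiction. No compactness, no tracking of normalizing affine maps---the tilting in the extremal direction does all the work.
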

\begin{proof} By subtracting an affine function, we can assume that $u\geq 0$ in $\Omega$ and $\mathcal{C}=\{u=0\}$. 
Suppose that $\mathcal{C}$ contains more than one point and that the conclusion of the theorem is false, that is $\mathcal{C}$ contains an interior extremal point in $\Omega$.
By changing coordinates, we can assume that
$0\in\Omega$ is an interior extremal point of the set $\{u=0\}\subset \{x_n\leq 0\}$, and furthermore, $\mathcal{C}_{\delta}:=\{0\geq x_n\geq -\delta\}\cap\mathcal{C}$ is compactly contained in $\Omega$ for some small $\delta$. Choose $x''\in \{x_n=-\delta\}\cap\mathcal{C}\subset\Omega$. Let
$$G_{\e}:=\{x\in\Omega: v_{\e}(x):=u(x) -\e (x_n +\delta)\leq 0\}.$$
Then $G_{\e}\subset \{x_n\geq -\delta\}$ and $G_{\e}$ shrinks to  
$\mathcal{C}_{\delta}$ as $\e\rightarrow 0$.  Hence, when $\e$ is small,  $G_{\e}\subset\Omega$  and $v_{\e}=0$ on $\p G_{\e}$. 
Clearly $x''\in \p G_{\e}$ and $0$ is an interior point of $G_{\e}$.  We observe that on $G_{\e}$
$$v(x) \geq -\e(M+\delta),$$
where $M= \text{diam}(\Omega)$
and therefore
$$v(0) =-\e\delta \leq \frac{\delta}{M + \delta} \inf_{G_{\e}} v(x).$$
Now, $0$ is an interior point on the segment connecting some point $x^{\e}$ of $\p G_{\e}\cap\{x_n\geq 0\}$ and $x''$ . By Lemma \ref{balance_lem}, we have $$|x_n^{\e}| \geq c(n,\delta, \lambda,\Lambda, M)|x_n^{\e}-x''_n|\geq c(n,\delta, \lambda,\Lambda, M).$$
This contradicts the fact that $x_n^{\e}\rightarrow 0$ when $\e\rightarrow 0.$
\end{proof}
A consequence of Theorem \ref{Caf_loc} is the following strict convexity result and its quantitative version. It gives an answer to Question \ref{strict_quest}.
\begin{thm} \label{strict_thm}
~~
\begin{myindentpar}{1cm}
(i) Suppose that $\lambda\leq \det D^2 u\leq \Lambda$ in a convex domain $\Omega$ and $u=0$ on $\p\Omega$. Then $u$ is strictly convex in $\Omega$. This implies that $u$ cannot coincide a supporting hyperplane in 
$\Omega$.\\
(ii) Suppose that $u\geq 0$, $u(0)=0$,
$$B_{1}(0)\subset \Omega= S_u(0, 1)\subset B_n(0), ~u=1~\text{on}~\p\Omega,~ \text{and } \lambda\leq \det D^2 u\leq \Lambda.$$
Then for any $x\in\Omega$ with $dist(x,\Omega)\geq \alpha>0$, there is a universal constant $h(\alpha, n,\lambda,\Lambda)>0$ such that $S_u(x, h)\subset\subset\Omega$.
\end{myindentpar}
\end{thm}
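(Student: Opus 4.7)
The plan is to establish part (i) by a contradiction argument built on Caffarelli's localization theorem (Theorem \ref{Caf_loc}), and then to derive part (ii) from part (i) by a compactness argument using Theorem \ref{Blaschke_thm}.

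For part (i), I will first record the auxiliary fact that $u < 0$ throughout $\Omega$: if $u$ vanished at an interior point, convexity combined with $u \le 0 = u|_{\partial\Omega}$ would force $u \equiv 0$ on any chord through that point joining two boundary points, and hence on all of $\Omega$, contradicting $\det D^2 u \ge \lambda > 0$. Next, I will argue by contradiction. Suppose $u$ is not strictly convex at some $x_0 \in \Omega$; then the contact set $\mathcal{C} := \{x \in \bar\Omega : u(x) = l_{x_0}(x)\}$ of some supporting hyperplane $l_{x_0}$ to the graph of $u$ at $(x_0, u(x_0))$ contains more than one point. Since $\mathcal{C}$ is a compact convex set, Theorem \ref{Caf_loc} guarantees that every extremal point of $\mathcal{C}$ lies on $\partial\Omega$. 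At any such extremal point $e$, the boundary condition gives $l_{x_0}(e) = u(e) = 0$, placing all extremal points of $\mathcal{C}$ inside the affine hyperplane $\{l_{x_0} = 0\}$. By the Krein-Milman theorem, $\mathcal{C}$ is the closed convex hull of its extremal points, so $\mathcal{C} \subset \{l_{x_0} = 0\}$. In particular $x_0 \in \mathcal{C}$ forces $l_{x_0}(x_0) = 0 = u(x_0)$, contradicting the strict negativity of $u$ at $x_0$.

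For part (ii), I will argue by contradiction and compactness. Suppose the conclusion fails: there exist sequences $(u_k, \Omega_k)$ satisfying the normalization and Monge-Amp\`ere bounds, together with interior points $x_k$ with $\text{dist}(x_k, \partial\Omega_k) \ge \alpha$, such that $S_{u_k}(x_k, 1/k)$ is not compactly contained in $\Omega_k$. Pick $y_k \in \overline{S_{u_k}(x_k, 1/k)} \cap \partial\Omega_k$. Applying Theorem \ref{Blaschke_thm} to $u_k - 1$ (which vanishes on $\partial\Omega_k$ and satisfies the same Monge-Amp\`ere bounds), I extract a subsequence along which $\Omega_k \to \Omega$ in Hausdorff distance and $u_k \to u$ locally uniformly, where the limit $u$ obeys $\lambda \le \det D^2 u \le \Lambda$ and $u = 1$ on $\partial\Omega$. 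Since $\text{dist}(x_k, \partial\Omega_k) \ge \alpha$, Lemma \ref{slope-est} gives a uniform bound on $|\nabla u_k(x_k)|$, so after further extraction $x_k \to x_\infty$ with $\text{dist}(x_\infty, \partial\Omega) \ge \alpha$, $y_k \to y_\infty \in \partial\Omega$, and $\nabla u_k(x_k) \to p$. Passing to the limit in the defining inequality of the section gives $u(y_\infty) \le u(x_\infty) + p \cdot (y_\infty - x_\infty)$, while passing to the limit in the supporting hyperplane inequalities for $u_k$ yields the reverse inequality $u(z) \ge u(x_\infty) + p \cdot (z - x_\infty)$ for all $z \in \bar\Omega$. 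Thus the affine function $l(z) = u(x_\infty) + p \cdot (z - x_\infty)$ is a supporting hyperplane to $u$ that touches its graph at the distinct points $x_\infty \in \Omega$ and $y_\infty \in \partial\Omega$. Part (i) applied to $u - 1$ then yields strict convexity of $u$, the desired contradiction.

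The principal obstacle lies in part (i), specifically in concluding that $\mathcal{C} \subset \{l_{x_0} = 0\}$. This requires combining the localization theorem (to push all extremal points of $\mathcal{C}$ to $\partial\Omega$) with the Krein-Milman representation of $\mathcal{C}$, together with the observation that the zero set of a nontrivial affine function is a convex affine hyperplane. A subsidiary technical point in part (ii) is ensuring that the limiting slope $p$ is a genuine subgradient of the limit $u$ at $x_\infty$ and that the limit $u$ retains the boundary value $1$ on $\partial\Omega$; both follow from the stability properties of convex functions and their supporting hyperplanes under locally uniform convergence, together with the uniform H\"older bound encoded in the proof of Theorem \ref{Blaschke_thm}.
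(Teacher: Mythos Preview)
Your proof is correct and follows essentially the same strategy as the paper: part (i) via Caffarelli's localization theorem pushing extremal points of the contact set to $\partial\Omega$ and then deriving a contradiction from the boundary condition, and part (ii) via compactness (Theorem \ref{Blaschke_thm}) together with part (i). Your treatment of part (i) is in fact a bit more explicit than the paper's, as you invoke Krein--Milman and first record the auxiliary fact $u<0$ in $\Omega$.

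The only genuine difference is the choice of the auxiliary point in part (ii). The paper picks $y_k$ on the set $\partial\Omega_{\alpha/2}=\{\text{dist}(\cdot,\partial\Omega)=\alpha/2\}$, so the limit $y_\infty$ is an \emph{interior} point and the contradiction with strict convexity is immediate. You instead take $y_k\in\partial\Omega_k$, so $y_\infty\in\partial\Omega$; strict convexity in $\Omega$ does not by itself forbid boundary contact, and you should note that the segment $[x_\infty,y_\infty]$ lies entirely in the contact set (since $u-l\ge 0$ is convex and vanishes at both endpoints), giving a second \emph{interior} contact point and hence the contradiction. This is a routine observation, but worth making explicit. The paper's choice of $y_k$ avoids this extra sentence at the cost of proving the slightly stronger inclusion $S_u(x,h)\subset\Omega_{\alpha/2}$; your choice is arguably more natural as a direct negation of the statement.
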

\begin{proof}~\\ (i) Let $x_0\in\Omega$ and let $l$ be a supporting hyperplane to the graph of $u$ at $(x_0, u(x_0))$. If $u$ is not strictly convex at $x_0$, then the contact set $\mathcal{C}= \{x\in\Omega: u(x)=l(x)\}$ is
not a single point. By Theorem \ref{Caf_loc}, all extremal points of $\mathcal{C}$ lie on the boundary $\p\Omega$. It is easy to see that $l=0$ on $\mathcal{C}$. Using the convexity of $u$, we find that $u\leq 0$ in $\Omega$ but $u$ is above $l$. This is only possible when $u=l=0$ in $\Omega$, contradicting $\det D^2 u\geq\lambda$.\\
(ii) This follows from a compactness argument using Theorem \ref{Blaschke_thm} and the strict convexity result in part (i). 
Let $\Omega_{\delta}=\{x\in\Omega: \text{dist}(x,\p\Omega)\geq \delta\}$. Let $\nabla u(x)$ be the slope of a supporting hyperplane to the graph of 
$u$ at $(x, u(x))$ where $x\in\Omega$.
It suffices to derive a contradiction from
the following scenario: suppose we can find a sequence of convex functions $u_{k}\geq 0$ on $\Omega$ with $B_{1}(0)\subset \Omega= S_{u_k}(0, 1)\subset B_n(0), ~u_k=1~\text{on}~\p\Omega$, 
$u_k(0)=0$ 
such that
\begin{equation}
u_{k}(y_{k})\leq u_{k}(x_k) + \nabla u_{k}(x_k) \cdot(y_k-x_k) + h_{k}
 \label{uk}
\end{equation}
for sequences $x_{k}\in\Omega_{\alpha}$, $y_k\in\p\Omega_{\alpha/2}$ and $h_{k}\rightarrow 0$. From Theorem  \ref{Blaschke_thm}, 
after passing to a subsequence if necessary, we may assume
$$u_{k} \to u_* \quad \mbox{locally uniformly on $\Omega $}, \quad x_k \to x_*, \quad y_k \to y_*.$$
Moreover $u_*$ satisfies $$\lambda \le \det D^{2} u_* \le \Lambda~ \text{in} ~\Omega ~\text{and}~ u^{\ast}=1 ~\text{on} ~\p\Omega.$$
On the other hand, 
using Lemma \ref{slope-est}, we find that
\begin{equation*}
 \abs{\nabla u_k}\leq C(\alpha, n,\Lambda)~\text{in}~ \Omega_{\alpha/4}.
\end{equation*}
Thus, after passing a further subsequence, we can deduce from (\ref{uk}) that
\begin{equation*}
 u_*(y_*) = u_*(x_*) + \nabla u_*(x_*) \cdot (y_*-x_*),
\end{equation*}
where $\nabla u_{\ast} (x_{\ast})$ is the slope of a supporting hyperplane to the graph of $u_{\ast}$ at $(x_*, u_*(x_*))$. 
This implies that the contact set $\{x\in\Omega: u_\ast (x)= u_*(x_*) + \nabla u_*(x_*)\cdot (x-x_*)\}$ is not a single point and
 we reached a contradiction to the conclusion in (i).
\end{proof}
\subsection{Strict convexity and $C^{1,\alpha}$ estimates}
The following lemma is a consequence of Aleksandrov's maximum principle and convexity.
\begin{lem}\label{theta_lem}
Suppose that u is a convex function defined on $\Omega$ that contains the origin. Furthermore, assume that $u$ satisfies $$u(0)=0, u\geq 0,~\lambda\leq \det D^2 u\leq\Lambda~\text{in}~\Omega.$$
If $S_u(0, h)\subset\subset\Omega$ then for any $x\in \p S_u(0, h)$, we have
$$u(\theta x)\geq \frac{1}{2} u(x)$$
where $\theta\in (\frac{1}{2}, 1)$ depending only on $n,\Lambda,\lambda.$
\end{lem}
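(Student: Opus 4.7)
The plan is to normalize $S_u(0,h)$ by an affine transformation and then invoke Aleksandrov's maximum principle, which forces the rescaled solution to be close to its boundary value $1$ on a boundary layer of the normalized section, thereby yielding the desired lower bound along the segment from $0$ to $x$.

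First I would note that, since $u(0)=0$ and $u\ge 0$, the zero hyperplane supports the graph of $u$ at $0$, so $0\in\p u(0)$, and therefore $S_u(0,h)=\{u<h\}$ with $u=h$ on $\p S_u(0,h)$. By John's lemma (Lemma~\ref{John_lem}), I would pick an invertible affine map $T(y)=Ay+b$ such that $B_1(0)\subset\tilde\Omega:=T^{-1}(S_u(0,h))\subset B_n(0)$, and set
$$\tilde u(y):=\frac{u(Ay+b)}{h},\qquad y\in\tilde\Omega.$$
Then $\tilde u\ge 0$ is convex, $\tilde u=1$ on $\p\tilde\Omega$, and $\tilde u(\tilde z_0)=0$, where $\tilde z_0:=T^{-1}(0)=-A^{-1}b$. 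Combining Lemma~\ref{rescaling_lem} with the volume estimate $c(n,\Lambda)h^{n/2}\le |S_u(0,h)|\le C(n,\lambda)h^{n/2}$ from Theorem~\ref{vol-sec1} traps the quantity $|\det A|^2/h^n$ between two positive constants depending only on $n,\lambda,\Lambda$, and hence
$$\lambda'\le \det D^2\tilde u\le \Lambda'$$
for some $\lambda',\Lambda'>0$ depending only on $n,\lambda,\Lambda$. A direct computation gives $T^{-1}(\theta x)=\theta\tilde x+(1-\theta)\tilde z_0$, where $\tilde x:=T^{-1}(x)\in\p\tilde\Omega$, so the claim $u(\theta x)\ge h/2$ reduces to showing
$$\tilde u\bigl(\theta\tilde x+(1-\theta)\tilde z_0\bigr)\ge \tfrac12.$$

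Next, I would apply Aleksandrov's maximum principle (Theorem~\ref{Alekmp}) to the nonpositive convex function $\tilde v:=\tilde u-1$, which vanishes on $\p\tilde\Omega$ and satisfies $\det D^2\tilde v\le\Lambda'$. Since $\text{diam}(\tilde\Omega)\le 2n$ and $M\tilde v(\tilde\Omega)\le\Lambda'|\tilde\Omega|\le\Lambda'\omega_n n^n$ are universally bounded, this yields
$$(1-\tilde u(y))^n\le C(n,\lambda,\Lambda)\,\text{dist}(y,\p\tilde\Omega)\quad\text{for every }y\in\tilde\Omega.$$
At the point $y_\theta:=\theta\tilde x+(1-\theta)\tilde z_0$ on the segment from $\tilde z_0$ to $\tilde x\in\p\tilde\Omega$, the distance to the boundary is bounded by $(1-\theta)|\tilde x-\tilde z_0|\le 2n(1-\theta)$, and therefore
$$\tilde u(y_\theta)\ge 1-C''(n,\lambda,\Lambda)(1-\theta)^{1/n}.$$
Choosing $\theta\in(1/2,1)$ close enough to $1$ so that $C''(1-\theta)^{1/n}\le 1/2$, for instance $\theta:=\max\{3/4,\,1-(2C'')^{-n}\}$, makes the right-hand side at least $1/2$, which gives the lemma.

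The only nontrivial ingredient besides Aleksandrov's estimate is the volume bound for sections in Theorem~\ref{vol-sec1}; this is precisely what makes the Monge-Amp\`ere bounds of $\tilde u$ universal after the affine normalization, allowing a single $\theta$ to work for all admissible $u$ and all $x\in\p S_u(0,h)$. No compactness or quantitative strict convexity is needed.
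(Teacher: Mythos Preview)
Your proof is correct and follows the same strategy as the paper's: normalize the section via John's lemma, obtain universal Monge--Amp\`ere bounds on the rescaled solution, and then invoke Aleksandrov's maximum principle to force the rescaled solution to be at least $1/2$ at points within a fixed distance of the boundary. Your version is in fact slightly cleaner on two points: you correctly treat the John normalization as an \emph{affine} map $T(y)=Ay+b$ and explicitly track the image $\tilde z_0=T^{-1}(0)$ of the origin (the paper calls $T$ ``linear,'' which would require an additional balancing argument to justify), and you apply Aleksandrov's estimate directly at the point $y_\theta$ rather than passing through the level set $\{\tilde u=1/2\}$.
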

\begin{proof}
By John's lemma (Lemma \ref{John_lem}), there is a linear transformation $T$ such that
$B_{1/n}(0)\subset T(S_u(0, h))\subset B_1(0).$
Let
$$v(y) =\frac{u(T^{-1} y)}{h}, y\in T(S_u(0, h)).$$
Then $v=1$ on $\p T(S_u(0,h))= T(\p S_u(0,h))$. Using the volume estimates on $S_u(0,h)$, we find that
$$C^{-1}(n,\lambda,\Lambda)\leq \det D^2 v(y) \leq C(n,\lambda,\Lambda).$$
From the Aleksandrov maximum principle in Theorem \ref{Alekmp}, we have
$$\text{dist} (T(S_u(0, h/2)), T(\p S_u(0, h)))= \text{dist}(\{v<1/2\}, \{v=1\})\geq C_0$$
where $C_0$ depends only on $n,\lambda,\Lambda$. If $y\in\{v=1\}$ then the segment $0y$ intersects $\p \{v=1/2\}$ at $z$ with $|y-z|\geq C_0$. Since $|y|\leq 1$, we have
$$|z|= |y|-|z-y| \leq |y|(1-\frac{1}{2} C_0)=\theta |y|,~\theta:= 1-\frac{1}{2}C_0.$$
Therefore, for all $y\in\{v=1\}$, we have by convexity, 
$$v(\theta y)\geq \frac{1}{2} v(y).$$ For each $x\in\p S_u(0, h)$, we apply the above inequality to $y= Tx$ and then rescale back to obtain the desired result.
\end{proof}
Combining the above lemma with strict convexity, we can estimate the size of sections.
\begin{lem}[Size of sections]\label{sec-size}
Assume that that
$$\lambda\leq \det D^2 u\leq \Lambda~\text{in}~\Omega,~u(0)=0, u\geq 0.$$
Suppose that $S_u(0, t_0)\subset\subset\Omega$ is a normalized section, that is,
$B_{1}(0)\subset S_u(0, t_0)\subset B_n(0).$
Then there is a universal constant $\mu\in (0,1)$ such that for all sections $S_u(z, h)\subset\subset\Omega$ with $z\in S_u(0, 3/4 t_0)$, we have
$$S_u(z, h)\subset B_{Ch^{\mu}}(z).$$
\end{lem}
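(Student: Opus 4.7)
The plan is to iterate Lemma~\ref{theta_lem} to obtain geometric decay of section diameters as height halves, and to use Theorem~\ref{strict_thm}(ii) (together with Aleksandrov's maximum principle) to secure a uniform starting height around each $z \in S_u(0, \tfrac{3}{4} t_0)$. Since sections are invariant under subtraction of affine functions and $\det D^{2}u$ is unchanged by translations and subtracting linear functions, I first fix any $z$ with $S_u(z,h)\subset\subset\Omega$, pick a subgradient $p\in\partial u(z)$, and form
\[
\tilde u(y) := u(z+y) - u(z) - p\cdot y, \qquad y \in \Omega - z.
\]
Then $\tilde u(0)=0$, $\tilde u\geq 0$, $\lambda\leq \det D^{2}\tilde u\leq\Lambda$, and $S_{\tilde u}(0,h) = S_u(z,h) - z \subset\subset \Omega-z$, so Lemma~\ref{theta_lem} applies at the origin for $\tilde u$ at every level contained in $S_{\tilde u}(0,h)$.

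The key geometric step will be the inclusion $S_{\tilde u}(0,h/2) \subset \theta\,\overline{S_{\tilde u}(0,h)}$. To see it, take $y\in S_{\tilde u}(0,h/2)\setminus\{0\}$, write $y=s\hat e$ with $|\hat e|=1$, and consider the convex function $f(t)=\tilde u(t\hat e)$ along the ray; since $f(0)=0$ and $f\geq 0$, the slope $f(t)/t$ is nondecreasing, so $f$ is nondecreasing on $t\geq 0$. Let $t_h$ be the (unique) value with $f(t_h)=h$, so $t_h\hat e\in\partial S_{\tilde u}(0,h)$; Lemma~\ref{theta_lem} gives $f(\theta t_h)\geq h/2 > f(s)$, hence $s\leq \theta t_h$, which means $y/\theta\in \overline{S_{\tilde u}(0,h)}$. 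Iterating this shrinking (all intermediate sections remain compactly contained in $\Omega-z$ since they nest inside $S_{\tilde u}(0,h)$), I get
\[
S_u(z, h/2^k) - z \;\subset\; \theta^{k}\,\overline{S_u(z,h) - z}, \qquad k\geq 0.
\]

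To turn this into the claimed Hölder estimate I need a uniform starting height. Rescaling $v := u/t_0$, the hypothesis $B_1(0)\subset S_u(0,t_0)\subset B_n(0)$ together with $v=1$ on $\partial S_u(0,t_0)$ puts $v$ in the setting of Theorem~\ref{strict_thm}(ii) (with structural bounds $\lambda/t_0^n, \Lambda/t_0^n$). For $z\in S_u(0,\tfrac{3}{4}t_0)=S_v(0,\tfrac{3}{4})$, applying Aleksandrov's maximum principle (Theorem~\ref{Alekmp}) to $v-1$ in $S_v(0,1)$ yields $\mathrm{dist}(z,\partial S_v(0,1))\geq c(n,\lambda,\Lambda,t_0)>0$, so Theorem~\ref{strict_thm}(ii) produces a universal $\bar h = \bar h(n,\lambda,\Lambda,t_0)>0$ with $S_u(z,\bar h)\subset\subset S_u(0,t_0)\subset\subset\Omega$ for every such $z$. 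Finally, given $S_u(z,h)\subset\subset\Omega$: if $h\geq \bar h$ the bound $S_u(z,h)\subset \Omega\subset B_n(0)$ is trivially of the form $B_{Ch^{\mu}}(z)$; if $h<\bar h$, choose the integer $k$ with $\bar h/2^{k+1}\leq h \leq \bar h/2^k$ and combine $S_u(z,h)\subset S_u(z,\bar h/2^k)$ with the iterated shrinking to get $\mathrm{diam}(S_u(z,h)-z)\leq \theta^{k}\,\mathrm{diam}(S_u(z,\bar h))\leq C\,(h/\bar h)^{\mu}$ with $\mu:=-\log_2\theta\in(0,1)$ (which is universal because $\theta$ is). This yields $S_u(z,h)\subset B_{Ch^{\mu}}(z)$.

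The main obstacle I anticipate is the one already handled in the first step above: verifying that Lemma~\ref{theta_lem} can be correctly transferred to a general center $z\in S_u(0,\tfrac{3}{4}t_0)$, which requires the translation/subtraction normalization and the one-dimensional monotonicity argument along each ray. Once this shrinking inclusion is established, the iteration and starting-height arguments are essentially routine bookkeeping using Aleksandrov's estimate and Theorem~\ref{strict_thm}(ii).
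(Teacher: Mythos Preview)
Your approach is essentially the same as the paper's: both iterate Lemma~\ref{theta_lem} to get geometric decay and invoke strict convexity (Theorem~\ref{strict_thm}) for a uniform starting scale. The paper phrases the iteration as a pointwise lower bound $u(x)\geq c|x|^{1+\beta}$ and then, for general $z$, affinely normalizes a small section $S_u(z,\delta)$ and controls the eigenvalues of the normalizing map; you instead iterate at the level of section diameters after a translation/tangent-plane subtraction, which is a bit cleaner since it bypasses those eigenvalue estimates. Your ray-monotonicity argument for the inclusion $S_{\tilde u}(0,h/2)\subset\theta\,\overline{S_{\tilde u}(0,h)}$ is correct, and your exponent $\mu=-\log_2\theta$ agrees with the paper's $\mu=1/(1+\beta)$ (since $\theta^{1+\beta}=1/2$).

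One small slip: in the case $h\geq\bar h$ you write ``$S_u(z,h)\subset\Omega\subset B_n(0)$'', but the hypothesis only gives $S_u(0,t_0)\subset B_n(0)$, not $\Omega\subset B_n(0)$. This is harmless once you observe that for $\bar h\leq h\leq t_0$ (with $t_0$ universal) the engulfing-type bound $S_u(z,h)\subset S_u(0,Ct_0)$ or simply the left inclusion of Theorem~\ref{C1alpha} controls the diameter polynomially in $h$; and in any case all applications of the lemma in the notes use small $h$. The paper's own proof has the same implicit restriction to small $h$, so this is not a substantive gap.
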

\begin{proof} By the volume estimates for sections in Theorem \ref{vol-sec1}, we see that $t_0$ is bounded from above and below by a universal constant depending
only on $n,\lambda,\Lambda$. Without loss of generality, we can assume that $t_0=1$.

We first consider the case where the center $z$ of the section is the origin.
 From Lemma \ref{theta_lem}, 
$$u(x)\geq 2^{-1} u(\theta^{-1}x)$$
for $x$ near the origin. Repeating the argument, we see that
$$u(x)\geq 2^{-k} u(\theta^{-k}x)$$ as long as $\theta^{-k}x\in\Omega$ (by then $\theta^{-k}x\in S_u(0, h)$ for some $h$). For any $x$ near the origin, let $k$ be such that
$$\frac{1}{n}\geq |\theta^{-k}x|\geq\frac{\theta}{n}.$$
Then $\theta^k\approx |x|$. Using the strict convexity result of Theorem \ref{strict_thm} and a compact argument invoking Theorem \ref{Blaschke_thm}, we obtain
$$u\mid_{\p B_{\theta/n}(0)}\geq C^{-1}(n,\lambda,\Lambda).$$
It follows that for $\theta^{1+\beta}= 2^{-1}$, we have
$$u(x)\geq \theta^{k(1+\beta)} u(\theta^{-k}x) \geq C^{-1}(n,\lambda,\Lambda)\theta^{k(1+\beta)} \geq c|x|^{1+\beta}.$$
The above estimate says that if $y\in S_u(0, h)$ then 
$h\geq c|y|^{1+\beta},$
or
$$S_u(0, h)\subset B_{Ch^{\mu}}(0), ~\mu =\frac{1}{1+\beta}.$$

Consider now the general case $z\in S_u(0, 3/4).$ There is a universal constant $\delta>0$ depending only on $n,\lambda,\Lambda$ such that $S_u(z,\delta )\subset S_u(0, 4/5)$. Let 
$Tx= Ax + b$ be an affine transformation  that normalizes $S_u(z,\delta)$, that is
$$B_1 (0)\subset T(S_u(z, \delta))\subset B_n(0).$$ 
By the gradient bound of $u$ in $S_u(0, 4/5)$, we can find a small $c$ universal such that $S_u(z, \delta)\supset B_{c\delta}(z)$. It is now easy to see that the eigenvalues $\lambda_i$ of $A$ satisfy $|\lambda_i|\leq C\delta^{-1}$. 
By the volume bound on section we have
$$|\det A| |S_u(z, \delta)|\approx 1$$
and hence all $\lambda_i$ satisfy the lower bound $|\lambda_i|\geq c\delta^{-n/2 + 1}$. Subtracting an affine function, we now assume that $u(z)=0$ and $u\geq 0$. Thus 
$S_u(z, t)=\{y\in \Omega: u(y)<t\}.$ 
Let
$$v(x) = \frac{u(T^{-1 }x)}{\delta}, x\in T(S_u(z, \delta)).$$
By using the result of the previous case to $v$, we see that 
$$S_u(z, t)= T^{-1} S_v(Tz, \frac{t}{\delta})\subset T^{-1} (B_{C(\frac{t}{\delta})^{\mu}}(Tz))\subset B_{Ct^{\mu}}(z).$$

\end{proof}
As a consequence of the strict convexity result, we have the following $C^{1,\alpha}$ regularity, due to Caffarelli \cite{C3}.
\begin{thm}[Caffarelli's pointwise $C^{1,\alpha}$ regularity \cite{C3}]\label{C1alpha}
Assume that $\lambda\leq \det D^2 u \leq\Lambda$ in $S_u(0, 1)\subset\subset\Omega$, with $u(0)=0, u\geq 0$. Then, for some universal $\delta$ depending only on $n,\lambda$ and $\Lambda$, we have
$$(\frac{1}{2} +\delta) S_{u}(0, 1) \subset S_{u}(0, 1/2)\subset (1-\delta) S_u(0, 1).$$
\end{thm}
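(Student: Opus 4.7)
The plan is to reduce the theorem to a normalized setting via John's lemma and then establish the two inclusions by separate arguments: the outer inclusion $S_u(0,1/2) \subset (1-\delta) S_u(0,1)$ will follow from Aleksandrov's maximum principle (Theorem \ref{Alekmp}), while the inner inclusion $(1/2+\delta) S_u(0,1) \subset S_u(0,1/2)$ will be obtained by a compactness-and-strict-convexity argument using Theorem \ref{strict_thm}(i). Concretely, by John's lemma (Lemma \ref{John_lem}) applied to the convex body $S_u(0,1)$, there is an affine map $Tx = Ax + b$ such that, writing $\tilde S := T(S_u(0,1))$ and $\tilde 0 := T(0)$, one has $B_1(z_0) \subset \tilde S \subset B_n(z_0)$ for some $z_0$. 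Setting $\tilde u(y) := u(T^{-1}y)$ on $\tilde S$, the volume estimate (Theorem \ref{vol-sec1}) together with Lemma \ref{rescaling_lem} produces universal bounds $c(n,\lambda) \leq \det D^2 \tilde u \leq C(n,\Lambda)$, while $\tilde u \geq 0$, $\tilde u(\tilde 0) = 0$, and $\tilde u = 1$ on $\partial \tilde S$. Because affine dilations from a fixed source point transform to affine dilations from its image, it suffices to establish both inclusions for $\tilde u$ in the normalized frame, with dilations taken from $\tilde 0$.

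For the outer inclusion, I would apply Aleksandrov's maximum principle (Theorem \ref{Alekmp}) to the convex function $\tilde u - 1$, which vanishes on $\partial \tilde S$: for every $x \in \tilde S$,
$$(1 - \tilde u(x))^n \leq C_n \,(\operatorname{diam} \tilde S)^{n-1}\, \operatorname{dist}(x, \partial \tilde S)\,|\partial \tilde u(\tilde S)|.$$
Since $\operatorname{diam} \tilde S \leq 2n$ and $|\partial \tilde u(\tilde S)| \leq C(n,\Lambda)|\tilde S|$ are both universally bounded, the sublevel set $\{\tilde u \leq 1/2\}$ lies at distance at least a universal constant $c > 0$ from $\partial \tilde S$. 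Because $\tilde S$ is sandwiched between two balls of comparable radii containing $\tilde 0$, an elementary convex-geometric estimate then gives $\{\tilde u \leq 1/2\} \subset \tilde 0 + (1-\delta)(\tilde S - \tilde 0)$ for a universal $\delta>0$, which pulls back to the desired $S_u(0,1/2) \subset (1-\delta)S_u(0,1)$.

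For the inner inclusion, convexity together with $\tilde u(\tilde 0)=0$ gives for every $x \in \tilde S$ the soft bound
$$\tilde u\bigl(\tilde 0 + \tfrac12(x-\tilde 0)\bigr) \leq \tfrac12 \tilde u(x) + \tfrac12 \tilde u(\tilde 0) \leq \tfrac12,$$
i.e., $\tilde 0 + \tfrac12(\tilde S - \tilde 0) \subset \overline{\{\tilde u \leq 1/2\}}$; the content of the theorem is to upgrade this closed containment to strict containment with a universal gap $\delta$. I would argue by contradiction. Assuming the upgrade fails for every $\delta>0$, one extracts a sequence of normalized pairs $(\tilde u_k, \tilde S_k)$ together with $x_k \in \overline{\tilde S_k}$ satisfying $\tilde u_k(x_k)=1$ such that $y_k := \tilde 0_k + (\tfrac12 + \tfrac1k)(x_k-\tilde 0_k)$ has $\tilde u_k(y_k) \geq \tfrac12$. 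Invoking the compactness result in Theorem \ref{Blaschke_thm} together with weak continuity of Monge-Amp\`ere measures (Lemma \ref{weakMA}), one passes to a subsequential limit $\tilde u_\infty$ on a limit convex body $\tilde S_\infty$, still satisfying the same Monge-Amp\`ere pinch, with $x_k \to x_\infty$, $\tilde 0_k \to \tilde 0_\infty$, and $y_k \to y_\infty := \tilde 0_\infty + \tfrac12(x_\infty - \tilde 0_\infty)$. In the limit, $\tilde u_\infty(\tilde 0_\infty) = 0$, $\tilde u_\infty(x_\infty) \leq 1$, and $\tilde u_\infty(y_\infty) \geq \tfrac12$; the convexity bound applied at $y_\infty$ then forces equality throughout, so $\tilde u_\infty$ is affine along the segment $[\tilde 0_\infty, x_\infty]$. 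This contradicts the strict convexity of $\tilde u_\infty - 1$ on $\tilde S_\infty$ provided by Theorem \ref{strict_thm}(i), completing the argument.

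The main obstacle is this inner inclusion: the direct convexity estimate never produces a quantitative gap, and the universal $\delta$ is genuinely qualitative, emerging only through the contradiction/compactness step. The delicate point is ensuring that the limit configuration $(\tilde u_\infty, \tilde S_\infty)$ inherits both the two-sided Monge-Amp\`ere bounds and the non-degeneracy (that $\tilde S_\infty$ is a full convex body comparable to $B_1$) required to invoke Theorem \ref{strict_thm}(i), which is exactly why normalization via John's lemma is carried out at the outset.
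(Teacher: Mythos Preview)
Your proposal is correct and follows essentially the same route as the paper. The outer inclusion via Aleksandrov's maximum principle is exactly the content of the paper's Lemma \ref{theta_lem} (your ``elementary convex-geometric estimate'' converting a distance bound into a dilation containment is the segment argument in that lemma's proof), and your compactness-plus-strict-convexity contradiction for the inner inclusion matches the paper's first argument verbatim. One minor point: the John balls are centered at $z_0$, not at $\tilde 0$, so your phrase ``two balls of comparable radii containing $\tilde 0$'' is imprecise; what you actually need (and what Aleksandrov at $\tilde 0$ gives) is that $\tilde 0$ sits at universal distance from $\partial \tilde S$, together with the diameter bound.

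It is worth knowing that the paper also supplies a second, constructive proof of the inner inclusion that avoids compactness entirely: one restricts to the ray $t \mapsto u(tx)$, sets $w(t) = v(\tfrac12+\delta+t) - v'(\tfrac12+\delta)t - v(\tfrac12+\delta)$ with $v(t)=u(tx)$, and applies Lemma \ref{theta_lem} to $w$ to derive a contradiction from $v(\tfrac12+\delta) > \tfrac12$. This yields an explicit $\delta$ in terms of the constant $\theta$ of Lemma \ref{theta_lem}, whereas your compactness argument (and the paper's first proof) gives only a qualitative universal $\delta$.
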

\begin{proof}
The right inclusion follows from the Lemma \ref{theta_lem} for any $0<\delta\leq 1-\theta$. For the left inclusion, we assume by contradiction and use the compactness of normalized solution. The limiting solution is linear on some ray from $0$, contradicting the 
strict convexity result in Theorem \ref{strict_thm}. 

We can also use the argument in \cite[Lemma 3.5]{TW3} to show that the left inclusion holds. Here is how the proof goes. 
For any $x\in S_u(0, 1), x\neq 0$, we show that $u((\frac{1}{2}+\delta)x)\leq \frac{1}{2}u(x)$ if $\delta$ is universally small. For each $t\in \R$, let $v(t)= u(tx)$.
By dividing $v$ by $v(1)$, we can 
assume that $v(1)=1.$ Set
$$w(t)= v(\frac{1}{2}+\delta + t) -v'(\frac{1}{2}+ \delta) t- v(\frac{1}{2}+ \delta).$$ By the convexity of $u$, we have $w\geq 0$ and $w(0)=0$. 
Suppose now $v(\frac{1}{2}+\delta)>\frac{1}{2}$. Then using the convexity of $v$ and $v(0)=0,$ we find 
\begin{equation}\frac{1}{1+2\delta}\leq v'(\frac{1}{2}+\delta)\leq \frac{1}{1-2\delta}.
 \label{vdbound}
\end{equation}
Thus
$$w(-\frac{1}{2}-\delta)= v'(\frac{1}{2}+\delta) (\frac{1}{2}+\delta) - v(\frac{1}{2}+\delta)\leq \frac{1}{1-2\delta} (\frac{1}{2}+\delta) - \frac{1}{2}= \frac{2\delta}{1-2\delta}.$$
Using Lemma \ref{theta_lem} to $w$, we then find
\begin{eqnarray*}
 w(-\theta^{-1}(\frac{1}{2}+\delta))\leq 2 w(-\frac{1}{2}-\delta)\leq \frac{4\delta}{1-2\delta}.
\end{eqnarray*}
It follows from the definition of $w$, (\ref{vdbound}) and $v(\frac{1}{2}+\delta )\leq \frac{1}{2}+\delta $ that
\begin{eqnarray*}
 v((\frac{1}{2}+\delta )(1-\theta^{-1})) \leq \frac{4\delta}{1-2\delta} -\theta^{-1}/2 + \frac{1}{2}+\delta<0
 \end{eqnarray*}
if $\delta>0$ small because $1/2<\theta<1$. This is a contradiction to the fact that $v\geq 0$.

Now, we prove that the left inclusion implies the pointwise $C^{1,\alpha}$ regularity of $u$ in $S_u(0,1)$. If $x\in \p S_u(0,h)$ then 
$u((\frac{1}{2}+\delta)x)\leq h/2.$ Using
convexity, we find that
$$u(\frac{1}{2} x)\leq  \frac{1}{1+ 2\delta} u( (\frac{1}{2} +\delta) x) + \frac{2\delta}{1+ 2\delta}u(0)\leq \frac{1}{1+ 2\delta} \frac{u(x)}{2}.$$

Let $\alpha$ be defined by $\frac{1}{1 + 2\delta}= 2^{-\alpha}.$ 
Then for any $t\in (\frac{1}{2^{k+1}},\frac{1}{2^k})$, and $x\in \Omega$, we have
$$u(tx)\leq 2^{-k} (\frac{1}{1+ 2\delta})^k u(x) = (2^{-k})^{1+\alpha} u(x) \leq 2 t^{1+\alpha} u(x).$$
Hence $u\in C^{1,\alpha}$ at the origin. The proof at other points in $S_u(0, 1)$ is the same. 
\end{proof}
\subsection{Engulfing property of sections}
The following theorem is concerned with the engulfing property of sections \cite{GH}. It can be viewed as a triangle inequality in the Monge-Amp\`ere setting. This together with Theorem \ref{vol-sec1} shows that sections of solutions to the Monge-Amp\`ere equation have many properties similar to those of balls.
\begin{thm}[Engulfing property of sections]
Suppose that $u$ is a strictly convex solution to the Monge-Amp\`ere equation
$$\lambda \leq \det D^2 u \leq \Lambda~\text{in}~\Omega.$$ There is a universal constant $\theta_0>2$ with the following property: If $S_u(y, 2h)\subset\subset\Omega$
and 
$x\in S_u(y, h)$, then we have $S_{u}(y, h)\subset S_u(x, \theta_0 h).$
\label{engulfthm}
\end{thm}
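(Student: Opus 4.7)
My plan is to prove the engulfing property by normalizing the outer section and then applying a universal gradient bound together with the maximum principle for convex functions. First, I would exploit the affine invariance of the Monge-Amp\`ere equation (Lemma \ref{rescaling_lem}) together with John's lemma (Lemma \ref{John_lem}) and the volume estimate (Theorem \ref{vol-sec1}) to reduce to the case
\[
B_1(0) \subset S_u(y, 2) \subset B_n(0), \qquad y = 0, \qquad u(0) = 0, \qquad 0 \in \partial u(0),
\]
with $u \geq 0$ and $\tilde\lambda \leq \det D^2 u \leq \tilde\Lambda$ for universal constants $\tilde\lambda, \tilde\Lambda$. The volume estimate $|S_u(y, 2h)| \approx h^{n/2}$ is precisely what keeps the Monge-Amp\`ere bounds universal after dividing $u$ by $h$ and composing with the John affine map. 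Under this normalization, $S_u(0, 1) = \{u < 1\}$ and the desired statement becomes: for every $x \in S_u(0, 1)$ there is $p_x \in \partial u(x)$ so that $S_u(0, 1) \subset S_u(x, \theta_0)$ for a universal $\theta_0$.

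Next, I would establish a uniform gradient bound on $S_u(0, 1)$. Since $u = 2$ on $\partial S_u(0, 2)$ while $u(x) < 1$ for $x \in S_u(0, 1)$, applying Aleksandrov's maximum principle (Theorem \ref{Alekmp}) to the convex function $u - 2$ on $S_u(0, 2)$ yields $\text{dist}(x, \partial S_u(0, 2)) \geq c_0 > 0$ for a universal $c_0$. Lemma \ref{slope-est} applied to $u$ in $S_u(0, 2)$ then gives $|p| \leq C_1$ for every $p \in \partial u(x)$ and every $x \in S_u(0, 1)$, with $C_1$ universal.

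With the gradient bound in hand, fix any $x \in S_u(0, 1)$ and any $p_x \in \partial u(x)$, and set
\[
v(z) := u(z) - u(x) - p_x \cdot (z - x).
\]
Then $v$ is convex, $v \geq 0$ on $\overline{\Omega}$, and $v(x) = 0$. For $z \in \partial S_u(0, 1)$ we have $u(z) = 1$, and since $|p_x|$, $|u(x)|$, $|x|$, $|z|$ are all universally bounded, there is a universal constant $C_0$ with $v(z) \leq C_0$ on $\partial S_u(0, 1)$. Because $v$ is convex on the convex set $\overline{S_u(0, 1)}$, its maximum is attained on the boundary, so $v \leq C_0$ throughout $S_u(0, 1)$. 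This is exactly $S_u(0, 1) \subset S_u(x, C_0)$, and undoing the normalization yields the claim with $\theta_0$ depending only on $n, \lambda, \Lambda$ (taking $\theta_0 > 2$ if necessary).

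The main obstacle will be ensuring that the gradient bound on $p_x$ is genuinely uniform even when $x$ lies near $\partial S_u(0, 1)$, since Lemma \ref{slope-est} degenerates as the reference point approaches the boundary of the domain on which it is applied. The resolution is to apply Lemma \ref{slope-est} on the strictly larger section $S_u(0, 2)$ rather than on $S_u(0, 1)$ itself: the buffer between the two sections, quantified by Aleksandrov's estimate, gives a definite positive distance from any $x \in S_u(0, 1)$ to $\partial S_u(0, 2)$, and this is precisely where the hypothesis $S_u(y, 2h) \subset\subset \Omega$ enters the argument.
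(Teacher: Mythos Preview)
Your proposal is correct and follows essentially the same route as the paper: normalize the outer section $S_u(y,2h)$ via John's lemma and the volume estimate, then use Aleksandrov's maximum principle to separate $S_u(0,1)$ from $\partial S_u(0,2)$, feed that separation into Lemma~\ref{slope-est} to get a universal gradient bound at $x$, and finally estimate $u(z)-u(x)-p_x\cdot(z-x)$ directly. The only superfluous step in your write-up is the appeal to the maximum principle on $\partial S_u(0,1)$: since $u(z)<1$ for every $z$ in the open section, the bound $v(z)<1+2nC_1$ holds immediately without passing to the boundary first, which is exactly how the paper argues.
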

\begin{rem}
 When $u(x)=|x|^2/2$, we have $\lambda=\Lambda=1$ and we can take $\theta_0=4$. Indeed, if $x\in S_u(y, h)= B_{\sqrt{2h}}(y)$ then from the triangle inequality, we find
 that $B_{\sqrt{2h}}(y)\subset B_{2\sqrt{2h}}(x)$, or $S_u(y, h)\subset S_u(x, 4h).$
\end{rem}

It suffices to prove the theorem in the normalized setting.
\begin{lem}
Suppose that $\lambda\leq \det D^2 u\leq \Lambda$ and $u$ is normalized in $\Omega= S_u(0, 1)$, that is $u(0)=0, Du(0)=0$, $u=1$ on $\p\Omega$ and
$B_1(0)\subset\Omega\subset B_n(0)$. There is a universally small $\delta$ such that if $x\in S_u(0, 1/2)$ then 
$S_u(0, 1/2)\subset S_u(x, \delta^{-1})$.
\end{lem}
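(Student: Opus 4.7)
The plan is to establish the engulfing in the normalized setting by a direct quantitative estimate: I will show that for every $x, y \in S_u(0, 1/2)$ and every subgradient $p \in \partial u(x)$,
\[
u(y) - u(x) - p \cdot (y - x) \leq C,
\]
for some universal constant $C$, whereupon taking $\delta^{-1} = C$ yields the containment.

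First, I would apply Caffarelli's pointwise $C^{1,\alpha}$ regularity theorem (Theorem \ref{C1alpha}) at the origin to conclude that
\[
S_u(0, 1/2) \subset (1 - \delta_0) S_u(0, 1)
\]
for a universal $\delta_0 > 0$. Because $B_1(0) \subset S_u(0,1) \subset B_n(0)$, this inclusion yields a universal lower bound
\[
\mathrm{dist}\bigl(S_u(0, 1/2),\, \partial S_u(0,1)\bigr) \geq c_0 > 0.
\]

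Next, since $u$ satisfies $\lambda \leq \det D^2 u \leq \Lambda$ with $u = 1$ on $\partial \Omega$ and $u(0) = 0$ is the minimum value (because $0 \in \partial u(0)$ via $Du(0)=0$ implies $u \geq 0$ everywhere), the gradient estimate from Lemma \ref{slope-est} gives for every $x \in S_u(0, 1/2)$ and every $p \in \partial u(x)$,
\[
|p| \leq \frac{\max_{\partial \Omega} u - u(x)}{\mathrm{dist}(x, \partial \Omega)} \leq \frac{1}{c_0}.
\]

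Finally, for any $y \in S_u(0, 1/2)$ we have $u(y) < 1/2$, while $u(x) \geq 0$, and $|y - x| \leq \mathrm{diam}(S_u(0,1)) \leq 2n$. Combining:
\[
u(y) - u(x) - p \cdot (y - x) \;\leq\; \tfrac{1}{2} + |p|\,|y-x| \;\leq\; \tfrac{1}{2} + \frac{2n}{c_0} \;=:\; \delta^{-1}.
\]
This is precisely the statement $y \in S_u(x, \delta^{-1})$, proving the containment. There is no real obstacle here once the two ingredients (Theorem \ref{C1alpha} for the interior separation and Lemma \ref{slope-est} for the gradient bound) are in hand; the mild subtlety is ensuring the estimate is independent of the choice of subgradient $p \in \partial u(x)$, but this is immediate from the fact that both $|p|$ and the convexity inequality $u(y) \geq u(x) + p\cdot(y - x)$ hold uniformly over $p \in \partial u(x)$.
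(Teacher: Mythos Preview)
Your proof follows essentially the same strategy as the paper's: establish a universal lower bound on $\mathrm{dist}(S_u(0,1/2),\partial\Omega)$, use Lemma~\ref{slope-est} to bound the subgradients, and then estimate $u(y)-u(x)-p\cdot(y-x)$ directly. The final computation is identical.

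The one difference is how you obtain the separation bound. The paper applies Aleksandrov's maximum principle (Theorem~\ref{Alekmp}) directly to $u-1$: for $x_0\in\partial S_u(0,1/2)$ one has $|u(x_0)-1|^n\leq C_n(\mathrm{diam}\,\Omega)^{n-1}\,\mathrm{dist}(x_0,\partial\Omega)\,\Lambda|\Omega|$, which gives the universal lower bound immediately. You instead invoke Theorem~\ref{C1alpha}. That theorem, as stated, assumes $S_u(0,1)\subset\subset\Omega$, whereas here $\Omega=S_u(0,1)$, so the hypothesis is not literally met. The relevant half of Theorem~\ref{C1alpha} (the right inclusion) is proved via Lemma~\ref{theta_lem}, whose argument does go through verbatim when $u=1$ on $\partial S_u(0,1)$ by hypothesis rather than by compact containment, so there is no real obstruction---but you should either note this explicitly or, more cleanly, bypass Theorem~\ref{C1alpha} and appeal directly to Aleksandrov as the paper does.
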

\begin{proof} 
By the Aleksandrov maximum principle in Theorem \ref{Alekmp}, we have
$$\text{dist} (S_u(0, 1/2),\p S_u(0, 1))\geq c(n,\lambda,\Lambda).$$
By the gradient bound in Lemma \ref{slope-est}, the slopes  of $u$ in $S_u(0, 1/2)$ are 
bounded by $C$. Thus, if $z\in S_u(0, 1/2)$
then, for $\delta$ universally small,
$$u(z) - u(x) -D u(x)\cdot (z-x)\leq u(z) + |Du(x)||z-x|\leq 1/2 + 2n C<\delta^{-1}.$$
\end{proof}
\begin{proof}[Proof of Theorem \ref{engulfthm}] 
Suppose $S_u(y, 2h)\subset\subset\Omega$
and 
$x\in S_u(y, h)$. We can normalize $S_u(y, 2h)$ and thus we can assume $y=0, h=1/2$ and that $S_u(0, 1)\subset\subset\Omega$. Therefore, $x\in S_u(0, 1/2)$. 
Apply the previous lemma to find
$S_u(y, h)= S_u(0, 1/2)\subset S_u(x, \delta^{-1}).$
The result follows by choosing $\theta_0=2\delta^{-1}.$
\end{proof}
\begin{thm} [Inclusion and exclusion property of sections]
 \label{pst}
Suppose that $u$ is a strictly convex solution to the Monge-Amp\`ere equation
$$\lambda \leq \det D^2 u \leq \Lambda~\text{in}~\Omega.$$ 
There exist universal constants $c_0>0$ and $p_1\geq 1$ such that 
 \begin{myindentpar}{1cm}
 (i) if $0< r<s\leq 1$ and $x_1\in S_u(x_0, r t)$ where $S_u(x_0, 2t)\subset\subset \Omega$, then
 $$S_{u}(x_1, c_0 (s-r)^{p_1} t)\subset S_{u}(x_0, s t).$$
 (ii)  if $0< r<s< 1$ and $x_1\in S_u(x_0, t)\backslash S_{u}(x_0, st)$ where $S_u(x_0, 2t)\subset\subset
 \Omega$, then
 $$S_{u}(x_1, c_0 (s-r)^{p_1} t)\cap S_{u}(x_0, rt)=\emptyset.$$
 \end{myindentpar}
\end{thm}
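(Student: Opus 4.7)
The plan is to reduce both claims to the normalized setting by an affine rescaling, and then derive them from a short computation combining the universal Lipschitz bound on the rescaled solution with the size estimate for sections from Lemma~\ref{sec-size}.

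First I would normalize. After subtracting the supporting hyperplane $l(x) = u(x_0) + \nabla u(x_0)\cdot(x-x_0)$, we may assume $u(x_0)=0$ and $\nabla u(x_0)=0$, so that $S_u(x_0,\tau t) = \{u<\tau t\}$ for every $\tau>0$. Apply John's lemma (Lemma~\ref{John_lem}) to produce an affine map $Ty = Ay + x_0$ with $B_1(0)\subset A^{-1}(S_u(x_0,2t)-x_0)\subset B_n(0)$; by the volume estimate of Theorem~\ref{vol-sec1}, $|\det A|$ is comparable to $t^{n/2}$ up to universal constants. Define $\tilde u(y):= u(Ty)/t$. Then by Lemma~\ref{rescaling_lem}, $\det D^2\tilde u$ lies between universal constants $\lambda'$ and $\Lambda'$, while $\tilde u(0)=0$, $\nabla\tilde u(0)=0$, and $S_{\tilde u}(0,2)$ is normalized. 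A direct computation shows that sections transform as $T\,S_{\tilde u}(y_1,\tilde h) = S_u(Ty_1,\tilde h\,t)$, so both assertions of the theorem reduce to their $t=1$ counterparts for $\tilde u$.

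Two properties then drive the argument in the normalized setting. (a) Because $\mathrm{dist}(S_{\tilde u}(0,1),\partial S_{\tilde u}(0,2))$ is universally bounded below by the Aleksandrov maximum principle (Theorem~\ref{Alekmp}), Lemma~\ref{slope-est} yields a universal bound $|\nabla\tilde u|\le M$ on $S_{\tilde u}(0,1)$; in particular, $\tilde u$ is $M$-Lipschitz there. (b) By Lemma~\ref{sec-size} with $t_0=2$, for every $y_1\in S_{\tilde u}(0,1)$ and every $h$ small enough that $S_{\tilde u}(y_1,h)\subset\subset S_{\tilde u}(0,2)$, one has $S_{\tilde u}(y_1,h)\subset B_{Ch^\mu}(y_1)$ for a universal $\mu\in(0,1)$. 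For part (i), take $y_1\in S_{\tilde u}(0,r)$ and $z\in S_{\tilde u}(y_1,h)$ with $h=c_0(s-r)^{p_1}$; combining the definition of $S_{\tilde u}(y_1,h)$ with (a) and (b),
\[
\tilde u(z) < \tilde u(y_1)+\nabla\tilde u(y_1)\cdot(z-y_1)+h \le r + M|z-y_1| + h \le r + (MC+1)h^\mu,
\]
so choosing $p_1=1/\mu$ (so $p_1>1$) and $c_0=(MC+1)^{-1/\mu}$ makes the right-hand side at most $s$, proving $z\in S_{\tilde u}(0,s)$. For part (ii), if $z\in S_{\tilde u}(y_1,h)\cap S_{\tilde u}(0,r)$ while $y_1\notin S_{\tilde u}(0,s)$, then $\tilde u(y_1)\ge s$ and $\tilde u(z)<r$, so (a) and (b) yield
\[
s-r \le \tilde u(y_1) - \tilde u(z) \le M|y_1-z| \le MCh^\mu,
\]
contradicting the choice of $h$ once $c_0$ is sufficiently small.

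The main technical point to watch is verifying the compact-inclusion hypothesis $S_{\tilde u}(y_1,h)\subset\subset S_{\tilde u}(0,2)$ needed to invoke Lemma~\ref{sec-size}; this will hold for $h$ below a universal threshold, since the lower bound on $\mathrm{dist}(S_{\tilde u}(0,1),\partial S_{\tilde u}(0,2))$ together with the gradient bound (a) forces $S_{\tilde u}(y_1,h)$ to shrink to $y_1$ as $h\to 0$. When $c_0(s-r)^{p_1}$ exceeds that threshold the assertion is trivial (simply shrink $c_0$ further), so this poses no genuine obstacle.
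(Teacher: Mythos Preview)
Your proof is correct and follows essentially the same route as the paper: normalize so that $t=1$ and $S_u(0,2)$ is John-balanced, then combine the universal gradient bound from Aleksandrov's maximum principle and Lemma~\ref{slope-est} with the section-size estimate Lemma~\ref{sec-size}, choosing $p_1=1/\mu$ and $c_0$ small. The paper only writes out (i) and declares (ii) ``very similar''; your treatment of (ii) via the Lipschitz inequality $s-r\le \tilde u(y_1)-\tilde u(z)\le M|y_1-z|\le MCh^\mu$ is a clean variant of the same idea.
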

\begin{proof}[Proof of Theorem \ref{pst}] We give the proof of (i) because that of (ii) is very similar. We can assume $x_0=0$.
The conclusion of the theorem
is invariant under affine transformation and rescaling of the domain $\Omega$ and function $u$. Thus, we can assume that $t=1$ and $u$ is normalized in $S_u(0, 2)$, that is 
$u=0, Du(0)=0$ and $B_1 (0)\subset S_u(0, 2)\subset B_n(0)$. 

Suppose now $x_1\in S_u(0, r)$ and $y\in S_u(x_1, c_0 (s-r)^{p_1})$. Then we have $u(x_1)< r$ and
$$u(y) <u(x_1)+ Du(x_1)\cdot (y-x_1) + c_0 (s-r)^{p_1}.$$
Because $S_u(0, r)\subset S_u(0, 1)\subset S_u(0, 2)$ and $S_u(0, 2)$ is normalized, we can deduce from the Aleksandrov maximum principle, Theorem \ref{Alekmp} applied to $u-2$, that
$$\text{dist}(S_u(0, 1), \p S_u(0, 2))\geq c(n,\lambda,\Lambda)$$ for some universal $c(n,\lambda,\Lambda)>0$. By the gradient bound of $u$ in Lemma \ref{slope-est}, we have
$|D u(x_1)|\leq C$ for some $C$ universal.
Using the estimate on the size of section in Lemma \ref{sec-size} together with the above gradient bound, we find
$$u(y)<r + C|y-x_1|\leq r + C'(c_0 (s-r)^{p_1})^{\mu} + c_0 (s-r)^{p_1}<s$$
if we choose $p_1=\mu^{-1}$ and $c_0$ small.
\end{proof}
We now prove a quantitative version of Theorem \ref{C1alpha} on interior $C^{1,\alpha}$ estimates for the Monge-Amp\`ere equation.
\begin{thm} [$C^{1,\alpha}$ estimates]\label{C1alpha2}
Assume that
$u$ is a strictly convex solution to the Monge-Amp\`ere equation
$\lambda \leq \det D^2 u \leq \Lambda$ in an open, bounded and convex domain $\Omega\subset\R^n.$
If $S_u(x_0, t)\subset\subset\Omega$
is a normalized section and $y, z\in S_u(x_0, t/2)$ then $$|Du(y)-Du(z)|\leq C(n,\lambda,\Lambda)|y-z|^{\alpha}.$$
\end{thm}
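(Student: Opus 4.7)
The plan is to deduce the gradient Hölder estimate from a uniform pointwise $C^{1,\alpha}$ modulus at every $z\in S_u(x_0, t/2)$, combined via the affine-difference trick. Throughout, let
\[ \bar u_z(x) := u(x) - u(z) - Du(z)\cdot (x-z) \geq 0 \]
denote the excess function at $z$, which satisfies $\bar u_z(z) = 0$, $D\bar u_z(z) = 0$, and $\lambda \leq \det D^2\bar u_z \leq \Lambda$, and whose sections $S_{\bar u_z}(z,h)$ coincide with $S_u(z, h)$. As preliminaries: the volume estimate (Theorem \ref{vol-sec1}) bounds the height $t$ between universal constants, and Lemma \ref{slope-est} supplies a universal Lipschitz constant $L$ for $u$ on $S_u(x_0, t/2)$. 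For every $z \in S_u(x_0, t/2)$, the inclusion Theorem \ref{pst}(i) (with $r=1/2$, $s=1$) produces a universal $c_1>0$ such that $S_u(z, c_1 t)\subset S_u(x_0, t)\subset\subset\Omega$; moreover the Lipschitz bound $\bar u_z(x) \leq 2L|x-z|$ forces $S_u(z, c_1 t) \supset B_{r_0}(z)$ with $r_0 := c_1 t/(2L)$ a universal positive constant.

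Next, pointwise $C^{1,\alpha}$ at $z$ is extracted via iteration of the left inclusion in Theorem \ref{C1alpha}. The crucial observation is that this inclusion is affine-invariant: its proof requires only $u(0)=0$, $u\geq 0$, and $S_u(0,1)\subset\subset\Omega$, without any normalization of the section. Applying it to $\bar u_z$ on $S_u(z, c_1 t)$ therefore yields
\[
z + \bigl(\tfrac12+\delta\bigr)\bigl(S_u(z, c_1 t) - z\bigr) \subset S_u\bigl(z,\tfrac{c_1 t}{2}\bigr)
\]
with $\delta>0$ universal. Iterating $k$ times and using $S_u(z, c_1 t)\supset B_{r_0}(z)$ gives $B_{r_0(1/2+\delta)^k}(z)\subset S_u(z, 2^{-k} c_1 t)$ for every $k\geq 0$. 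For any $y$ with $|y-z|\leq r_0$, selecting the largest $k$ satisfying $|y-z|\leq r_0(1/2+\delta)^k$ produces $\bar u_z(y) \leq 2^{-k} c_1 t \leq C_1\,|y-z|^{1+\alpha}$, where $\alpha>0$ is defined by $(1/2+\delta)^{1+\alpha}=1/2$ and $C_1$ is universal.

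Finally, for $y,z\in S_u(x_0,t/2)$ with $\rho := |y-z|\leq r_0/2$ and any unit vector $e\in\R^n$, the affine function $A(x) := \bar u_z(x) - \bar u_y(x)$ has linear part $Du(y)-Du(z)$. Since $\bar u_z,\bar u_y\geq 0$ and the pointwise estimate just derived applies to both, one obtains $|A(z)|=\bar u_y(z)\leq C_1\rho^{1+\alpha}$ and $|A(z+\rho e)|\leq C_1\rho^{1+\alpha}+C_1(2\rho)^{1+\alpha}$. The identity $A(z+\rho e)-A(z) = \rho\,(Du(y)-Du(z))\cdot e$, after dividing by $\rho$ and letting $e$ range over an orthonormal basis, produces $|Du(y)-Du(z)|\leq C\,|y-z|^\alpha$. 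The complementary range $|y-z|>r_0/2$ is handled directly from the Lipschitz bound $|Du|\leq L$. The main obstacle will be verifying that the left inclusion of Theorem \ref{C1alpha} carries through unchanged when applied to $\bar u_z$ rather than to a normalized section; the payoff of working with the affine-invariant formulation is precisely that the universal constant $\delta$ descends to every $z\in S_u(x_0, t/2)$ without any loss due to the possibly unbounded eccentricity of the sections centered at $z$.
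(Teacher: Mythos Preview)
Your proof is correct and follows essentially the same route as the paper: both establish a uniform pointwise $C^{1,\alpha}$ modulus at every point of $S_u(x_0,t/2)$ by iterating the left inclusion of Theorem \ref{C1alpha} (whose affine invariance you correctly identify and exploit), combined with the inclusion property Theorem \ref{pst} and the gradient bound Lemma \ref{slope-est}, and then upgrade this to the two-point gradient H\"older estimate. Your final step---evaluating the affine difference $\bar u_z-\bar u_y$ at $z$ and $z+\rho e$---is a minor variant of the paper's, which instead recenters at $z$ (so $Du(z)=0$) and selects a point $x$ with $|x-y|\approx|y-z|$ in a direction nearly parallel to $Du(y)$.
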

\begin{proof}
 By using the volume estimates for sections in Theorem \ref{vol-sec1}, we find that $t$ is bounded from above and below by universal constants. Without loss of generality, we can assume
 that $t=1$, $x_0=0$, $u\geq 0$ and $u(0)=0$. 
 Fix $z\in S_u(0, 1/2)$. Then by Theorem \ref{pst}, there is a universal constant $\gamma>0$ such that $S_u(z,\gamma) \subset S_u(0, 3/4)$. Using the Aleksandrov maximum principle
 in Theorem \ref{Alekmp} and the gradient bound in Lemma \ref{slope-est}, we find that $|Du|\leq C$ in $S_u(0,3/4)$. It follows that $S_u(z,\gamma)\supset B_{c\gamma}(z)$ for some
 $c$ universal. To prove the theorem, it suffices to consider $y\in S_u(0, 1/2)\cap S_u(z,\gamma)$. In this case, we can assume further that $z=0$, $\gamma=1/2$ and need to show that
 \begin{equation}\label{Dualpha}|Du(y)|\leq C|y|^{\alpha}
  \text{ for all y in } S_u(0, 1/2).
 \end{equation}
By Theorem \ref{C1alpha} , we have
 $$0\leq u(x)\leq C|x|^{1+\alpha}~\text{and}~|u(x)-u(y)-Du(y)\cdot (x-y)|\leq C|x-y|^{1+\alpha}~\text{for all~} x\in S_u(0, 3/4).$$
 We choose $x\in S_u(0, 3/4)$ so that
 $$C(n,\lambda,\Lambda)|y|\geq |x-y|\geq c(n,\lambda,\Lambda)|y|~
 \text{and}~ Du(y)\cdot (x-y)\geq \frac{1}{2}|Du(y)||x-y|.$$
Then
$$\frac{1}{2}|Du(y)||x-y| \leq Du(y)\cdot (x-y) \leq u(y) + u(x) + C|x-y|^{1+\alpha}\leq C|y|^{1+\alpha}.$$
Therefore
$$|Du(y)|\leq C|y|^{\alpha}.$$
\end{proof}

\begin{rem}

Theorem \ref{engulfthm} was extended to the boundary in \cite{LN1}. Theorem \ref{pst}, due to Guti\'errez-Huang \cite{GH},  has been recently extended to the boundary in \cite{Le_Bdr}.
\end{rem}
\begin{rem}
 The second proof of the left inclusion in Theorem \ref{C1alpha}, which avoids compactness arguments, gives an explicit dependence of $\delta$ on $n,\lambda$ and $\Lambda$. Note that
 $\delta$ only depends quantitatively on the explicit constant $\theta$ appearing in Theorem \ref{theta_lem}. As a consequence, the H\"older exponent $\alpha$ in Theorem
 \ref{C1alpha2} can be computed explicitly from the formula $\alpha=\log_2 (1+2\delta)$.
 
 On the other hand, our proof of Theorem \ref{C1alpha2} does not give an explicit dependence of the H\"older norm $C(n,\lambda,\Lambda)$ on $n,\lambda$ and $\Lambda$ because
 $C(n,\lambda,\Lambda)$ depends on the universal constants appearing in the statements of Theorems \ref{sec-size} and \ref{pst} which were obtained by compactness arguments.

 The conclusions of Theorems \ref{C1alpha} and \ref{C1alpha2} also hold when the Monge-Amp\`ere measure $\mu=\det D^2 u$ is doubling with respect to the center of mass 
 on the sections of $u$ as in (\ref{muDC}); see Caffarelli \cite{C3}. We note that in this generality, a direct proof of Theorem \ref{C1alpha2} (which avoids any compactness
argument) has been given by Forzani and Maldonado \cite{FM}, allowing one to compute
the explicit dependence of $\alpha$ and $C(n,\lambda,\Lambda)$ on $n,\lambda$ and $\Lambda$.
\end{rem}

\newpage

\addcontentsline{toc}{part}{\appendixname}

\appendix

\renewcommand{\thesection}{\Alph{section}}

\renewcommand{\theequation}{\arabic{equation}}

\section{Auxiliary Lemmas}
\begin{lem} \label{divfreeU} Let $u$ be a $C^3$ convex function. Let
 $U= (U^{ij})$ denotes the matrix of cofactors of the Hessian matrix
$D^2 u = (u_{ij})$.
Then $U$ is divergence-free, that is, for each $i=1,\cdots, n$, we have
$$\sum_{k=1}^n \partial_k U^{ik}=0.$$

\end{lem}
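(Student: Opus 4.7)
My plan is to prove the divergence-free property of the cofactor matrix by writing $U^{ij}$ explicitly in terms of the Levi-Civita symbol $\epsilon$, differentiating entry by entry, and then exploiting the symmetry of mixed third partial derivatives $u_{abc}=u_{acb}=u_{bca}$ (which holds since $u\in C^3$) against the antisymmetry of $\epsilon$ to produce pairwise cancellation.

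Concretely, I would use the representation
\begin{equation*}
U^{ij} \;=\; \frac{1}{(n-1)!}\sum \epsilon^{\,i\,i_2\cdots i_n}\,\epsilon^{\,j\,j_2\cdots j_n}\; u_{i_2 j_2}\,u_{i_3 j_3}\cdots u_{i_n j_n},
\end{equation*}
where the sum runs over all repeated indices $i_2,\dots,i_n$ and $j_2,\dots,j_n$. Differentiating with respect to $x_k$ and using the product rule gives
\begin{equation*}
\partial_k U^{ij} \;=\; \frac{1}{(n-1)!}\sum_{\alpha=2}^{n}\sum \epsilon^{\,i\,i_2\cdots i_n}\,\epsilon^{\,j\,j_2\cdots j_n}\; u_{i_2 j_2}\cdots \,u_{i_\alpha j_\alpha k}\,\cdots u_{i_n j_n}.
\end{equation*}
Setting $j=k$ and summing over $k$, it suffices to show that, for each fixed $\alpha\in\{2,\dots,n\}$, the inner sum over $k$ and $j_\alpha$ of the expression
\begin{equation*}
\epsilon^{\,k\,j_2\cdots j_\alpha \cdots j_n}\, u_{i_\alpha j_\alpha k}
\end{equation*}
vanishes identically. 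This is the heart of the argument: the Levi-Civita factor is antisymmetric under the transposition $(k,j_\alpha)$, while $u_{i_\alpha j_\alpha k}$ is symmetric under the same transposition because $u\in C^3$ (Clairaut/Schwarz). Hence each summand cancels with its transpose, and the inner double sum is zero.

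The proof reduces to careful bookkeeping of indices; I would first record the Leibniz formula for the determinant together with the cofactor representation above, then present the symmetric/antisymmetric cancellation for a single $\alpha$ in a one-line computation, and finally sum over $\alpha$ to conclude. There is no real analytic obstacle here since the $C^3$ hypothesis is precisely what is needed to ensure the symmetry of third derivatives; the only mildly tricky step is convincing oneself that the cancellation argument is independent of $\alpha$, which follows because the roles of the remaining indices $(i_2,j_2),\dots,(i_{\alpha-1},j_{\alpha-1}),(i_{\alpha+1},j_{\alpha+1}),\dots,(i_n,j_n)$ are irrelevant to the $(k,j_\alpha)$ swap. As a sanity check I would verify the identity by hand in dimension $n=2$, where $U^{11}=u_{22}$, $U^{12}=-u_{12}$, $U^{21}=-u_{21}$, $U^{22}=u_{11}$, so that $\partial_1 U^{11}+\partial_2 U^{12}=u_{221}-u_{122}=0$ and similarly for the second row.
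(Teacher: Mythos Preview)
Your proof is correct but takes a genuinely different route from the paper. The paper first reduces to the case where $D^2u$ is strictly positive definite by approximating $u$ with $u+\varepsilon|x|^2$, then writes $U^{ik}=(\det D^2u)\,u^{ik}$ using the inverse matrix $(u^{ij})=(D^2u)^{-1}$, and computes $\partial_k U^{ik}$ via the chain-rule identity $\partial_k(\det D^2u)=U^{rs}u_{rsk}$; the cancellation then comes from a short product-rule manipulation involving $u^{ik}$ and $u_{sk}$. Your argument bypasses the inverse matrix entirely by expressing $U^{ij}$ through the Levi--Civita symbol and observing that the symmetry $u_{i_\alpha j_\alpha k}=u_{i_\alpha k j_\alpha}$ of third derivatives plays directly against the antisymmetry of $\epsilon$ in the slots $(k,j_\alpha)$. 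This buys you two things: no approximation step is needed (the formula holds even where $D^2u$ is degenerate), and the same mechanism immediately generalizes to cofactor matrices of other symmetric $C^1$ matrix fields. The paper's approach, on the other hand, keeps the computation within familiar matrix-calculus identities and avoids introducing the multilinear machinery.
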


\begin{proof} By considering $u_{\e}(x) = u(x) +\e |x|^2$ for $\e>0$ and proving the conclusion of the lemma for $u_{\e}$ and then letting $\e\rightarrow 0$, it suffices
to consider the case $D^2u$ is strictly positive definite. In this case,
$$U = (\det D^2 u) (D^2 u)^{-1}.$$
Let us denote
$(D^2 u)^{-1}= (u^{ij}).$
Then, 
$U^{ik}= (\det D^2 u) u^{ik}.$
We know that for each $j=1,\cdots, n$
$$\det D^2 u=\sum_{k=1}^{n} U^{jk}u_{kj},$$
hence
$$\frac{\partial \det D^2 u}{\partial u_{ij}}= U^{ji}= U^{ij}.$$
Thus, from 
$$\frac{\p}{\p x_k} \det D^2 u= U^{rs} u_{rsk}~\text{and}~U^{ik}= (\det D^2 u) u^{ik},$$ we have
\begin{eqnarray*}\partial_k U^{ik} &=& U^{rs} u_{rsk} u^{ik} + (\det D^2 u)\partial_k u^{ik} \\&=& U^{rs}\partial_r (u_{sk}u^{ik}) - U^{rs} u_{sk}\partial_r u^{ik} + (\det D^2 u) \partial_k u^{ik}\\&=& -(\det D^2 u) \delta_{rk}\partial_r u^{ik} + (\det D^2 u) \partial_k u^{ik}=0.
\end{eqnarray*}
In the last line, $\delta_{rs}$ is the Kronecker symbol where $\delta_{rs}=1$ if $r=s$ and $\delta_{rs}=0$ if $r\neq s$.
\end{proof}
\begin{lem}
\label{concavelem} Let $\theta\in[0, \frac{1}{n}]$.
 Let $A$ and $B$ be two nonnegative symmetric $n\times n$ matrices, and $\lambda\in [0, 1]$. Then
 \begin{equation*}
  \left[\det (\lambda A + (1-\lambda)B)\right]^{\theta}\geq \lambda (\det A)^{\theta} + (1-\lambda)(\det B)^{\theta}.
 \end{equation*}
\end{lem}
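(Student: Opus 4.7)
The plan is to prove the lemma by first establishing Minkowski's determinant inequality (the case $\theta = 1/n$), which gives concavity of $M\mapsto(\det M)^{1/n}$ on the cone of nonnegative symmetric matrices, and then using a composition argument to handle $\theta\in(0,1/n)$. The case $\theta=0$ is trivial since both sides equal $1$.

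\textbf{Step 1 (Minkowski's determinant inequality).} I will show that for nonnegative symmetric $n\times n$ matrices $X$ and $Y$,
\begin{equation*}
[\det(X+Y)]^{1/n} \geq (\det X)^{1/n} + (\det Y)^{1/n}.
\end{equation*}
By replacing $X$ with $X+\varepsilon I_n$ and sending $\varepsilon\downarrow 0$, I may assume $X$ is positive definite. Factor $X+Y = X^{1/2}(I_n + C)X^{1/2}$ where $C:= X^{-1/2}YX^{-1/2}$ is symmetric and nonnegative, so that the inequality reduces to
\begin{equation*}
[\det(I_n+C)]^{1/n} \geq 1 + (\det C)^{1/n}.
\end{equation*}
Diagonalize $C$ with eigenvalues $\mu_1,\dots,\mu_n\geq 0$. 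Setting $a_i := 1/(1+\mu_i)$ and $b_i:= \mu_i/(1+\mu_i)$ so that $a_i+b_i=1$, the classical AM-GM inequality gives
\begin{equation*}
\Bigl(\prod_{i=1}^n a_i\Bigr)^{1/n} + \Bigl(\prod_{i=1}^n b_i\Bigr)^{1/n} \leq \frac{1}{n}\sum_{i=1}^n (a_i+b_i) = 1,
\end{equation*}
which after multiplying through by $[\prod_i(1+\mu_i)]^{1/n}$ is exactly the desired inequality.

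\textbf{Step 2 (Concavity for $\theta = 1/n$).} Applying Step 1 with $X = \lambda A$ and $Y = (1-\lambda)B$ and using homogeneity $(\det tM)^{1/n} = t(\det M)^{1/n}$ for $t\geq 0$, I immediately obtain
\begin{equation*}
[\det(\lambda A + (1-\lambda) B)]^{1/n} \geq \lambda (\det A)^{1/n} + (1-\lambda)(\det B)^{1/n},
\end{equation*}
which is the lemma when $\theta=1/n$.

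\textbf{Step 3 (Composition argument for $\theta\in(0,1/n)$).} Write $(\det M)^\theta = g(h(M))$ with $h(M):=(\det M)^{1/n}$ and $g(t):=t^{n\theta}$. Since $n\theta\in(0,1]$, the function $g$ is concave and nondecreasing on $[0,\infty)$. Combining this with the concavity of $h$ from Step 2, I get for any nonnegative symmetric $A,B$ and $\lambda\in[0,1]$,
\begin{align*}
[\det(\lambda A + (1-\lambda)B)]^\theta
&= g\bigl(h(\lambda A+(1-\lambda)B)\bigr)\\
&\geq g\bigl(\lambda h(A)+(1-\lambda)h(B)\bigr) \\
&\geq \lambda\, g(h(A)) + (1-\lambda)\, g(h(B)) \\
&= \lambda (\det A)^\theta + (1-\lambda)(\det B)^\theta,
\end{align*}
where the first inequality uses that $g$ is nondecreasing together with Step 2, and the second uses concavity of $g$.

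The main (and really only) obstacle is the determinant Minkowski inequality in Step 1; once this is in hand the rest is routine. The AM-GM reduction above is the cleanest way to handle it without invoking any heavier machinery, and it fits naturally into an appendix.
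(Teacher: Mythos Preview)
Your proof is correct and follows essentially the same approach as the paper: establish Minkowski's determinant inequality via the $X^{1/2}(I_n+C)X^{1/2}$ factorization and AM--GM on the eigenvalues, deduce concavity of $M\mapsto(\det M)^{1/n}$, and then compose with the concave nondecreasing map $t\mapsto t^{n\theta}$ for general $\theta\in[0,1/n]$. Your version is slightly more explicit in noting that $g$ must be nondecreasing for the composition step and in separating out the trivial case $\theta=0$, but otherwise the arguments coincide.
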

\begin{proof} 
We first prove the lemma for $\theta=\frac{1}{n}$. Due to the identity $\det (\lambda M)=\lambda^n\det M$ for all $n\times n$ matrices $M$, 
we only need to prove that for two
nonnegative symmetric $n\times n$ matrices $A$ and $B$, we have
\begin{equation}
 \label{reduce1}
 [\det (A + B)]^{\frac{1}{n}}\geq (\det A)^{\frac{1}{n}} + (\det B)^{\frac{1}{n}}.
\end{equation}
It suffices to prove (\ref{reduce1}) for the particular case when $A$ is invertible. In the general case, $A+ \e I_n$ is invertible for each $\e>0$.
Hence 
$$[\det (A +\e I_n + B)]^{\frac{1}{n}}\geq [\det (A+ \e I_n)]^{\frac{1}{n}} + (\det B)^{\frac{1}{n}}.$$
Therefore, letting $\e\rightarrow 0$, we obtain (\ref{reduce1}).

Let us assume now that $A$ is invertible. In view of the identities
$$A + B = A^{1/2} (I_n + D) A^{1/2},~ D= A^{-1/2}B A^{-1/2},$$
and $\det (MN)= (\det M)(\det N)$ for all $n\times n$ matrices $M$ and $N$, (\ref{reduce1}) is a consequence of
\begin{equation}\det ( I_n+ D)^{\frac{1}{n}}\geq 1 + (\det D)^{\frac{1}{n}}
 \label{Dmat}
\end{equation}
for all nonnegative symmetric $n\times n$ matrices $D$. We diagonalize $D$ and let $\lambda_i$ ($i=1,\cdots, n$) be nonnegative eigenvalues of $D$. Then (\ref{Dmat}) reduces
to 
\begin{equation*}
 \prod_{i=1}^n (1 +  \lambda_i)^{\frac{1}{n}}\geq 1 + \prod_{i=1}^{n} \lambda_i^{\frac{1}{n}}.
\end{equation*}
But this is a consequence of the Arithmetic-Geometric inequality, since
$$\prod_{i=1}^n\left(\frac{1}{1+\lambda_i}\right)^{\frac{1}{n}} + \prod_{i=1}^{n} \left(\frac{\lambda_i}{1+\lambda_i}\right)^{\frac{1}{n}}\leq \frac{1}{n}\sum_{i=1}^n
\frac{1}{1+\lambda_i}+ \frac{1}{n}\sum_{i=1}^n
\frac{\lambda_i}{1+\lambda_i} =1.$$

Now, we prove the lemma for the general case $0\leq \theta\leq \frac{1}{n}$. Since the function $f(x) =x^{n\theta}$ is concave on
$[0, \infty)$, we use the result for the case $\theta=\frac{1}{n}$ to get
\begin{eqnarray*}
 \det (\lambda A + (1-\lambda)B)^{\theta} = f\left([\det (\lambda A + (1-\lambda)B)]^{\frac{1}{n}}\right)
 &\geq& f\left(\lambda (\det A)^{\frac{1}{n}} + (1-\lambda)(\det B)^{\frac{1}{n}}\right)\\
 &\geq& \lambda f\left((\det A)^{\frac{1}{n}}\right) + (1-\lambda) f\left( (\det B)^{\frac{1}{n}}\right)\\&=& \lambda (\det A)^{\theta} + (1-\lambda)(\det B)^{\theta}.
\end{eqnarray*}

\end{proof}
\begin{lem}
 \label{trlem}
  Let $A$ and $B$ be two nonnegative symmetric $n\times n$ matrices. Then
  $$\det A \det B\leq \left(\frac{\emph{trace} (AB)}{n}\right)^{n}.$$
\end{lem}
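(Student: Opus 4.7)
The plan is to reduce to the classical arithmetic-geometric mean inequality applied to the eigenvalues of an auxiliary symmetric matrix. First, I would dispose of the degenerate case $\det A = 0$, in which case the inequality is trivial since $\text{trace}(AB) \geq 0$ by the nonnegativity of $A$ and $B$. So assume $A$ is positive definite; the general case then follows by replacing $A$ with $A + \varepsilon I_n$ and letting $\varepsilon \to 0^{+}$, using the continuity of $\det$ and $\text{trace}$.

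Next, since $A$ is positive definite and symmetric, it has a unique positive definite symmetric square root $A^{1/2}$. I would set
$$M := A^{1/2} B A^{1/2},$$
which is symmetric (since $B$ is) and nonnegative definite (since for any $v \in \R^n$, $v^{T} M v = (A^{1/2}v)^{T} B (A^{1/2}v) \geq 0$). The two key identities are
$$\det M = \det(A^{1/2}) \det B \det(A^{1/2}) = \det A \, \det B,$$
and, using the cyclic invariance of the trace,
$$\text{trace}(M) = \text{trace}(A^{1/2} B A^{1/2}) = \text{trace}(A^{1/2} \cdot A^{1/2} B) = \text{trace}(AB).$$

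Finally, since $M$ is symmetric and nonnegative definite, its eigenvalues $\mu_1,\ldots,\mu_n$ are nonnegative real numbers. The AM-GM inequality gives
$$\det M = \prod_{i=1}^{n} \mu_i \leq \left(\frac{1}{n}\sum_{i=1}^{n}\mu_i\right)^{n} = \left(\frac{\text{trace}(M)}{n}\right)^{n}.$$
Substituting the two identities above yields the desired inequality. There is no serious obstacle here; the only subtlety is the reduction to the case where $A$ is invertible so that $A^{1/2}$ makes sense, and this is handled by the standard perturbation argument.
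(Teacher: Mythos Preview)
Your proof is correct and follows essentially the same strategy as the paper: reduce to the AM--GM inequality on the eigenvalues of an auxiliary nonnegative symmetric matrix built from $A$ and $B$. The only cosmetic difference is that you conjugate by $A^{1/2}$ to form $M = A^{1/2} B A^{1/2}$, whereas the paper orthogonally diagonalizes $A = P\Lambda P^t$ and works with $\Lambda D$ where $D = P^t B P$; your choice is arguably cleaner since $M$ is manifestly symmetric, while $\Lambda D$ is not (though it is similar to $\Lambda^{1/2} D \Lambda^{1/2}$, so the paper's application of AM--GM still goes through).
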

\begin{proof} 
The proof of this lemma is standard. If $M$ is a nonnegative symmetric $n\times n$ matrix then 
\begin{equation}\text{trace} (M)\geq n (\det M)^{1/n}.
 \label{Mtrace}
\end{equation}
Indeed, let $\alpha_1,\cdots,\alpha_n$ be nonnegative eigenvalues of $M$. By 
the Arithmetic-Geometric inequality, 
\begin{equation*}
 \text{trace} (M) =\sum_{i=1}^n \alpha_i\geq n \prod_{i=1}^n\alpha_i^{1/n}= n (\det M)^{1/n}.
\end{equation*}
Returning to our lemma. 
Let $\lambda_1,\cdots,\lambda_n$ be the nonnegative eigenvalues of $A$. There is
an orthogonal matrix $P\in O(n)$ such that $A= P \Lambda P^t$ where $\Lambda= \text{diag}(\lambda_1,\cdots,\lambda_n)$. Then, $D= P^t B P$ is symmetric, nonnegative definite because
for all vector $\xi\in \R^n$, we have
$$D\xi\cdot \xi= (P^t B P \xi)\cdot \xi= (B P\xi)\cdot P\xi\geq 0.$$
Furthermore, $\det D=\det B$. Thus, by (\ref{Mtrace}), we have
$$\text{trace} (AB)=\text{trace} (P\Lambda P^t B)= \text{trace} (\Lambda D)\geq n [\det (\Lambda D)]^{1/n}= n (\det A)^{1/n}(\det B)^{1/n}.$$
\end{proof}
\begin{lem}\label{uv_trace}
 For any symmetric, positive definite $n\times n$ matrix $A=(a_{ij})$ and any vector $b=(b_1,\cdots, b_n)\in \R^n$,
 we have
 $$a_{ij} b_i b_j\geq \frac{|b|^2}{\emph{trace}(A^{-1})}.$$
\end{lem}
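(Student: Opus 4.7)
The plan is to reduce the inequality to a one–line application of the Cauchy--Schwarz inequality after simultaneously diagonalizing $A$. Since $A$ is symmetric and positive definite, write $A = P\Lambda P^{t}$ where $P\in O(n)$ and $\Lambda = \mathrm{diag}(\lambda_{1},\ldots,\lambda_{n})$ with $\lambda_{i}>0$. Setting $c = P^{t}b$, orthogonality of $P$ gives
$$
a_{ij}b_{i}b_{j} = \langle Ab,b\rangle = \sum_{i=1}^{n}\lambda_{i}c_{i}^{2}, \qquad |b|^{2} = |c|^{2} = \sum_{i=1}^{n} c_{i}^{2}, \qquad \mathrm{trace}(A^{-1}) = \sum_{i=1}^{n}\lambda_{i}^{-1}.
$$
So the claim is equivalent to the scalar inequality
$$
\Big(\sum_{i=1}^{n}\lambda_{i}c_{i}^{2}\Big)\Big(\sum_{i=1}^{n}\lambda_{i}^{-1}\Big)\ \geq\ \sum_{i=1}^{n} c_{i}^{2},
$$
which I will establish directly.

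The key step is to apply the Cauchy--Schwarz inequality to the two vectors with components $\sqrt{\lambda_{i}}\,c_{i}$ and $1/\sqrt{\lambda_{i}}$:
$$
\Big(\sum_{i=1}^{n} c_{i}\Big)^{2} = \Big(\sum_{i=1}^{n}\sqrt{\lambda_{i}}\,c_{i}\cdot\frac{1}{\sqrt{\lambda_{i}}}\Big)^{2} \leq \Big(\sum_{i=1}^{n}\lambda_{i}c_{i}^{2}\Big)\Big(\sum_{i=1}^{n}\lambda_{i}^{-1}\Big).
$$
This is almost what we want, except the left-hand side involves $\big(\sum c_{i}\big)^{2}$ rather than $\sum c_{i}^{2}$. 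To get the right form, I will instead apply Cauchy--Schwarz to $\sqrt{\lambda_{i}}\,c_{i}$ and $c_{i}/\sqrt{\lambda_{i}}$:
$$
\Big(\sum_{i=1}^{n}c_{i}^{2}\Big)^{2} = \Big(\sum_{i=1}^{n}\sqrt{\lambda_{i}}\,c_{i}\cdot\frac{c_{i}}{\sqrt{\lambda_{i}}}\Big)^{2}\leq \Big(\sum_{i=1}^{n}\lambda_{i}c_{i}^{2}\Big)\Big(\sum_{i=1}^{n}\frac{c_{i}^{2}}{\lambda_{i}}\Big).
$$
Then I bound the last factor crudely: since $\lambda_{i}^{-1}\leq \sum_{j}\lambda_{j}^{-1} = \mathrm{trace}(A^{-1})$ for each $i$, we get
$$
\sum_{i=1}^{n}\frac{c_{i}^{2}}{\lambda_{i}}\ \leq\ \mathrm{trace}(A^{-1})\sum_{i=1}^{n}c_{i}^{2} = \mathrm{trace}(A^{-1})\,|b|^{2}.
$$
Combining the two displays yields $|b|^{4}\leq \langle Ab,b\rangle\cdot \mathrm{trace}(A^{-1})|b|^{2}$, which, dividing through by $\mathrm{trace}(A^{-1})|b|^{2}$ (trivially when $b=0$), produces the desired inequality.

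There is no real obstacle here; the only care needed is the Cauchy--Schwarz pairing, which must split the factor $\lambda_{i}$ so that a full $\sum c_{i}^{2}$ appears on the left after squaring, rather than $(\sum c_{i})^{2}$. The proof is coordinate-free once rephrased as the classical Kantorovich-type estimate $\langle Ab,b\rangle\langle A^{-1}b,b\rangle \geq |b|^{4}$ combined with $\langle A^{-1}b,b\rangle \leq \mathrm{trace}(A^{-1})|b|^{2}$, and I will present it in that order for brevity.
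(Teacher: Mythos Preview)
Your proof is correct and follows the same route as the paper: diagonalize $A=P\Lambda P^{t}$, set $c=P^{t}b$, and reduce to the scalar inequality $\big(\sum\lambda_i^{-1}\big)\big(\sum\lambda_i c_i^2\big)\ge\sum c_i^2$, which the paper simply calls ``obvious.'' Your Cauchy--Schwarz step is valid but a detour; the inequality follows termwise from $\lambda_i c_i^2\cdot\big(\sum_j\lambda_j^{-1}\big)\ge\lambda_i c_i^2\cdot\lambda_i^{-1}=c_i^2$, which is essentially your ``crude bound'' applied directly without first invoking $\langle Ab,b\rangle\langle A^{-1}b,b\rangle\ge|b|^4$.
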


\begin{proof}[Proof of Lemma \ref{uv_trace}]
 Let $P$ be an orthogonal matrix such that $A= P D P^t$ where $D=\text{diag} (\lambda_1,\cdots,\lambda_n)$. Then
 $$a_{ij}b_i b_j= Ab\cdot b= (P D P^t b)\cdot b= (D P^t b)\cdot P^t b.$$
 Because $|P^t b|= |b|$, it suffices to prove the lemma for the case $A=D=\text{diag} (\lambda_1,\cdots,\lambda_n)$. In this case, the lemma is equivalent to proving the obvious inequality:
 $$\left(\sum_{i=1}^n\lambda_i^{-1}\right)\left(\sum_{i=1}^n\lambda_i b_i^2\right)\geq \sum_{i=1}^n b_i^2.$$
\end{proof}
\begin{lem} 
\label{ELlem} Consider
the affine area functional over smooth, convex functions $u$ on $\overline{\Omega}$:
$$\mathcal{A}(u,\Omega) =\int_{\Omega}[\det D^2 u(x)]^{\frac{1}{n+ 2}}dx.$$
Then smooth, locally uniformly convex critical points $u$ of $\mathcal{A}(\cdot,\Omega)$
satisfy the Euler-Lagrange equation
\begin{equation*}\sum_{i, j=1}^{n}\frac{\partial^2}{\partial x_i \partial x_j}(U^{ij} w)=0,~ w = [\det D^2 u]^{-\frac{n+1}{n +2}}
\end{equation*}
where $U= (U^{ij})$ denotes the matrix of cofactors of the Hessian matrix
$D^2 u= (u_{ij}).$
\end{lem}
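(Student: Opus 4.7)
The plan is to compute the first variation of $\mathcal{A}$ directly and then integrate by parts twice. Fix $\varphi\in C^{\infty}_{0}(\Omega)$. Since $u$ is locally uniformly convex and $\varphi$ has compact support in $\Omega$, the function $u+t\varphi$ remains locally uniformly convex on $\mathrm{supp}\,\varphi$ for all sufficiently small $|t|$, so $\det D^{2}(u+t\varphi)$ is positive and smooth in $t$ there, and the integrand of $\mathcal{A}(u+t\varphi,\Omega)$ differs from that of $\mathcal{A}(u,\Omega)$ only on $\mathrm{supp}\,\varphi$. Differentiation under the integral sign is therefore legitimate.

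The key algebraic input is the standard identity $\dfrac{\partial \det M}{\partial m_{ij}}=M^{ij}_{\mathrm{cof}}$ applied to $M=D^{2}u$, which gives
$$\frac{d}{dt}\bigg|_{t=0}\det D^{2}(u+t\varphi)=U^{ij}\varphi_{ij}.$$
Combining this with the chain rule and the relation $\tfrac{1}{n+2}-1=-\tfrac{n+1}{n+2}$, I obtain
$$\frac{d}{dt}\bigg|_{t=0}\mathcal{A}(u+t\varphi,\Omega)=\frac{1}{n+2}\int_{\Omega}[\det D^{2}u]^{-\frac{n+1}{n+2}}\,U^{ij}\varphi_{ij}\,dx=\frac{1}{n+2}\int_{\Omega}w\,U^{ij}\varphi_{ij}\,dx.$$
The criticality condition $\tfrac{d}{dt}|_{t=0}\mathcal{A}(u+t\varphi,\Omega)=0$ therefore reduces to
$$\int_{\Omega}w\,U^{ij}\varphi_{ij}\,dx=0\qquad\text{for every }\varphi\in C^{\infty}_{0}(\Omega).$$

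To conclude, I would integrate by parts twice. Because $\varphi$ and $D\varphi$ vanish near $\partial\Omega$, all boundary terms drop, and I obtain
$$\int_{\Omega}\varphi\,\partial_{ij}(w\,U^{ij})\,dx=0\qquad\text{for every }\varphi\in C^{\infty}_{0}(\Omega).$$
The fundamental lemma of the calculus of variations, together with the smoothness of $u$ (which makes $\partial_{ij}(wU^{ij})$ continuous), then yields $\partial_{ij}(wU^{ij})=0$ in $\Omega$, which is exactly the claimed Euler--Lagrange equation \eqref{AMSE}.

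There is no real obstacle; the proof is essentially a bookkeeping exercise. The only point requiring a little care is the derivative-of-determinant identity $\partial\det M/\partial m_{ij}=U^{ij}$, which I would justify by cofactor expansion of $\det D^{2}u$ along any row. One may optionally simplify the final equation using Lemma \ref{divfreeU}: since $U$ is divergence-free, $\partial_{ij}(wU^{ij})=U^{ij}w_{ij}$, yielding the more compact form $U^{ij}w_{ij}=0$ used in \eqref{HAu}, but this simplification is not needed for the statement proved here.
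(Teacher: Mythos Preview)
Your proof is correct and follows essentially the same approach as the paper's: compute the first variation using the cofactor identity $\partial(\det D^{2}u)/\partial u_{ij}=U^{ij}$, obtain $\int_{\Omega} w\,U^{ij}\varphi_{ij}=0$, integrate by parts twice, and apply the fundamental lemma of the calculus of variations. Your write-up is in fact slightly more careful than the paper's in justifying differentiation under the integral sign and the preservation of convexity for small $|t|$.
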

\begin{proof} Smooth, locally uniformly convex critical points $u$ of $\mathcal{A}(\cdot,\Omega)$ satisfies for all $\varphi\in C^{\infty}_{0}(\Omega)$
$$\frac{d}{dt}\mid_{t=0} \mathcal{A}(u+ t\varphi,\Omega)=0.$$
We compute, denoting $F(t) = t^{\frac{1}{n+2}}$
\begin{eqnarray*}\frac{d}{dt}\mid_{t=0} \mathcal{A}(u+ t\varphi,\Omega)&=& \int_{\Omega} F^{'}(\det D^2 (u+t\varphi)) \frac{d}{dt} \det D^2 (u+t\varphi)\mid_{t=0}\\ &=&
\int_{\Omega} F^{'}(\det D^2 u) \frac{\partial \det D^2 (u+t\varphi)}{\partial u_{ij}}\varphi_{ij}\mid_{t=0}\\
&=& \int_{\Omega} F^{'}(\det D^2 u) U^{ij}\varphi_{ij}.
\end{eqnarray*}
In the last equality, we used the following identity from the proof of Lemma \ref{divfreeU},
$$\frac{\partial \det D^2 u}{\partial u_{ij}}= U^{ji}= U^{ij}.$$
Let
$$W^{ij}= F^{'}(\det D^2 u) U^{ij} =\frac{1}{n+2} (\det D^2 u)^{-\frac{n+1}{n+2}} U^{ij}.$$
Then, since $\varphi$ and all of its derivatives vanish on $\partial\Omega$, we integrate by parts twice to obtain
\begin{eqnarray*}
0=\int_{\Omega} F^{'}(\det D^2 u) U^{ij}\varphi_{ij} = \int_{\Omega} W^{ij}\varphi_{ij}=\int_{\Omega}\partial_{i}\partial_{j} W^{ij}\varphi.
\end{eqnarray*}
This is true for all  $\varphi\in C^{\infty}_{0}(\Omega)$, hence for $w=(\det D^2 u)^{-\frac{n+1}{n+2}}$, we have
$$0=\partial_{i}\partial_{j} W^{ij} =\frac{1}{n+ 2} \partial_{i}\partial_{j} ( U^{ij} w).$$
\end{proof}

\begin{lem}
\label{A_inv}
 The functional
 $$\mathcal{A}(u,\Omega)= \int_{\Omega} G(\det D^2 u(x)) dx,$$
 where
 $$G(t) = t^{\frac{1}{n+2}},$$
 defined over smooth and convex functions $u$ on $\Omega\subset\R^n$, 
 is invariant under the unimodular affine transformations in $\R^{n+1}$.
\end{lem}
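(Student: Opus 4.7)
The plan is to reduce the problem to a generating set of the unimodular affine group of $\R^{n+1}$ and verify invariance generator-by-generator. Every unimodular affine transformation factors as a translation composed with a linear map in $SL(n+1)$, and translations act trivially on $\mathcal{A}$: a translation in $\R^n$ merely relabels $\Omega$, while the vertical translation $y\mapsto y+c$ shifts $u$ by a constant and preserves $D^2u$. Thus it suffices to treat $SL(n+1)$.

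For $SL(n+1)$, writing points of $\R^{n+1}$ as $(x,y)$ with $x\in\R^n$, I would use four types of generators: (a) block maps $(x,y)\mapsto(Bx,y)$ with $B\in SL(n)$; (b) lower shears $(x,y)\mapsto(x,y+b\cdot x)$ with $b\in\R^n$; (c) anisotropic scalings $(x,y)\mapsto(\Lambda x,(\det\Lambda)^{-1}y)$ with $\Lambda$ a positive diagonal matrix; and (d) vertical shears $(x,y)\mapsto(x+by,y)$. Cases (a)--(c) preserve the graph structure of $u$ and are routine: in (a) the Hessian transforms contragrediently, so $\det D^2\tilde u=\det D^2u$ since $\det B=1$, and the substitution $\tilde x=Bx$ has unit Jacobian; in (b) one has $\tilde u=u+b\cdot x$, so $D^2u$ is untouched; in (c), setting $\delta=(\det\Lambda)^{-1}$, a short computation yields $\tilde u(\tilde x)=\delta u(\Lambda^{-1}\tilde x)$, $\det D^2\tilde u=\delta^{n+2}\det D^2u$, and $d\tilde x=\delta^{-1}dx$, so the $(n+2)$-th root exactly cancels the Jacobian.

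The main obstacle is case (d), the vertical shear, which tilts the graph so that the image $\{(x+bu(x),u(x)):x\in\Omega\}$ must be re-parametrized as a graph over a new domain. Define $\Phi(x):=x+bu(x)$; by the matrix determinant lemma its Jacobian satisfies $\det D\Phi(x)=1+b\cdot Du(x)=:\lambda(x)$. For $|b|$ small, or more generally by following the one-parameter family $t\mapsto tb$, $t\in[0,1]$, and arguing by continuity, $\lambda>0$ and $\Phi$ is a diffeomorphism onto $\tilde\Omega:=\Phi(\Omega)$, which allows us to set $\tilde u:=u\circ\Phi^{-1}$. The crux will be to establish the pointwise identity
\[
\det D^2\tilde u(\Phi(x))=\frac{\det D^2u(x)}{\lambda(x)^{n+2}}.
\]
I would prove this by differentiating the relation $\tilde u\circ\Phi=u$ twice and applying the Sherman--Morrison formula $(I+b(Du)^t)^{-1}=I-b(Du)^t/\lambda$: a first differentiation gives $D\tilde u(\Phi(x))=Du(x)/\lambda(x)$, and a second, somewhat lengthier, computation produces the displayed determinant identity (which I verified explicitly in the $n=1$ case, where $\tilde u''(\Phi(x))=u''(x)/\lambda(x)^3$). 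Combined with $d\tilde x=\lambda(x)\,dx$ and the $1/(n+2)$ power, this gives $(\det D^2\tilde u)^{1/(n+2)}d\tilde x=(\det D^2u)^{1/(n+2)}dx$ pointwise, finishing case (d).

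A slicker alternative that bypasses the Sherman--Morrison algebra is to recognize $(\det D^2u)^{1/(n+2)}dx$ as the volume form on the graph of $u$ induced by the affine metric $g_{ij}=u_{ij}/(\det D^2u)^{1/(n+2)}$, which is discussed right after \eqref{calAfunc} in the text, and to invoke the classical $SA(n+1)$-invariance of $g$ from affine differential geometry. However, since the lemma is elementary in flavor and the metric invariance itself requires proof, I would prefer the direct decomposition argument above.
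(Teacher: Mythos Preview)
Your proof is correct but follows a genuinely different decomposition from the paper's. The paper generates $ASL(n+1)$ by (i) the rigid motion group $ASO(n+1)$ and (ii) the diagonal scalings $(x,x_{n+1})\mapsto(\lambda_1 x_1,\dots,\lambda_n x_n,(\lambda_1\cdots\lambda_n)^{-1}x_{n+1})$. For (ii) it does exactly your computation (c). For (i), however, it does \emph{not} compute with shears: instead it rewrites the integrand in intrinsic form using the Gauss curvature $K=\det D^2u/(1+|Du|^2)^{(n+2)/2}$ and the surface element $d\Sigma=(1+|Du|^2)^{1/2}dx$, obtaining
\[
\mathcal{A}(u,\Omega)=\int_{\mathcal{M}} K^{1/(n+2)}\,d\Sigma,
\]
which is manifestly invariant under Euclidean rigid motions of $\R^{n+1}$. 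This sidesteps entirely the re-parametrization issue you handle in case (d).

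Your route is more algebraic and more self-contained: it never leaves the graph parametrization, and the key identity $\det D^2\tilde u(\Phi(x))=\det D^2u(x)/\lambda(x)^{n+2}$ (which indeed follows from $(D^2\tilde u)(\Phi)=\lambda^{-1}(D\Phi)^{-t}(D^2u)(D\Phi)^{-1}$ via Sherman--Morrison, exactly as you outline) makes the exponent $1/(n+2)$ appear transparently. The paper's route is shorter once one accepts the Gauss-curvature reformulation, and it explains conceptually why $1/(n+2)$ is the unique exponent---the computation there shows that for $G(t)=t^\alpha$ the extra factor $(1+|Du|^2)^{\alpha(n+2)/2-1/2}$ survives unless $\alpha=1/(n+2)$. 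Your continuity argument for $\lambda>0$ along the path $t\mapsto tb$ is fine, since the hypothesis that the image is again a graph of a convex function forces $\Phi$ to be a diffeomorphism.
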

\begin{proof}
 Let us denote by $ASL(n+1)$ the group of unimodular affine transformations in $\R^{n+1}$. Note that $ASL(n+1)$ is generated by $ASO(n+1)$-the group of translations and 
 proper rotations
 in $\R^{n+1}$, and the linear transformation group $M$ of $\R^{n+1}$ mapping the point $(x_1,\cdots, x_n, x_{n+1})$ of $\R^{n+1}$ onto the one with
 coordinates $(\lambda_1 x_1, \cdots, \lambda_n x_n, (\lambda_1\cdots\lambda_n)^{-1} x_{n+1})$ for any $\lambda_1,\cdots,\lambda_n>0$.
 
 We first verify that $\mathcal{A}$ is invariant under $ASO(n+1)$. Let
 $$\mathcal{M}=\{(x, u(x))| x\in\Omega\}$$
 be the graph of $u$ over $\Omega$. Let $d\Sigma$ and $K$ be the volume element and Gauss curvature of $\mathcal{M}$, respectively. At $(x, u(x))\in\mathcal{M}$, we have
 $$K(x) = \frac{\det D^2 u(x)}{(1 + |Du(x)|^2)^{\frac{n+2}{2}}}.$$
 Then, for general $G$,
 \begin{eqnarray*}
  \mathcal{A}(u,\Omega)=\int_{\Omega} G(\det D^2 u(x)) dx &=& \int_{\Omega} G(K(x) (1+ |Du(x)|^2)^{\frac{n+2}{2}}) dx\\
  &=& \int_{\mathcal{M}} G(K(x) (1+ |Du(x)|^2)^{\frac{n+2}{2}}) (1+ |Du(x)|^2)^{-\frac{1}{2}} d\Sigma.
  \end{eqnarray*}
In the particular case of $G(t)=t^{\frac{1}{n+2}},$
$$\mathcal{A}(u,\Omega)=\int_{\mathcal{M}} K^{\frac{1}{n+2}} d\Sigma,$$
which is clearly invariant under $ASO(n+1)$.

Note that for $G(t)= t^{\alpha}$ where $\alpha\neq \frac{1}{n+2}$, $\mathcal{A}$ is not invariant under $ASO(n+1)$.

Finally, we verify that $\mathcal{A}$ is invariant under $M$. For any $\lambda=(\lambda_1,\cdots,\lambda_n)$ with $\lambda_1,\cdots,\lambda_n>0$, 
let $$\Omega_{\lambda}=\{(\lambda_1 x_1,\cdots,\lambda_n x_n)|x=(x_1,\cdots, x_n)\in\Omega\}.$$ The
image of the graph $\mathcal{M}$ under the mapping
$$(x_1,\cdots, x_n, x_{n+1})\longmapsto (\lambda_1 x_1, \cdots, \lambda_n x_n, (\lambda_1\cdots\lambda_n)^{-1} x_{n+1})$$
is the graph
$$\mathcal{M}_{\lambda} =\{(\lambda_1 x_1, \cdots, \lambda_n x_n, (\lambda_1\cdots\lambda_n)^{-1} u(x))| x\in\Omega\}:= \{(y, u_\lambda(y))| y\in\Omega_{\lambda}\}$$
of the function $u_{\lambda}$ defined over $\Omega_{\lambda}$ where
$$u_\lambda(y)= (\lambda_1\cdots\lambda_n)^{-1} u(\frac{y_1}{\lambda_1},\cdots,\frac{y_n}{\lambda_n}).$$
Clearly, by simple computations and changes of variables, we have
\begin{eqnarray*}
 \mathcal{A}(u_{\lambda},\Omega_{\lambda})=\int_{\Omega_{\lambda}} [\det D^2 u_\lambda (y)]^{\frac{1}{n+2}} dy &=&\int_{\Omega_{\lambda}} 
 (\lambda_1\cdots \lambda_n)^{-1} [\det D^2 u(\frac{y_1}{\lambda_1},\cdots,\frac{y_n}{\lambda_n})]^{\frac{1}{n+2}} dy
 \\ &=& \int_{\Omega} [\det D^2 u(x)]^{\frac{1}{n+2}} dx= \mathcal{A}(u,\Omega).
\end{eqnarray*}
Thus  $\mathcal{A}$ is invariant under the group of transformations $M$. The proof of our lemma is complete.
\end{proof}

\section{A heuristic explanation
of Trudinger-Wang's non-smooth example}
In this appendix, we provide a heuristic explanation
of Trudinger-Wang's example of non-smooth convex solutions to the affine maximal surface equation
\begin{equation}\sum_{i, j=1}^{n}\frac{\partial^2}{\partial x_i \partial x_j}(U^{ij} w)=0, w = [\det D^2 u]^{-\frac{n+1}{n +2}}; U= (U^{ij})= (\det D^2 u) (D^2 u)^{-1}.
 \label{AMSE2}
\end{equation}

Trudinger and Wang found in \cite{TW00} that the non-smooth convex function $u(x)= \sqrt{|x'|^9 + x_{10}^2}$, where
$x'= (x_1, \cdots, x_9)$, satisfies (\ref{AMSE2}) in $\R^{10}$ and is not differentiable at the origin.

This explanation is based on simple symmetry and scaling arguments and it is reminiscent of Pogorelov's singular solution (see \cite{Pogo})
of the form 
\begin{equation}u(x', x_n)=|x'|^{2-2/n} f(x_n)
 \label{pogo_ex}
\end{equation}
 to the Monge-Amp\`ere equation 
 \begin{equation}\det D^2 u=1.
  \label{det1}
 \end{equation}
Here and what follows, we denote a point $x\in\R^n$ by $$x=(x', x_n)~ \text{where}~ x'= (x_1,\cdots, x_{n-1}).$$
The equation (\ref{det1}) is invariant under the rescalings of $u$ given 
 by $$u_{\lambda} (x', x_n)= \lambda^{2/n-2} u(\lambda x', x_n).$$ 
The Pogorelov example in (\ref{pogo_ex}) is invariant under these rescalings.

Let us now return to (\ref{AMSE2}). We denote a point $x\in\R^n$ by
$$x= (y, z)~\text{where}~y\in \R^k,~z\in\R^l~\text{with}~k+ l=n.$$
We easily find that (\ref{AMSE2}), or more generally,
$$\sum_{i, j=1}^{n}\frac{\partial^2}{\partial x_i \partial x_j}(U^{ij} w)=\text{constant m}, w = [\det D^2 u]^{-\frac{n+1}{n +2}}; U= (U^{ij})= (\det D^2 u) (D^2 u)^{-1}$$
is invariant under rescalings
\begin{equation}u_\lambda (y, z)= \lambda^l u(y, \lambda z),~\lambda>0.
 \label{lambda_kl}
\end{equation}

One way to see this is the following. We note that the function $u$ satisfying (\ref{AMSE2}) is a critical point of the affine area functional
$$\mathcal{A}(u,\Omega)=\int_{\Omega}[\det D^2 u(x)]^{\frac{1}{n+2}} dx$$
and that this functional is invariant under the following one-parameter transformation group of $\R^{n+1}$ mapping the point $(y, z, x_{n+1})$ of $\R^{n+1}$ onto the one with
 coordinates $(y, \lambda^{-1} z, \lambda^{l} x_{n+1})$ for any $\lambda>0$. 
 
The function
\begin{equation}v(y, z)= \frac{h(y)}{|z|^l}
 \label{vform1}
\end{equation}
is invariant under the rescalings (\ref{lambda_kl}) of (\ref{AMSE2}). We can look for convex solutions to (\ref{AMSE2}) of the form (\ref{vform1}). Since the function 
$\frac{1}{|z|^l} (z\in \R^l)$ needs to be convex, we must have $l=1$. Another reason (without using the convexity of $v$) to consider the case $l=1$ only is the following. 
We might try to look for solutions to (\ref{AMSE2}) of the form (\ref{vform1}) with $2\leq l<n$ but in this case, it seems impossible to resolve the singularity of $v$ at the origin
 using the unimodular affine transformations in $\R^{n+1}$. 
 
To simplify computations, we only consider radial $h(y)$. Thus, with further simplifications, we are led to finding a solution to (\ref{AMSE2})
of the form
\begin{equation}v(y', y_n)= \frac{|y'|^{2\alpha}}{|y_n|}
 \label{vform2}
\end{equation}
where $y=(y', y_n)\in\R^n$ with $y'=(y_1,\cdots, y_{n-1})$. 
  
  Since we require $v$ to be convex, we can impose $\alpha\geq 1$. Computing as in \cite{TW00}, we find that $v$ solves (\ref{AMSE2}) (away from the origin)
  if
  $$8\alpha^2 -(n^2-4n+ 12)\alpha + 2(n-1)^2=0$$
  which is solvable for $n\geq 10$. When $n=10$, we have $\alpha=\frac{9}{2}$
  and hence
  \begin{equation}v(y)=\frac{|y'|^9}{2 |y_{10}|}
   \label{fin_v}
  \end{equation}
  is a solution to (\ref{AMSE2}) (away from the origin). We need some more work to show that it solves (\ref{AMSE2}) weakly on the whole space $\R^{10}$; see
  \cite{TW00} for more details.
  
Recall that (\ref{AMSE2}) and the affine area functional $\mathcal{A}$ are also invariant under the rotations in $\R^{n+1}$. Thus, the rotation  
\begin{equation*}
\left\{
 \begin{alignedat}{1}
   x' ~& = y', \\\
x_{10} ~&= \frac{1}{\sqrt{2}} (y_{10}-v),\\\
u~&= \frac{1}{\sqrt{2}} (y_{10} + v)
 \end{alignedat} 
  \right.
  \end{equation*}
 transforms $v$ in (\ref{fin_v})
 to the Trudinger-Wang singular function $$u(x)= \sqrt{|x'|^9 + x_{10}^2}.$$
 This $u$ is merely Lipschitz and solves the affine maximal surface equation (\ref{AMSE2}).

\end{document}